\documentclass[10pt]{amsart}
\usepackage{amssymb}
\usepackage[varbb, varg, smallerops]{newtxmath}
\usepackage[scaled=0.87]{inconsolata} 
\usepackage[T1]{fontenc}

\usepackage{amsthm}
\usepackage{mathrsfs}  
\usepackage{amssymb}
\usepackage{mathtools}
\usepackage{amsmath}
\usepackage{yfonts}
\usepackage{amsfonts}
\usepackage{aliascnt}
\usepackage{graphicx}
\usepackage{tikz}
\usetikzlibrary{shapes.geometric}

\usepackage[a4paper,margin=1.1in]{geometry} %MARGINI MIGLIORI
\usepackage{tikz}
\usepackage{tikz-cd}
\usetikzlibrary{arrows}
\tikzcdset{arrow style=tikz,
diagrams={>={Stealth[round,length=4pt,width=6pt,inset=3pt]}}}
\usepackage{graphicx}
\usepackage[all,cmtip]{xy}
\makeatletter
\newcommand*{\tightdisplaymath}{\abovedisplayskip\z@\belowdisplayskip\z@}

\usepackage{calligra}

\usepackage[colorlinks,allcolors=red]{hyperref}

\makeatletter
\newcommand\RSloop{\@ifnextchar\bgroup\RSloopa\RSloopb}
\makeatother
\newcommand\RSloopa[1]{\bgroup\RSloop#1\relax\egroup\RSloop}
\newcommand\RSloopb[1]
  {\ifx\relax#1
   \else
     \ifcsname RS:#1\endcsname
       \csname RS:#1\endcsname
     \else
       \GenericError{(RS)}{RS Error: operator #1 undefined}{}{}
     \fi
   \expandafter\RSloop
   \fi
  }
\newcommand\X{0}
\newcommand\RS[1]
  {\begin{tikzpicture}
     [every node/.style=
       {circle,draw,fill,minimum size=1.5pt,inner sep=0pt,outer sep=0pt},
      line cap=round
     ]
   \coordinate(\X) at (0,0);
   \RSloop{#1}\relax
   \end{tikzpicture}
  }

\newcommand\RSdef[1]{\expandafter\def\csname RS:#1\endcsname}
\newlength\RSu
\RSu=1ex
\RSdef{i}{\draw (\X) -- +(90:\RSu) node{};}
\RSdef{g}{\draw (\X) -- +(30:\RSu) node{};} %NUOVO MIO
\RSdef{l}{\draw (\X) -- +(135:\RSu) node{};}
\RSdef{r}{\draw (\X) -- +(45:\RSu) node{};}
\RSdef{I}{\draw (\X) -- +(90:\RSu) coordinate(\X I);\edef\X{\X I}}
\RSdef{G}{\draw (\X) -- +(25:\RSu) coordinate(\X G);\edef\X{\X G}} %NUOVO MIO
\RSdef{H}{\draw (\X) -- +(115:\RSu) coordinate(\X H);\edef\X{\X H}} %NUOVO MIO
\RSdef{L}{\draw (\X) -- +(135:\RSu) coordinate(\X L);\edef\X{\X L}}
\RSdef{R}{\draw (\X) -- +(45:\RSu) coordinate(\X R);\edef\X{\X R}}
\RSdef{D}{\draw (\X) -- +(35:\RSu) coordinate(\X D);\edef\X{\X D}} %NUOVO MIO
\RSdef{A}{\draw (\X) -- +(65:\RSu) coordinate(\X A);\edef\X{\X A}} %NUOVO MIO
\RSdef{B}{\draw (\X) -- +(115:\RSu) coordinate(\X B);\edef\X{\X B}} %NUOVO MIO
\RSdef{C}{\draw (\X) -- +(145:\RSu) coordinate(\X C);\edef\X{\X C}} %NUOVO MIO
\RSdef{d}{\draw (\X) -- +(35:\RSu) node{};} %NUOVO MIO
\RSdef{a}{\draw (\X) -- +(65:\RSu) node{};} %NUOVO MIO
\RSdef{b}{\draw (\X) -- +(115:\RSu) node{};} %NUOVO MIO
\RSdef{c}{\draw (\X) -- +(145:\RSu) node{};} %NUOVO MIO

%ALBERI NUOVI
\newcommand{\MDu}{\RS{{R{r}}{L{l}}}}
\newcommand{\MTr}{\RS{{R{r}}{I{i}}{L{l}}}}
\newcommand{\MQu}{\RS{{D{D{d}}}{A{a}}{C{C{c}}}{B{b}}}}
\newcommand{\MCS}{\RS{{R{r}}{L{l}r}}} %COMPOSIZIONE SINISTRA
\newcommand{\MCD}{\RS{{R{l}{r}}{L{l}}}} %COMPOSIZIONE DESTRA
\newcommand{\Q}{\mathcal{Q}} %Quivers
\newcommand{\dQ}{d_{\Q}}
\newcommand{\Ain}{\mbox{A}_{\infty}} %Quivers
\newcommand{\Id}{\mbox{Id}}

\newcommand{\T}{\mathfrak{t}}

\newcommand{\TT}{\textswab{F}}
\newcommand{\TTn}{\tiny\textswab{F}}
\newcommand{\mTT}{m_{\TTn(\Q)}}

\newcommand\restr[2]{{% we make the whole thing an ordinary symbol
  \left.\kern-\nulldelimiterspace % automatically resize the bar with \right
  #1 % the function
  \littletaller % pretend it's a little taller at normal size
  \right|_{#2} % this is the delimiter
  }}

\newcommand{\littletaller}{\mathchoice{\vphantom{\big|}}{}{}{}}

\newcommand{\aCat}{\mbox{A$_{\infty}$Cat}}

\newcommand{\DgQui}{\mbox{DGQuiv}}
\newcommand{\DgCat}{\mbox{DGCat}}

\newcommand{\HomA}{\mbox{Hom}_{\mathscr{A}}}

\newcommand{\A}{\mathscr{A}}

\newcommand{\HomB}{\mbox{Hom}_{\mathscr{B}}}
\newcommand{\B}{\mathscr{B}}

\newcommand{\Ho}{\mbox{Ho}}
\newcommand{\Fun}{\mbox{Fun}_{\infty}}

\newcommand{\G}{\mathscr{G}}
\newcommand{\F}{\mathscr{F}}
\newcommand{\C}{\mathscr{C}}
\newcommand{\Hom}{\mbox{Hom}}
\newcommand{\dego}{\mbox{deg}}
\newcommand{\sF}{\mathsf{F}}
\newcommand{\sG}{\mathsf{G}}
\newcommand{\Na}{\mbox{N}_{\tiny\Ain}}
\newcommand{\Nd}{\mbox{N}_{\tiny\mbox{DG}}}
\newcommand{\tA}{|\tilde{\A}_{n-1}|}
\newcommand{\Qs}{\Q_{\tilde{S}_n}}

\newcommand{\sem}{\tiny\mbox{sf}}

%\author{Mattia Ornaghi} 
%\title{A remark on fibrancy of ($\mbox{A$_{\infty}$Cat}$,$W^{\tiny\mbox{A}_{\infty}}_{\tiny\mbox{qe}}$)}
%\address{\parbox{0.9\textwidth}{Universit\`a degli Studi di Milano\\
%Dipartimento di Matematica\\
%Via Cesare Saldini 50, 20133 Milano, Italy}}
%\email{mattia12.ornaghi@gmail.com}
%\subjclass[2020]{14F08, 18E35, 18G70}

\newcommand{\vers}{The Homotopy Theory of $\Ain$categories}
\title[\vers]{The Homotopy Theory of $\Ain$categories} 

\author{Mattia Ornaghi}  

\usepackage{hyperref}
\hypersetup{colorlinks=false}

\address{\parbox{0.9\textwidth}{Universit\`a degli Studi di Milano\\
Dipartimento di Matematica\\
Via Cesare Saldini 50, 20133 Milano, Italy}}
\email{mattia12.ornaghi@gmail.com}

\subjclass[2020]{14F08, 18E35, 18G70}

\thanks{The author was supported by the research project FARE 2018 HighCaSt (grant number R18YA3ESPJ) and by ERC Advanced
Grant - CUP: G43C23001750006 - HE-ERC23ANEEM-01}

\theoremstyle{definition}
\newtheorem{defn}{Definition}[section]
\newtheorem{cons}{Construction}[section]
\newtheorem{defnthm}{Definition-Theorem}[section]
\newtheorem{thm}{Theorem}[section]

\newtheorem{lem}[thm]{Lemma}

\newtheorem{prp}[thm]{Proposition}
\newtheorem{cor}[thm]{Corollary}

\newtheorem*{namedthmA}{Theorem A}
\newtheorem*{namedthmB}{Theorem B}
\newtheorem*{namedthmBB}{Theorem B$'$}
\newtheorem*{namedthmC}{Theorem C}

\newtheorem*{namedlemD}{Lemma D}
\theoremstyle{remark}
\newtheorem{rem}{Remark}[section]
\newtheorem{exmp}{Example}[section]
\newtheorem{notatu}{Notation}[section]

\begin{document}

\date{\today}

\maketitle

\begin{abstract}
In this paper we describe the homotopy category of the $\Ain$categories.\ 
To do that we introduce the notion of semi-free $\Ain$category, 
%inspired by the notion of relatively free $\Ain$categories of Lyubashenko-Manzyuk, 
which plays the role of standard cofibration.\ 
Moreover, we define the non unital $\Ain$ (resp. DG)categories with cofibrant morphisms 
and we prove that any non unital $\Ain$ (resp. DG)category has a resolution of this kind.  
\end{abstract}

\setcounter{tocdepth}{1}

\tableofcontents

\section*{Introduction and Statement of Results}

The history of $\Ain$categories has roots in the foundational work of Jim Stasheff, who, in the 1960s, made significant contributions to understanding higher homotopy operations.\ His work focused on providing a combinatorial description of the Stasheff polytopes, revealing the algebraic structure underlying higher associativity relations.\ Stasheff's ideas served as a precursor to the development of $\Ain$structures \cite[II]{Sta}.\ 

In the early 1990s, it was introduced the concept of $\Ain$algebra and it was extended to the categories thanks to the work of many people:\ Kontsevich \cite{Kon}, Soibelman \cite{KS3}, Fukaya \cite{Fuk}, Oh, Ono, Ohta \cite{FOOO1}, Seidel \cite{Sei}, etc.\
In fact, $\Ain$categories became fundamental in the context of Homological Mirror Symmetry, since, the Fukaya category of a symplectic manifold, comes naturally with a structure of $\Ain$category.\ Or, more precisely, a structure of $\Ain$pre-category.\

Nowadays $\Ain$categories have applications in many other fields of mathematics and physics: String Theory, Moduli Spaces, Deformation Theory, Algebraic Topology, Categorical Homotopy Theory, Homological Algebra, etc.\

In a few words, an $\Ain$category is a non associative $R$-linear DG category, whose cohomology is a $R$-linear (graded) category.\
Let us denote by $\aCat$ the category of $\Ain$categories with $\Ain$functors, linear over a commutative ring $R$.\
In this framework, we have a very natural notion of quasi-equivalence, which is a functor between $\Ain$categories inducing an equivalence between the underlying cohomology categories.\
As in the case of DG categories, what we actually care is not the (1-)category of $\Ain$categories, rather its homotopy category $\mbox{Ho}(\aCat)$.\ Namely, the localization of the category of $\Ain$categories with respect to the quasi-equivalences.\

In general, the fundamental tool for investigating the homotopy category of a category is, of course, the model structure.\
Model structures were introduced by Quillen, in the 1960s, as a way to define the hom-spaces of homotopy categories in terms of cofibrant-fibrant resolutions and homotopy relations.\

For this reason, one would provide a model structure on the category $\aCat$.\ Unfortunately the nature of the $\Ain$functors makes this undertaking very hard (see \cite{COS1}).\
In 2003, Lefevre-Hasegawa managed to equip the category of one object $\Ain$categories (namely the $\Ain$algebras), linear over a field, with a model structure without limits \cite{LH}.\ 
See also the more recent work \cite{Val}.\

The aim of this present work is to give a very explicit description of $\mbox{Ho}(\aCat)$, the homotopy category of $\aCat$ (linear over a commutative ring $R$) in terms of semi-free resolutions and weakly equivalences, avoiding the model structures.\\
In particular, our first main result is the following:

\begin{namedthmA}[\ref{artoo}]
Given two strictly unital $\Ain$categories $\A,\B$ we have a natural bijection:
\begin{align}\label{frutott}
\mbox{Ho}(\aCat)(\A,\B)\simeq^{\dagger_1}\mbox{Ho}(\aCat)(\A^{\tiny\mbox{sf}},\B)\simeq^{\dagger_2}\aCat^{\tiny\mbox{(cu)}}(\A^{\tiny\mbox{sf}},\B)/_{\approx}.
\end{align}
Where ${\mbox{(cu})}$ can be omitted and denotes the set of cohomological unital $\Ain$functors.
\end{namedthmA}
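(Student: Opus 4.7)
The plan is to mimic the classical Dwyer--Kan style description of a homotopy category via cofibrant resolutions, with semi-free $\Ain$categories in place of cofibrant objects and the homotopy relation $\approx$ playing the role of left homotopy. The two bijections $\dagger_1$ and $\dagger_2$ will be established separately and glued, and the strict unitality of $\A$ is used only to guarantee that the semi-free resolution $\A^{\sem}\to\A$ is well behaved and automatically cohomologically unital.

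For $\dagger_1$, I would first invoke the construction of the semi-free resolution developed earlier in the paper to produce a canonical $\Ain$functor $p_{\A}\colon \A^{\sem}\to \A$ which is a quasi-equivalence. By the universal property of the localization $\aCat\to\Ho(\aCat)$, the image of $p_{\A}$ is an isomorphism, so pre-composition with $p_{\A}$ gives a bijection $\Ho(\aCat)(\A,\B)\to\Ho(\aCat)(\A^{\sem},\B)$. This step is essentially formal once the existence of semi-free resolutions has been secured.

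The heart of the argument is $\dagger_2$, namely that morphisms in $\Ho(\aCat)(\A^{\sem},\B)$ are represented faithfully by actual $\Ain$functors modulo $\approx$. Surjectivity amounts to a straightening result: any morphism in the homotopy category is a priori a zig-zag $\A^{\sem}\leftarrow \C_1\to\C_2\leftarrow\cdots\to\B$ of quasi-equivalences and $\Ain$functors, and I would reduce it to a single functor $\A^{\sem}\to\B$ by lifting against the wrong-way quasi-equivalences. The lifting is proved by induction along the filtration by cells of the semi-free object $\A^{\sem}$: on each cell one has to solve an obstruction problem for the $\Ain$-relations, which can be done because the target map is a quasi-equivalence (this is the $\Ain$-analogue of the standard ``cofibrant source + trivial fibration target'' lifting). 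Injectivity reduces to showing that two $\Ain$functors $F,G\colon\A^{\sem}\to\B$ which agree in $\Ho(\aCat)$ differ by $\approx$; one builds an explicit homotopy between them by the same cellwise induction applied to a path-object or cylinder construction (for which the semi-free structure again produces the required extensions).

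The main technical obstacle is the cellwise lifting argument, because at each stage one has to verify that the obstruction cocycle arising from adding a new generator is a coboundary in the mapping complex, and to do so coherently with the higher $\Ain$operations $m_n$. The strictly unital hypothesis simplifies this by making the comparison maps compatible with units on the nose, which is why the parenthetical $(cu)$ can be omitted: any lift produced cellwise is automatically cohomologically unital. Once the surjective/injective parts of $\dagger_2$ are in place, composing with $\dagger_1$ yields the stated bijection.
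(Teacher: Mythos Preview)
Your treatment of $\dagger_1$ is correct and agrees with the paper: once the semi-free resolution $\Psi\colon\A^{\sem}\to\A$ exists (Theorem~\ref{semifree}), precomposition with its class in $\Ho(\aCat)$ is a bijection.

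For $\dagger_2$ your route diverges from the paper's and contains a genuine gap. You propose to straighten a zig-zag by lifting $\A^{\sem}$ cellwise against each wrong-way quasi-equivalence, but the cellwise lifting available in this paper (Lemma~\ref{lemlift}) only applies when the quasi-equivalence is \emph{surjective on morphisms} and the functor being lifted is \emph{strict}. A generic backward arrow in a zig-zag satisfies neither condition: it need not be surjective, and the forward arrows are non-strict $\Ain$functors. No factorization system on $\aCat$ is available at this point to reduce to the surjective case, and the obstruction problem for extending a non-strict $\Ain$functor along a new cell is not the single-cocycle problem you sketch: by Theorem~\ref{2.3PropLM1} a functor out of a free $\Ain$category carries independent data $F^n$ on all tensor powers, so adding one generator affects infinitely many components at once.

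The paper instead takes a detour through DG categories. It uses that semi-free $\Ain$categories have cofibrant, hence h-projective, hom-complexes (Theorem~\ref{h-proj}), and then invokes Homological Perturbation Theory (Theorem~\ref{DC}) to invert quasi-equivalences between h-projective objects \emph{up to} $\approx$. This is packaged into the functor $(\mbox{-})^{\sem}_{\approx}$ on $\aCat/\approx$ (Theorem~\ref{funtocat}), which is what actually collapses zig-zags in Lemma~\ref{rorororoor} and supplies the weak lifting of Lemma~\ref{lordod}. The bijection then follows from the commutative square~(\ref{mteo}) together with Lemma~\ref{dodi}. The mechanism you are missing is that inversion of arbitrary quasi-equivalences is obtained via HPT on h-projective objects, not by cellwise obstruction theory on the semi-free side.

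Finally, your reason for dropping ``$(\mathrm{cu})$'' is not the right one. The point is not that lifts are automatically cohomologically unital (that direction is trivial), but that $\A^{\sem}$ has a \emph{nice} strict unit (Theorem~\ref{augmo}), so every cohomologically unital functor out of it is $\approx$ to a strictly unital one; this is Lemma~\ref{donderbong}, which rests on Lemma~\ref{sim}.
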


Let us explain formula (\ref{frutott}) in the details: $\A^{\tiny\mbox{sf}}$ denotes the semi-free resolution of $\A$.\\
The notion of semi-free $\Ain$category is new in the literature (see Definition \ref{semifrollo}).\ We introduce it in section \ref{semifreeee} and it comes, quite naturally, from the definition of relatively free $\Ain$category, due to Lyubashenko-Manzyuk \cite[5.1 Definition]{LM2}, and from the definition of semi-free DG category, due to Drinfeld \cite[B.4]{Dri}.\ We prove the following:

\begin{namedthmB}[\ref{semifree}+\ref{h-proj}+Remark \ref{functororne}]
Every strictly unital $\Ain$category $\A$ admits a semi-free resolution (it can be made functorial in $\aCat_{\tiny\mbox{strict}}$) which has cofibrant morphisms (see Theorem \ref{cofmor}).\\
In particular there exists a semi-free $\Ain$category $\A^{\tiny\mbox{sf}}$ with a (surjective on the morphisms) strict quasi-equivalence $\Psi:\A^{\tiny\mbox{sf}}\to\A$.
\end{namedthmB}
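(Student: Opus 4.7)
The plan is to adapt Drinfeld's inductive construction of the semi-free resolution of a DG category to the $\Ain$ setting, via the universal property of free $\Ain$categories from Lyubashenko--Manzyuk. I would construct $\Asf$ as a directed colimit
\[
\Asf_0 \hookrightarrow \Asf_1 \hookrightarrow \Asf_2 \hookrightarrow \cdots, \qquad \Asf := \mathrm{colim}_n \Asf_n,
\]
of a tower of free $\Ain$extensions, together with compatible strict, strictly unital $\Ain$functors $\Psi_n : \Asf_n \to \A$ assembling into $\Psi := \mathrm{colim}_n \Psi_n$.

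At stage $0$, put $\mathrm{Ob}(\Asf_0) := \mathrm{Ob}(\A)$, equip it with strict units $1_a$, and for each ordered pair $(a,b)$ adjoin one generator $x_f$ in degree $|f|$ for every element $f\in\A(a,b)$. Take $\Asf_0$ to be the free strictly unital $\Ain$category on this graded quiver (the higher $m_k$ free on generators), with $m_1(x_f):=x_{m_1 f}$; the assignment $\Psi_0(x_f):=f$, $\Psi_0(1_a):=1_a$ determines a strict $\Ain$functor by the universal property of free $\Ain$categories. This already makes $\Psi_0$ surjective on morphisms and on cohomology. At the inductive stage, for each cycle $c\in\Asf_n(a,b)$ such that $\Psi_n(c)=m_1\beta$ for some $\beta\in\A(a,b)$, choose such a $\beta$ and adjoin a new generator $y_{c,\beta}$ of degree $|c|-1$, again freely, with $m_1(y_{c,\beta}):=c$ and $\Psi_{n+1}(y_{c,\beta}):=\beta$.

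Passing to the colimit, $\Asf$ is semi-free in the sense of Definition \ref{semifrollo} (it is the union of a filtration by free $\Ain$extensions), and $\Psi$ is strict, strictly unital, and surjective on morphisms by stage $0$. Surjectivity on $H^*$ is immediate from stage $0$; injectivity is exactly what the kernel-killing generators $y_{c,\beta}$ arrange at each successive stage, so $\Psi$ is a strict quasi-equivalence. Each hom-complex $\Asf(a,b)$ inherits a filtration whose subquotients are free $R$-modules on the added generators, hence it is a cofibrant chain complex; this is the content of Theorem \ref{cofmor}. Functoriality on $\aCat_{\mathrm{strict}}$ follows by making the choices canonically: a strict $\Ain$functor $F:\A\to\A'$ commutes on the nose with $m_1$ and preserves cycles, so it sends the chosen $f$'s and $\beta$'s to admissible choices in $\A'$, and extends uniquely to $F^{\sem}:\Asf\to(\A')^{\sem}$ by freeness.

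The main obstacle is keeping $\Psi_n$ strict (all higher components vanishing) throughout the tower: adjoining $y_{c,\beta}$ pins down its $m_1$-image, but leaves the $m_{k\geq 2}$-operations involving it entirely free on $\Asf_{n+1}$. Consistency with the full Stasheff tower and with the strictness of $\Psi_{n+1}$ is not automatic, but it follows from the universal property of the free $\Ain$category on a graded quiver: any set-map on generators compatible with $m_1$ extends uniquely to a strict $\Ain$functor. Each inductive step thus reduces to the internal choice problem of picking a bounding chain $\beta\in\A$ with $m_1\beta=\Psi_n(c)$, and the strict unitality of $\A$ ensures the units can be carried through without introducing higher $\Ain$-corrections in $\Psi$.
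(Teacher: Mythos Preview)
Your approach is essentially the paper's: build $\Asf$ as a directed colimit of free $\Ain$extensions over the discrete category, inductively killing the kernel of $\Psi$ on cohomology by adjoining generators whose differentials land in the previous stage. This is precisely the construction in Theorem~\ref{semifree}, and the cofibrancy of hom-complexes is Theorem~\ref{h-proj} (not \ref{cofmor}), proved there via an explicit tree-filtration rather than the one-line appeal you give.

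The genuine gap is in functoriality. At the inductive step you write ``choose such a $\beta$'', i.e.\ one bounding chain per cycle $c$. This does not yield a functor on $\aCat_{\tiny\mbox{strict}}$: given a strict $F:\A\to\A'$, there is no reason the chosen lift $\beta'$ for the cycle $F^{\sem}_n(c)$ in $(\A')^{\sem}_n$ equals $F(\beta)$, so the naturality square $\Psi_{\A'}\circ F^{\sem}=F\circ\Psi_{\A}$ can fail on the generator $y_{c,\beta}$. ``Sends the chosen $\beta$'s to admissible choices in $\A'$'' is true but irrelevant; admissible is not the same as chosen. The paper's fix (Remark~\ref{functororne}) is to adjoin a generator for \emph{every} admissible pair $(c,\beta)$ rather than selecting one; then $F^{\sem}(y_{c,\beta}):=y_{F^{\sem}_n(c),F(\beta)}$ is automatically a generator on the target and the square commutes by construction.

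A secondary divergence: at stage~$0$ the paper adds generators only for \emph{closed} homogeneous $f$ (so the added quiver has zero differential), whereas you take all $f$ with $m_1(x_f)=x_{m_1 f}$. Read literally this makes $x_0$ a nonzero basis element in every degree with $d(x_0^{(n)})=x_0^{(n+1)}$, hence $d^2\neq 0$; you must declare $x_0=0$. Even then, the resulting filtration only witnesses \emph{relative} freeness (Definition~\ref{reflfree}), not semi-freeness: item~4 of Definition~\ref{sfreeAC} requires the quotient quiver $\Q_n/|\A_{n-1}|$ to have zero differential and be free, which yours does not satisfy. You can recover semi-freeness by refining (first the closed $x_f$'s, then the rest), but this needs to be said for the statement as claimed.
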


%This construction can be made functorial in $\aCat_{\tiny\mbox{strict}}$, see Remark \ref{functororne}.\
Here it is crucial the construction of free $\Ain$categories and of quotients by $\Ain$ideals.\ 
The first is due to Kontsevich \cite{Kon}, the latter is due to Lyubashenko and Manzyuk \cite{LM2}.\ 
We dedicate the whole section \ref{Liberone} to these constructions.\
Clearly Theorem above provides the isomorphism $\dagger_1$ in (\ref{frutott}).\\

It remains to explain the natural isomorphism $(\dagger_2)$.\ The relation $\approx$, is an equivalence relation on the set of $\Ain$functors with same source and target.\
We write $\F\approx\G$ if they are weakly equivalent (see Definition \ref{def1}).\ 

To understand this equivalence relation it is necessary the following fact:\
%Fixing two $\Ain$categories, there are two different equivalence relations on the hom-space cf. 
%We that there are two 
Fixing two $\Ain$categories $\A$ and $\B$, the $\Ain$functors $\A\to\B$ form an $\Ain$category.\
In this category, the objects are the $\Ain$functors, and the morphisms are the prenatural transformations (cf. Definition \ref{trantran}).\
Two $\Ain$functors are weakly equivalent if there are two prenatural transformations making them quasi-isomorphic.\ 

In a few words, a prenatural transformation $T:\F\Rightarrow \G:\A\to\B$, can be considered as a 2-morphism:
\begin{align*}
\xymatrix@R=1em{
&  \ar@{=>}[dd]_{T}&\\
\A\ar@/^2pc/[rr]^-{\F}\ar@/_2pc/[rr]_-{\G}&&\C\\
&&
}
\end{align*}
We prove an important relation between the weakly equivalences and the homotopy classes: 
\begin{namedthmC}[\ref{mspaces}]
Given two $\Ain$categories $\A$ and $\B$ if $\F\approx\G$ then $[\F]=[\G]$ in $\mbox{Ho}(\aCat)$.
\end{namedthmC}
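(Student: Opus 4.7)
The plan is to realize the pre-natural transformations witnessing $\F\approx\G$ as the two boundary restrictions of a single $\Ain$-functor defined on a cylinder-type $\Ain$-category over $\A$, whose two canonical inclusions become equal in $\mbox{Ho}(\aCat)$. First, by Theorem A and Theorem B, I reduce to the case where $\A$ is semi-free: the strict quasi-equivalence $\Psi:\A^{\tiny\mbox{sf}}\to\A$ is invertible in $\mbox{Ho}(\aCat)$, so it suffices to establish $[\F\circ\Psi]=[\G\circ\Psi]$, and thereafter I simply write $\A$ for $\A^{\tiny\mbox{sf}}$. The relation $\F\approx\G$ then supplies closed pre-natural transformations $T:\F\Rightarrow\G$ and $S:\G\Rightarrow\F$ representing mutually inverse classes in $H^0(\Fun(\A,\B))$, together with bounding pre-natural transformations of higher degree that exhibit the compositions $T*S$ and $S*T$ as cohomologous to the identities $1_\G$ and $1_\F$.

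Next, I would construct a cylinder object $\mbox{Cyl}(\A)$ in $\aCat$, obtained as a tensor product $\A\otimes\mathcal{I}$ with an ``interval'' DG-category $\mathcal{I}$ having two objects $0,1$ and an invertible degree-zero generator $0\to 1$. Semi-freeness of $\A$ guarantees that $\mbox{Cyl}(\A)$ admits a semi-free structure, generated by two copies of the generators of $\A$ (indexed by $0$ and $1$) together with the interval morphism and all associated higher coherence data. By construction it carries two strict inclusions $i_0,i_1:\A\to\mbox{Cyl}(\A)$ and a common retraction $r:\mbox{Cyl}(\A)\to\A$ satisfying $r\circ i_0=r\circ i_1=\mbox{Id}_{\A}$, with each of these three maps a quasi-equivalence.

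The key technical step is to use the pre-natural transformation data $(T,S,\dots)$ to assemble an $\Ain$-functor $H:\mbox{Cyl}(\A)\to\B$ with $H\circ i_0=\F$ and $H\circ i_1=\G$. Because $\mbox{Cyl}(\A)$ is semi-free, this reduces to prescribing $H$ on generators: on the generators inherited from the two copies of $\A$, $H$ acts as $\F$ and $\G$ respectively; on the interval-type generators and their higher coherence companions, $H$ acts via the components of $T$, $S$, and the higher pre-natural transformations bounding $T*S$ and $S*T$. Checking the $\Ain$-functor relations on each such generator then translates, term by term, into the identities satisfied by a closed invertible morphism in $H^0(\Fun(\A,\B))$, so that the extension is automatic.

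Once $H$ exists, the conclusion is immediate: in $\mbox{Ho}(\aCat)$ we have $[i_0]=[r]^{-1}=[i_1]$, hence
\[
[\F]=[H\circ i_0]=[H]\cdot[i_0]=[H]\cdot[i_1]=[H\circ i_1]=[\G].
\]
The principal obstacle lies in the third step: defining a semi-free cylinder for an arbitrary semi-free $\Ain$-category $\A$ and verifying that the higher pre-natural transformation data supply exactly the coherence equations required to extend an assignment on generators to a genuine $\Ain$-functor. This is precisely where semi-freeness is indispensable: without Theorem B one cannot freely extend the data on generators, and the whole cylinder construction collapses.
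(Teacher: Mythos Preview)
Your approach is dual to the paper's: you propose a cylinder object on the source, while the paper uses a path object on the target. Concretely, the paper sets $\Na^{\star}(\B)_1 := \Fun(\overline{[1]},\B)$, where $\overline{[1]}$ is exactly your interval $\mathcal{I}$, and exploits the closed multicategory structure on $\aCat$ to obtain an isomorphism $\Fun(\overline{[1]},\Fun(\A,\B))\simeq\Fun(\A,\Fun(\overline{[1]},\B))$ (Lemma~\ref{LEMMOMBA}). An object of the left-hand side encodes precisely the data witnessing $\F\approx\G$, and under the isomorphism it becomes an $\Ain$functor $\psi_T:\A\to\Na^{\star}(\B)_1$ with $s\cdot\psi_T=\F$ and $t\cdot\psi_T=\G$; since $s,t$ and $i:\B\to\Na^{\star}(\B)_1$ are quasi-equivalences with $s\cdot i=t\cdot i=\Id$, a roof argument finishes. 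No reduction to semi-free $\A$, no tensor product, no extension on generators.

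Your route has two real problems. First, invoking Theorem~A is circular: in the paper's architecture Theorem~A relies on the well-definedness of $\delta:\aCat/\!\!\approx\,\to\Ho(\aCat)$, which is exactly the content of Theorem~C. You only need Theorem~B (existence of $\A^{\tiny\mbox{sf}}$) for the reduction, so this is repairable, but as written it is a logical loop. Second, and more seriously, the tensor product $\A\otimes\mathcal{I}$ of an $\Ain$category with a DG category is not available off the shelf; the paper explicitly works around the absence of a monoidal structure on $\aCat$ via the closed multicategory structure. Your claim that semi-freeness of $\A$ yields a semi-free cylinder with the correct universal property would require an independent construction and a verification that the $(T,S,\ldots)$ data solve exactly the $\Ain$functor equations on the new generators --- essentially as much work as the theorem itself, and you acknowledge this is where the obstacle lies. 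The path-object route sidesteps both issues: $\Fun(\overline{[1]},\B)$ is always a well-defined $\Ain$category, and the multicategory isomorphism replaces your entire extension-on-generators step with a one-line formal identity.
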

To prove Theorem \ref{mspaces} we use a slightly modification of the $\Ain$nerve.\ 
In this case the $\Ain$nerve is not only an $\infty$-category but it is enriched over $\Ain$categories (see Lemma \ref{LEMMOMBA}).\   
\\
We recall that, there is another equivalence relation between two $\Ain$functors: the homotopy one.\ 
We say that $\F$ and $\G$ are homotopic if $\F-\G=\mathfrak{M}^1(H)$, where $H$ is a prenatural transformation.\ 
We denote it by $\F\sim\G$.\ In Lemma \ref{sim} we prove that $\sim$ implies $\approx$.\ 
This is fundamental to pass from cohomological to strictly unital $\Ain$functors.\\
\\
Let us compare our Theorem \ref{artoo} with the \emph{fundamental theorem about model categories}\footnote{quoting Hovey  \cite[pp.13 Theorem 1.2.10]{Hov}} \cite[Theorem 1.2.10]{Hov}.\\
Suppose that $\mathcal{M}$ is a model category whose objects are fibrant\footnote{For example the category DGCat with the model structure of \cite{Tab1}}.\ 
Fixing two objects $X,Y\in\mathcal{M}$, we have natural isomorphisms
\begin{align*}
\mbox{Ho}(\mathcal{M})(X,Y)\simeq\mbox{Ho}(\mathcal{M})(X^{\tiny\mbox{cof}},Y)\simeq\mathcal{M}(X^{\tiny\mbox{cof}},Y)/_{\sim_{l,r}}.
\end{align*}
Here $X^{\tiny\mbox{cof}}$ denotes the cofibrant replacement and $\sim_{l,r}$ is the homotopy relation.\\
So, we can see that semi-free resolutions and $\approx$ play the role, respectively, of cofibrant replacements and homotopy relation in the framework of $\Ain$categories.\
This is very similar to the case of DG categories, where the semi-free resolutions are the standard cofibrant objects (cf. Corollary \ref{cor1}).\  
On the other hand, the interesting thing is that, the relation $\approx$, naturally encodes the existence of a path object (see Remark \ref{pathobj}).\\

Note that in Theorem B we mentioned $\aCat_{\tiny\mbox{strict}}$.\ This is the category of $\Ain$categories with strict $\Ain$functors.\ It will play a significant role in this paper.\
There are, indeed, two notions of functors between $\Ain$categories: the $\Ain$functors (Definition \ref{af}) and the strict $\Ain$functors (Definition \ref{astricts}).\
The first notion is very general, and it is designed to preserve the operations up to homotopy.\ 
The second one, is much more narrow, and it is basically a morphism of the underlying graded $R$-quivers.\
For example, considering two DG categories as $\Ain$categories, the strict $\Ain$functors between them are exactly the DG functors.\

We point out that, the $\Ain$functors, are the right notion of functors between $\Ain$categories (see subsection \ref{HPT}).\
This is because, if $R$ is a field, they are flexible enough to make an $\Ain$category equivalent to its cohomology.\  
In a few words, if $\A$ is an $\Ain$category, we can equip the cohomology category $H(\A)$ with a minimal (i.e. with trivial differentials) $\Ain$structure which is equivalent to $\A$, via $\Ain$functors.\ 
This is no longer true over a commutative ring.\ 
One has to take the antiminimal model and equip the cohomology with a derived $\Ain$structure \cite{Sag}, but this is not the topic of this paper.\ 

The downside of $\Ain$functors is that they make very difficult to work with the category $\aCat$.\ 
For example $\aCat$ has no equalizers (sf. \cite{COS1}).\ 

On the other hand, the category of $\Ain$categories equipped with strict $\Ain$functors, is much better from this point of view.\
In subsection \ref{cocompletone}, we prove that $\aCat_{\tiny\mbox{strict}}$ is complete and cocomplete, and we give a very explicit description of the (small) limits and colimits.\
This is fundamental to prove the existence of a semifree resolution.\
\\
Moreover, we proved that, in $\aCat_{\tiny\mbox{strict}}$, our semi-free $\Ain$resolutions have a \emph{lifting property} \`a la Drinfeld.\ 

\begin{namedlemD}[\ref{lemlift}]
Let $\A^{\tiny\mbox{sf}}$ be a semi-free $\Ain$category and $\B$, $\C$ two $\Ain$categories.\\
Given $\mathsf{F}:\A^{\tiny\mbox{sf}}\to\B$, a strict $\Ain$functor, and $\mathsf{G}:\C\to\B$ a strict quasi-equivalence, surjective on the morphisms, there exists a (non unique) strict $\Ain$functor $\tilde{\mathsf{F}}:\A^{\tiny\mbox{sf}}\to\C$ making the diagram
\[
\xymatrix{
&{\C}\ar@{->>}[d]_{\simeq}^{\mathsf{G}}\\
\A^{\tiny\mbox{sf}}\ar@{-->}[ur]^{\tilde{\sF}}\ar[r]^{\sF}&\B
}
\]
commutative in $\aCat_{\tiny\mbox{strict}}$.
\end{namedlemD}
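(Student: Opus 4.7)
The plan is to exploit the semi-free filtration of $\A^{\sem}$ and to construct $\tilde{\sF}$ by induction along that filtration, reducing at each stage to a one-step lifting problem on Hom-complexes. By the definition of semi-free (Section \ref{semifreeee}) there is a filtration $\A_{-1}\subset\A_0\subset\A_1\subset\cdots$ of sub-$\Ain$categories with $\A^{\sem}=\mathrm{colim}_i\A_i$ in $\aCat_{\tiny\mbox{strict}}$, where $\A_{-1}$ carries only the objects and each $\A_{i+1}$ is obtained from $\A_i$ by freely adjoining a collection of generators $\{x_\alpha\}$ (with prescribed sources, targets and degrees) subject only to the condition $m_1(x_\alpha)\in\A_i$. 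By the freeness of this construction, a strict $\Ain$functor out of $\A^{\sem}$ is uniquely determined by its values on objects and on each generator, subject to the single compatibility $\tilde{\sF}(m_1(x_\alpha))=m_1(\tilde{\sF}(x_\alpha))$; strict compatibility with the higher $m_n$ ($n\ge 2$) is automatic on generators because these products are formal in the free $\Ain$structure.

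The induction runs as follows. First, on objects, pick for each $a\in\A^{\sem}$ a preimage $\tilde{\sF}(a)$ of $\sF(a)$ under $\sG$. Assuming $\tilde{\sF}$ has been defined on $\A_i$, for each generator $x_\alpha$ appearing at stage $i+1$ use surjectivity of $\sG$ on Hom-complexes to lift $\sF(x_\alpha)$ to some $y\in\C$. The defect
\[
u:=\tilde{\sF}(m_1(x_\alpha))-m_1(y)
\]
satisfies $\sG(u)=\sF(m_1(x_\alpha))-m_1(\sG(y))=0$ by strictness of $\sF$ and $\sG$, and $m_1(u)=0$ by the $\Ain$-identity $m_1^2=0$ on Hom-complexes combined with the inductive hypothesis on $\A_i$. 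Thus $u$ is an $m_1$-cycle in $\ker\sG$.

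The remaining input is that, because $\sG$ is a strict quasi-equivalence that is moreover surjective on Hom-complexes, the short exact sequence $0\to\ker\sG\to\C\to\B\to 0$ and its associated long exact sequence in cohomology force $\ker\sG$ to be acyclic. Hence $u=m_1(v)$ for some $v\in\ker\sG$, and setting $\tilde{\sF}(x_\alpha):=y+v$ gives both $\sG(\tilde{\sF}(x_\alpha))=\sF(x_\alpha)$ and $m_1(\tilde{\sF}(x_\alpha))=\tilde{\sF}(m_1(x_\alpha))$, as required. Extending strictly by freeness defines $\tilde{\sF}$ on $\A_{i+1}$, and passing to the colimit over $i$ in $\aCat_{\tiny\mbox{strict}}$ produces the desired $\tilde{\sF}$. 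The main obstacle is the bookkeeping of the semi-free presentation: once one establishes that the only non-formal constraint on a strict functor out of $\A^{\sem}$ is the $m_1$-equality on generators, the lifting at each stage becomes a standard acyclic-kernel argument. Non-uniqueness of $\tilde{\sF}$ is visible in the choices of object lifts, of the initial $y$, and of the primitive $v$ (the latter modulo $m_1$-cycles of $\ker\sG$, which are themselves boundaries).
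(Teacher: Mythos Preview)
Your argument is correct and follows essentially the same route as the paper: induct along the semi-free filtration, reduce to defining $\tilde{\sF}$ on the new generators at each stage, and solve the one-step lifting problem using that $\sG$ is a surjective quasi-isomorphism on Hom-complexes. The only cosmetic difference is that the paper packages the lifting step as Drinfeld's observation (Remark \ref{remarzo}: if $g(x)=d_Y(y)$ with $d_X(x)=0$ then one can choose $x'$ with $g(x')=y$ and $d_X(x')=x$), whereas you unpack the equivalent statement that $\ker\sG$ is acyclic and correct an arbitrary lift by a primitive of the defect; these are the same argument.
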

It is not difficult to see that, in general, given an $\Ain$functor is not always possible to find a (weakly) equivalent strict $\Ain$functor.\ 
One has to consider the category $\aCat/\approx$, whose objects are the $\Ain$categories and the morphisms given by $\aCat/{\approx}(\A,\B):=\aCat(\A,\B)/\approx$ (note that $\approx$ is compatible with the composition).\ 
%The main result of \cite{LM1} is that, if $\A$ is a free $\Ain$category, every $\Ain$functor, with source $\A$, is weakly equivalent to a strict one.\\
%We should say that Lemma \ref{lemlift} is far from being true, if $\sF$ is a non strict $\Ain$functor.\
%So our strategy is to \emph{strictify} the $\Ain$functors with source a semi-free $\Ain$category.\ We prove   
%\begin{namedthm}[\ref{cofibrant}]
%Let $\A^{\tiny\mbox{sf}}$ be a semi-free $\Ain$category.\ Given an $\Ain$functor $\F:\A^{\tiny\mbox{sf}}\to\B$ there exists a strict $\Ain$functor $\sF$ such that $\mathsf{F}\approx \F$.
%\end{namedthm}
%The proof of Theorem \ref{cofibrant} is very long and technical.\ 
%But, in a few words, the idea is that we can get the homotopy $T:\F\Rightarrow\sF$, gluing the homotopies on the free generators.\
%By Theorem \ref{cofibrant} and \ref{lemlift}, 
We have the following weak lifting property: 
%\begin{namedthmDD}[Lemma \ref{dodi}]
Given two $\Ain$functors $\F:\A\to\B$ and $\G:\C\to\B$, such that $\G$ is a quasi-equivalence.\ 
There exists a (non unique) $\Ain$functor $\tilde{\F}:\A^{\tiny\mbox{sf}}\to\C$ making the diagram
\[
\xymatrix{
&{\C}\ar[d]_{\simeq}^{\G}\\
\A^{\tiny\mbox{sf}}\ar@{-->}[ur]^{\tilde{\F}}\ar[r]^{\F}&\B
}
\]
commutative in $\aCat/\approx$.\ We call it "weak", since $\G\cdot\tilde{\F}\approx\F$ (see Lemma \ref{lordod})\\
Moreover the semi-free construction defines a functor:
\begin{align*}
(\mbox{-})^{\sem}_{\approx}:\aCat/\approx &\to \aCat_{\sem}/\approx\\
\A&\mapsto \A^{\sem}\\
%\F&\mapsto \F^{\sem}_{\approx}.
\end{align*}
Where $\aCat_{\sem}$ denotes the full subcategory of $\aCat$ whose objects are the semi-free $\Ain$categories (see Theorem \ref{funtocat}).\
Note that Theorem A does not implies the equivalence between $\Ho(\aCat)$ and $\aCat/\approx$ (this is indeed false, even if $R$ is a field see \cite[Remark 4.14]{COS2}).\\

To conclude, we remark that Theorem B has to do with strictly unital $\Ain$categories.\ 
In subsection \ref{cofibrenzio} we introduce the notion of categories/quivers with cofibrant morphisms.\ 
This notion in some sense "replace" the one of semi-free $\Ain$categories in the cases of non unital, cohomological unital or unital $\Ain$categories.\
In a few words, given $\C$ a DG quiver, (non unital) DG category, (non unital) $\Ain$category, we say that $\C$ has cofibrant morphisms if, for every $x,y\in\C$ 
the DG $R$-module $\Hom_{\C}(x,y)$ is cofibrant (according to Remark \ref{cofibrelli}).\ We prove the following result:
 
\begin{namedthmBB}[\ref{cofmor}+ Remark \ref{functororne}]
Given a $\star$ $\Ain$category (resp. DG category) $\A$ we can find a $\star$ $\Ain$category $\A^{\tiny\mbox{cm}}$ (resp. DG category) with cofibrant morphisms 
(which has the same objects of $\A$), and a (surjective on the morphisms) strict $\Ain$functor $\Psi_{\A}:\A^{\tiny\mbox{cm}}\to\A$ which is a quasi equivalence.\ 
Moreover the construction $\A\mapsto \A^{\tiny\mbox{cm}}$ can be made functorial in $\aCat_{\tiny\mbox{strict}}^{\star}$ and make the diagram
\label{cofmor}
\begin{align}
\xymatrix{
\A^{\tiny\mbox{cm}}\ar[d]^{\Psi_{\A}}\ar[r]^{\sF^{\tiny\mbox{cm}}}&\ar[d]^{\Psi_{\B}}\B^{\tiny\mbox{cm}}\\
\A\ar[r]^{\sF}&\B\\
}
\end{align}
commutative.\
Here $\star\in\mathcal{f} \mbox{non unital, cohomological unital, unital}\mathcal{g}$.
\end{namedthmBB}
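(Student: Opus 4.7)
The plan is to proceed by cases. For strictly unital $\Ain$-categories the statement follows from Theorem B: the semi-free resolution $\A^{\tiny\mbox{sf}}$ automatically has cofibrant hom-complexes because semi-free DG modules are cofibrant. For DG categories, Drinfeld's semi-free DG construction produces a DG category (strictly associative by design) with cofibrant hom-complexes and a surjective strict quasi-equivalence to $\A$. The remaining cases — non-unital and cohomologically unital $\Ain$-categories — are handled by the following direct hom-wise construction: fix a functorial cofibrant replacement $Q\twoheadrightarrow\Id$ in $\Ch$ with its projective model structure, and set $\Hom_{\A^{\tiny\mbox{cm}}}(x,y):=Q(\Hom_{\A}(x,y))$ with $(\Psi_{\A})_{x,y}:=p_{x,y}$ the tautological surjective quasi-isomorphism. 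This already makes $\A^{\tiny\mbox{cm}}$ a DG quiver with cofibrant hom-complexes, sharing the object set of $\A$ and admitting a map $\Psi_{\A}$ which is identity on objects and surjective on morphisms.

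It remains to transfer the $\Ain$-structure. Each tensor power $p^{\otimes n}$ is an acyclic fibration between cofibrants, so the lifting property in $\Ch$ yields chain maps $m_n^{\tiny\mbox{cm}}$ lifting $m_n^{\A}\circ p^{\otimes n}$ along $p$. A standard transfer-of-structure argument inductively refines these lifts so that the Stasheff relations hold on the nose: at stage $n$ the failure of the relation is a cycle which maps to $0$ under $p$, hence lies in the acyclic complex $\ker p$, hence is a boundary that can be absorbed into $m_n^{\tiny\mbox{cm}}$ within $\ker p$ without breaking the lifting. The resulting $\Psi_{\A}$ is then strict, surjective on morphisms, identity on objects, and a quasi-isomorphism on each hom-complex, hence a strict quasi-equivalence. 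For functoriality in $\aCat_{\tiny\mbox{strict}}^{\star}$ and commutativity of \ref{cofmor}, the same argument applied relatively to a strict $\sF\colon\A\to\B$ produces $\sF^{\tiny\mbox{cm}}$: naturality of $Q$ gives the underlying chain maps, and an obstruction argument applied to the pair $(m_n^{\A},m_n^{\B})$ arranges strictness with respect to the transferred operations. The star conditions require small tweaks: non-unital is automatic; cohomologically unital is preserved since $\Psi_{\A}$ is a quasi-iso on each hom; strictly unital requires $Q(\Hom_{\A}(x,x))$ to contain a distinguished summand $R\cdot\tilde 1_x$ with $p(\tilde 1_x)=1_x$, so that $\tilde 1_x$ becomes a strict unit in $\A^{\tiny\mbox{cm}}$ — possible because the unit equations lift uniquely along $p$ and are compatible with the strict Stasheff relations by the same obstruction argument restricted to a free summand.

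The main obstacle is the inductive transfer step of paragraph two — both absolute and relative — of ensuring that the lifts $m_n^{\tiny\mbox{cm}}$ and the maps $\sF^{\tiny\mbox{cm}}$ satisfy the relevant coherence equations exactly rather than merely up to homotopy. This is a classical transfer-of-structure argument whose success depends on $p$ being an acyclic fibration, so that $\ker p$ is acyclic and every tensor power $p^{\otimes n}$ remains an acyclic fibration between cofibrants. The DG case is delegated to Drinfeld's construction precisely because this naive transfer would in general produce nonzero $m_n^{\tiny\mbox{cm}}$ for $n\geq 3$, losing strict associativity; Drinfeld's semi-free construction sidesteps the issue by freely generating composition on chosen generators, which is strictly associative by construction.
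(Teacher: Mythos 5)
Your hom-wise construction for the existence part is a genuinely different route from the paper's: the paper does not take a cofibrant replacement of each hom-complex and transfer the structure; it builds $\A^{\tiny\mbox{cm}}$ as a colimit of an increasing sequence of quotients of free $\Ain$categories $\TT(\Q_n)/I_n$ on explicit generating quivers (the unit-free analogue of the semi-free resolution of Theorem \ref{semifree}), and proves cofibrancy of the hom-complexes by the tree filtration of Theorem \ref{h-proj}. Your absolute transfer argument is essentially sound, modulo two imprecisions worth fixing: for $n\ge 3$ the map $m^n_{\A}\circ p^{\otimes n}$ is not a chain map, so the model-categorical lifting property does not apply directly --- you need degreewise surjectivity of $p$ and degreewise projectivity of the cofibrant replacement to get a graded lift, and then the correction term must be a boundary in $\Hom(M^{\otimes n},\ker p)$ (acyclic because $M^{\otimes n}$ is cofibrant, hence h-projective, and $\ker p$ is acyclic), not merely a boundary in $\ker p$. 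Also, the case $\star=$ unital (in Lyubashenko's sense) is silently skipped; the paper obtains it from cohomological unitality together with h-projectivity of $\A^{\tiny\mbox{cm}}$ via \cite[Lemma 3.13]{COS2}, whereas your list jumps from cohomological unital to strictly unital, which is not even among the cases of the statement.

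The genuine gap is functoriality, which is part of the statement. Your construction makes non-canonical choices at every inductive stage (the graded lifts $\mu_n$ and the homotopies $h_n$), so even with a functorial cofibrant replacement $Q$ the induced quiver map $Q(\sF^1)$ will not intertwine the transferred operations strictly, and the proposed fix --- ``an obstruction argument applied to the pair $(m_n^{\A},m_n^{\B})$ arranges strictness'' --- does not work as stated: the required identities $\sF^{\tiny\mbox{cm},1}\circ m^n_{\A^{\tiny\mbox{cm}}}=m^n_{\B^{\tiny\mbox{cm}}}\circ(\sF^{\tiny\mbox{cm},1})^{\otimes n}$ are nonlinear in the correction of $\sF^{\tiny\mbox{cm},1}$ (modifying it changes both sides), so this is not a cycle-is-a-boundary problem, and even if each single strict $\sF$ admitted some strict lift, nothing in your argument makes these lifts compatible with composition and identities, i.e.\ produces an honest functor $(\mbox{-})^{\tiny\mbox{cm}}$ on $\aCat^{\star}_{\tiny\mbox{strict}}$ with the square commuting on the nose. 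The paper gets this for free from the shape of its construction: taking the full generating sets $\tilde S_n=S_n$ makes every stage canonical (free categories on canonical generators, quotients by canonical relations), so a strict $\Ain$functor induces a unique strict functor on the resolutions and the diagram of Theorem B$'$ commutes (Remark \ref{functororne}); similarly for the DG case the paper replaces $\TT$ by $\TT_{\tiny\mbox{DG}}$ (Remark \ref{gorto}) rather than simply citing Drinfeld, which also covers the non unital DG case and functoriality. To complete your approach you would need either a genuinely natural transfer (which the non-existence of a natural section of $p$ obstructs) or to switch to a canonical, generators-and-relations style construction --- which is precisely what the paper does.
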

 
As before, considering the category $\aCat^{\star}/\approx$, we have a functor
\begin{align*}
(\mbox{-})^{\tiny\mbox{cm}}_{\approx}:\aCat^{\star}/\approx &\to \aCat^{\star}_{\tiny\mbox{cm}}/\approx\\
\A&\mapsto \A^{\tiny\mbox{cm}}.
\end{align*}
%It is important to note that the $\aCat$ in the image of $(\mbox{-})^{\tiny\mbox{cm}}_{\approx}$ have a lifting property 

%which are the left inverses of inclusions.
 
\subsection*{Related work and further applications:}

This article could be considered as a side/complementary project of \cite{COS2} by the author joint with Canonaco and Stellari.\ 
The aim of that paper was twofold.\ First: we proved that the homotopy categories of $\DgCat$ and $\aCat$ 
(with any possible notion of unit) are equivalent, Second the description of the internal homs of $\Ho(\DgCat)$ in terms of $\Ain$functors.\ 
There was a technical point to prove these results, namely \cite[Proposition 4.10]{COS2}, i.e. the isomorphism of the sets 
$$\Ho(\DgCat)(\A,\B)\simeq\aCat^{\tiny\mbox{u}}/\approx(\A,\B),$$
where $\A$ is a h-projective DG category.\ 
In order to do that we used the model structure on $\DgCat$ (see \cite{Tab1}).\ 
In particular the existence of a cofibrant replacement of DGCat and the existence of the path object (see \cite[Lemma 4.3-4.8-4.9]{COS2}).\
This current paper started from the very natural question if one could prove the same isomorphism avoiding the model structure of $\DgCat$.\ 
The answer is not only positive but our Theorem A is a genuine improvement, see also Lemma \ref{rorororoor}.\\
We give now a few of applications of our results: \\
\begin{itemize}
\item[1.] Our construction $\A^{\tiny\mbox{cm}}$ is used in \cite[\S 3.3]{COS2} to prove that the category of (homotopy theory of) 
cohomological unital $\Ain$categories is equivalent to the (homotopy theory of) unital ones.
%(cf. \cite[]{COS2})\cite{COS2} the passage from cohomological unital to unital.\\
\item[2.] In \cite[\S 1.2]{KS3} Kontsevich and Soibelman described the Hochschild Cohomology of an $\Ain$cat $\A$ (over a field) 
as
\begin{align*}
\mathbb{HC}^n(\A):=H^n\big( \mbox{Ext}(\Id_{\A},\Id_{\A}) \big)
\end{align*}
So, over a commutative ring, we can describe the Hochschild Cohomology of $\A$ as:
\begin{align*}
\mathbb{HC}^n(\A):=H^n\big( \Fun(\A^{\tiny\mbox{sf}},\A)(\Psi_{\A},\Psi_{\A}) \big).
\end{align*}
Note that this is not an immediate consequence, we need to prove that $\Ho(\aCat)$ has a closed symmetric monoidal structure, 
this will be done in \cite{Orn2}.\
\item[3.]  The construction of Theorem \ref{cofmor} can be extended to $\Ain$precategories.\ 
Together with Canonaco and Stellari, we will use these resolutions to approach Kontsevich-Soibelman Conjectures 4 and 5 \cite[Conjectures 4 and 5]{KS4}.
\end{itemize}

We conclude by saying that in this paper we do not treat the various $\infty$-version on this topic.\ 
Namely, one can consider as \emph{homotopy theory} of $\aCat$ the simplicial localization (see \cite{DK}) of $\aCat$ instead of the usual localization.\ 
The two localizations are related as follows: denoting by $L(\aCat)$ the simplicial localization, we have
$$\pi_0(L(\aCat))\simeq\Ho(\aCat).$$
Roughly speaking $L(\aCat)$ is a simplicial enrichment of $\Ho(\aCat)$.\ 
It means that, fixed two $\Ain$categories $\A$ and $\B$, $L(\aCat)(\A,\B)$ is a simplicial set.\\
In the case of DG categories, it makes sense to consider $L(\DgCat)$ instead of $\Ho(\DgCat)$ (see \cite[pp. 617]{Toe}).\
This is because fixed two DG categories $\A$ and $\B$, it is difficult to say what is $\DgCat(\A,\B)$ (see section \ref{lonzolenzo}).\
On the other hand, the category $\aCat$ has a natural enrichment on $\aCat$, 
namely 
$$\aCat(\A,\B)$$
is an $\Ain$category, fixed two $\Ain$categories $\A$ and $\B$ and so $\aCat(\A,\B)/\approx$.\
It relies on the globular nature of $\aCat$ (see \cite{Lyu}).

\subsection*{Notation} 

Throughout the paper $R$ denotes a commutative ring.\ We assume that all our categories are small, in an appropriate universe $\mathbb{U}$.\
We denote by $\Hom_{\C}(x,y)$ the \emph{hom-space between $x$ and $y$}.\ 
Sometimes we simply denote it by $\Hom(x,y)$, omitting the category $\C$, if it is clear from the context.\ 
If $\C$ is the category of (DG, $\Ain$, etc) categories we denote the hom-space by $\C(X,Y)$.\
Moreover, if $\C$ is a 2-category, we denote by $F\Rightarrow G$ a 2-arrow between two 1-arrows $F$ and $G$.\ % and, as usual, $:x\to y$ a 1-modphism.
Given a graded complex $M$, and an integer $n$, we denote by $M[n]$ the graded complex shifted by $n$.\
Given two DG $R$-modules $M$ and $N$, $\mbox{Hom}(M,N)$ denotes the DG module of morphisms (in the DG category of DG $R$-modules).\ 
In particular, $h^N$ denotes the contravariant hom-functor, i.e. $h^N(M):=\mbox{Hom}(M,N)$.

\subsection*{Acknowledgements}

I am very grateful to Francesco Genovese for all the stimulating discussions about DG categories and higher categories.\ 
I thank Alberto Canonaco and Paolo Stellari for all the valuable conversations I benefitted during the writing of this paper.\ 
I want to express my gratitude to Volodymyr Lyubashenko and Paul Seidel for the useful clarifications about their results.\
%Finally, I thank Francesco de Vecchi for the aesthetic advice regarding graph theory and trees.
%Moreover Francesco de Vecchi for the useful suggestions about the trees and .

\newpage

\section{Background}

\subsection{Glossary of Graph Theory}\label{gloglossary}

We assume that the reader is familiar with the concepts of \emph{Planar Rooted Trees} and the operation of \emph{grafting}.\ A precise treatment of these notions can be found in \cite[Appendix C.2.]{LV}.\
Roughly speaking a \emph{planar rooted tree} is a graph such that each vertex has $n$-inputs and  $1$-output.\
The upper inputs are called \emph{leaves} and the lower output is called \emph{root}, the intermediate vertices are called \emph{nodes}.\ Note that the root counts as a \emph{trivial node}.\ See the picture: 
\begin{figure}[htbp]
\centering
\includegraphics[width=1\textwidth, height=.12\textheight, keepaspectratio]{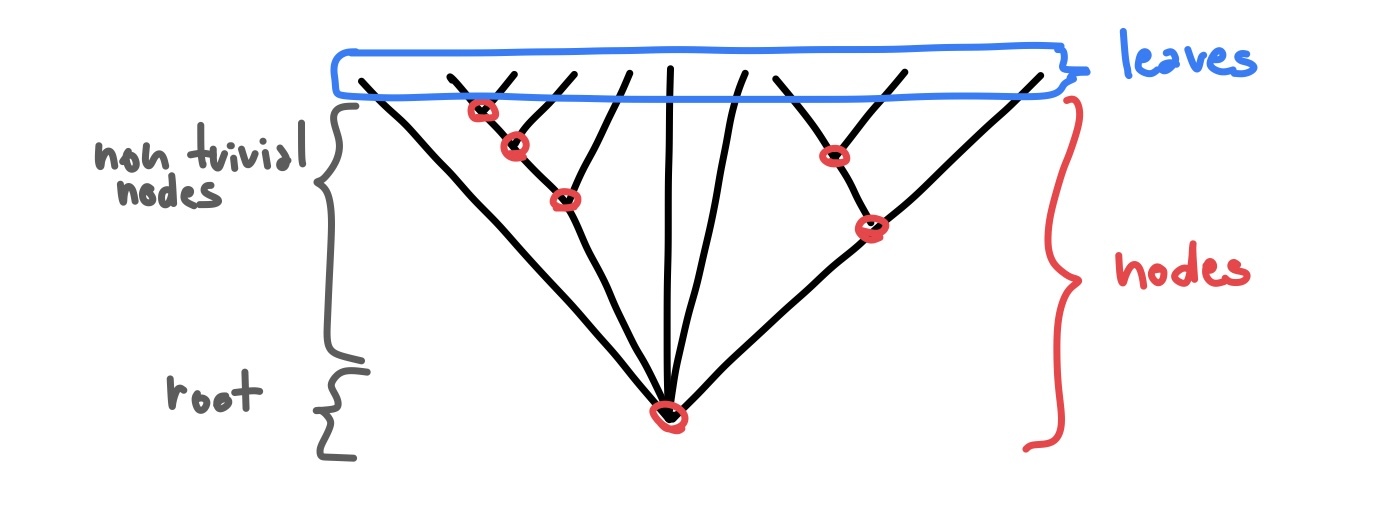}
\end{figure}
\\
Let us draw some examples of planar rooted trees:
\begin{figure}[htbp]
\centering
\includegraphics[width=.6\textwidth, height=.3\textheight, keepaspectratio]{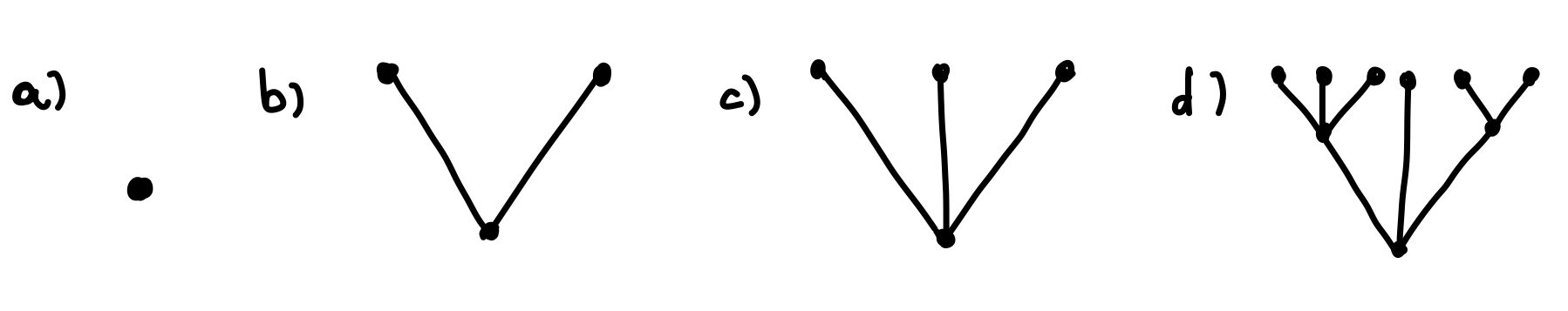}
%\caption{This is an image}
%\label{fig}
\end{figure}
\\
%\begin{figure}[htbp]
%\centerline{\includegraphics[width=.6\textwidth, height=.3\textheight, keepaspectratio]{trreeeez.jpg}}
%\end{figure}\\
The planar rooted trees a), b) and c) have respectively one, two and three leaves and one trivial note (the root).\
d) is the planar rooted tree with six leaves and three nodes (two non trivial).\
The planar rooted trees with $n$-vertices and no (non trivial) nodes are denoted by $\mathfrak{T}_n$.\
So a) is $\mathfrak{T}_1$, b) is $\mathfrak{T}_2$ and c) is $\mathfrak{T}_3$.\ %on the other hand $d$ has two nodes.\ 
On the other hand, we denote by $PT_n$ the set of planar rooted trees with $n$-inputs.\ So, $d\in PT_6$ and cleary $\mathfrak{T}_n\in PT_n$.\ 
More precisely we denote by $PT^l_n$ the set of set of planar rooted trees with $n$-inputs and $(n-l)$-nodes.\ So a) $\in PT^0_1$, b) $\in PT^1_2$, c) $\in PT^2_3$ and d) $\in PT^{3}_6$.\ We have: 
\begin{align*}
PT_n=\displaystyle\bigcup^{n-1}_{l=1} PT^l_n.
\end{align*}

Now we introduce the \emph{grafting operation}:\
We consider $n$ planar rooted trees $\mathfrak{t}_j\in PT_{m_j}$ where $n$ is an integer $n>1$ .\
The \emph{grafting} of $\mathfrak{t}_j$ is the tree $\mathfrak{t}_1\vee ...\vee \mathfrak{t}_n \in PT_{m_1+...+m_n}$ obtained by joining the roots of the $\mathfrak{t}_j$'s to the leaves of the tree $\mathfrak{T}_n$.\ See the following picture:
\begin{figure}[htbp]
\centerline{\includegraphics[width=.6\textwidth, height=.3\textheight, keepaspectratio]{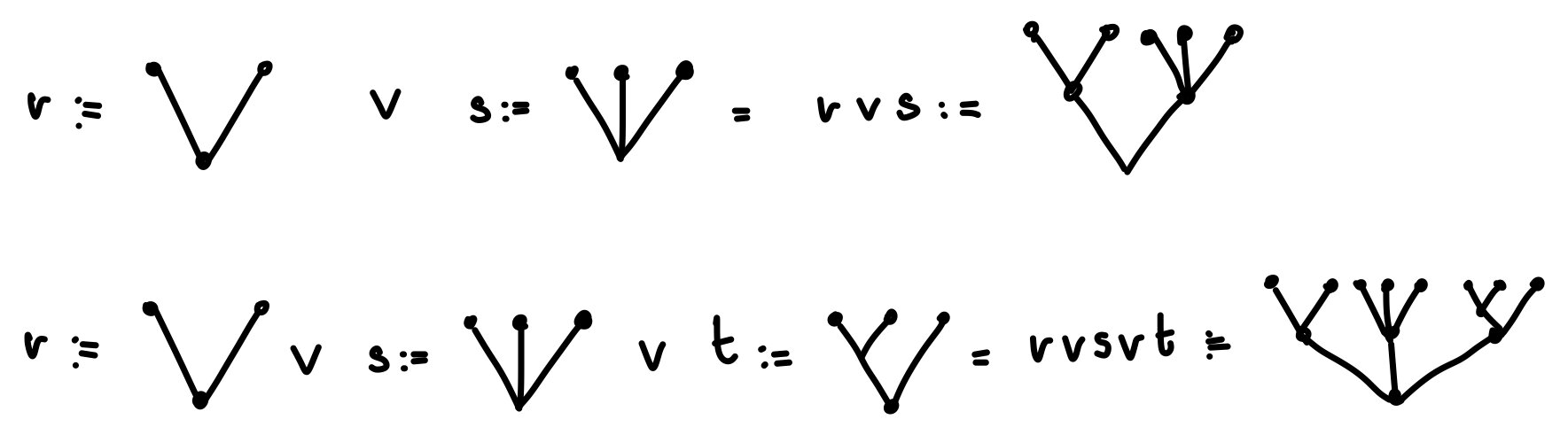}}
%\caption{This is an image}
\label{fig}
\end{figure}
\\
We note that every planar rooted tree is obtained by grafting $\mathfrak{T}_1$.\\
It is clear that $\vee$ is not associative.\ For example $(\mathfrak{t}_1\vee\mathfrak{t}_2)\vee\mathfrak{t}_3\not=\mathfrak{t}_1\vee\mathfrak{t}_2\vee\mathfrak{t}_3\not=\mathfrak{t}_1\vee(\mathfrak{t}_2\vee\mathfrak{t}_3)$.\\

From now on, we refer to the \emph{planar rooted trees} simply as \emph{trees}.

\subsection{$\Ain$categories and DG quivers}
In this subsection we give the definitions of DG quivers, DG categories, non unital $\Ain$categories and their corresponding notions of functors.\ 
In particular it will be useful the notion of underlying DG quiver of an $\Ain$category.\
We suppose that the readers are familiar with (graded $R$-linear) quivers (if not, the Definition can be found in \cite[Definition 1.11]{COS1}).
\begin{defn}[$\Ain$category]\label{acat}
A \emph{non unital $\Ain$category} $\A$ is a graded $R$-linear quiver equipped with $R$-linear morphisms
\begin{align*}
m_{\A}^n:\Hom_{\A}(x_{n-1},x_n)\otimes ...\otimes\Hom_{\A}(x_0,x_1)\to&\Hom_{\A}(x_0,x_n)[2-n],
\end{align*}
for every positive integer $n$, and sequence $x_0,...,x_n$ of $n+1$-objects, such that:
\begin{align}\label{grango}
\displaystyle\sum^n_{m=1}\sum^{n-m}_{d=0}(-1)^{\dagger_d} m_{\A}^{n-m+1}\big(f_n,...,f_{d+m+1},m_{\A}^{m}(f_{d+m},...,f_{d+1}),f_d,...,f_1\big)&=0,
\end{align}
where $\dagger_d=\dego(f_d)+...+\dego(f_1)-d$.
\end{defn}
We can denote by $\big(\A, (m_{\A}^1,m_{\A}^2,...)\big)$ the graded $R$-linear quiver $\A$ and its $\Ain$structure.
\begin{defn}[DG quiver]\label{dgqui}
A \emph{DG quiver} $\Q$ is a graded $R$-linear quiver equipped with a $R$-linear morphism 
\begin{align*}
d_{\Q}:\Hom(x,y)&\to\Hom(x,y)[1]
\end{align*}
for every two objects $x$ and $y$, such that
\begin{align}
d_{\Q}\big(d_{\Q}(f)\big)=0.
\end{align}
Shortly a DG quiver is a non unital $\Ain$category such that $m_{\Q}^1=d_{\Q}$ and $m_{\Q}^{n\ge2}=0$.
\end{defn}
As before we can denote a DG quiver $\Q$ by $\big(\Q, d_{\Q}\big)$.\\
\\
Given a non unital $\Ain$category $\A$ we can associate a DG quiver $|\A|$ as follows:
\begin{align}\label{forgm}
\A\to & |\A|\\
\big(\A, (m_{\A}^1,m_{\A}^2,...)\big) \mapsto& \big(\A,m_{\A}^1\big).
\end{align}
Since $\A$ is a non unital $\Ain$category then $m_{\A}^1(m_{\A}^1(f))=0$, this make $\big(\A,m_{\A}^1\big)$ a DG quiver.\\ 
We call $|\A|$ the \emph{underlying} DG quiver of $\A$.

\begin{defn}[Subquiver]\label{subsquinzi}
Let $\Q$ be a DG quiver.\ A DG subquiver $\Q'$ of $\Q$ is a DG quiver such that 
\begin{itemize}
\item[1.] $\mbox{Ob}(\Q')=\mbox{Ob}(\Q)$.
\item[2.] Given two objects $x,y\in\Q'$ we have an inclusion of graded $R$-modules: 
$$\Hom_{\Q'}(x,y)\subset\Hom_{\Q}(x,y).$$
\item[3.] Given two objects $x,y\in\Q'$ and a morphism $f\in\Hom_{\Q'}(x,y)$ we have:
$$d_{\Q'}(f)=d_{\Q}(f).$$
\end{itemize}
\end{defn}

\begin{defn}\label{sumqui}
Given two DG quivers $\Q_1$, $\Q_2$ with the same objects we can define the DG quiver $\Q_1+\Q_2$ as follows:
\begin{itemize}
\item[1.] The objects of $\Q_1+\Q_2$ are the same objects of $\Q_1$ (or $\Q_2$).
\item[2.] Fixed two objects $x,y\in\Q_1+\Q_2$, the hom spaces is given by:
\begin{align*}
\Hom_{\Q_1+\Q_2}(x,y)&:=\Hom_{\Q_1}(x,y)\oplus\Hom_{\Q_2}(x,y).
\end{align*}
\end{itemize}
\end{defn}

\begin{defn}[DG category]\label{dgcat}
A \emph{DG category} $\C$ is a graded $R$-linear category equipped with a $R$-linear morphism 
\begin{align*}
d_{\C}:\Hom(x,y)&\to\Hom(x,y)[1]
\end{align*}
for every two objects $x$ and $y$, such that
\begin{align}
d_{\C}\big(d_{\C}(f)\big)=0.
\end{align}
Moreover $d_{\C}$ satisfies the Leibniz rule, i.e. 
\begin{align}
d_{\C}(g\cdot f)&=g\cdot d_{\C}(f) + (-1)^{\tiny\dego(g)} d_{\C}(g)\cdot f,
\end{align}
for every $f\in\Hom(x,y)$, $g\in\Hom(y,z)$ and triple of objects $x,y,z\in\mathcal{C}$.
\end{defn}

We can see a DG category $\C$ as a non unital $\Ain$category such that $m_{\C}^1=d_{\C}$, $m_{\C}^2(g,f)=g\cdot f$ and $m_{\C}^{n\ge 3}=0$.

\begin{rem}\label{dgnu}
By definition, every object of a DG category has a unit which is a closed degree zero morphism satisfying Definition \ref{su}.\ 
In subsection \ref{unit} we will define different notions of unit for $\Ain$categories.\
On the other hand we can define a \emph{non unital DG category} to be a non unital $\Ain$category such that $m^{n\ge 3}=0$.
\end{rem}

\begin{rem}
If $\A$ is an $\Ain$category (resp. DG category) with one object it is called $\Ain$algebras (resp. DG algebras).
\end{rem}

\subsection*{$\Ain$functors and forgetful functor}

Let $\A$ and $\B$ be two non unital $\Ain$categories.
\begin{defn}[$\Ain$functors]\label{af}
A \emph{non unital $\Ain$functor} between $\A$ and $\B$ is given by the following data:
\begin{itemize}
\item[1.] A map of sets $\F^0:\mbox{Ob}(\A)\to\mbox{Ob}(\B)$.
\item[2.] A collection of $R$-linear morphisms 
\begin{align*}
\F^n:\Hom_{\A}(x_{n-1},x_n)\otimes...\otimes\Hom_{\A}(x_0,x_1)\to&\Hom_{\A}(x_0,x_n)[1-n],
\end{align*}
for every positive integer $n\ge 1$ and sequence of objects $x_0,...,x_n\in\A$, such that 
\begin{align}
\displaystyle\sum_{r\ge1} &\displaystyle\sum_{s_1+...+s_r=n}m^{r}_{\B}\big(\F^{s_r}(f_n,...,f_{n-s_r+1}),...,\F^{s_1}(f_{s_1},...,f_1)\big)=\\
&=\displaystyle\sum_{m=1}^n\displaystyle\sum_{k=0}^{n-m}(-1)^{\dagger_k} \F^{n-m+1}\big(f_n,...,f_{k+m+1},m^{m}_{\A}(f_{k+m},...,f_{k+1}),f_k,...,f_1\big).
\end{align}
Where $\dagger_k$ is the same of Definition \ref{acat}.
\end{itemize}
\end{defn}

\begin{defn}[Strict functor]\label{astricts}
A \emph{strict non unital $\Ain$functor} $\F$ between $\Ain$categories is a non unital $\Ain$functor such that 
\begin{align*}
\F^{n}(f_n,...,f_1)&=0 
\end{align*}
for every positive integer $n\ge 2$ and every sequence of morphisms $f_n,...,f_1\in\A$.
\end{defn}

\begin{rem}\label{SFE}
If $\F:\A\to\B$ is a strict non unital $\Ain$functor then 
\begin{align*}
m^n_{\B}\big(\F^{1}(f_n),...,\F^{1}(f_1)\big)&=\F^{1}\big( m^n_{\A}(f_n,...,f_1) \big).
\end{align*}
\end{rem}

We recall from \cite[Definition 1.2.4]{Orn1}, \cite[(1.7)]{Sei}.
\begin{defn}[Composition of (non unital) $\Ain$functors]\label{composizioncella}
Let $\F:\A\to\B$ and $\G:\B\to\C$ two (non unital) $\Ain$functors we have a (non unital) $\Ain$functor $\G\cdot\F:\A\to\C$ defined as follows:
\begin{itemize}
\item[1.] $(\G\cdot\F)^0:=(\G)^0\cdot(\F)^0$ (composition of maps of sets).
\item[2.] For every $n>0$, and every sequence of morphisms $f_n,...,f_1\in\A$ we have:
\begin{align}
(\G\cdot\F)^n(f_n,...,f_1):=\displaystyle\sum_{r\ge1} &\displaystyle\sum_{s_1+...+s_r=n}
\G^{r}\big(\F^{s_r}(f_n,...,f_{n-s_r+1}),...,\F^{s_1}(f_{s_1},...,f_1)\big).
\end{align}
\end{itemize} 
\end{defn}
It is not difficult to prove that if $\F$ and $\G$ are strict (resp. cohomological unital, unital, strictly unital, augmented strictly unital in sense of subsection \ref{unit}) then even $\G\cdot\F$ is so.

\begin{defn}[DG functors]\label{dgf}
A \emph{DG functor} is a strict $\Ain$functor between two DG categories preserving the units.\ Namely, denoting by $1_x$ the unit of $x$ we have $\F^1(1_x)=1_{\F^0(x)}$ for every object $x\in\A$. 
\end{defn}

\begin{defn}[Functor between DG quivers]\label{dgqf}
A functor between two DG quivers $\Q_1$ and $\Q_2$ is given by the following data:
\begin{itemize}
\item[1.] A map of sets $\sF^0:\mbox{Ob}(\Q_1)\to\mbox{Ob}(\Q_2)$.
\item[2.] A map of graded $R$-modules $\sF^1:\Hom_{\Q_1}(x,y)\to\Hom_{\Q_2}\big(\sF^0(x),\sF^0(y)\big)$ such that:
\begin{align*}
d_{\Q_1}(\sF^1(f))&=\sF^1(d_{\Q_2}(f)).
\end{align*}
\end{itemize}
\end{defn}
In analogy to the case of categories we can denote an $\Ain$functor by
\begin{align*}
\F:=\big(\F^0,\F^1,...,\F^n,...\big)
\end{align*}
Given an $\Ain$functor $\F$, we can consider $\F^1:=\big(\F^0,\F^1,0,0,...\big)$.\\
We point out that $\F^1$ is not an $\Ain$functor but it is a functor between the underlying DG quivers.\\ 
\\
So we can extend the map $|\mbox{-}|$, defined in (\ref{forgm}), to a forgetful functor:
\begin{align}\label{foffofo}
\aCat \to& \DgQui \\
\A\mapsto&|\A |\\
\big(\F^0,\F^1,...,\F^n,...\big)\mapsto& \F^1.
\end{align}

We conclude this subsection with a definition.

\begin{defn}\label{equiv}
We say that two non unital $\Ain$categories $\A$ and $\B$ are \emph{equivalent} if there exist two non unital $\Ain$functors $\F:\A\to\B$ and $\G:\B\to\A$ such that $\F\cdot\G=\Id_{\B}$ and $\G\cdot\F=\Id_{\A}$.\ We say that $\F$ and $\G$ are \emph{equivalences}.
\end{defn}

In Definition \ref{equiv}, we denote by $\Id$ the (strict) $\Ain$functor given by:
$$\Id_{\A}=(\Id_{\tiny\mbox{Ob}(\A)},\Id_{\tiny\Hom_{\A}},0,...).$$

\subsection{The cohomology category and quasi-equivalences}\label{queequi}
Let $\A$ be a non unital $\Ain$category.\
It is clear that (non unital) $\Ain$categories are not even close to the concept of categories (or enriched categories) in the usual sense.\
But we can associate to $\A$ the \emph{cohomology category} $H(\A)$ which is a graded $R$-linear non unital category.\
The category $H(\A)$ is defined as follows:
\begin{itemize}
\item[1.] $H(\A)$ has the same objects of $\A$. 
\item[2.] The hom-spaces are the cohomology classes 
$$\Hom_{H(\A)}(x,y):=\bigoplus_{n\in\mathbb{Z}}H^n\big(\Hom_{\A}(x,y)\big),$$
\item[3.] The product is 
\begin{align}\label{ciciop}
g\cdot f&:=[m_2^{\A}(g,f)].
\end{align}
for every $f,g\in\A$.
\end{itemize}
A non unital $\Ain$functor $\F:\A\to\B$ is a \emph{quasi-equivalence} if it induces an equivalence of non unital graded categories 
$$[\F]:H(\A)\to H(\B).$$
This is equivalent to say that 
\begin{itemize}
\item[1.] For every $x,y\in\A$, the morphism of chain complexes $$\F^1:\Hom_{\A}(x,y)\to \Hom_{\B}(\F^0(x),\F^0(y))$$
is a quasi-isomorphism of chain complexes.
\item[2.] $H^0(\F)$ is essentially surjective.
\end{itemize}
The same definition holds %to define the notion of \emph{quasi-equivalence} 
in the framework of DG categories.

\begin{rem}
An equivalence in the sense of Definition \ref{equiv} is a quasi-equivalence (clearly the opposite implication is false).\ 
A quasi-equivalence between $\Ain$algebras is called \emph{quasi-isomorphism}.\ In this case condition 2. above is superfluous.\
\end{rem}

\subsection{The notion of unit in $\Ain$categories and functors}\label{unit}
The definition of $\Ain$category does not include a natural notion of unit.\ 
But in most of the examples of $\Ain$categories (especially the ones coming from Homological Mirror Symmetry) we have a morphism which can be considered as a unit (at least in cohomology).\
In a few words, the interesting $\Ain$categories are the ones whose cohomology category is unital.\\
In this section we recall some notions of unit in this framework.\ We are particularly interested in the strictly unital $\Ain$categories.\

\begin{defn}[Strictly unital $\Ain$category]\label{su}
An $\Ain$category $\A$ is \emph{strictly unital} if, given an object $x\in\A$, there is a closed (degree zero) morphism $1_x:x\to x$ such that:
\begin{itemize}
\item[1.] $m^2_{\A}(1_x,f)=m^2_{\A}(f,1_x)=f$, for every $f\in\A$.
\item[2.] $m^n_{\A}(f_n,...,1_x,...,f_1)=0$, for every $f_i\in\A$ and $n>2$. 
\end{itemize}
\end{defn}
\begin{defn}[Nice unit]\label{snu}
We say that a strictly unital $\Ain$category $\A$ has a \emph{nice} unit if, for every $x\in\A$ the unit $R$ is such that $R\cdot 1_x \cong R$ and the following short exact sequence 
\begin{align}
0\to R\cdot 1_x \to \Hom_{\A}(x,x) \to \Hom_{\A}(x,x)/ R\cdot 1_x\to 0
\end{align}
splits in the category of graded $R$-modules.
\end{defn}
Given two strictly unital $\Ain$categories $\A$ and $\B$.
\begin{defn}[Strictly unital $\Ain$functor]\label{suF}
An $\Ain$functor $\F:\A\to\B$ is \emph{strictly unital} if
\begin{itemize}
\item[1.] $\F^1(1_x)=1_{\F^0(x)}$, for every $x\in\A$.
\item[2.] $\F^n(f_n,...,1_x,...,f_1)=0$, for every $f_i\in\A$ and $n>1$. 
\end{itemize}
\end{defn}

\begin{defn}[Discrete category and quiver]\label{disco}
Let $\A$ be a DG category.\\
The \emph{discrete category} of $\A$, denoted by $\mbox{disc}(\A)$, has the same objects of $\A$ and the morphisms are defined as follows
\begin{align}
\Hom_{\tiny\mbox{disc}(\A)}(x,y)=
&\begin{cases}
   0        & \text{if } x \not=y \\
   1_x        & \text{if } x=y.
  \end{cases}
\end{align}
The differential is zero and the composition is the one making $\mbox{disc}(\A)$ a DG category.\\
We denote by $\mathsf{I}_{\A}$ the DG quiver $|\mbox{disc}(\A)|$.\ We call $\mathsf{I}_{\A}$ the \emph{discrete} DG quiver of $\A$.
\end{defn}

Clearly Definition \ref{disco} makes sense even if $\A$ is a DG quiver or an $\Ain$category.

\begin{defn}[Augmented unital $\Ain$category]\label{sAu}
A strictly unital $\Ain$category $\A$ is \emph{augmented} if it has a strict and strictly unital $\Ain$functor 
$\epsilon_{\A}:\A\to\mbox{disc}(\A)$ which is the identity on objects. 
\end{defn}
Given a non unital $\Ain$category $\A$ we can define the augmented strictly unital $\Ain$category $\A_{+}$, which has the same objects of $\A$.\ The augmented $\Ain$categories, together with the strictly unital $\Ain$functors preserving $\epsilon$, form a category denoted by $\aCat^{\tiny\mbox{a}}$.

On the other hand, given an augmented strictly unital $\Ain$category $\A$, we can take its reduction $\overline{\A}$.\ Such a category is a non unital $\Ain$category with the same objects of $\A$ and the hom-spaces $\overline{\A}(x,y)$ defined to be the Kernel (in the category of DG modules) of $\epsilon_{\A}:\A(x,y)\to \mbox{disc}(\A)(x,y)$.\\ 
Augmentation and reduction give rise to two functors 
$$\mbox{(-)}_{+}:\aCat^{\tiny\mbox{nu}}\to \aCat^{\tiny\mbox{a}}$$ 
and 
$$\overline{\mbox{(-)}}:\aCat^{\tiny\mbox{a}}\to \aCat^{\tiny\mbox{nu}},$$
which are quasi-inverse equivalences of categories (cf [COS; pp. 7]).\ 
Here $\aCat^{\tiny\mbox{nu}}$ denotes the category of non unital $\Ain$categories together with non unital $\Ain$functors (see Section \ref{hcat}).

\begin{defn}[Cohomological unital $\Ain$categories and functors]\label{uniff}
An $\Ain$category $\mathscr{A}$ is \emph{cohomological unital} if its cohomology category $H^0(\mathscr{A})$ is a category (i.e. it is unital).\ An $\Ain$functor between two cohomological unital $\Ain$categories is said \emph{cohomological} if it induces a (unital) functor between their cohomology categories.
\end{defn}

Cohomological unital (resp. strictly unital)  $\Ain$categories together with cohomological unital (resp. strictly unital) $\Ain$functors form a category.

\begin{rem}
By condition 2. of Definition \ref{su} we have that a strict unit $1_x$ (if it exists) is unique.\ On the other hand a cohomological unit $1_x$ it is unique up to $m^1$.
\end{rem}

The next definition is due to Lyubashenko \cite[Definition 7.3.]{Lyu}.

\begin{defn}[Unital $\Ain$categories]\label{LMuni}
A cohomological unital {$\Ain$}category $\A$ is \emph{unital} if, for every object $x\in\A$, there exists $1_x$, a representative of the cohomology unit of $x$, such that $m^2_{\A}(1_x,\mbox{-})$ and $m^2_{\A}(\mbox{-},1_x)$ induce two morphisms of chain complexes invertible in homotopy.
\end{defn}
In other words, for every object $y\in\A$ there exists a morphism of graded $R$-complexes: 
{$$\mathcal{H}:\Hom_{\A}(x,y)\to\Hom_{\A}(x,y)[-1]$$} 
such that 
{$$\Id_{\tiny\Hom_{\A}(x,y)}-m^2_{\A}(-,1_x)=m^1_{\A}\cdot\mathcal{H}+\mathcal{H}\cdot m^1_{\A},$$}
and a morphism of graded $R$-complexes {$\mathcal{H'}:\HomA(y,x)\to\HomA(y,x)[-1]$} such that 
{$$\Id_{\tiny\Hom_{\A}(y,x)}-m^2_{\A}(1_x,-)=m^1_{\A}\cdot\mathcal{H}'+\mathcal{H}'\cdot m^1_{\A}.$$}

We have the following inclusions of categories:
\[
\mbox{strictly unital $\Ain$cats}\subset\mbox{unital $\Ain$cats}\subset \mbox{cohomological unital $\Ain$cats}.
\]
Here the category of unital $\Ain$category is a full subcategory of the cohomological unital $\Ain$cats.\ See \cite[pp 23]{Sei} and \cite[Definition 8.1. and Proposition 8.2.]{Lyu}.\ %We conclude this section saying that there is also a notion of \emph{weak unit}, which is due to Kontsevich and Soibelman [KS3, \S 4.2].\ These notions are equivalent (see [LM3]).\ In the upcoming paper [COS2], we fully investigate the relations between the categories of $\Ain$-categories with these different notions of unit.

\begin{rem}
We have other notions of unit in the framework of $\Ain$categories.\ We mention the \emph{weakly unital} $\Ain$categories due to Kontsevich and Soibelman \cite[Definition 4.2.3]{KS3} and homotopy unital $\Ain$categories (\cite[\S (2a)]{Sei}).\ According to \cite[3.6 Proposition-3.7 Theorem]{LM3} all of them are (in some sense) equivalent.
\end{rem}

\subsection{Categories and Homotopy Categories}\label{hcat}
We conclude this section by recalling all the categories involved in this paper.\\

Denoting by $\DgQui$, $\DgCat^{\tiny\mbox{nu}}$, $\aCat^{\tiny\mbox{nu}}$, $\aCat^{\tiny\mbox{nu}}{\tiny\mbox{strict}}$ respectively the categories of DG quivers (Definition \ref{dgqui}), non unital DG Categories (Remark \ref{dgnu}) together with non unital strict $\Ain$functors (Definition \ref{astricts}), non unital $\Ain$categories (Definition \ref{acat}) with non unital $\Ain$functors (Definition \ref{af}) and with non unital strict $\Ain$functors (Definition \ref{astricts}).\ We have the inclusions:
\[
\xymatrix{
{\DgQui}\subset \DgCat^{\tiny\mbox{nu}} \subset \aCat^{\tiny\mbox{nu}}_{\tiny\mbox{strict}} \subset \aCat^{\tiny\mbox{nu}}.
}
\]
On the other hand we can define a \emph{unital DG category} to be a unital $\Ain$category (Definition \ref{LMuni}) such that $m^{n\ge3}=0$.\ Unital DG categories together with cohomological unital strict $\Ain$functors (Definition \ref{uniff} + \ref{astricts}) form a category $\DgCat^{\tiny\mbox{u}}$.\ 
Denoting by $\aCat^{\tiny\mbox{u}}$ and $\aCat^{\tiny\mbox{u}}_{\tiny\mbox{strict}}$ the category of unital $\Ain$categories equipped, respectively, with cohomological $\Ain$functors and with strict cohomological $\Ain$functors, we have the inclusions of categories:
\begin{align}\label{inc1}
\xymatrix{
\DgCat^{\tiny\mbox{u}}\subset\aCat^{\tiny\mbox{u}}_{\tiny\mbox{strict}} \subset \aCat^{\tiny\mbox{u}} \subset \aCat^{\tiny\mbox{cu}}.
}
\end{align}
To conclude we denote by $\DgCat$, $\aCat_{\tiny\mbox{strict}}$ and $\aCat$ the categories of DG categories (Definition \ref{dgcat}), strictly unital $\Ain$categories (Definition \ref{su}) equipped with strict strictly unital $\Ain$functors (Definition \ref{astricts} + Definition \ref{suF}) and strictly unital $\Ain$functors (Definition \ref{suF}).\ Then we have the inclusions:
\begin{align}\label{inc2}
\xymatrix{
\DgCat \subset \aCat_{\tiny\mbox{strict}} \subset \aCat.
}
\end{align}
It is easy to see that none of the inclusions above is an equivalence of (1-)categories.\\
On the other hand we have a notion of quasi-equivalence in both the categories $\aCat$ and $\DgCat$ (see Subsection \ref{queequi}).\\
We denote by $\mbox{Hqe}$ and $\Ho(\aCat)$ respectively the localization of the category of DG categories and the category of the strictly unital $\Ain$categories, 
with respect to the classes of quasi-equivalences.\\
The categories $\mbox{Hqe}$ and $\Ho(\aCat)$ are called the \emph{homotopy categories} of DG categories and $\Ain$categories.\\
With \emph{Homotopy theory} we mean the Gabriel-Zisman localization of 1-categories see \cite[\S1.1.1]{GZ}.\ 
For more example of homotopy theories see \cite[\S2.1]{Toe1}.\\
In \cite[Theorem B]{COS2} we proved that the functors $U$ and $i$ (see subsection \ref{hyhyhhyhy}) induce an equivalences 
\begin{align*}
\mbox{Hqe}\simeq\Ho(\aCat)\simeq\Ho(\aCat^{\tiny\mbox{u}})\simeq\Ho(\aCat^{\tiny\mbox{cu}}).
\end{align*}
Note that these categories are equivalent at the $(\infty,1)$-level, see \cite[Theorem A]{COS2}.\\

\newpage

\section{The $\Ain$category of $\Ain$functors}\label{lonzolenzo}

As we saw in the previous section the definition of $\Ain$category is pretty technical and complicated.\ 
The great advantage of this framework respect to the DG one is in the extreme flexibility of the (pre)natural transformations (cf. Definition \ref{trantran}).\
Let us start with a example.

\begin{defn}[Natural transformations of DG categories]\label{NatDg}
Given $\A$, $\B$ two DG categories and $F,G:\A\to\B$ two DG functors, a \emph{natural transformation} $\eta$ is a morphism $\eta_x:F(x)\to G(x)$ in $\B$, for every $x\in\A$, such that the following diagram
\begin{align*}
\xymatrix{
F(x)\ar[r]^{\eta_x}\ar[d]^{F(f)}& G(x)\ar[d]^{G(f)}\\
F(y)\ar[r]^{\eta_y}&G(y).
}
\end{align*}
is commutative in $\B$ for every $f\in\A$.
\end{defn}
The set of natural transformations between two DG categories forms a DG category, denoted by $\mbox{Nat}_{\tiny\mbox{dg}}(\A,\B)$, whose objects are the DG functors and the morphisms are natural transformations.\ The differential and composition are defined respectively as follows:
\begin{align*}
d_{\tiny\mbox{Nat}_{\tiny\mbox{dg}}(\A,\B)}(\eta):=d_{\B}\cdot \eta - \eta\cdot d_{\A}
\end{align*}
\begin{align*}
(\eta \cdot \psi)_x:=\eta_x \cdot \psi_x.
\end{align*}

\begin{thm}
The category of natural transformations provides an Internal Hom for the symmetric monoidal category ($\DgCat$,$\otimes$,$R$).\ Namely, given three DG categories we have a natural bijection of sets:
\begin{align*}
\DgCat(\A\otimes\B,\C)\simeq\DgCat(\A,\mbox{Nat}_{\tiny\mbox{dg}}(\B,\C)).
\end{align*}
\end{thm}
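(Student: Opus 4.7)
The plan is to exhibit explicit maps in both directions between $\DgCat(\A\otimes\B,\C)$ and $\DgCat(\A,\mbox{Nat}_{\tiny\mbox{dg}}(\B,\C))$ and verify they are mutually inverse bijections, natural in all three arguments. First, I would define
\[
\Phi:\DgCat(\A\otimes\B,\C)\to\DgCat(\A,\mbox{Nat}_{\tiny\mbox{dg}}(\B,\C))
\]
by the classical currying recipe: given $F:\A\otimes\B\to\C$, the DG functor $\Phi(F)=:\tilde F$ sends an object $a\in\A$ to the DG functor $\tilde F(a):\B\to\C$ defined by $b\mapsto F(a,b)$ on objects and $g\mapsto F(1_a\otimes g)$ on morphisms; and it sends a morphism $f:a\to a'$ in $\A$ to the prospective natural transformation $\tilde F(f):\tilde F(a)\Rightarrow \tilde F(a')$ with components $\tilde F(f)_b:=F(f\otimes 1_b)$.

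Next I would verify the three well-definedness conditions. First, each $\tilde F(a)$ is a DG functor: preservation of the differential and of composition on $\B$ is inherited from $F$ together with the identities $d_{\A\otimes\B}(1_a\otimes g)=1_a\otimes d_\B(g)$ and $(1_a\otimes g_2)\cdot(1_a\otimes g_1)=1_a\otimes(g_2\cdot g_1)$ in $\A\otimes\B$. Second, each $\tilde F(f)$ is a natural transformation in the sense of Definition \ref{NatDg}: the required commutative square reduces to the relation
\[
F(f\otimes 1_{b'})\cdot F(1_a\otimes g)=F\bigl((f\otimes 1_{b'})\cdot(1_a\otimes g)\bigr)=F\bigl((1_{a'}\otimes g)\cdot(f\otimes 1_b)\bigr)=F(1_{a'}\otimes g)\cdot F(f\otimes 1_b),
\]
which holds in $\A\otimes\B$ up to the Koszul sign dictated by the middle interchange and cancels in the DG Leibniz-level definition of $\mbox{Nat}_{\tiny\mbox{dg}}$. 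Third, $\tilde F$ itself is a DG functor to $\mbox{Nat}_{\tiny\mbox{dg}}(\B,\C)$: compatibility with $d$ follows from $d_{\mbox{Nat}_{\tiny\mbox{dg}}}(\eta)=d_\C\cdot\eta-\eta\cdot d_\B$ applied componentwise and $F$'s compatibility with $d_{\A\otimes\B}$; compatibility with composition and units comes from $F(f_2\otimes 1_b)\cdot F(f_1\otimes 1_b)=F((f_2 f_1)\otimes 1_b)$ and $F(1_a\otimes 1_b)=1_{F(a,b)}$.

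In the other direction, I would define
\[
\Psi:\DgCat(\A,\mbox{Nat}_{\tiny\mbox{dg}}(\B,\C))\to\DgCat(\A\otimes\B,\C)
\]
by $\Psi(G)(a,b):=G(a)(b)$ and on a pure tensor $f\otimes g:(a,b)\to(a',b')$ by
\[
\Psi(G)(f\otimes g):=(-1)^{\dego(f)\dego(g)}\,G(f)_{b'}\cdot G(a')(g)=G(a)(g)\cdot G(f)_b,
\]
the two expressions agreeing by the naturality encoded in the fact that $G(f)$ is a morphism in $\mbox{Nat}_{\tiny\mbox{dg}}(\B,\C)$. One then checks that $\Psi(G)$ is a DG functor: the Leibniz rule on pure tensors reduces, after unwinding the tensor-product composition law and the Leibniz rule in $\C$, to the already-verified fact that $G$ preserves $d$ (so that $d_\C(G(f)_b)=G(d_\A f)_b+(-1)^{\dego f}\,[d_\B$-correction$]$, matched term by term). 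Finally, a direct computation shows $\Psi\circ\Phi=\mbox{Id}$ and $\Phi\circ\Psi=\mbox{Id}$ on objects and generating morphisms, and naturality in $\A,\B,\C$ is immediate from the explicit formulas (pre/post-composition with DG functors in each slot commutes with currying).

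The main obstacle is bookkeeping the Koszul signs: making sure that the sign $(-1)^{\dego(f)\dego(g)}$ chosen in the definition of $\Psi(G)(f\otimes g)$ is consistent with the sign convention in the composition of $\A\otimes\B$ (so that $\Psi(G)$ is associative and unital) and with the Leibniz rule on $\mbox{Nat}_{\tiny\mbox{dg}}(\B,\C)$ (so that $\Psi(G)$ is a chain map). Once this sign is fixed and shown to agree with both sides of the purported naturality square above, the rest of the verifications are purely formal.
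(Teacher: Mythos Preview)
Your approach is correct and is exactly the standard currying/uncurrying argument; the paper's own ``proof'' of this statement is the single line ``Left as an exercise'', so your outline is precisely the content the reader is expected to supply. One small slip to fix when you write it up: in the definition of $\Psi(G)(f\otimes g)$ the indices are swapped --- the two composable expressions are $G(f)_{b'}\cdot G(a)(g)$ and $G(a')(g)\cdot G(f)_b$, not $G(a')(g)$ in the first and $G(a)(g)$ in the second --- but this does not affect the argument.
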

\begin{proof}
Left as an exercise.
\end{proof}
On the other hand, even the homotopy category of DG categories is a closed symmetric monoidal category.\ This is a remarkable result by To\"en \cite[\S6]{Toe} (see also \cite[Theorem 1.1]{CS}).\\
Clearly in this case the DG category of natural transformations $\mbox{Nat}_{\tiny\mbox{dg}}(\A,\B)$ is not sufficient to describe the Internal Hom of $\mbox{Hqe}$.\ 
But we can describe it via (natural transformations of) $\Ain$functors.\ This was a brilliant intuition by Kontsevich and it was proven in \cite[Theorem B]{COS1} (over a field) and \cite{COS2} (over a commutative ring).\
This is an example of how the language of $\Ain$categories is the right framework for studying the homotopy category.

\subsection{Category of functors between $\Ain$categories}\label{S1}
In this subsection we introduce the notion of 1-transformation and prenatural transformation.\ Morally they are a kind of generalization of Definition \ref{NatDg} in the context of $\Ain$functors (see remark \ref{NATRAN}).

\begin{defn}[1-transformation DG quiver and $\Ain$category]
Let $\Q$ be a quiver, and $\A$ a non unital $\Ain$category and $\sF,\sG:\Q\to|\A|$ are morphisms of DG quivers (Definition \ref{dgqf}).\\
A \emph{1-transformation} is the datum $T:=(T^0,T^1)$.\ 
Where $T^0(x):\sF(x)^0\to\sG^0(x)$, for every $x\in\A$ and 
\begin{align*}
T^1:\Hom_{\Q}(x,y)\to \Hom_{\A}(\sF^0(x),\sG^0(y))
\end{align*}
is a morphism of differential graded complexes, for every $f:x\to y\in \A$.
%$r_1(f):\sF(x)\to \sG(y)$
\end{defn}

Fixed $\Q$ a DG quiver and $\A$ a non unital $\Ain$category, we can define
\begin{align}\label{func}
\mbox{Fun}_1(\Q,\A)
\end{align}
a non unital $\Ain$category such that:
\begin{itemize}
\item[1.] the objects are the functors of DG quivers with source $\Q$ and target $|\A|$,
\item[2.] the morphisms are the 1-transformations $(T^0,T^1)$.
\item[3.] Fixed two functors $\sF,\sG:\Q\to|\A|$ and $T=(T^0,T^1)$ the differential is given
\begin{align*}
\mathfrak{M}^1(T^0,T^1)(x,f):=\left( m_{\A}^1\big(T^0(x)\big), T^1\big( d_{\Q}(f)\big)  +m_{\A}^2\big(T^0(y) , \sF^1(f) \big) + m_{\A}^2\big(\sG^1(f), T^0(x)\big)+ m_{\A}^1(T^1(f)) \right).
\end{align*}
such that $x\in\Q$ and $f:x\to y\in\Q$.
\end{itemize}
The higher compositions are inherited by the ones of $\A$.\\ 
Moreover if $\A$ is a unital (resp. strictly unital) $\Ain$category then even $\mbox{Fun}_1(\Q,\A)$ is so.\\
A 1-transformation which is closed with respect to $\mathfrak{M}_1$ is called \emph{natural} 1-transformation.\\
A detailed description of 1-transformations (and more in general about \emph{n-transformations}) can be found in \cite[\S1 and \S 2.7]{LM1} and \cite[Section 5]{Lyu}.\\
\\
Let $\A$ and $\B$ be two non unital $\Ain$categories and two non unital $\Ain$functors $\F,\G:\A\to\B$.
\begin{defn}[Prenatural transformation between $\Ain$categories]\label{trantran}
A \emph{prenatural transformation} $$T:=(T^0,T^1,...)$$ 
of degree $g$ is given by the datum: 
\begin{itemize}
\item[1.] For every object $x\in\A$ a morphism $T^0(x):\F(x)\to\G(x)$ in $\B$.
\item[2.] for every $n>0$ and sequence of objects $x_0,...,x_n\in\A$.
\begin{align*}
T^n:\Hom(x_{n-1},x_n)\otimes...\otimes\Hom(x_0,x_1)\to&\Hom\big(\F^0( x_0),\G^0( x_n)\big)[g-n].
\end{align*}
\end{itemize}
\end{defn}

\begin{rem}\label{pretra}
To understand what is a prenatural trasformation it is useful the interpretation of (non unital) $\Ain$categories and (non unital) $\Ain$functors via bar construction (see \cite[\S2]{Orn1}).\\
Denoting by $\textbf{B}_{\infty}$ the bar construction and by $\mathsf{Q}$ a graded-quiver (not a DG quiver) we have:
\begin{itemize}
\item[1.] $\Ain$structures on $\mathsf{Q}$ $\leftrightarrow$ DG structures on $\textbf{B}_{\infty}(\mathsf{Q})$.
\item[2.] $\Ain$functors $\A_1\to\A_2$ $\leftrightarrow$ DG cofunctors: $\textbf{B}_{\infty}(\A_1)\to\textbf{B}_{\infty}(\A_2)$. 
\end{itemize}
In this setting the prenatural transformations 
$$T:\F\Rightarrow\G$$ 
correspond to the $(\textbf{B}_{\infty}(\F),\textbf{B}_{\infty}(\G))$-coderivations.\
Let us recall that $\textbf{B}_{\infty}(\mathsf{Q})$ is a non unital DG Cocategory.\\
The associated unital DG cocategory is denoted by $\textbf{B}_{\infty}(\mathsf{Q})_{+}$.\ 
In a few words if $\mathsf{Q}=V$ a graded vector space then $\textbf{B}_{\infty}(\mathsf{Q}):=\overline{\mbox{T}}^c(V[1]):=\displaystyle\bigoplus_{n\ge1} \big(V[1]\big)^{\otimes n}$ and $\textbf{B}_{\infty}(\mathsf{Q})_{+}:=\displaystyle\bigoplus_{n\ge0} \big(V[1]\big)^{\otimes n}$.
\end{rem}

Fixed two non unital $\Ain$categories $\A$ and $\B$, we denote by
\begin{align}\label{func}
\Fun(\A,\B)
\end{align}
the set of non unital functors from $\A$ to $\B$.\ We can make $\Fun(\A,\B)$ a non unital $\Ain$category:
\begin{itemize}
\item[1.] the objects are the non unital $\Ain$functors with source $\A$ and target $\B$,
\item[2.] the morphisms are the prenatural transformations.
\item[3.] Given a prenatural transformation $T:\F\Rightarrow\G:\A\to\B$, the differential is given by the formula:
\begin{align}\label{lososot}
\mathfrak{M}^1(T)_n(f_n,...,f_1)&:= \displaystyle\sum_{1\le i\le r}\sum_{s_1+...+s_r=n}(-1)^{\dagger} m^r_{\B}\big(\F^{s_r}(f_n,...,f_{n-s_r+1}),...,\F^{s_{j+1}}(f_{...},...,f_{...}), \\
&T^{s_j}(f_{...},...,f_{...}),\G^{s_j-1}(f_{...},...,f_{...}),...,\G^{s_1}(f_{s_1},...,f_1) \big)\\
&+ \displaystyle\sum_{r,i}(-1)^{\ddagger}T^{d-m+1}\big(f_d,...,f_{n+m+1},m^{m}_{\A}(f_{n+m},...,f_{n+1})f_n,...,f_1\big).
\end{align}
%Here $\dagger=\big(\mbox{deg}(T)-1\big)\big(\mbox{deg}(f_1)-1 \big)$
The precise formula (with the right signs) can be found in \cite[Definition 1.3.2]{Orn1}.\ 
In the first summation we could have none term of the form $\F^{...}(...)$ or $\G^{...}(...)$ but we may have $T^0(...)$. 
\item[4.] the composition $\mathfrak{M}^2$ is described in \cite[(1.9)]{Sei}, the higher multilinear maps are described in \cite[Section 5]{Lyu}.
\end{itemize}
We define a \emph{natural transformation} to be a prenatural transformation $T$, such that $\mathfrak{M}^1(T)=0$.\\
As in the case of non unital $\Ain$categories and $\Ain$functors, let 
$$T=(T^0,T^1,T^2,...)$$
be a natural transformation between two non unital $\Ain$functors $\F,\G:\A\to \B$, then 
\begin{align}\label{zorgonzolo}
(T^0,T^1)
\end{align}
is a natural 1-transformation between $|\A|$ and $\B$.\\

Suppose that $\A$ is a strictly unital $\Ain$category.\ 
A prenatural transformation $T\in\Fun(\A,\B)$ is \emph{strictly unital} if $T^n(f_n,...f_1)=0$ whenever $n\ge 1$ and there exists $j$ such that $f_j=1_x$ for.\\
We define the subcategory $\Fun^u(\A,\B)$, with the same objects of $\Fun(\A,\B)$, and whose morphisms are the strictly unital prenatural transformations (see \cite[Remark 1.24]{COS1} and \cite[Definition 1.17]{COS2}).

\begin{rem}\label{NATRAN}
The natural transformations are the closed coderivations in the interpretation of Remark \ref{pretra}.\ 
If $T$ is a natural transformation then $[T]$ is a natural transformation $[\F]\to[\G]:[\A]\to[\B]$ between $R$-linear categories.
\end{rem}

\begin{rem}
To provide an $\Ain$structure to the category of functors, we just care about the structure of the target category $\B$.\ It means that if $\B$ is non-unital (resp. cohomologically unital, unital, strictly unital, DG) then $\Fun(\A,\B)$ is non-unital (resp. cohomologically unital, unital, strictly unital, DG).
\end{rem}

\subsection{Vertical and horizontal composition}
We already said that the category $\aCat^{\tiny\mbox{nu}}$ is a 1-category.\ The horizontal composition of two $\Ain$functors is defined explicitly in Definition \ref{composizioncella}.\ 
On the other hand, fixed two non unital $\Ain$categories we have a vertical composition $\mathfrak{M}^2$ according to the previous section.\\% (the explicit definition can be found here [Orn; Definition 1.3.3] or here [Sei; (1.9)]).\\
\\
We fix a non unital $\Ain$functor $\F:\A\to\B$ between two $\Ain$categories.\\
Given a non unital $\Ain$category $\C$, we can consider the non unital $\Ain$category $\mbox{Fun}_{\infty}(\B,\C)$.\\ 
The functor $\F$ induces a non unital strict $\Ain$functor:
\begin{align}\label{PUSSHH}
\Fun(\B,\C)&\to\Fun(\A,\C).\\
\G&\mapsto \F^*(\G),\\
T& \mapsto \F^{*}(T)
\end{align}
Where $\F^*(\G):=\G\cdot\F$ and $\F^{*}(T)$ is defined in
\cite[Proposition-Definition 8.41]{Fuk} or \cite[(1e)]{Sei}.\\
Taking two non unital $\Ain$functors $\G,\G':\B\to\C$ and a prenatural transformation $T:\G\Rightarrow\G'$, we have:
\[\xymatrix@R=1em{
&\ar@{=>}[dd]^{T}&\\
\B\ar@/^2pc/[rr]^{\G}\ar@/_2pc/[rr]_{\G'}&&\C\\
&&
}
\xymatrix@R=1em{
&&\\
&\mapsto&\\
&&
}
\xymatrix@R=1em{
&\ar@{=>}[dd]^{\F^*(T)}&\\
\A\ar@/^2pc/[rr]^{\F^{*}(\G)}\ar@/_2pc/[rr]_{\F^*(\G')}&&\C\\
&&
}
\]

\subsection{Homotopic and weakly equivalent $\Ain$functors}

We fix a non unital $\Ain$category $\A$ and a cohomological unital $\Ain$category $\B$.\	
In this case the cohomological category% It makes sense to consider 
\begin{align}\label{Hfun}
H(\Fun(\A,\B))
\end{align}
is a (graded) 1-category (i.e. it is unital, see \cite[Proposition 7.7]{Lyu}).

We need to define some relations between functors with same source and target.\\
\\
The first Definition can be found in \cite[Definition 1.26. (ii)]{COS1} or \cite[(1h)]{Sei}.\\
We say that two non unital $\Ain$functors $\mathscr{F}$ and $\mathscr{G}$ are \emph{homotopic} if $\mathscr{F}^0=\mathscr{G}^0$ and there exists a prenatural transformation $H$ of degree $0$ such that $H^0(x)=0$ (for every $x$) and $\mathscr{F}-\mathscr{G}=\mathfrak{M}^1(H)$.\\
If $\F$ and $\G$ are homotopic then they induce the same functor in cohomology.\

\begin{defn}[Weakly equivalent \mbox{\cite[Definition 1.26. (i)]{COS1}}]\label{def1}
Two non unital functors {$\F$, $\G\in\Fun(\A,\B)$} are \emph{weakly equivalent} if they are isomorphic (as objects) in {$H(\Fun(\A,\B))$}.\\
%\end{defn}
%\begin{defn}[Homotopy equivalent \mbox{[Fuk, Definition 8.1 + Definition 6.22]}]\label{defi2}
%Two $\Ain$-functor $\mathscr{F},\mathscr{G}:\A\to\B$ are {homotopy equivalent} if they are homotopy equivalent\footnote{see [Fuk, Definition 6.22.]} as object in {$\Fun(\A,\B)$}.\\ 
%Namely $\mathscr{F}$ and $\mathscr{G}$ are {homotopy equivalent} if 
Namely there exist two natural transformations $T:\F\Rightarrow\G$, $S:\G\Rightarrow\F$ and two prenatural trasformations $H:\F\Rightarrow\F$, $H':\G\Rightarrow\G$ such that: 
\begin{align}\label{appross}
\mathfrak{M}^2(S,T)=\Id_{\F}+\mathfrak{M}^1(H)\mbox{ and } \mathfrak{M}^2(T,S)=\Id_{\G}+\mathfrak{M}^1(H')
\end{align}
\end{defn} 
Here $\Id_{\F}$ and $\Id_{\G}$ are the prenatural transformations such that $(\Id_{\F})^{n>0}=(\Id_{\F})^{n>0}=0$ and 
$(\Id_{\F})^0(x)$ and $(\Id_{\G})^0(x)$ are (one of) the units of $\F^0(x)$ and $\G^0(x)$ respectively.\

\begin{rem}\label{defi2}
Definition \ref{def1} correspond to the notion of \emph{homotopy equivalent} non unital $\Ain$functors {(see \cite[Definition 8.1 + Definition 6.22]{Fuk}}).
\end{rem}
We can enrich the category $\aCat$ with a structure of $\omega$-globular set (cf. \cite[6.4 Definition]{Lyu}).\ In this case $H$ and $H'$ play the role of 3-morphisms between the 2-morphisms $\mathfrak{M}^2(S,T)$, {$\Id_{\F}$} and $\mathfrak{M}^2(T,S)$, $\Id_{\G}$ respectively.\\
In a few words, $\F$ and $\G$ are weakly equivalent if there exist two natural transformations $S:\F\Rightarrow\G$ and $T:\G\Rightarrow\F$ such that $\mathfrak{M}^2(S,T)$ is homotopic to $\Id_{\F}$ and $\mathfrak{M}^2(T,S)$ is homotopic to $\Id_{\G}$.\\
\\
Following \cite{COS1} we use the following notation:
\begin{itemize}
\item[1.] $\mathscr{F}\sim\mathscr{G}$ if they are homotopic, 
\item[2.] $\mathscr{F}\approx\mathscr{G}$ if they are weakly equivalent (or homotopy equivalent). 
\end{itemize}
It is clear that $\sim$ and $\approx$ are equivalence relations.\\
Moreover the relation $\approx$ is compatible with the compositions, so we can form the category $\aCat/\approx$, see \cite[pp.13]{COS1} and \cite[pp.12]{COS2}.

\begin{defn}\label{isoph}
We say that two cohomological unital $\Ain$categories $\A$ and $\B$ are \emph{weakly equivalent} if there exist two $\Ain$functors $\F:\A\to\B$ and $\B\to\A$ such that $\G\cdot\F\approx\Id_{\A}$ and $\F\cdot\G\approx\Id_{\B}$. 
\end{defn}

\begin{rem}
Let $\mathsf{C}$ be a non unital cocomplete DG cocategory (see \cite[Definition 1.14]{COS1}) and $\mathsf{Q}$ be a graded quiver.\
Given a morphism $\mathsf{F}:\mathsf{C}\to \mathsf{Q}$ of graded quivers (considering the underline graded quiver of $\mathsf{C}$).\\
We can extend uniquely\footnote{This is via the adjunction $\mbox{coCat}^{\tiny\mbox{n}}\leftrightarrows\mbox{GrQu}$ (see \cite[Proposition 1.17]{COS1}.} $\mathsf{F}$ to an functor of cocategories $\textbf{B}_{\infty}(\mathsf{F}):\mathsf{C}\to \textbf{B}_{\infty}(\mathsf{Q})$.\\
Let $\A_1$, $\A_2$ be two non unital $\Ain$categories and $\B$ a (at least) cohomological unital $\Ain$category, an $\Ain$bifunctor $\mathsf{F}$ from $\A_1$ and $\A_2$ to $\B$ is a morphism of graded quivers 
$$\mathsf{F}:\overline{\textbf{B}_{\infty}(\A_1)_{+}\otimes\textbf{B}_{\infty}(\A_1)_{+}}\to\B$$ 
such that 
$$\textbf{B}_{\infty}(F):\overline{\textbf{B}_{\infty}(\A_1)_{+}\otimes\textbf{B}_{\infty}(\A_1)_{+}}\to \textbf{B}_{\infty}(\B)$$
is a DG cofunctor (see \cite[1.5.3]{Orn1}).\\
As in the case of $\Ain$functors, the $\Ain$bifunctors form a non unital $\Ain$category \cite{BLM}.\ We denote by $\Fun(\A_1,\A_2,\B)$ the $\Ain$category of $\Ain$bifunctors from $\A_1$, $\A_2$ to $\B$.\ By definition the non unital $\Ain$category $\Fun(\A_1,\A_2,\B)$ is isomorphic to $\Fun(\A_2,\A_1,\B)$.\ Moreover we have an isomorphism of non unital $\Ain$categories
\begin{align}\label{evalu}
\Fun(\A_1,\A_2,\B)\cong\Fun(\A_1,\Fun(\A_2,\B)).
\end{align}
The isomorphism (\ref{evalu}) on the objects is simply the evaluation.\\
Given a non unital $\Ain$bifunctor $T$
\begin{align*}
T:\overline{\textbf{B}_{\infty}(\A_1)_+\otimes \textbf{B}_{\infty}(\A_2)_+}\to \B 
\end{align*}
Its evaluation $\mbox{ev}(T):\A_1\to\Fun(\A_2,\B)$ is given by
\begin{align*}
\big(\mbox{ev}(T)^n(f_n,...,f_1)\big)^m(g_m,...,g_1):=T\big((f_n[1]\otimes ...\otimes f_1[1])\otimes(g_m[1]\otimes...\otimes g_1[1])\big)
\end{align*}
for any sequences of morphisms $f_n,...,f_1\in\A_1$ and $g_m,...,g_1\in\A_2$.\\
In the same vein we can define the $\Ain$multifunctors.\ Which are $\Ain$functors with source n non unital $\Ain$categories $\A_1$,...,$\A_n$.\ 
Denoting by $\Fun(\A_1,...,\A_n,\B)$ the $\Ain$category of $n$-$\Ain$functors inherits the structure of $\B$.\ If $\B$ is unital, strictly unital, dg then $\Fun(\A_1,...,\A_n,\B)$ is so.\
This provides a structure of \emph{closed multicategory} on the category of $\Ain$categories to overcome the lack of a tensor product of $\Ain$categories.\
Note that the relation $\approx$ makes sense for $\Fun(\A_1,...,\A_n,\B)$, so we can take a quotient (multi)category $\aCat/\approx$, see \cite[Definition 1.26]{COS1}. 
\end{rem}

\subsection{Homological Perturbation Theory}\label{HPT}

The %(probably) 
most important property of the $\Ain$categories is the following:
\begin{thm}[Whitehead Theorem for $\Ain$categories]\label{White}
Let $R$ be a field.\ Every quasi-equivalence between two ($R$-linear) $\Ain$categories has an inverse "up to homotopy".
\end{thm}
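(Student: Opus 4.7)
The plan is to reduce the statement to the case of \emph{minimal} $\Ain$categories (those with $m^1=0$) via the Homological Perturbation Lemma, and then to construct an $\Ain$inverse by hand in the minimal setting, lifting it back to the original categories via the contraction data.

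First, I would use homological perturbation theory to produce, for any $\Ain$category $\A$ over a field, a minimal model $\A^{\min}$: the objects are the same as those of $\A$, the hom-spaces are the cohomologies $H(\Hom_{\A}(x,y))$, and the higher operations $m^n_{\A^{\min}}$ are transferred from $\A$ by the standard sum over planar rooted trees described in subsection \ref{gloglossary}, with the tree $\T\in PT^l_n$ contributing a composite built from the $m^k_{\A}$'s and a chosen chain homotopy. The hypothesis that $R$ is a field is used precisely here: every hom-complex splits as the direct sum of its cohomology and a contractible piece, giving contraction data $(i_{\A},p_{\A},h_{\A})$ with $p_{\A}\cdot i_{\A}=\Id_{\A^{\min}}$ and $\Id_{\A}-i_{\A}\cdot p_{\A}=m^1_{\A}\cdot h_{\A}+h_{\A}\cdot m^1_{\A}$. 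By construction, $i_{\A}$ and $p_{\A}$ are $\Ain$quasi-equivalences, and $i_{\A}\cdot p_{\A}\sim\Id_{\A}$ in the sense of subsection \ref{S1}.

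Second, given a quasi-equivalence $\F:\A\to\B$, I form the induced $\Ain$functor
\[
\F^{\min}:=p_{\B}\cdot\F\cdot i_{\A}:\A^{\min}\to\B^{\min},
\]
which is again a quasi-equivalence. Since $m^1$ vanishes on both sides, the linear part $(\F^{\min})^1$ is literally an isomorphism of graded $R$-modules on each hom-space, and essential surjectivity of $H^0(\F)$ means that for every $y\in\B^{\min}$ there is an object $x_y\in\A^{\min}$ and a closed degree-zero morphism $\eta_y:y\to\F^{\min}(x_y)$ that is genuinely invertible (not merely up to boundary) because minimality forbids non-trivial boundaries. Using the $\eta_y$ and the iso $(\F^{\min})^1$, I construct inductively an $\Ain$functor $\G^{\min}:\B^{\min}\to\A^{\min}$ together with prenatural transformations $T:\G^{\min}\cdot\F^{\min}\Rightarrow\Id_{\A^{\min}}$ and $T':\F^{\min}\cdot\G^{\min}\Rightarrow\Id_{\B^{\min}}$ whose $0$-th components are the (inverses of the) $\eta_y$ and whose higher components are chosen order by order so as to kill the successive obstructions in $\mathfrak{M}^1$. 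Setting $\G:=i_{\A}\cdot\G^{\min}\cdot p_{\B}$ and composing the homotopies with $i_{\A}\cdot p_{\A}\sim\Id_{\A}$, $i_{\B}\cdot p_{\B}\sim\Id_{\B}$, I obtain $\G\cdot\F\sim\Id_{\A}$ and $\F\cdot\G\sim\Id_{\B}$, i.e.\ an inverse up to the relation $\sim$ of subsection \ref{S1} (and a fortiori up to $\approx$).

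The main obstacle is the inductive construction of $\G^{\min}$ together with the homotopies $T,T'$: at each order one must solve an equation of the form $\mathfrak{M}^1(T^n)=(\text{lower-order terms})$ and check that the right-hand side is $m^1$-closed so that the equation has a solution — here minimality is essential, because the only way to solve such equations in $\B^{\min}$ is that the obstructions vanish on the nose rather than up to boundary. The cleanest way to organize the bookkeeping and signs from (\ref{lososot}) is through the bar/coalgebra picture of Remark \ref{pretra}: $\Ain$functors become DG cofunctors $\textbf{B}_{\infty}(\A)\to\textbf{B}_{\infty}(\B)$, prenatural transformations become $(\textbf{B}_{\infty}(\F),\textbf{B}_{\infty}(\G))$-coderivations, and the construction reduces to the classical perturbation lemma for counital DG coalgebras applied to the cofree coalgebras $\textbf{B}_{\infty}(\A^{\min})_+$, which produces all needed homotopies with correct signs automatically.
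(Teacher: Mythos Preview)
The paper does not actually supply a proof of this theorem: it states it as a known result, cites \cite[Theorem 4.2.45]{FOOO1}, \cite[Corollaire 1.3.1.3 (b)]{LH}, \cite{Kad}, \cite{KS}, \cite[Corollary 1.14]{Sei}, and remarks only that all of these arguments pass through the \emph{minimal model}. Your strategy---transfer to minimal models via HPT, invert there, transport back---is exactly the route the paper alludes to and the one taken in the references, so at the level of outline you are aligned with the literature.

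That said, the heart of your sketch contains a genuine confusion. In the minimal setting $m^1_{\A^{\min}}=m^1_{\B^{\min}}=0$, so in the expression for $\mathfrak{M}^1(T)_n$ from (\ref{lososot}) every term that would involve $T^n$ is multiplied by an $m^1$ and hence vanishes; likewise the $\Ain$functor relation for $\G^{\min}$ at level $n$ does not involve $(\G^{\min})^n$. You therefore cannot ``solve for $T^n$'' or for $(\G^{\min})^n$ in the way you describe: minimality does not make the obstructions vanish, it removes the very unknowns you propose to solve for. What actually drives the construction is the invertibility of $(\F^{\min})^1$. When $(\F^{\min})^0$ is bijective on objects, the equation $(\G^{\min}\cdot\F^{\min})^n=0$ for $n\ge 2$ contains the term $(\G^{\min})^n\big((\F^{\min})^1,\dots,(\F^{\min})^1\big)$, and since $(\F^{\min})^1$ is an isomorphism this can be solved uniquely for $(\G^{\min})^n$; one then checks separately that the resulting $\G^{\min}$ satisfies the $\Ain$functor relations and that $\F^{\min}\cdot\G^{\min}=\Id$ as well (this is \cite[Lemma 1.5]{Sei}). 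The general (essentially surjective) case is then handled by the invertibility criterion for natural transformations, as in Proposition \ref{Invertitransf} or \cite[Lemma 1.13]{Sei}, not by an obstruction calculus.

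A smaller point: your conclusion $\F\cdot\G\sim\Id_{\B}$ is too strong. On the $\B$ side, $(\F^{\min}\cdot\G^{\min})^0$ sends $y$ to $(\F^{\min})^0(x_y)$, which need not equal $y$; you only get $\F\cdot\G\approx\Id_{\B}$ via the isomorphisms $\eta_y$. This is harmless for the statement (the paper explicitly interprets ``inverse up to homotopy'' as $\approx$), but you should not claim $\sim$.
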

Here, by "up to homotopy", we mean that if $\F:\A\to\B$ is a quasi-equivalence, then there exists an $\Ain$functor $\G:\B\to\A$ such that: $\F\cdot\G\approx\Id$ and $\G\cdot\F\approx\Id$ (see Definition \ref{def1}).\ We intentionally stated this Theorem \ref{White} imprecisely.\
One can ask the "right" hypothesis under which it works (e.g. if $\A$, $\B$ or $\F$ has to be unital, strictly unital, etc.).\
Since it was proven many times e.g. \cite[Theorem 4.2.45]{FOOO1}, \cite[Corollaire 1.3.1.3 (b)]{LH}, \cite{Kad}, \cite{KS}, \cite[Corollary 1.14]{Sei}, with different hypotheses (for DG algebras, homotopy unital $\Ain$categories, over a field of characteristic zero, etc..), we suggest to have a look directly to the references mentionated.\ We point out that all of them works only over a field (or assuming that all the cohomologies involved are h-projective) since they use the \emph{minimal model} of an $\Ain$category.\ 

The name \emph{Whitehead Theorem} was given by Fukaya, Oh, Ono and Otha since it is an algebraic counterpart of Whitehead's theorem for CW-complexes or $\infty$-groupoids.\\

Theorem \ref{White} can be considered an application of a more general Homological Perturbation Theory (shortly HPT) which works over a commutative ring (\cite[\S 1.4.2]{LH}, \cite[(1i)]{Sei}).\ HPT is a method to produce an $\Ain$functor (with inverse up to homotopy) starting from a various number of hypotheses.\ We recall this Theorem which provides a good replacement of Theorem \ref{White} if $R$ is a commutative ring. 

\begin{thm}[\mbox{\cite[Theorem 8.8]{Lyu} + \cite{COS2}}]\label{DC}
Let $\F$ be a (a priori) non unital $\Ain$functor $\F:\A\to\B$ where $\A$ is (a priori) a non unital $\Ain$category and $\B$ a unital $\Ain$category.\
If the followings are satisfied:
\begin{itemize}
\item[1.] there is a mapping of sets {$h:\mbox{Ob}(\B)\to\mbox{Ob}(\A)$}.
\item[2.] For every two objects $a$ and $a'$ in $\A$, the map of chain complexes:
{$$\mathscr{F}^1:\HomA(a,a')\to\HomB(\mathscr{F}^0(a),\mathscr{F}^0(a'))$$}
has a homotopy inverse $\mathscr{G}^1$: 
{$$\mathscr{G}^1:\HomB(\mathscr{F}^0(a),\mathscr{F}^0(a'))\to\HomA(a,a').$$}
\end{itemize}
Then $\A$ and $\F$ are unital and it exists a unital $\Ain$functor $\mathscr{G}:\B\to\A$ such that $\mathscr{F}$ and $\mathscr{G}$ are quasi-inverse to each other.\ Namely {$\mathscr{G}\cdot\mathscr{F}\approx\Id_{\mathscr{A}}$} and {$\mathscr{F}\cdot\mathscr{G}\approx\Id_{\mathscr{B}}$}.\\
Moreover, if $\A$, $\B$ and $\F$ are strictly unital, in particular $\B$ with nice unit\footnote{Note that \cite[8.8 Theorem]{Lyu} is stated differently.\ V. Lyubashenko informed us by email that a strictly unital version of this Theorem it is not immediate.\ This is why we require the extra hypotheses on $\B$ of being nice unital \cite[Lemma 3.1]{COS2}.} (see Definition \ref{snu}), then $\G$ is a strictly unital $\Ain$functor.
\end{thm}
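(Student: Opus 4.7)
The plan is to construct $\G$ by Homological Perturbation Theory (HPT) in the bar-construction picture of Remark \ref{pretra}, where $\Ain$-functors $\A\to\B$ correspond to DG cocategory morphisms $\textbf{B}_{\infty}(\A)_{+}\to \textbf{B}_{\infty}(\B)_{+}$. First I would set $\G^0:=h$ and take $\G^1$ to be the given chain-homotopy inverse of $\F^1$, fixing once and for all a chain homotopy $\mathcal{H}$ witnessing $\F^1\G^1\simeq\Id$ together with its counterpart on the other side. At the linear level the $\Ain$-functor equation is then just the statement that $\G^1$ is a chain map, which holds by hypothesis.

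Next I would construct the higher components $\G^n$ for $n\geq 2$ by induction on $n$. The $\Ain$-functor equation at order $n$ takes the schematic form
\[\partial\G^n \;=\; \Phi_n\big(\G^{<n},\F,m_{\A},m_{\B}\big),\]
where $\partial$ is the natural bar differential on multilinear maps and $\Phi_n$ is a signed sum of nested compositions involving only lower-order data. Using the inductive hypothesis together with the $\Ain$-equations for $\A$, $\B$ and $\F$, one checks that $\partial\Phi_n=0$; a primitive $\G^n$ is then produced by applying $\mathcal{H}$ and $\G^1$ to $\Phi_n$. This is precisely the standard HPT recursion (cf. \cite[\S1.4.2]{LH}, \cite[(1i)]{Sei}); translated to the bar picture, it amounts to extending the linear chain-equivalence $\G^1$ to a genuine retract of cocategory maps.

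Third, I would transfer the units from $\B$ to $\A$: for each $a\in\A$, set $1_a:=\G^1(1_{\F^0(a)})$ and verify, using Definition \ref{LMuni} and the homotopy $\mathcal{H}$, that this makes $\A$ unital and renders $\F$ and $\G$ unital $\Ain$-functors. The weak equivalences $\G\cdot\F\approx\Id_{\A}$ and $\F\cdot\G\approx\Id_{\B}$ (in the sense of Definition \ref{def1}) would be produced by the same inductive scheme applied inside $\Fun(\A,\A)$ and $\Fun(\B,\B)$: build natural transformations whose $0$-components come from $\mathcal{H}$ and extend their higher components by the HPT recursion. For the strict-unit refinement, when $\B$ has nice unit (Definition \ref{snu}), use the splitting $\HomB(b,b)=R\cdot 1_b\oplus V_b$ to adjust $\G^1$ so that $\G^1(1_b)=1_{h(b)}$ on the nose, and carry out the inductive construction respecting this splitting, so that $\G^n(\ldots,1_{b},\ldots)=0$ for $n>1$.

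The main technical obstacle is the inductive step of stage two, namely verifying that $\Phi_n$ is $\partial$-closed at each stage, so that it admits a primitive via $\mathcal{H}$. This closedness is a formal consequence of the Maurer--Cartan coherence of the bar construction, but the signs and the grafting-of-trees combinatorics (cf. subsection \ref{gloglossary}) make the bookkeeping genuinely delicate; the strict-unit improvement requires in addition that each intermediate $\Phi_n$ respects the chosen splitting, which is exactly why the nice-unit hypothesis is essential. The published proofs \cite[Theorem 8.8]{Lyu} and \cite[\S3]{COS2} devote substantial space to these verifications.
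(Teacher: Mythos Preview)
The paper does not give its own proof of Theorem~\ref{DC}; the statement is quoted from \cite[Theorem 8.8]{Lyu} (with the strictly unital refinement from \cite{COS2}) and used as a black box. So there is no in-paper argument to compare against. Your sketch follows the HPT approach of those references and is in the right spirit.

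One genuine slip in your outline: hypothesis~2 gives a homotopy inverse $\G^1$ only on hom-complexes of the form $\Hom_{\B}(\F^0(a),\F^0(a'))$, not on arbitrary $\Hom_{\B}(b,b')$. Simply declaring $\G^0:=h$ does not yet tell you what $\G^1$ does on $\Hom_{\B}(b,b')$ for objects $b$ not in the image of $\F^0$. The role of $h$ in Lyubashenko's argument is precisely to furnish, for each $b\in\B$, an object $h(b)\in\A$ together with an isomorphism in $H(\B)$ between $b$ and $\F^0(h(b))$ (this uses unitality of $\B$), so that one can transport the given homotopy inverse to all hom-complexes of $\B$. This step should appear before your inductive construction of the higher $\G^n$.
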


It follows immediately

\begin{cor}
If $\F:\A\to\B$ (in $\aCat$) is a quasi-equivalence, $\A$ and $\B$ are h-projective $\Ain$categories (see Section \ref{hprojec}), in particular $\B$ with nice unit, then $\F$ has an inverse up to $\approx$. 
\end{cor}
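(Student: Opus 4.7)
The statement is an almost immediate consequence of Theorem \ref{DC}; the plan is simply to verify its two hypotheses under the given assumptions, and then invoke it.

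First, I would build the map of sets $h:\mathrm{Ob}(\B)\to\mathrm{Ob}(\A)$ required in hypothesis 1 of Theorem \ref{DC}. By the definition of quasi-equivalence recalled in subsection \ref{queequi}, the induced functor $H^0(\F):H^0(\A)\to H^0(\B)$ is essentially surjective. Hence for each $b\in\B$ there exists $a_b\in\A$ with $\F^0(a_b)\cong b$ in $H^0(\B)$; pick one such $a_b$ for every $b$ (possibly invoking the axiom of choice) and set $h(b):=a_b$.

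Second, I would verify hypothesis 2, namely that each component $\F^1:\Hom_{\A}(a,a')\to\Hom_{\B}(\F^0(a),\F^0(a'))$ admits a homotopy inverse as a chain map. Again by the definition of quasi-equivalence, $\F^1$ is a quasi-isomorphism of DG $R$-modules. The h-projectivity assumption on $\A$ and $\B$ precisely guarantees that the source and target of $\F^1$ are h-projective DG $R$-modules, and it is a standard fact in homological algebra that a quasi-isomorphism between h-projective complexes is a homotopy equivalence. This produces the desired homotopy inverse $\G^1:\Hom_{\B}(\F^0(a),\F^0(a'))\to\Hom_{\A}(a,a')$.

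Finally, with hypotheses 1 and 2 in hand, and using that $\A$, $\B$ and $\F$ are strictly unital with $\B$ having nice unit (as assumed), I would directly apply Theorem \ref{DC}. It produces a strictly unital $\Ain$functor $\G:\B\to\A$ such that $\G\cdot\F\approx\Id_{\A}$ and $\F\cdot\G\approx\Id_{\B}$, which is exactly the conclusion that $\F$ has an inverse up to $\approx$. The only real technical point, and hence the mildest possible ``obstacle'', is invoking the standard fact that quasi-isomorphisms between h-projective DG $R$-modules are homotopy equivalences; everything else is a direct translation of definitions into the hypotheses of Theorem \ref{DC}.
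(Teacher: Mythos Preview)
Your proposal is correct and follows essentially the same approach as the paper's own proof, which simply states that every quasi-isomorphism between h-projective DG modules has a homotopy inverse and then invokes Theorem \ref{DC} directly. Your version is merely a more explicit unpacking of the same argument, spelling out the construction of $h$ via essential surjectivity and the verification of hypothesis 2, both of which the paper leaves implicit.
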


\begin{proof}
Since every quasi-isomorphism between h-projective DG modules has an inverse in homotopy it follows directly from Theorem \ref{DC}.
\end{proof}

Theorem \ref{DC} basically says: if $\F$ is an $\Ain$functor, we can find a (weak) inverse starting if $\F_1$ has an homotopy inverse of chain complexes.\
Actually we have something similar for natural transformation:

\begin{prp}\label{Invertitransf}
Let $T$ be a natural transformation in $\Fun(\A,\B)(\F,\G)$.\\ 
We recall that, for any $x\in\A$ we have $T^0(x):\F^0(x)\to\G^0(x)\in\B$.\\ 
If, for every $x\in\A$ and $b\in\B$, the morphisms of complexes
\begin{align}
m^2_{\B}(T^0(x),\mbox{-}):\Hom_{\B}\big(b,\F^0(x)\big)\to\Hom_{\B}\big(b,\G^0(x)\big)
\end{align}
and
\begin{align}
m^2_{\B}(\mbox{-},T^0(x)):\Hom_{\B}\big(\F^0(x),b\big)\to\Hom_{\B}\big(\G^0(x),b\big)
\end{align}
are homotopy invertible then the natural transformation $T$ is invertible as natural transformation.\\ 
It means that it exists a natural transformation $S:\G\Rightarrow\F$ such that $\F\approx\G$.
\end{prp}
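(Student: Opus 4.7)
The strategy is to construct $S : \G \Rightarrow \F$ component by component via a Homological Perturbation style induction, using at each stage the homotopy invertibility hypothesis on $m^2_\B(T^0(x),-)$ to solve the resulting obstruction equations, and then to verify that $\mathfrak{M}^2(T,S)$ and $\mathfrak{M}^2(S,T)$ are homotopic to the identity prenatural transformations, which is exactly the condition in Definition \ref{def1}.

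First, we produce $S^0$. For each $x \in \A$, specialising the hypothesis with $b = \G^0(x)$, the chain map
$$m^2_\B(T^0(x),-):\Hom_\B(\G^0(x),\F^0(x))\to\Hom_\B(\G^0(x),\G^0(x))$$
is a homotopy equivalence, so we may pull a representative of the cohomological unit $1_{\G^0(x)}$ back to a closed element $S^0(x) \in \Hom_\B(\G^0(x),\F^0(x))$ together with a primitive $\eta_x$ satisfying $m^2_\B(T^0(x), S^0(x)) = 1_{\G^0(x)} + m^1_\B(\eta_x)$. The symmetric hypothesis on $m^2_\B(-,T^0(x))$ produces a one-sided inverse from the other side, and a standard two-out-of-six argument in the cohomology category $H(\B)$ then lets us arrange that $S^0(x)$ serves as a simultaneous left and right homotopy inverse of $T^0(x)$.

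Next we build $S^n$ for $n\ge 1$ inductively so that $S$ is natural, i.e.\ $\mathfrak{M}^1(S)=0$, and simultaneously so that $\mathfrak{M}^2(T,S)-\Id_\G = \mathfrak{M}^1(H')$ for some prenatural transformation $H'$; the other-sided equation is treated in the same fashion. Having fixed $S^{<n}$ and the partial data of $H'$, the equation to be solved takes the shape
$$m^2_\B\!\bigl(T^0(x_n),\,S^n(f_n,\ldots,f_1)\bigr) \;=\; \Psi_n(f_n,\ldots,f_1) \;+\; m^1_\B(\text{lower-order terms}),$$
where $\Psi_n$ is assembled from $S^{<n}$, $T$, $\F$, $\G$ and the $m^k_\B$ via the expansion \eqref{lososot}. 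The naturality identity $\mathfrak{M}^1(T)=0$ together with the inductive hypotheses forces $\Psi_n$ to be a cycle in $\Hom_\B\bigl(\F^0(x_0),\G^0(x_n)\bigr)$, and the homotopy invertibility of $m^2_\B(T^0(x_n),-)$ then yields $S^n$ (and a compensating contribution to $H'$) solving the equation.

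\textbf{Main obstacle.} The delicate point is organising the double bookkeeping of the induction: one must simultaneously ensure $\mathfrak{M}^1(S)=0$ and the existence of $H'$, and verify that the obstruction $\Psi_n$ really is a cycle and that the boundaries absorbed into $H'$ are compatible with the naturality of $S$. This is essentially the content of the HPT transfer principle in the prenatural setting, and once the signs in \eqref{lososot} are tracked carefully it goes through just as in the proof of Theorem \ref{DC}; indeed one may alternatively derive this proposition by applying Theorem \ref{DC} to the $\Ain$-functor $\A\to\Fun(\Ti,\B)$ obtained from $T$ via the evaluation isomorphism \eqref{evalu}, where $\Ti$ is the free interval $\Ain$-category on one closed arrow.
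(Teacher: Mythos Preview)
The paper does not give its own proof of this proposition: it simply refers to \cite[\S 7.13]{Lyu}. So there is nothing in the paper to compare your argument against directly; you are supplying what the paper outsources.

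Your inductive HPT-style construction of $S$ is the correct strategy and is indeed how the cited reference proceeds. The construction of $S^0$ is fine. For the induction, however, your write-up is a plan rather than a proof: you assert that the obstruction $\Psi_n$ is a cycle and that the homotopy invertibility of $m^2_\B(T^0(x_n),-)$ lets you solve for $S^n$, but you do not actually verify either claim, nor do you explain how the choice of $S^n$ is simultaneously compatible with $\mathfrak{M}^1(S)=0$ and with building $H'$. These are exactly the delicate points you flag in your ``main obstacle'' paragraph, and simply saying ``it goes through just as in the proof of Theorem~\ref{DC}'' is not adequate, since Theorem~\ref{DC} concerns inverting a functor, not a natural transformation, and the combinatorics of $\mathfrak{M}^1$ on prenatural transformations (formula~\eqref{lososot}) differ from those governing $\Ain$-functors.

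Your proposed alternative---encoding $T$ as a functor $\A\to\Fun(\Ti,\B)$ via \eqref{evalu} and then applying Theorem~\ref{DC}---is not set up correctly as stated. Theorem~\ref{DC} produces an inverse \emph{functor}; it is not clear what playing it against $\A\to\Fun(\Ti,\B)$ would yield, since you have not identified a second functor and a pointwise homotopy equivalence between them. A version of this idea can be made to work (it is closer to the use of $\Na^\star(\B)_1$ in Corollary~\ref{Corollaroni}), but as written it does not constitute a proof.
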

\begin{proof}
See \cite[\S 7.13]{Lyu}.
\end{proof}

We conclude this section with a Lemma which is a (kind of) nice application of Theorem \ref{DC}.\
We extend \cite[Lemma 2.5]{Sei} over a commutative ring $R$ characterizing the relation between homotopic and weakly equivalent functors.\ This will be useful in the last section.

\begin{lem}\label{sim}
Let $\mathscr{F}$ and $\mathscr{G}$ be two cohomological unital functors between strictly unital $\Ain$categories, if $\mathscr{F}\sim\mathscr{G}$ then $\mathscr{F}\approx\mathscr{G}$ and $[\F]=[\G]$ in $\mbox{Ho}(\aCat)$.
\end{lem}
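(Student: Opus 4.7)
The plan is to promote the additive homotopy $H$ to a genuine natural transformation whose $0$-component is a strict unit of $\B$, to invert it by Proposition~\ref{Invertitransf}, and finally to invoke Theorem~\ref{mspaces} for the statement in $\Ho(\aCat)$.

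Since $\F^0=\G^0$ on objects and $\B$ is strictly unital, the strict unit $1_{\F^0(x)}=1_{\G^0(x)}$ is available for every $x\in\A$. I would set
\begin{align*}
T^0(x) := 1_{\F^0(x)}, \qquad T^n := H^n \quad (n \geq 1),
\end{align*}
giving a degree $0$ prenatural transformation $T:\F\Rightarrow\G$. The central verification is $\mathfrak{M}^1(T)=0$. Expanding the explicit formula for $\mathfrak{M}^1$ on prenatural transformations (\cite[(1e)]{Sei}, \cite[\S1.3.2]{Orn1}), the summands in which $T^0$ is inserted collapse by strict unitality of $\B$: Definition~\ref{su}.1 gives $m^2_\B(1_{\F^0(x)},-)=m^2_\B(-,1_{\F^0(x)})=\Id$, while Definition~\ref{su}.2 kills every $m^{r\ge 3}_\B$-term into which the unit is inserted, leaving precisely $\F^n(f_n,\ldots,f_1)-\G^n(f_n,\ldots,f_1)$. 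The summands involving only $T^{\geq 1}=H$ (here one uses $H^0=0$) reassemble to $\mathfrak{M}^1(H)^n(f_n,\ldots,f_1)$. These two contributions cancel by the hypothesis $\F-\G=\mathfrak{M}^1(H)$.

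Once $T$ is established to be a natural transformation, Proposition~\ref{Invertitransf} applies immediately: $T^0(x)=1_{\F^0(x)}$ is a strict unit, so $m^2_\B(T^0(x),-)$ and $m^2_\B(-,T^0(x))$ are the identity on each Hom complex, hence trivially homotopy invertible. This produces an inverse natural transformation $S:\G\Rightarrow\F$, witnessing $\F\approx\G$ in the sense of Definition~\ref{def1}. The remaining assertion $[\F]=[\G]$ in $\Ho(\aCat)$ then follows at once from Theorem~\ref{mspaces} (Theorem~C).

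The main obstacle is the combinatorial matching in the verification of $\mathfrak{M}^1(T)=0$: the differential is a sum indexed by compositions of $n$ and by the insertion position, decorated with Koszul signs, and the required cancellation between the $T^0$- and $T^{\geq 1}$-contributions forces one to align these expansions term by term, exploiting strict unitality of both $\A$ and $\B$ to discard the higher unital terms. A subsidiary subtlety is that $\F$ and $\G$ are only cohomological unital, so $\F^1(1_x)$ is merely cohomologous to $1_{\F^0(x)}$; the resulting $m^1_\B$-boundary can be absorbed into the homotopy $H$ (or, after the fact, into the natural transformation $S$ produced by Proposition~\ref{Invertitransf}), so it does not obstruct the argument. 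This gives a version of~\cite[Lemma 2.5]{Sei} valid over an arbitrary commutative base ring.
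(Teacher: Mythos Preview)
Your argument is correct and takes a genuinely different route from the paper's own proof. The paper constructs an interval DG algebra $I=R\cdot u_0\oplus R\cdot u_1\oplus R\cdot h$ and uses the homotopy $H$ to build an $\Ain$functor $\mathscr{H}:\A\to I\otimes\B$ fitting into a commutative diagram with $\F$ and $\G$; this diagram gives $[\F]=[\G]$ directly (without invoking Theorem~\ref{mspaces}), and then Theorem~\ref{DC} is applied to the homotopy equivalences $f_j\otimes\Id_{\B}$, $i\otimes\Id_{\B}$ to deduce $\F\approx\G$. By contrast, you bypass both the interval object and HPT: you build the closed transformation $T=(1,H^1,H^2,\dots)$ by hand, invert it via Proposition~\ref{Invertitransf}, and only then appeal to Theorem~\ref{mspaces} for the statement in $\Ho(\aCat)$. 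Your approach is more elementary and self-contained; the paper's approach has the advantage of yielding $[\F]=[\G]$ without relying on the $\Ain$-nerve machinery of Section~\ref{highercats}, so the two conclusions are logically independent there.

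One remark: your final paragraph about the ``subsidiary subtlety'' is a red herring. The verification of $\mathfrak{M}^1(T)=0$ never evaluates $\F$ or $\G$ on units of $\A$; it only uses that $T^0(x)=1_{\F^0(x)}$ is a strict unit in $\B$, which holds by hypothesis. The cohomological (rather than strict) unitality of $\F$ and $\G$ plays no role in the computation, so nothing needs to be ``absorbed'' into $H$ or $S$. You may simply delete that caveat.
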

\begin{proof}
We define the DG algebra $I:=R\cdot u_0 \oplus R\cdot u_1\oplus R\cdot h$ where $\mbox{deg}(u_0)=\mbox{deg}(u_1)=0$ and $\mbox{deg}(h)=1$.\ The differential is given by $d(u_0)=h=-d(u_1)$ and the multiplication is given as follows:
$$\mbox{$u_0\cdot u_0=u_0$, \qquad $u_1\cdot u_1=u_1$, \qquad $u_0\cdot u_1=u_1\cdot u_0=0$, \qquad $u_1\cdot h=h\cdot u_0=h$, \qquad $h\cdot u_1=u_0\cdot h=0$.}$$ 
We note that $u_0+u_1$ is the unit of $I$.\ We have three DG functors:
\begin{align*}
i:R&\to I\\
1&\mapsto u_0+u_1.
\end{align*}
And for $j=0,1$ the projection:
\begin{align*}
f_j:I&\to R\\
r_0\cdot u_0 +r_1\cdot u_1 +r_2\cdot h&\mapsto r_j.
\end{align*}
Such that $i$ and $f_j$ are homotopy inverses.\\
We recall that, if $\F\sim \G$, there exists a prenatural transformation $H$ such that $\F-\G=\mathfrak{M}_1(H)$.\\
Following \cite[Remark 1.11]{Sei} or \cite[Remark 1.4]{Orn1} via $H$, we can define an $\Ain$functor $\mathscr{H}$ fitting the commutative diagram in $\aCat$:
\begin{align}\label{inver}
\xymatrix{
&&\B&&\\
\A\ar@/^/[urr]^{\F}\ar@/_/[drr]_{\G}\ar[rr]^{\mathscr{H}}&&I\ar[u]^{f_0\otimes\tiny\Id_{\B}}\ar[d]_{f_1\otimes\tiny\Id_{\B}}\otimes\B&&\ar[ll]_{i\otimes\tiny\mbox{id}_{\B}}\B\ar@{=}[ull]\ar@{=}[dll]\\
&&\B&&
}
\end{align}
By (\ref{inver}) we have that $[\F]=[\G]$ in $\mbox{Ho}(\aCat)$.\ Moreover since $f_j\otimes\Id_{\B}$ and $i\otimes \Id_{\B}$ are homotopy inverses we can apply Theorem \ref{DC} getting $\F\approx\G$.
\end{proof}

\subsection{The simplicial and globular nature of the $\Ain$categories}\label{highercats}
In the previous section we proved that, if $\F\sim\G$ then $\F\approx\G$.\ 
To understand the homotopy category of $\Ain$categories it will be crucial Theorem \ref{mspaces} which says that, if $\F\approx\G$, then $[\F]=[\G]$ in $\mbox{Ho}(\aCat)$.\ This is a very straightforward consequence of a much more general result (Lemma \ref{LEMMOMBA}).\
In order to prove it we use the fact that $\Ain$categories are "enriched" in $\Ain$categories.\ 
In this subsection we consider strictly unital $\Ain$categories, strictly unital  $\Ain$functors and strictly unital prenatural transformations.\\

Let $\A$ be a strictly unital $\Ain$category.\\
In this subsection we consider only strictly unital $\Ain$categories and strictly unital $\Ain$functors.\\
\\
We can associate to $\A$ an $\infty$-category $\Na(\A)$ (see \cite{Orn2}).\\
Explicitly we have
\begin{align*}
\Na(\A)_n:=\Hom_{\tiny\Ain}([n],\A).
\end{align*}
$\Hom_{\tiny\Ain}([n],\A)$ denotes the set of strictly unital $\Ain$functor, which is contained in $\mbox{Ob}\big(\mbox{Fun}_{\infty}([n],\A)\big)$.\\ 
Here $[n]$ denotes the object of the simplex category seen as an $\Ain$category.
\\
\\
We recall by \cite{Orn2} that an object in $\Na(\A)_1$ is given by:
\begin{itemize}
\item[1.] two objects of $\A$, $x_0:=\F^0(0)$ and $x_1:=\F^0(1)$,
\item[2.] one closed morphisms of degree zero $f_{01}:=\F^1(j_{01})$.
\end{itemize}
We recall that, if $\A$ is a DG category we have $\Na(\A)_1=\Nd(\A)_1$, where $\Nd$ is the DG nerve defined in \cite[\S1.3.1]{Lu}.\\
Now we consider $\Hom_{\tiny\Ain}([n],\A)$ with its natural $\Ain$structure given by $\mbox{Fun}_{\infty}([1],\A)$.\\

We recall that $[0]$ is the $\Ain$category with one object and whose morphisms is the strict identity:
\begin{align*}
\Hom_{\tiny\Ain}([0],\A)\simeq\A.
\end{align*}
We have two $\Ain$functors 
\begin{align*}
i_0,i_1:[0]&\to[1]
\end{align*}
such that $i_0(0)=0$ and $i_1(0)=1$.\\
We can define two $\Ain$functors 
\begin{align*}
s:=\Hom_{\tiny\Ain}(i_0,\A):\Na(\A)_1\to\A\\
t:=\Hom_{\tiny\Ain}(i_1,\A):\Na(\A)_1\to\A
\end{align*}
On the other hand we can take the DG category $\overline{[1]}$ which has two objects and two morphisms closed of degree zero:
\[
\xymatrix{
0\ar@/^/[r]^{j_{01}}&1\ar@/^/[l]^{j_{10}}
}
\]
such that $j_{01}\cdot j_{10}=\mbox{Id}$ and $j_{10}\cdot j_{01}=\mbox{Id}$.\\
We have
\begin{align*}
[1]\subset \overline{[1]}.
\end{align*}
We have 
\begin{align*}
\Hom_{\tiny\Ain}(\overline{[1]},\A) \subset \Na(\A)_1.
\end{align*}
From now on let me denote by ${\Na^{\star}(\A)}_1$ the $\Ain$category $\Hom_{\tiny\Ain}(\overline{[1]},\A)$.\\
Since $\A$ is an $\Ain$category then ${\Na^{\star}(\A)}_1$ has an $\Ain$structure.\\ 
The objects in ${\Na^{\star}(\A)}_1$ are uniquely determined by 
\begin{itemize}
\item[1.] two objects of $\A$, $x_0:=\F^0(0)$ and $x_1:=\F^0(1)$,
\item[2.] two closed morphisms $f_{01}:=\F^1(j_{01})$ and $f_{10}:=\F^1(j_{10})$ of degree zero,
\item[3.] two morphisms of degree one $f_{010}:=\F^2(j_{01},j_{10})$ and $f_{101}:=\F^2(j_{10},j_{01})$.\\ 
We have
\begin{align*}
m^1_{\A}(f_{010}):=m^2_{\A}(f_{10}, f_{01})+\mbox{Id}_{x_0},
\end{align*}
and 
\begin{align*}
m^1_{\A}(f_{101}):=m^2_{\A}(f_{01}, f_{10})+\mbox{Id}_{x_1}.
\end{align*}
\end{itemize}
It is easy to see that $[0]$ and $\overline{[1]}$ are equivalent as $R$-linear category, we have: 
\begin{align*}
i_0,i_1:[0]&\to\overline{[1]}
\end{align*}
such that $i_0(0)=0$ and $i_1(0)=1$, and 
\begin{align*}
j:\overline{[1]}&\to[0].
\end{align*}
We can define the following three quasi-equivalences of $\Ain$categories:
\begin{align*}
s:=\Hom_{\tiny\Ain}(i_0,\A):{\Na^{\star}(\A)}_1\to\A,\\
t:=\Hom_{\tiny\Ain}(i_1,\A):{\Na^{\star}(\A)}_1\to\A,\\
i:=\Hom_{\tiny\Ain}(j,\A):\A\to {\Na^{\star}(\A)}_1,
\end{align*}
such that $t\cdot i=s\cdot i=\mbox{Id}$.

\begin{lem}\label{LEMMOMBA}
Let $\A$, $\B$ be two strictly unital $\Ain$categories.\\
For every positive integer $n$, we have an isomorphism of $\Ain$categories:
\begin{align*}
\Na\big(\Fun(\A,\B)\big)_n&\to \Fun\big(\A,\Na(\B)_n \big).
\end{align*}
Moreover 
\begin{align*}
\Na^{\star}\big(\Fun(\A,\B)\big)_1&\to \Fun\big(\A,\Na^{\star}(\B)_1 \big).
\end{align*}
\end{lem}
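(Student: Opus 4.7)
The plan is to realize both sides as two incarnations of the same $\Ain$category of $\Ain$bifunctors, via the closed multicategory structure on $\aCat$ recalled in the remark preceding the statement.

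First, I would assemble the chain of natural isomorphisms of $\Ain$categories
\begin{align*}
\Fun([n], \Fun(\A,\B)) \;\cong\; \Fun([n], \A, \B) \;\cong\; \Fun(\A, [n], \B) \;\cong\; \Fun(\A, \Fun([n], \B)),
\end{align*}
where the outer two are the currying isomorphism (\ref{evalu}) and the middle one is the symmetry $\Fun(\A_1,\A_2,\B)\cong\Fun(\A_2,\A_1,\B)$ from the same remark. Concretely, the composite sends an $\Ain$functor $\sF:[n]\to \Fun(\A,\B)$ to the $\Ain$functor $\sG:\A\to \Fun([n],\B)$ whose object assignment $a\mapsto \sF_a$ is defined by
\begin{align*}
\sF_a^{\,p}(g_p,\ldots,g_1):=\sF^p(g_p,\ldots,g_1)^0(a),
\end{align*}
and whose higher Taylor components are obtained from the prenatural transformation components of $\sF$ evaluated on morphisms of $\A$. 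Because the $\Ain$structure on both sides is inherited solely from $\B$ and currying does not touch the $\B$-operations, this identification is automatically an isomorphism of $\Ain$categories.

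Second, I would restrict to strictly unital objects. By construction $\Na(\Fun(\A,\B))_n$ is the full $\Ain$subcategory of $\Fun([n], \Fun(\A,\B))$ on the strictly unital $\Ain$functors, while $\Fun(\A,\Na(\B)_n)$ is the full $\Ain$subcategory of $\Fun(\A, \Fun([n],\B))$ on those $\Ain$functors landing in $\Na(\B)_n$. The key observation is that $\sF$ is strictly unital, i.e.\ $\sF^1(1_i)=1_{\sF^0(i)}$ and $\sF^{\geq 2}(\ldots,1_i,\ldots)=0$ as prenatural transformations, if and only if each $\sF_a$ satisfies $\sF_a^1(1_i)=1_{\sF_a^0(i)}$ and $\sF_a^{\geq 2}(\ldots,1_i,\ldots)=0$, which is exactly the condition $\sF_a\in\Na(\B)_n$ for every $a$. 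Hence the currying isomorphism restricts to the desired isomorphism of $\Ain$subcategories.

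Third, the $\Na^{\star}$ version follows by the same argument with $\overline{[1]}$ replacing $[n]$: since $\overline{[1]}$ is itself a DG category, and in particular a strictly unital $\Ain$category, the currying/symmetry chain still makes sense, and the extra object-data of $\Na^{\star}(\B)_1$ (the degree-one morphisms $f_{010}, f_{101}$) simply correspond under currying to $\sF^2(j_{01},j_{10})^0(a)$ and $\sF^2(j_{10},j_{01})^0(a)$, with the homotopy identities translating termwise.

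The principal obstacle I expect is bookkeeping: verifying that the explicit formulas for the $\Ain$bifunctor relations, the prenatural transformation differentials $\mathfrak{M}^1$, and the $\Ain$functor relations on both sides are intertwined by the currying map with the correct Koszul signs. Once the closed multicategory isomorphism (\ref{evalu}) is invoked, the only genuinely new content is the strict-unitality compatibility in the second step, which is the straightforward check sketched above; everything else is a routine unwinding of definitions.
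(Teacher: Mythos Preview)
Your approach is essentially identical to the paper's: both proofs invoke the closed multicategory structure to produce the chain
\[
\Fun([n],\Fun(\A,\B))\;\cong\;\Fun([n],\A,\B)\;\cong\;\Fun(\A,[n],\B)\;\cong\;\Fun(\A,\Fun([n],\B)),
\]
and then replace $[n]$ by $\overline{[1]}$ for the $\Na^{\star}$ version. The paper's argument is extremely terse (it simply writes the chain and declares ``the same proof holds for $\Na^{\star}$''), whereas you have unpacked the currying map explicitly and added a second step checking that strict unitality with respect to $[n]$ is preserved under currying; this is a point the paper glosses over entirely by silently identifying $\Na(\mbox{--})_n$ with $\Fun([n],\mbox{--})$ in the enriched sense.
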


\begin{proof}
Here we are considering the $\Ain$nerve with his $\Ain$enrichement, so we can write $\Hom_{\tiny\aCat}=\Fun$.\ We have:
\begin{align*}
\Na\big(\Fun(\A,\B)\big)_n:=\Fun([n],\Fun(\A,\B))&\simeq \Fun([n],\A,\B)\\
&\simeq\Fun(\A,[n],\B)\\
&\simeq \Fun\big(\A,\Fun([n],\B)\big)\\
&=:\Fun\big(\A,\Na(\B)_n \big).
%\Na(\Fun(\F,\G)_1)&\to\mathcal{f}\mbox{$\psi:\A\to\Na(\B)_1$ such that $s\psi=\F$ and $t\psi=\G$}\mathcal{g}
\end{align*}
The same proof holds for $\Na^{\star}$ taking $\overline{[1]}$ instead of $[n]$.
\end{proof}

A direct consequence of Lemma \ref{LEMMOMBA} is the following:
\begin{cor}\label{Corollaroni}
Denoting by $\mbox{Nat}$ the sets of closed degree zero prenatural transformations.\ In other words $\mbox{Nat}=\mbox{Ob}(H^0(\Fun(\F,\G)))$.\ We have a bijection of sets:
\begin{align*}
\mbox{Nat}(\F,\G)&\leftrightarrow\mathcal{f}\mbox{$\psi:\A\to\Na(\B)_1$ such that $s\cdot\psi=\F$ and $t\cdot\psi=\G$}\mathcal{g}%\\
%H&\mapsto \psi_{H}.
\end{align*}
If $T$ is invertible in $H\big(\Fun(\A,\B)\big)$ then $\psi(T):\A\to\Na^{\star}(\B)$.
\end{cor}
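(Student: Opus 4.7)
The plan is to derive both assertions from Lemma \ref{LEMMOMBA} by specializing to $n=1$ and carefully identifying objects together with their source/target structure.

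First, I would invoke Lemma \ref{LEMMOMBA} at $n=1$ to obtain an isomorphism of $\Ain$categories
\[
\Na\bigl(\Fun(\A,\B)\bigr)_1 \;\simeq\; \Fun\bigl(\A,\Na(\B)_1\bigr),
\]
and in particular a bijection on the underlying sets of objects. By the explicit description of $\Na(\mathcal{C})_1$ recalled at the start of this subsection, an object of $\Na\bigl(\Fun(\A,\B)\bigr)_1$ is precisely a triple $(\F,\G,T)$, where $\F,\G:\A\to\B$ are strictly unital $\Ain$functors and $T$ is a closed degree zero prenatural transformation, i.e. an element of $\mbox{Nat}(\F,\G)$.

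Next, I would trace how the source and target functors $s,t:\Na(\B)_1\to\B$ interact with the isomorphism. Post-composition with $s$ (resp.\ $t$) defines a strict $\Ain$functor $\Fun(\A,\Na(\B)_1)\to\Fun(\A,\B)$, sending $\psi$ to $s\cdot\psi$ (resp.\ $t\cdot\psi$); the corresponding operation on $\Na(\Fun(\A,\B))_1$ is pre-composition with $i_0,i_1:[0]\to[1]$, which sends $(\F,\G,T)\mapsto \F$ (resp.\ $\G$). This compatibility is exactly naturality of the isomorphism in Lemma \ref{LEMMOMBA} in the first variable, i.e.\ in $[1]$. Fixing source $\F$ and target $\G$ therefore identifies $\mbox{Nat}(\F,\G)$ on the left with the fiber $\{\psi:\A\to\Na(\B)_1 \mid s\cdot\psi=\F,\ t\cdot\psi=\G\}$ on the right, giving the claimed bijection $T\leftrightarrow \psi(T)$.

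For the second assertion I would apply the $\Na^{\star}$ part of Lemma \ref{LEMMOMBA}. Unwinding the description of $\Na^{\star}(\mathcal{C})_1$ given just before the lemma, an object of $\Na^{\star}(\Fun(\A,\B))_1$ is a strictly unital $\Ain$functor $\overline{[1]}\to \Fun(\A,\B)$, which amounts to natural transformations $T:\F\Rightarrow\G$ and $S:\G\Rightarrow\F$ together with degree one prenatural transformations $H,H'$ satisfying the identities \eqref{appross} of Definition \ref{def1} — exactly the data witnessing invertibility of $T$ in $H(\Fun(\A,\B))$. So if such a $T$ is given, any choice of homotopy inverse $S$ and homotopies $H,H'$ produces an object of $\Na^{\star}(\Fun(\A,\B))_1$ which, via the isomorphism $\Na^{\star}(\Fun(\A,\B))_1\simeq \Fun(\A,\Na^{\star}(\B)_1)$, provides the desired lift $\psi(T):\A\to \Na^{\star}(\B)_1$. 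The only point that really needs checking is the compatibility of the face maps $s,t$ with the isomorphism of Lemma \ref{LEMMOMBA} — i.e.\ naturality in the simplicial variable; once that is granted, the corollary is a bookkeeping unwinding of the descriptions of $\Na(-)_1$ and $\Na^{\star}(-)_1$.
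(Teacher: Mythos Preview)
Your proposal is correct and follows the paper's approach: the paper states the corollary as ``a direct consequence of Lemma \ref{LEMMOMBA}'' with no further proof, and your argument is precisely the unwinding of that lemma at $n=1$ (and at $\overline{[1]}$ for the $\Na^{\star}$ part), together with the identification of objects and the compatibility with the source/target maps.
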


\begin{thm}\label{mspaces}
Given two $\Ain$categories $\A$ and $\B$ if $\F\approx\G$ then $[\F]=[\G]$ in $\mbox{Ho}(\aCat)$.
\end{thm}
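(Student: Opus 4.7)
The plan is to leverage Corollary \ref{Corollaroni} to realize the weak equivalence $\F\approx\G$ as a single $\Ain$functor with appropriate projections, and then exploit the path-object-like structure of $\Na^{\star}(\B)_1$. By hypothesis and Definition \ref{def1}, there exists a natural transformation $T:\F\Rightarrow\G$ that is invertible in $H(\Fun(\A,\B))$ (with inverse $S$). By the ``moreover'' clause of Corollary \ref{Corollaroni}, $T$ corresponds to a (strictly unital) $\Ain$functor $\psi:\A\to\Na^{\star}(\B)_1$ satisfying $s\cdot\psi=\F$ and $t\cdot\psi=\G$.

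Next I would invoke the structural properties of $\Na^{\star}(\B)_1$ recalled just before Lemma \ref{LEMMOMBA}: the three maps $s,t:\Na^{\star}(\B)_1\to\B$ and $i:\B\to\Na^{\star}(\B)_1$ are all quasi-equivalences, and they satisfy $s\cdot i=t\cdot i=\Id_{\B}$. Passing to $\Ho(\aCat)$, the class $[i]$ is invertible, and from $[s]\cdot[i]=[t]\cdot[i]=[\Id_{\B}]$ we conclude $[s]=[t]=[i]^{-1}$.

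Combining the two ingredients gives the desired equality:
\[
[\F]=[s\cdot\psi]=[s]\cdot[\psi]=[t]\cdot[\psi]=[t\cdot\psi]=[\G]
\]
in $\Ho(\aCat)$.

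The point requiring care is that Corollary \ref{Corollaroni} produces $\psi$ landing in $\Na^{\star}(\B)_1$ rather than merely in $\Na(\B)_1$; this is precisely what the invertibility of $T$ in $H(\Fun(\A,\B))$ secures. Everything else is an application of the fact that a ``cylinder/path object'' whose projections are both quasi-equivalences with a common section forces source and target to coincide in the localization, with $\Na^{\star}(\B)_1$ playing the role of such a path object for $\B$.
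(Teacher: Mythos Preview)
Your proof is correct and follows essentially the same approach as the paper: both arguments use Corollary \ref{Corollaroni} to convert the invertible natural transformation into an $\Ain$functor $\psi:\A\to\Na^{\star}(\B)_1$ with $s\cdot\psi=\F$ and $t\cdot\psi=\G$, and then use that $s,t,i$ are quasi-equivalences with $s\cdot i=t\cdot i=\Id_{\B}$ to conclude $[\F]=[\G]$ in $\Ho(\aCat)$. Your phrasing of the final step (deducing $[s]=[t]=[i]^{-1}$ directly) is slightly more explicit than the paper's description via the common roof, but the content is identical.
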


\begin{proof}
If $\F\approx\G$ then there exist two natural transformations $H:\F\to\G$ and $H':\G\to\F$ satisfying (\ref{appross}).\\
In particular it means that $m^2(H'^0(a),H^0(a))=\mbox{Id}_a+m^1(W^0(a))$ and $m^2(H^0(a),H'^0(a))=\mbox{Id}_a+m^1(W'^0(a))$.\\
By Corollary \ref{Corollaroni} $\psi({H}):\A\to\Na(\B)_1$ is contained in ${\Na^{\star}(\B)}_1$.\ We have the commutative diagram:
\[
\xymatrix{
&\B&\\
\A\ar[ur]^{\F}\ar[dr]_{\G}\ar[r]^-{\psi_H}&{\Na^{\star}(\B)}_1\ar[u]_s^{\simeq}\ar[d]^t_{\simeq}&\ar@{=}[ul]\ar@{=}[dl]\B\ar[l]_-{i}^-{\simeq}\\
&\B&
}
\]
So $\F$ and $\G$ are the inverses in $\mbox{Ho}(\aCat)$ of the roof
\[
\xymatrix{
\A\ar[r]^-{\psi_H}&{\Na^{\star}(\B)}_1&\B\ar[l]_-{i}^-{\simeq}\\
}
\]
It means that $[\F]=[\G]$ and we are done.
\end{proof}

\begin{rem}\label{pathobj}
If $\A$ is a DG category then ${\Na^{\star}(\A)}_1$ coincides with the DG category $P(\A)$ of \cite[\S3]{Tab2}, \cite[\S 2.2]{CS}, see also \cite[Lemma 4.8]{Gen}.\
We recall that $P(\A)$ is the path object in the Tabuada model structure.\ 
It has a very simply characterization in term of $\Ain$functors.\
This category is different from $\Hom_{\tiny\DgCat}(\overline{[1]},\A)$.\ 
The second one is the category of closed degree zero morphisms with inverse (not inverse in homology).
\end{rem}

\newpage

\section{Free categories, $\Ain$ideals and quotients}\label{Liberone}

This section is divided in three subsections: in the first one we define the non unital, strictly unital $\Ain$categories, and DG categories, generated by a DG quiver (the free categories).\
In the second we characterize the $\Ain$functors and natural transformations with source a free $\Ain$category.\ In the last one we characterize the $\Ain$functors and natural transformations with source a quotient of a free $\Ain$category.\ 

\subsection{Free $\Ain$categories, A$_{\infty}$ideals and quotients}
We start this subsection with the construction of free $\Ain$category due to Kontsevich and Soibelman.\ Then we introduce the notions of $\Ain$ideal and system of relations.\ Once we have the notion of $\Ain$ideal it is easy to define the quotient of an $\Ain$category.\\

Let $\Q$ be a DG quiver.
\begin{cons}\label{Liberanzo}
We can define a non unital $\Ain$category $\TT(\Q)$ as follows:
\begin{itemize}
\item[1.] the objects of $\TT(\Q)$ are the same of $\Q$.
\item[2.] Fixed two objects $x,y\in \TT(\Q)$, the hom-space is the graded $R$-module given by
\begin{align*}
\Hom_{\TT(\Q)}(x,y):=\oplus_{n\ge1}\displaystyle\bigoplus_{\substack{\mathfrak{t}\in PT_n \\ x=x_1,...,x_{n+1}=y}}\Hom_{\Q}(x_1,x_{2}) \otimes ...\otimes \Hom_{\Q}(x_{n-1},x_n).
\end{align*}
See Remark \ref{alberoni}.
\item[3.] The $\Ain$structure on $\TT(\Q)$ is defined as follows: 
\begin{itemize}
\item For $n=2$, we have
\begin{align*}
\mTT^2\big((\T_1; f_1,...,f_n),(\T_2; g_1,...,g_m)\big):=(\T_1\vee \T_2; f_1,...,f_n,g_1,...,g_m).
\end{align*}
Where $\T_1\vee \T_2$ is the planar rooted tree with $n+m$-leaves obtained by grafting $\T_1$ and $\T_2$, see subsection \ref{gloglossary}.
\item In general, for $n>2$ we have:
\begin{align*}
\mTT^n\big((\T_1;f_{1_1},...,f_{m_1}\big),...,\big(\T_n;f_{n_1},...,f_{n_m})\big):=(\T_1\vee...\vee \T_n; f_{1_1},...,f_{n_m}).
\end{align*}
\end{itemize}
We define $\mTT^1$ to be the only one making $\TT(\Q)$ an $\Ain$category (see Example \ref{diffex}).
\end{itemize}
It is clear that the hom-spaces of $\TT(\Q)$ has a natural grading induced by the hom-spaces of $\Q$.
\end{cons}

\begin{rem}\label{alberoni}
Since the hom-space of $\TT(\Q)$ is the direct sum indexed by the trees, we can write a homogeneous morphism as follows:
\begin{align}\label{enxo}
(\mathfrak{t}; f_1,...,f_n).
\end{align}
Where $\mathfrak{t}\in PT_n$ (a tree with $n$-leaves) and 
\begin{align*}
f_1,...,f_n\in \Hom_{\Q}(x_1,x_{2}) \otimes ...\otimes \Hom_{\Q}(x_n,x_{n+1})
\end{align*}
with $x_1=x$ and $x_{n+1}=y$.\ 
The writing (\ref{enxo}) means that the leaves of $\mathfrak{t}$ are labeled by the $f_i$'s.\ 
See the following picture:
\begin{figure}[htbp]
\centerline{\includegraphics[width=.5\textwidth, height=.1\textheight, keepaspectratio]{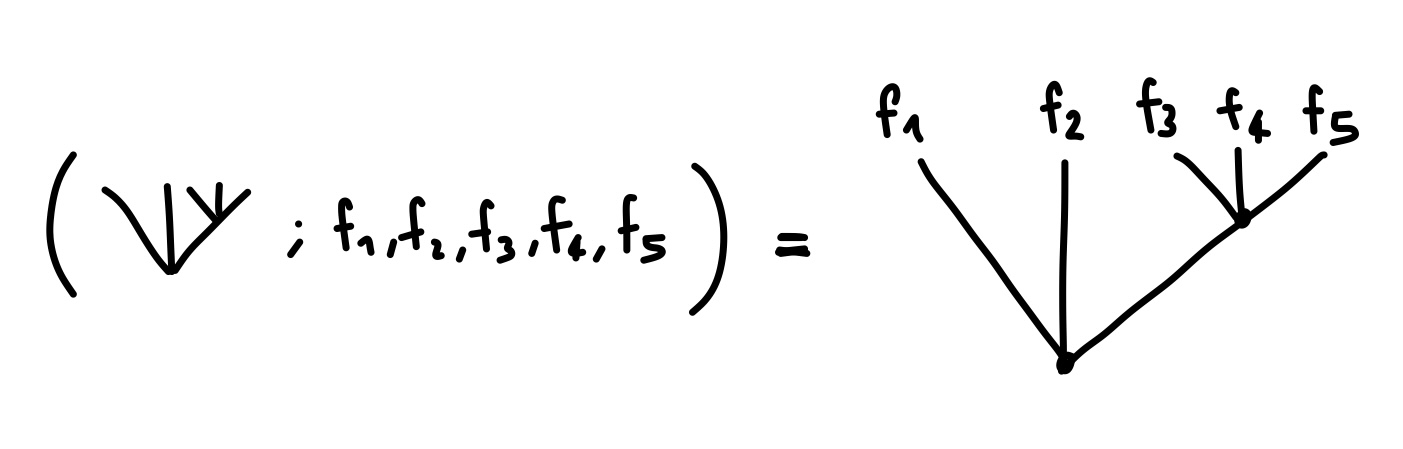}}
%\caption{This is an image from a text that uses color to teach music.}
%\label{fig}
\end{figure}
\\
Since the morphisms $f_1,...,f_n$ are taking in the tensor product, it means that the followings hold:  
\begin{align*}
r\cdot (\mathfrak{t};f_1,...,f_j,...,f_n)=(\mathfrak{t}; rf_1,...,f_j,...,f_n)=...=(\mathfrak{t}; f_1,...,rf_j,...,f_n)=...=(\mathfrak{t};f_1,...,f_j,...,rf_n)
\end{align*}
for every tree $\mathfrak{t}$, $f_1,...,f_n\in\Q$ and $r\in R$.\ And
\begin{align*}
(\mathfrak{t};f_1,...,f_j,...,f_n)+(\mathfrak{t};f_1,...,f'_j,...,f_n)=(\mathfrak{t}; f_1,...,(f_j+f'_j),...,f_n)
\end{align*}
for every tree $\mathfrak{t}$, $f_1,...,f_{j-1},f_{j+1},f_n \in\Q$ and $f_j,f'_j\in\Hom_{\Q}(x_j,x_{j+1})$.
\end{rem}

\begin{exmp}\label{genera}
Note that every tree is generated (iterating the $\mTT$) by the one of the form:
\begin{align*}
\big(\hspace{0,1cm}\bullet \hspace{0,1cm};& \hspace{0,1cm}f\hspace{0,1cm}\big)\hspace{0,3cm} \mbox{where $f\in\Hom_{\Q}(x,y)$}.
\end{align*}
For example 
\begin{align}
t_2=\big(\ \MDu\ ;\ f_1,f_2\ \big)=\mTT^2\big((\bullet ; f_1),(\bullet ; f_2)\big)
\end{align}
The homogeneous morphism 
\begin{align}
t_3=\big(\ \MCS\ ;\ f_1,f_2,f_3\ \big)=\mTT^2(t_2,t_1)
\end{align}
where $t_1=(\bullet ; f_3)$.
\end{exmp}

\begin{exmp}\label{diffex}
In Construction \ref{Liberanzo}, the differential $\mTT^1$ is implicit.\ 
Let us calculate explicitly a few of differentials unrolling equation (\ref{grango}):
\begin{itemize}
\item[$PT_1$)] Given $f\in\Hom_{\Q}(x,y)$:
\begin{align*}
&\mTT^1\big( (\bullet; f )\big)=\big( \bullet; d_{\Q}(f) \big)
\end{align*}
\item[$PT_2$)] Given $f_1,f_2\in\Hom_{\Q}(x_1,x_2)\otimes \Hom_{\Q}(x_2,x_3)$:
\begin{align*}
&\mTT^1\left(\big(\ \MDu\ ;\ f_1,f_2\ \big) \right)=\left(\big(\ \MDu \ ;\ d_{\Q}(f_1),f_2\  \big)\right) +(-1)^{\tiny\mbox{deg($f_2$)}} \left( \big(\ \MDu\ ;\ f_1,d_{\Q}(f_2)\  \big)\right),
\end{align*}
\item[$PT_3$)] Given $f_1,f_2,f_3\in\Hom_{\Q}(x_1,x_2)\otimes \Hom_{\Q}(x_2,x_3)\otimes \Hom_{\Q}(x_3,x_4)$, we have two possibilities:
\begin{itemize}
\item[] It $T\in\mathcal{f}\  \MCS\ ,\  \MCD\  \mathcal{g}$, then:
\begin{align*}
\mTT^1\left(\big(\ T\ ;\ f_1,f_2,f_3\ \big)\right)&= (-1)^{\tiny\mbox{deg($f_1$)}} \big(\ T\ ;\ \dQ (f_1),f_2,f_3\ \big) \\
&+(-1)^{\tiny\mbox{deg($f_2$)}} \big(\ T\ ;\ f_1,\dQ (f_2),f_3\ \big)\\
&+ (-1)^{\tiny\mbox{deg($f_3$)}} \big(\ T\ ;\ f_1,f_2,\dQ (f_3)\ \big).
\end{align*}
\item[] Otherwise:
\begin{align*}
\mTT^1\left(\ \big(\ \MTr\ ;\ f_1,f_2,f_3\ \big)\right)&= \big(\ \MCS\ ;\ f_1,f_2,f_3\  \big) \\
&+ \big(\ \MCD\ ;\ f_1,f_2,f_3\ \big)\\
&+ (-1)^{\tiny\mbox{deg($f_1$)}} \big(\ \MTr\ ;\ \dQ (f_1),f_2,f_3\ \big) \\
&+(-1)^{\tiny\mbox{deg($f_2$)}} \big(\ \MTr\ ;\ f_1,\dQ (f_2),f_3\ \big)\\
&+ (-1)^{\tiny\mbox{deg($f_3$)}} \big(\ \MTr\ ;\ f_1,f_2,\dQ (f_3)\ \big).
\end{align*}
\end{itemize}
\end{itemize}
\end{exmp}

\begin{notatu}\label{notello}
We denote by $PT_n(\Q)(x,y)$ the DG $R$-module 
\begin{align*}
PT_n(\Q)(x,y):=\displaystyle\bigoplus_{\substack{\mathfrak{t}\in PT_n \\ x=x_1,...,x_{n+1}=y}}\Hom_{\Q}(x_1,x_{2}) \otimes ...\otimes \Hom_{\Q}(x_{n},x_{n+1}).
\end{align*}
Note that, if $T\in PT_n(\Q)(x,y)$ then $\mTT^1(T)\subset PT_n(\Q)(x,y)$.\\
Clearly 
$$PT_n(\Q)(x,y)\subset \Hom_{\TT(\Q)}(x,y)$$ 
as DG $R$-modules.\\
We denote 
$$PT_n(\Q):=\displaystyle\bigoplus_{x,y\in\Q}PT_n(\Q)(x,y).$$
Moreover, we denote by $PT^l_n(\Q)(x,y)$ the graded $R$-module 
\begin{align*}
PT_n(\Q)(x,y):=\displaystyle\bigoplus_{\substack{\mathfrak{t}\in PT^l_n \\ x=x_1,...,x_{n+1}=y}}\Hom_{\Q}(x_1,x_{2}) \otimes ...\otimes \Hom_{\Q}(x_{n},x_{n+1}),
\end{align*}
for every $\mathfrak{t}\in PT^l_n$ and $x_2,...,x_{n}\in\Q$.\\ 
We denote 
$$PT^l_n(\Q):=\displaystyle\bigoplus_{x,y\in\Q}PT^l_n(\Q)(x,y).$$
Note that, if $T\in PT^l_n(\Q)(x,y)$ and $l>1$, then $\mTT^1(T)\not\subset PT^l_n(\Q)(x,y)$ (see Example \ref{diffex}).
\end{notatu}

We can make Construction \ref{Liberanzo} functorial.\\
Given a functor $\sF$ between two DG quivers $\sF:\Q_1\to\Q_2$ we have a strict non unital $\Ain$functor
\begin{align}\label{functfree}
\TT(\sF):\TT(\Q_1)\to&\TT(\Q_2).
\end{align}
Defined as follows:
\begin{itemize}
\item[1.] We set $\TT(\sF)^0(x)=\sF^0(x)$ for any object $x\in\Q_1$. 
\item[2.] Fixed two objects $x,y\in\Q_1$ we have:
\begin{align*}
\TT(\sF)^1:\Hom_{\TT(\Q_1)}(x,y)\to&\Hom_{\TT(\Q_2)}(\TT(\sF)^0(x),\TT(\sF)^0(y))\\
(\mathfrak{t};f_1,...,f_n)\mapsto& (\mathfrak{t};\sF^1(f_1),...,\sF^1(f_n)).
\end{align*}
\end{itemize}

\begin{defn}[Free $\Ain$category]
A \emph{free $\Ain$category} is a non unital $\Ain$category $\A$ which is of the form $\TT(\Q)$, where $\Q$ a DG quiver. 
\end{defn}

Given a non unital $\Ain$category $\A$, we have a natural strict non unital $\Ain$functor 
\begin{align}\label{beta}
\beta_{\A}:\TT(|\A|)&\to \A\\
(\mathfrak{t};f_1,...,f_n)&\mapsto \mathfrak{t}_{m_{\A}}(f_n,...,f_1).
\end{align}
Where $\mathfrak{t}_{m_{\A}}(f_n,...,f_1)$ is the "combination" of maps $m^n_{\A}$ obtained by setting 
$$m^n_{\TT(\A)}=m^n_{\A}$$
for every positive integer $n$.\ See the following:
\begin{figure}[htbp]
\centerline{\includegraphics[width=.9\textwidth, height=.3\textheight, keepaspectratio]{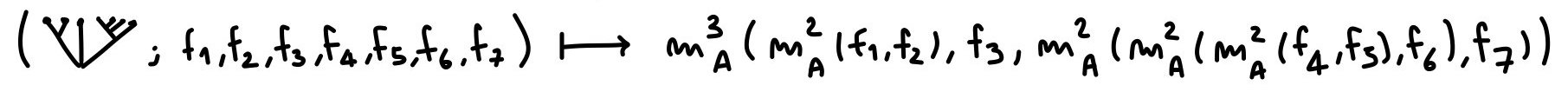}}
%\caption{This is an image from a text that uses color to teach music.}
%\label{fig}
\end{figure}
\\
On the other hand, we can consider the forgetful functor $|\mbox{-}|$ (see formula (\ref{foffofo})).\\
We have a natural functor between DG quivers
\begin{align}\label{unitiono}
\alpha_{\Q}:\Q&\to |\TT(\Q)|\\
f&\mapsto (\bullet;f).
\end{align}
One can prove (with a direct calculation) that $\alpha$ and $\beta$ are the unit and counit of an adjunction.\\
We have:
\begin{align}\label{adjunct}
\TT\dashv |\mbox{-}|:\aCat^{\tiny\mbox{nu}}_{\tiny\mbox{strict}}\to{\DgQui}.
\end{align}
See \cite[2.5 Corollary]{LM1}.\\
\\
Let $\A$ be a non unital $\Ain$category.\
\begin{defn}[$\Ain$ideal] 
An $\Ain$ideal $I$ of $\A$ is a subquiver of $|\A|$ (see Definition \ref{subsquinzi}) such that:
\begin{align}\label{ideale}
m_{\mathcal{A}}^n(f_{n-1},...,f_{l+1},f,f_{l-1}, ...,f_0) &\subset \Hom_{I}(x_0,x_n),
\end{align}
for every sequence of objects $x_0,...,x_{l-1},x,y,x_{l+2},...,x_n\in\A$ and morphisms $f_i\in\Hom_{\A}(x_i,x_{i+1})$, 
and $f\in\Hom_{I}(x,y)$.% where $x_l=x$ and $x_{l+1}=y$.
\end{defn}

\begin{exmp}
Given a DG category $\mathcal{C}$, every DG ideal of $\mathcal{C}$ is an $\Ain$ideal of $\mathcal{C}$.
\end{exmp}

\begin{defn}[$\Ain$quotient]
Let $I$ be an $\Ain$ideal of $\A$, we can define the non unital $\Ain$category $\A/I$ as follows:
\begin{itemize}
\item[1.] $\A/I$ has the same objects of $\A$.
\item[2.] Given $x,y\in\A/I$ the hom-space is given by the quotient of (graded) $R$-modules
\begin{align*}
\Hom_{\A/I}(x,y)&:=\Hom_{\A}(x,y)/I(x,y).
\end{align*}
\item[3.] The $\Ain$structure is given, for every $n\ge 0$, by:
\begin{align*}
m^n_{\A/I}([f_n],...,[f_1])&:=[m^n_{\A}(f_n,...,f_1)].
\end{align*}
Where $[f_i]\in\Hom_{\A/I}(x_i,x_{i+1})$ and $f_i\in\A$ is a representative of the class $[f_i]$.
\end{itemize}
\end{defn}
\begin{rem}
We note that the quotient above is well defined.\ Given an element $\tilde{f}_i\in[f_i]$, by (\ref{ideale}), we have 
$$m^n_{\A}(f_n,...,f_i,...,f_1)=m^n_{\A}(f_n,...,f'_i,...,f_1)+I$$ 
It implies: 
\begin{align*}
m^n_{\A/I}([f_n],...,[f_i],...,[f_1])&=m^n_{\A/I}([f_n],...,[f'_i],...,[f_1]).
\end{align*} 
\end{rem}
For every $\Ain$ideal $I$ of $\A$ we have a strict functor (see Definition \ref{astricts}) $q:\A\to\A/I$ (surjective on the morphisms) given by
\begin{align}\label{quoti}
q^0:\mbox{Obj}(\A)&\to\mbox{Obj}(\A/I)\\
a& \mapsto a\\
q^1:\Hom_{\A}(x,y)&\to\Hom_{\A/I}(x,y)\\
f&\mapsto [f].
\end{align}
Let $\A$ be an $\Ain$category, we have the associated DG quiver $|\A|$.\\ 
A \emph{graded subquiver}\footnote{Note that we do not require that $d_{R}(R)\subset R$, so it is different from Definition \ref{subsquinzi}} $R$ of $|\A|$ is a quiver whose objects are the same of $\A$ and, fixed two objects $x,y\in R$ $\Hom_R(x,y)$ is a dg $R$-submodule of 
$\Hom_{\A}(x,y)$.\\ 
We denote by $(R)$ the graded subquiver of $|\TT(\A)|$ whose objects are the same of $|\A|$ and whose morphisms are generated (cf Example \ref{genera}) by
\begin{align*}
m^n_{\TT(\A)}(f_n,...,r,...,f_1).
\end{align*}
for every $f_i\in\TT(\A)$ and $r\in R$.
\begin{defnthm}[]\label{relaz}
We call \emph{a system of relations} a graded subquiver $R$ of $|\A|$ such that, given a morphism $f\in |\A|$ then $d_{|\A|}(f)\subset (R)$.\ Given a system of relations $R$ the graded subquiver $(R)$ is an $\Ain$ideal of $\TT(\A)$.
\end{defnthm}

Given a DG quiver $\Q$, by construction, the free $\Ain$category $\TT(\Q)$ is a non unital $\Ain$category.\ It can be made an augmented strictly unital $\Ain$category as follows:\

\begin{cons}[Strictly unital free $\Ain$category]\label{UNITTTT}
Let $\Q$ be a DG quiver.
\begin{itemize}
\item[1.] First we consider the discrete DG quiver $\mathsf{I}_{\Q}=|\mbox{disc}(\Q)|$ (see Definition \ref{disco}).\
\item[2.] We take the DG quiver $\Q_{+}:=\Q+\mathsf{I}_{\Q}$ (cf Definition \ref{sumqui}).
\item[3.] The $\Ain$ideal $I_u$ of $\TT(\Q)$ is defined by the system of relations $R_u$ spanned by:
\begin{align*}
\big(\  \mathfrak{t}\vee(\bullet;1)- \mathfrak{t} \ , \ (\bullet;1)\vee \mathfrak{t}- \mathfrak{t} \ ,\ (\mathfrak{T}_n; f_n,...,1_x,...,f_1 )  \ \big),
\end{align*}
for any morphism $\mathfrak{t}\in\TT(\Q)$ and $\mathfrak{T}_n$, and sequence of morphisms of $\A$.\ We recall that $\mathfrak{T}_n$ is the tree with $n$-leaves without nodes, see section \ref{gloglossary}.\
\item[4.] The strictly unital $\Ain$category $\TT(\Q)_+$ (cf subsection \ref{unit}) is defined as the quotient $\TT(\Q_+)/ I_u$.
\end{itemize}
\end{cons}

\begin{rem}\label{primacostr}
We note that to define a strict $\Ain$functor $\sF$ with source a free $\Ain$category $\TT(\Q)$ is sufficient to know $\sF^1|_{\Q}$.\ This follows by the adjunction (\ref{adjunct}):
\begin{align*}
\Hom_{\tiny\aCat}(\TT(\Q),\A)&\simeq\Hom_{\tiny\mbox{DGQuiv}}(\Q,|\A|)\\
\sF&\mapsto\sF^1_{|_\Q}
\end{align*}
where $\A$ is a non unital $\Ain$category.\\
Suppose now that $\A$ is a strictly unital $\Ain$category.\ Every strictly unital $\Ain$functor 
$\sF:\TT(\Q)_{+}\to \A$ is uniquely determined by $\sF^1|_{\Q}$ since the unit $1_{+}$ of $\TT(\Q)_{+}$ must be preserved by $\sF$.\\
On the other hand, given a non unital strict $\Ain$functor $\psi:\TT(\Q)\to \A$, we can extended uniquely $\psi$ to a strictly unital $\Ain$functor:
\begin{align}\label{stronk2}
\psi_{+}:\TT(\Q)_{+}&\to \A
\end{align}
As follows:
\begin{itemize}
\item[1.] $\psi_{+}|_{\TT(\Q)}=\psi$,
\item[2.] $(\bullet;1_{x})\mapsto 1_{x}^{\A}$, for every object $x\in\TT(\Q)$.
\end{itemize}
\end{rem}

\begin{exmp}\label{ex1}
Let $\Q$ be a DG quiver.\ The graded subquiver $R$ of $\TT(\Q)$ whose morphisms are those of the form 
\begin{align*}
R:=\big(\ \MCS\ ;\  f_1,f_2,f_3\ \big)-\big(\ \MCD \ ;\ f_1,f_2,f_3\ \big)
\end{align*}
for every $f_1,f_2,f_3\in\Q$, is a system of relations (cf. Example \ref{diffex} $PT_3)$).\
On the other hand, the following graded subquiver
\begin{align*}
R_{\mathsf{As}}=\Big(\ R \ , \big(\ \MTr \ ;\ f_4,f_5,f_6\ \big),\big(\ \MQu \ ;\ f_7,..., f_{10}\big) , ... \Big)
\end{align*}
where $f_j\in\Q$, is a system of relations .
\end{exmp}

Using Example \ref{ex1} we can define the \emph{non unital free DG categories} and \emph{free DG categories}:

\begin{cons}\label{Drinniborguz}
Let $\Q$ be a DG quiver.\ We can associate to $\Q$,
\begin{itemize}
\item[i.] the non unital free DG category, denoted by $\TT_{\tiny{\mbox{DG}}}(\Q)$, is given by the quotient
\begin{align*}
\TT(\Q)/\mathsf{As}.
\end{align*}
Where $\mathsf{As}$ is the $\Ain$ideal generated by the system of relations $R_{\mathsf{As}}$ (see Example \ref{ex1}).\\ 
Clearly $\TT_{\tiny{\mbox{DG}}}(\Q)$ is associative, we denote by $f*g$ the composition 
$$m^2_{\TT_{\tiny{\mbox{DG}}}(\Q)}(f,g).$$ 
\item[ii.] The free DG category, denoted by $\TT_{\tiny{\mbox{DG}}}(\Q)_{+}$, is given by the quotient
\begin{align*}
\TT_{\tiny{\mbox{DG}}}(\Q+\sf{I}_{\Q})/ I^{\tiny\mbox{DG}}_u.
\end{align*}
Here $\sf{I}_{\Q}$ is the discrete category (see Definition \ref{disco}) and $I^{\tiny\mbox{DG}}_u$ is the DG ideal whose morphisms are of the form: 
$$(\ f*1-f \ ,\ 1*f-f \ )$$
for every $f\in \Q$.
\end{itemize}
%Cleary, if we take $\Q=\Q_S$, as in (\ref{homoclo}), it is possible to take the (non unital) free DG category associated to $\Q$, even in the case of an $\Ain$category.\ 
\end{cons}

\begin{exmp}\label{ex2}
Given $\A$ an $\Ain$category, we have a morphism of graded $R$-modules:
\begin{align*}
\delta_{x,y}:\displaystyle\bigoplus_{\substack{n\ge 1\\ x=x_1,...,x_{n+1}=y}}\Hom_{|\A|}(x_1,x_2)\otimes...\otimes \Hom_{|\A|}(x_n,x_{n+1})\to \Hom_{\TT(|\A|)}(x,y)
\end{align*}
defined as: 
\begin{align*}
\delta(f_n\otimes...\otimes f_1):=\big(\alpha_{|\A|}\cdot m^n_{\A} - m^n_{\TT(|\A|)}\cdot \alpha_{|\A|}^{\otimes n}\big) (f_n\otimes...\otimes f_1).
\end{align*}
Here $\alpha_{|\A|}$ is the unit of the adjunction (\ref{unitiono}).\\
We can define a graded subquiver $R_{\A}$ of $\TT(|\A|)$ taking $\Hom_{R_{\A}}(x,y):=\mbox{Im}(\delta_{x,y})$.\\
In other words the morphisms of $R_{\A}$ are spanned by 
\begin{align}
\alpha_{|\A|}\big(m^n_{\A}(f_1,...,f_n)\big)-\big( \mathfrak{T}_n; \alpha_{|\A|}(f_1),...,\alpha_{|\A|}(f_n)\big)
\end{align}
for every $n>1$ and $f_1,...,f_n\in\A$.\\
It important to say that $R_{\A}$ is a system of relations and $\A$ is equivalent (see Definition \ref{equiv}) to $\TT(|\A|)/R_{\A}$ via two non unital strict $\Ain$functors \cite[3.2 Proposition]{LM2}.
\end{exmp}

\subsection{$\Ain$functors and natural transformations from free $\Ain$categories.}
This subsection characterizes the non unital $\Ain$functors with source a free $\Ain$category.\ 
The results of this subsection can be found in a more general setting in \cite[\S 2]{LM1}.\ 
 
\begin{thm}[\mbox{\cite[2.3 Proposition]{LM1}}]\label{2.3PropLM1}
A non unital $\Ain$functor $\F:\TT(\Q)\to \A$ is uniquely determined by the following datum:
\begin{align}
(F^1,F^2,...,F^n,...).
\end{align}
Where 
\begin{align}
F^1:\Q\to |\A|
\end{align}
is a morphism of DG quivers.\ We denote by $F^0:\mbox{Ob}(\Q)\to\mbox{Ob}(\A)$ is the morphism of sets.\\
For every $n>1$ and $n+1$-objects in $\Q$
\begin{align}
F^n:\Hom_{\TT(\Q)}(x_0,x_1)\otimes...\otimes \Hom_{\TT(\Q)}(x_{k-1},x_k)&\to \Hom_{\A}(f_0(x_0),f_0(x_k))[1-n]
\end{align}
is a morphism of graded (non DG) complexes.
\end{thm}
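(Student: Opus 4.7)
The plan is to establish the bijection in both directions. The easy direction is to recover the datum from an $\Ain$ functor $\F$: set $F^1:=\F^1\circ\alpha_{\Q}$, where $\alpha_{\Q}:\Q\to|\TT(\Q)|$ is the unit of the adjunction of \eqref{unitiono}. This is a DG quiver morphism because $\F^1$ is a chain map on $\TT(\Q)$ (the $n=1$ case of the $\Ain$ relation) and $\alpha_{\Q}$ preserves the differential. For $n\ge 2$ take $F^n:=\F^n$. The substance of the statement is the converse.

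For the converse I would construct $\F$ from $(F^n)_{n\ge 1}$ as follows. Let $\F^0$ be induced by $F^1$ on objects; for $n\ge 2$ put $\F^n:=F^n$. The map $\F^1$ on $\TT(\Q)$ is defined by induction on the number of internal nodes. Every homogeneous morphism of $\Hom_{\TT(\Q)}(x,y)$ has the form $(\mathfrak{t};f_1,\ldots,f_n)$ with $\mathfrak{t}\in PT_n$ and $f_i\in\Q$; on an atomic tree set $\F^1((\bullet;f)):=F^1(f)$. Otherwise, the unique root-decomposition $\mathfrak{t}=\mathfrak{t}_1\vee\cdots\vee\mathfrak{t}_k$ with $k\ge 2$ yields $(\mathfrak{t};f_1,\ldots,f_n)=\mTT^k(\xi_k,\ldots,\xi_1)$ for strictly simpler trees $\xi_i\in\TT(\Q)$. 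In the $\Ain$ functor relation \eqref{grango} applied to the tuple $(\xi_k,\ldots,\xi_1)$, the term $\F^1\bigl(\mTT^k(\xi_k,\ldots,\xi_1)\bigr)$ appears exactly once (at $m=k$, $d=0$); every other term involves either $\F^{n'}=F^{n'}$ with $n'\ge 2$ or $\F^1$ on trees with strictly fewer internal nodes. I use this equation to \emph{define} $\F^1$ at $(\mathfrak{t};f_1,\ldots,f_n)$.

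Having defined $\F$ componentwise, I then verify that it really is a non unital $\Ain$ functor. On tuples of atomic generators $(\bullet;f_1),\ldots,(\bullet;f_k)$ the $\Ain$ equations hold tautologically by the very defining formula for $\F^1$ on composite trees; the $n=1$ relation on atoms reduces to the chain-map hypothesis on $F^1$. For arbitrary tuples of morphisms of $\TT(\Q)$, the equations then follow by $R$-multilinearity of $m^r_\A$, of $\mTT^n$ and of each $F^n$, together with the grafting calculus of Construction \ref{Liberanzo}: every tuple decomposes into a linear combination of grafted atomic tuples, and both sides of \eqref{grango} are multilinear in their arguments.

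The main obstacle is \emph{consistency}: the same tree $\mathfrak{t}$ could in principle be reached by several instances of \eqref{grango}, so one must check they do not conflict. Here the freeness of $\TT(\Q)$ is crucial: because the root-decomposition $\mathfrak{t}=\mathfrak{t}_1\vee\cdots\vee\mathfrak{t}_k$ is unique, each value $\F^1\bigl((\mathfrak{t};\ldots)\bigr)$ is fixed by exactly one of these equations, and the remaining $\Ain$ relations reduce to formal consequences of the grafting structure via the inductive definition. The most technical step is the bookkeeping of Koszul signs $\dagger_d$ through the induction, but these behave uniformly on the grafting operation and present no conceptual difficulty.
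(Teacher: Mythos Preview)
Your proposal is correct and follows essentially the same approach as the paper's sketch: both directions extract the datum via $F^1(f):=\F^1((\bullet;f))$ and $F^n:=\F^n$, and reconstruct $\F^1$ from its values on atomic trees using that every morphism of $\TT(\Q)$ is generated by 1-leaf trees under the $\mTT^k$. You simply make the inductive step explicit --- solving the $\Ain$ functor relation at the unique root-decomposition to express $\F^1(\mathfrak{t})$ in terms of $\F^1$ on strictly simpler subtrees and the given $F^{n\ge 2}$ --- whereas the paper's sketch just points to Example~\ref{genera}. One small slip: the equation you invoke throughout should be the $\Ain$ \emph{functor} relation of Definition~\ref{af}, not \eqref{grango}, which is the $\Ain$ \emph{category} relation; your argument clearly uses the former.
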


\begin{proof}[Sketch of proof]
We fix a non unital $\Ain$functor $\F:\TT(\Q)\to\A$.\ Let $f$ be a morphism in $\Q$, we set 
\begin{align*}
F^1(f):=\F^1\big( (\bullet; f)\big).
\end{align*}
This is a morphisms of DG quivers since:
\begin{align*}
d_{|\A|}\big(F^1(f)\big)&=m^1_{\A}(\F^1\big( (\bullet; f)\big))\\
&=\F^1\big(m_{\TT(\Q)}^1 (\bullet; f)\big)\\
&=\F^1\big((\bullet; m_{\Q}^1(f))\big)\\
&=F^1\big( m_{\Q}^1(f)\big).
\end{align*}
Moreover, given $m$-morphisms $g_1,...,g_m$ in $|\TT(\Q)|$, then 
\begin{align*}
F^m(g_1,...,g_m):=\F^m(g_1,...,g_m).
\end{align*}
defines a morphism of graded quivers.\\
On the other hand, given the datum
\begin{align*}
(F^1,F^2,...,F^n,...).
\end{align*}
A planar rooted tree with one leaf (namely i.e. for every $(\bullet; f)$) we set:
\begin{align}\label{roote}
\F^1\big( (\bullet; f)\big):=F^1\big( f\big).
\end{align}
Every tree is generated by the 1-leaf ones (cf. Example \ref{genera}) so (\ref{roote}) is sufficient to determined $\F^1$.
\end{proof}

\subsection{$\Ain$functors from a quotient of a free $\Ain$category}\label{quotran}
In this section we are interested in non unital $\Ain$functors and natural transformations with source a quotient of a free $\Ain$category.\ The more interested readers can take a look at \cite[\S2]{LM1}.\\ 
We start with a Lemma.
\begin{lem}\label{factone}
Let $\A$, $\B$ and $\C$ three non unital $\Ain$categories and $\F:\A\to\B$ a strict non unital $\Ain$functor.\
If $\F$ factorizes as
\begin{align*}
\xymatrix@R=0.7em{
\A\ar[rr]^{\F}\ar[rd]_{\G}&&\C\\
&\B\ar[ru]_{\F'}&
}
\end{align*}
Where $\G$ is a strict non unital $\Ain$functor surjective on the morphisms.\ 
Then $\F'$ is a strict non unital $\Ain$functor.
\end{lem}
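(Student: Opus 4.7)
The plan is to use the composition formula for $\Ain$functors (Definition \ref{composizioncella}) together with the strictness of $\F$ and $\G$, and then invoke surjectivity of $\G$ on the morphisms to conclude that $(\F')^n$ vanishes for $n\geq 2$. Note that the target of $\F$ must be $\C$ (reading off the diagram), so that $\F=\F'\cdot\G$.

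First, I would unpack what it means for $\G$ to be a strict non unital $\Ain$functor: by Definition \ref{astricts}, $\G^n(f_n,\ldots,f_1)=0$ for every $n\geq 2$. Fix $n\geq 2$ and a sequence of morphisms $g_n,\ldots,g_1\in\B$ composable in the sense required by Definition \ref{af}. Using the surjectivity of $\G$ on the morphisms, I would choose preimages $f_i\in\A$ with $\G^1(f_i)=g_i$; note that surjectivity also guarantees that these preimages can be chosen to be composable (since the sources/targets are preserved on the level of objects by the very existence of the composition).

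Next, I would expand $\F^n(f_n,\ldots,f_1)$ through the factorization $\F=\F'\cdot\G$ using the composition rule of Definition \ref{composizioncella}:
\[
\F^n(f_n,\ldots,f_1)=\sum_{r\geq 1}\sum_{s_1+\cdots+s_r=n}(\F')^{r}\bigl(\G^{s_r}(f_n,\ldots,f_{n-s_r+1}),\ldots,\G^{s_1}(f_{s_1},\ldots,f_1)\bigr).
\]
Because $\G$ is strict, every term with some $s_j\geq 2$ vanishes, so the only surviving summand is the one with all $s_j=1$ and hence $r=n$. This collapses the right-hand side to $(\F')^n(\G^1(f_n),\ldots,\G^1(f_1))=(\F')^n(g_n,\ldots,g_1)$. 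On the other hand, since $\F$ is itself strict, the left-hand side equals $0$ for $n\geq 2$, hence $(\F')^n(g_n,\ldots,g_1)=0$. As the $g_i$ were arbitrary by surjectivity, this proves $(\F')^n=0$ for every $n\geq 2$, i.e.\ $\F'$ is strict.

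The argument is essentially a bookkeeping exercise; there is no real obstacle, but the point I would be careful about is ensuring that surjectivity of $\G$ on morphisms genuinely produces \emph{composable} lifts $f_n,\ldots,f_1$ of an arbitrary composable sequence $g_n,\ldots,g_1$. Since surjectivity is imposed on each hom-space separately, and since a strict $\Ain$functor induces a map on objects as well, one simply picks $f_i\in\Hom_{\A}(x_{i-1},x_i)$ for any chosen lifts $x_j\in\A$ of the objects appearing in the sequence; composability on the $\A$-side is then automatic.
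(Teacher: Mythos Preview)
Your proof is correct and follows essentially the same approach as the paper: lift the $g_i$ via surjectivity of $\G$, then use strictness of $\G$ in the composition formula (Definition \ref{composizioncella}) to collapse $(\F'\cdot\G)^n$ to $(\F')^n(\G^1(f_n),\ldots,\G^1(f_1))$, and conclude by strictness of $\F$. You are in fact slightly more careful than the paper in checking that composable lifts exist, and you correctly spotted the typo in the statement (the target of $\F$ must be $\C$).
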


\begin{proof}
We have to prove that $\F'^n(f_n,...,f_1)=0$, for any sequence of morphisms $f_n,...,f_1\in\B$.\
Since $\G$ is surjective on the morphisms then $f_i=\G^1(a_i)$, where $a_i$ is a morphism of $\A$ for every $i$ such that $1\ge i\ge n$.\ We have $\F'^n(f_n,...,f_1)=\F'^n\big(\G^1(a_n),...,\G^1(a_1)\big)$.\ By the formula of composition of $\Ain$functors (see Definition \ref{composizioncella}), we have $\F'^n(f_n,...,f_1)=\F^n(a_n,...,a_1)$.\ This is zero since $\F$ is strict and we are done.
\end{proof}

We fix a non unital $\Ain$category $\A$ and a DG quiver $\Q$.\\
Let $R$ be a system of relations (see Definition-Theorem \ref{relaz}) of $\TT(\Q)$.\\ 
We denote by $I:=(R)$ the $\Ain$ideal generated by $R$ and $q$ the strict non unital $\Ain$functor defined in (\ref{quoti}).\\
\\
Let $\F:\TT(\Q)\to\B$ be a non unital $\Ain$functor.\\ 
Clearly $\F$ factorizes (uniquely) as
\begin{align}\label{feeeeer}
\xymatrix@R=0.7em{
\TT(\Q)\ar[rr]^{\F}\ar[rd]_-q&&\A\\
&\TT(\Q)/I\ar[ru]_-{\F'}&
}
\end{align}
if and only if $\F^n(...,I,...)=0$ for every $n>0$.\\
The following Theorem provides an easier criteria to verify.

\begin{thm}[\mbox{\cite[Proposition 2.5]{LM2}}]\label{2.5PropLM2}
The non unital $\Ain$functor $\F$ factorizes as in (\ref{feeeeer}) if and only if the following two conditions are satisfied:
\begin{itemize}
\item[1.] For any morphism $r\in R$ we have $\F^1(r)=0$.
\item[2.] For any morphism $i\in I$, any positive integer $n>1$, and $f_k\in\TT(\Q)$, we have:
$$\F^n(f_{n-1},...,f_j,i,f_{j-1},...,f_1)=0.$$
\end{itemize}
\end{thm}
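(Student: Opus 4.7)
\emph{Plan.} The \emph{only if} direction is essentially automatic. If $\F = \F'\cdot q$, the composition formula of Definition~\ref{composizioncella}, combined with the strictness of $q$ (which forces $q^{s}=0$ for $s\ge 2$), collapses to
\[
\F^n(f_n,\ldots,f_1) = \F'^n\bigl(q^1(f_n),\ldots,q^1(f_1)\bigr).
\]
Since $q^1$ kills $I$ and $R\subset I$, both conditions (1) and (2) follow at once.

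For the converse, the candidate $\F'^n([f_n],\ldots,[f_1]):=\F^n(f_n,\ldots,f_1)$ is well-defined precisely when $\F^n$ vanishes whenever any argument lies in $I$; condition (2) takes care of this for $n\ge 2$. Hence the whole task reduces to upgrading condition (1) from $R$ to the whole ideal $I=(R)$, i.e.\ to proving $\F^1(i)=0$ for every $i\in I$. I would argue this by induction on the generation depth of $i$ in the $\Ain$ideal generated by $R$; by $R$-linearity it suffices to treat a single generator. The base case $i\in R$ is condition (1).

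For the inductive step, write $i=m^{N}_{\TT(\Q)}(g_N,\ldots,g_{j+1},i',g_{j-1},\ldots,g_1)$ with $i'\in I$ of strictly smaller depth, and apply the $\Ain$functor identity of Definition~\ref{af} to the sequence $(g_N,\ldots,g_1)$. On the right-hand side of the identity, the unique summand in which the outer $\F$ has arity $1$ is $\F^1(m^N_{\TT(\Q)}(g_N,\ldots,g_1))=\F^1(i)$ (corresponding to the inner operation consuming all $N$ inputs); every other right-hand summand has the form $\F^{N-m+1}(\ldots)$ with $N-m+1\ge 2$ and either carries $i'$ as a direct outer argument, or contains $m^{m}_{\TT(\Q)}(\ldots,i',\ldots)\in I$ as one of its arguments---in either case the term vanishes by condition (2). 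On the left-hand side, each summand $m^r_{\B}(\F^{s_r}(\ldots),\ldots,\F^{s_1}(\ldots))$ has $i'$ inside exactly one group $\F^{s_p}(\ldots)$: if $s_p\ge 2$ this factor vanishes by condition (2), and if $s_p=1$ it equals $\F^1(i')=0$ by the inductive hypothesis, so the whole summand dies by $R$-linearity of $m^r_{\B}$. Equating the two sides forces $\F^1(i)=0$.

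Once $\F^n$ vanishes on every input containing an element of $I$, the induced $\F'$ is well-defined on $\TT(\Q)/I$, and its $\Ain$functor axioms follow immediately from those of $\F$ together with $m^n_{\TT(\Q)/I}([f_n],\ldots,[f_1])=[m^n_{\TT(\Q)}(f_n,\ldots,f_1)]$. The main technical delicacy lies in the bookkeeping of the $\Ain$ identity in the inductive step: one must verify that $\F^1(i)$ is the \emph{only} right-hand contribution not automatically killed by condition (2), which hinges on the fact that $N-m+1=1$ forces $m=N$ and the inner operation to consume every input.
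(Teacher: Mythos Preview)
The paper does not supply its own proof of this statement; it is quoted from \cite[Proposition 2.5]{LM2}, and only the consequence for strict functors (``$\mathsf{F}(I)=0$ iff $\mathsf{F}(R)=0$'') is remarked upon afterwards. So there is nothing in the paper to compare against.

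That said, your argument is correct and is essentially the standard one. The only substantive step is upgrading $\F^1|_R=0$ to $\F^1|_I=0$, and your induction on generation depth via the $\Ain$functor identity handles this cleanly. The bookkeeping you flag at the end is indeed the crux, and you have it right: on the right-hand side the condition $N-m+1=1$ forces $m=N$ and $k=0$, isolating $\F^1(i)$ as the unique surviving term, while every other right-hand summand has outer arity $\ge 2$ with an element of $I$ among its inputs (either $i'$ itself, or $m^m_{\TT(\Q)}(\ldots,i',\ldots)\in I$ by the ideal axiom), hence dies by condition~(2). The left-hand side vanishes exactly as you say.
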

It means that, if $\mathsf{F}$ is a strict $\Ain$functors, $\mathsf{F}(I)=0$ if and only if $\mathsf{F}(R)=0$.

\newpage

\section{Limits and colimits in the categories of $\Ain$categories}\label{cocompletone}

In this section we prove that the category $\aCat^{\tiny\mbox{nu}}_{\tiny\mbox{strict}}$ (and $\aCat_{\tiny\mbox{strict}}$) is complete and cocomplete.\ In particular, the cocompleteness of $\aCat_{\tiny\mbox{strict}}$ will be fundamental (in Section \ref{semifreeee}) to prove the existence of a semi-free resolution for $\Ain$categories.\\

It is well known that the categories of DG quivers and DG categories are complete and cocomplete.\ But in the case of $\Ain$categories the situation is much more ambiguous.\
First the category $\aCat^{\tiny\mbox{nu}}$ is not complete since it does not admit equalizers (see [COS; \S 1.5]\footnote{The proof is done for the category of strictly unital and cohomological unital $\Ain$categories but it works also for the non unital ones}) but it admits products and coproducts.\
Let me describe explicitly the product $\A\times\B$ of two non-unital $\Ain$categories $\A$ and $\B$:
\begin{itemize}
\item[1.] $\mbox{Ob}(\A\times\B):=\mbox{Ob($\A$)}\times\mbox{Ob($\B$)}$.
\item[2.] $\Hom_{\A\times\B}\big((a,b),(a',b')\big):=\Hom_{\A}(a,a')\oplus\Hom_{\B}(b,b')$.
\item[3.] Fixed $n$-objects, the $\Ain$structure is given by: 
\begin{align*}
m^n:\Hom_{\A\oplus\B}\big( (a_{n-1},b_{n-1}),(a_{n},b_{n}) \big)\otimes ...\otimes\Hom_{\A\oplus\B}\big( (a_{1},b_{1}),(a_{2},b_{2}) \big)&\to\Hom_{\A\oplus\B}\big( (a_{1},b_{1}),(a_{n},b_{n}) \big)[2-n]\\
(f_n,g_n)\otimes....\otimes(f_1,g_1)&\mapsto \big( m_{\A}^n(f_n,...,f_1),m_{\B}^n(g_n,...,g_1)\big).
\end{align*}
\end{itemize}
The coproduct of $\A$ and $\B$ is given by
\begin{itemize}
\item[1.] $\mbox{Ob}(\A\coprod\B):=\mbox{Ob}(\A)\coprod \mbox{Ob}(\B)$.
\item[2.] The hom-spaces are defined as follows:
\begin{align*}
\Hom_{\A\coprod\B}(x,y):=
&\begin{cases}
\Hom_{\A}(x,y)    & \text{if } x,y \in \A \\
  \Hom_{\B}(x,y)    & \text{if } x,y \in \B \\
0        & \text{otherwise}.\\
  \end{cases}
\end{align*}
\item[3.] The $\Ain$structure is the one induced by $\A$ and $\B$.
\end{itemize}
By easy calculations, if $\A$ and $\B$ are strictly unital, then even their product and coproduct is so.

\begin{rem}
Clearly, if $A$ and $B$ are two $\Ain$algebras, then $A\times B$ is the product in the category of $\Ain$algebras.\ 
But the coproduct described above is not contained in the category of $\Ain$algebras (since it has two objects).\ In [Orn3] it is proven that there no exists a coproduct $A\coprod B$ in the category of $\Ain$algebras.\ It has to deal with the problem of a "good" notion of tensor for $\Ain$algebras/categories.\ What we have is the existence of homotopy coproduct of $\Ain$algebras, in [Orn3] it is given an explicit description making use of semi-free resolutions of Section \ref{semifreeee}.
\end{rem}

\begin{thm}\label{equizz}
The categories $\aCat^{\tiny\mbox{nu}}_{\tiny\mbox{strict}}$ and $\aCat_{\tiny\mbox{strict}}$ have equalizers.
\end{thm}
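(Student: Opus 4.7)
The plan is to construct the equalizer of two parallel strict (strictly unital) $\Ain$-functors $\sF,\sG:\A\rightrightarrows\B$ as a sub-$\Ain$-category of $\A$, exploiting the fact that, by Remark \ref{SFE}, strict functors commute on the nose with every $m^n$.

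First I would define the prospective equalizer $\mathcal{E}$ on the level of the underlying graded $R$-quiver. Put
\[
\mathrm{Ob}(\mathcal{E}):=\{\,a\in\mathrm{Ob}(\A)\ :\ \sF^0(a)=\sG^0(a)\,\},
\]
and for $a,b\in\mathcal{E}$ set
\[
\Hom_{\mathcal{E}}(a,b):=\{\,f\in\Hom_{\A}(a,b)\ :\ \sF^1(f)=\sG^1(f)\,\},
\]
which is a graded $R$-submodule of $\Hom_{\A}(a,b)$. The key closure property to check is that $\mathcal{E}$ is stable under the $\Ain$-operations of $\A$: given $f_1,\dots,f_n\in\mathcal{E}$, strictness of $\sF,\sG$ yields
\[
\sF^1\bigl(m^n_{\A}(f_n,\dots,f_1)\bigr)=m^n_{\B}\bigl(\sF^1(f_n),\dots,\sF^1(f_1)\bigr)=m^n_{\B}\bigl(\sG^1(f_n),\dots,\sG^1(f_1)\bigr)=\sG^1\bigl(m^n_{\A}(f_n,\dots,f_1)\bigr),
\]
so $m^n_{\A}(f_n,\dots,f_1)\in\mathcal{E}$. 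Defining $m^n_{\mathcal{E}}$ as the restriction of $m^n_{\A}$, the Stasheff identities (\ref{grango}) descend automatically from $\A$, so $\mathcal{E}$ is a non unital $\Ain$-category, and the inclusion $\iota:\mathcal{E}\hookrightarrow\A$ is by construction a strict non unital $\Ain$-functor equalising $\sF$ and $\sG$.

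Next I would verify the universal property in $\aCat^{\tiny\mbox{nu}}_{\tiny\mbox{strict}}$. Given a strict $\Ain$-functor $\sH:\mathcal{C}\to\A$ with $\sF\cdot\sH=\sG\cdot\sH$, strictness ensures $(\sF\cdot\sH)^0=\sF^0\sH^0$ and $(\sF\cdot\sH)^1=\sF^1\sH^1$ (and similarly for $\sG$), so $\sH^0(c)\in\mathrm{Ob}(\mathcal{E})$ and $\sH^1(f)\in\Hom_{\mathcal{E}}(-,-)$ for every $c,f\in\mathcal{C}$. Thus $\sH$ factors uniquely through $\iota$ via the strict functor $\tilde{\sH}:\mathcal{C}\to\mathcal{E}$ obtained by corestriction; that $\tilde{\sH}$ is again strict is immediate because its components coincide with those of $\sH$.

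For $\aCat_{\tiny\mbox{strict}}$ one must additionally check the strictly unital structure on $\mathcal{E}$ and the unitality of $\iota$. Since $\sF,\sG$ are strictly unital, $\sF^1(1_a)=1_{\sF^0(a)}=1_{\sG^0(a)}=\sG^1(1_a)$ whenever $a\in\mathcal{E}$, so $1_a\in\Hom_{\mathcal{E}}(a,a)$; the strict unit axioms of Definition \ref{su} then hold in $\mathcal{E}$ by restriction from $\A$, and $\iota$ is strictly unital. The universal property carries over verbatim because any strictly unital $\sH$ equalising $\sF$ and $\sG$ satisfies $\sH^1(1_c)=1_{\sH^0(c)}$, which forces $\tilde{\sH}$ to be strictly unital as well.

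I expect no deep obstacle: the whole argument rests on the tautology $\sF^1 m^n = m^n (\sF^1)^{\otimes n}$ available for strict functors (Remark \ref{SFE}), which is exactly what fails for general $\Ain$-functors and explains why $\aCat^{\tiny\mbox{nu}}$ lacks equalizers (cf.\ \cite{COS1}). The only mild care point is bookkeeping the unital case, but this reduces to observing that the equaliser condition on objects automatically identifies the two candidate images of each strict unit, so no further adjustment is needed.
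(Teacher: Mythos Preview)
Your proposal is correct and follows essentially the same approach as the paper: build the equalizer as the sub-$\Ain$-category of $\A$ on which $\sF$ and $\sG$ agree, and use Remark \ref{SFE} (strictness) to show it is closed under all $m^n_{\A}$. Your write-up is in fact more explicit than the paper's, spelling out the universal property and the strictly unital case, whereas the paper records only the closure computation.
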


\begin{proof}
Let us describe the equalizers in $\aCat^{\tiny\mbox{(nu)}}_{\tiny\mbox{strict}}$.\ 
Given $\A$, $\B$ two $\Ain$categories and $\mathsf{F},\mathsf{G}:\A\to\B$ two strict $\Ain$functors, then the equalizers\
\[
\xymatrix{
\mathscr{E}\ar[r]&\A\ar@<0.8ex>[r]^{\mathsf{F}}\ar@<-0.1ex>[r]_{\mathsf{G}}&\B
}
\]
is given as follows.\ The morphisms are 
\begin{align*}
\Hom_{\mathscr{E}}\big( (a,b),(a',b') \big):=\mathcal{f} (f,g)\in\Hom_{\A\times\B}\big( (a,b),(a',b') \big)\mbox{ such that } \mathsf{F}^1(f)=\mathsf{G}^1(g)\big) \mathcal{g}.
\end{align*}
The $\Ain$structure on $\mathscr{E}$ is the one of $\A\times\B$.\ It is well defined since, by Remark $\ref{SFE}$,
\begin{align*}
\mathsf{F}^1\big(m_{\A}^n(f_n,...,f_1)\big)=m^n_{\B}(\mathsf{F}^1(f_n),...,\mathsf{F}^1(f_1))=m^n_{\B}(\mathsf{G}^1(f_n),...,\mathsf{G}^1(f_1))=\mathsf{G}^1\big(m_{\A}^n(f_n,...,f_1)\big).
\end{align*}
\end{proof}

%\begin{rem}
%PULLBACK SU FIBRAZIONI!! KELLER?
%\end{rem}

\begin{thm}\label{coeq}
The categories $\aCat^{\tiny\mbox{nu}}_{\tiny\mbox{strict}}$ and $\aCat_{\tiny\mbox{strict}}$ have coequalizers.
\end{thm}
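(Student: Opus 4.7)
The plan is to construct the coequalizer explicitly using the free-and-quotient machinery developed in Section~\ref{Liberone}. Given strict non unital $\Ain$functors $\mathsf{F},\mathsf{G}:\A\to\B$, I would first let $\sim$ denote the equivalence relation on $\mbox{Ob}(\B)$ generated by $\mathsf{F}^0(a)\sim\mathsf{G}^0(a)$ for $a\in\A$, and form the DG quiver $\tilde{\Q}$ whose objects are the classes $[x]\in\mbox{Ob}(\B)/\sim$ and whose hom-spaces are
\[
\Hom_{\tilde{\Q}}([x],[y]):=\bigoplus_{x'\in[x],\,y'\in[y]}\Hom_{\B}(x',y'),
\]
with differential induced diagonally from $\B$. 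The candidate coequalizer is then $\C:=\TT(\tilde{\Q})/(R)$ (or $\TT(\tilde{\Q})_{+}/(R)$ in the strictly unital case, cf. Construction~\ref{UNITTTT}), where $(R)$ is the $\Ain$ideal generated by two families of elements: (a) for each composable chain $b_n,\ldots,b_1$ in $\B$ and each $n\ge 2$, the element
\[
\alpha_{\tilde{\Q}}\bigl(m^n_{\B}(b_n,\ldots,b_1)\bigr)-m^n_{\TT(\tilde{\Q})}\bigl(\alpha_{\tilde{\Q}}(b_n),\ldots,\alpha_{\tilde{\Q}}(b_1)\bigr),
\]
encoding the $\Ain$structure of $\B$ (in the spirit of Example~\ref{ex2}); and (b) for each $f\in\Hom_{\A}(a,a')$, the element $\alpha_{\tilde{\Q}}(\mathsf{F}^1(f))-\alpha_{\tilde{\Q}}(\mathsf{G}^1(f))$, which lies in a single hom-space of $\tilde{\Q}$ since $[\mathsf{F}^0(a)]=[\mathsf{G}^0(a)]$.

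Next I would define $\mathsf{q}:\B\to\C$ on objects by $x\mapsto[x]$ and on morphisms by $b\mapsto[\alpha_{\tilde{\Q}}(b)]$. The family (a) forces $\mathsf{q}$ to be a strict non unital $\Ain$functor, while the family (b) forces $\mathsf{q}\cdot\mathsf{F}=\mathsf{q}\cdot\mathsf{G}$ (on objects by construction, on morphisms by (b), and on higher components because all three functors are strict). For the universal property, let $\mathsf{H}:\B\to\mathcal{D}$ be any strict $\Ain$functor coequalizing $\mathsf{F}$ and $\mathsf{G}$; its object map factors through $\mbox{Ob}(\B)/\sim$, and its morphism map extends to a DG quiver morphism $\tilde{\Q}\to|\mathcal{D}|$ by applying $\mathsf{H}^1$ on each direct summand. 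By Theorem~\ref{2.3PropLM1} this extends uniquely to a strict $\Ain$functor $\tilde{\mathsf{H}}:\TT(\tilde{\Q})\to\mathcal{D}$, and by Theorem~\ref{2.5PropLM2} it descends to $\C$ because $\tilde{\mathsf{H}}$ vanishes on (a) by the strictness of $\mathsf{H}$ and on (b) by $\mathsf{H}\cdot\mathsf{F}=\mathsf{H}\cdot\mathsf{G}$. Uniqueness of the factorization is automatic since the would-be map $\bar{\mathsf{H}}:\C\to\mathcal{D}$ is already prescribed on every generator coming from $\B$.

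The main obstacle is to verify that $R$ is genuinely a system of relations in the sense of Definition-Theorem~\ref{relaz}, i.e. that the differential $d_{\TT(\tilde{\Q})}$ applied to the generators lands in $(R)$. For generators of type (a) this follows from expanding the $\Ain$axiom (\ref{grango}) for $\B$ and the analogous axiom for $\TT(\tilde{\Q})$ simultaneously, so that the two sides differ by a sum of generators of type (a). For generators of type (b), the strictness of $\mathsf{F}$ and $\mathsf{G}$ gives $m^1_{\B}\cdot\mathsf{F}^1=\mathsf{F}^1\cdot m^1_{\A}$ and similarly for $\mathsf{G}$, whence the differential of the generator equals $\alpha_{\tilde{\Q}}\bigl(\mathsf{F}^1(m^1_{\A}f)-\mathsf{G}^1(m^1_{\A}f)\bigr)$, which is again of type (b). The strictly unital case is handled identically after adjoining to $(R)$ the relations $\alpha_{\tilde{\Q}}(1_{z})-1_{[z]}^{+}$ for each $z\in\B$, which collapses the various $1_{z}$ with $z\in[x]$ to a single strict unit of $\C$ and makes $\mathsf{q}$ strictly unital.
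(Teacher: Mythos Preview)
Your construction is correct and the verification that $R$ is a system of relations goes through as you outline, but the paper takes a genuinely different route. Rather than building the coequalizer by hand as a quotient of a free $\Ain$category, the paper argues abstractly via the adjunction $\TT\dashv|\mbox{-}|$: it first establishes (Lemma~\ref{reflex}) that \emph{reflexive} pairs in $\aCat^{\tiny\mbox{(nu)}}_{\tiny\mbox{strict}}$ admit coequalizers (because the image of $\mathsf{F}-\mathsf{G}$ is then an $\Ain$ideal), then uses Duskin's $3\times 3$ lemma to bootstrap from this special case to arbitrary pairs. Concretely, one writes down a $3\times 3$ grid whose columns are the canonical reflexive presentations $\TT|\TT|\mbox{-}|\rightrightarrows\TT|\mbox{-}|\to\mbox{-}$ of $\A$, $\B$ and of the (yet unknown) coequalizer, and whose rows come from applying $\TT$ to the coequalizer of $|\mathsf{F}|,|\mathsf{G}|$ in $\DgQui$; the right-hand column is then a reflexive coequalizer by Lemma~\ref{reflex}, and Duskin forces the bottom row to be a coequalizer as well. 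Your approach has the advantage of giving an explicit presentation of the coequalizer by generators and relations, which is useful if one actually wants to compute with it; the paper's approach is more structural, showing that the result is a formal consequence of monadicity-type considerations and therefore transports easily to analogous settings (DG categories, weakly unital DG categories as in \cite{PS}, etc.) without repeating any calculation. One minor point: where you invoke Theorem~\ref{2.3PropLM1} to extend the DG-quiver map to a strict $\Ain$functor, the cleaner reference is the adjunction~(\ref{adjunct}) or Remark~\ref{primacostr}, since Theorem~\ref{2.3PropLM1} is really about the non-strict case.
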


In order to do that we need two Lemmas.

\begin{lem}
\label{reflex}
Let $\A$ and $\B$ be two $\Ain$categories, and a diagram
\begin{align}\label{tongo}
\xymatrix{
\A\ar@<0.8ex>[r]^{\mathsf{F}}\ar@<-0.1ex>[r]_{\mathsf{G}}&\B
}
\end{align}
in $\aCat^{\tiny\mbox{(nu)}}_{\tiny\mbox{strict}}$.\
If there exists a strict $\Ain$functor $r:\B\to\A$ such that, $G\cdot r=F\cdot r=\mbox{Id}_{\B}$,
then (\ref{tongo}) has a coequalizer. 
\end{lem}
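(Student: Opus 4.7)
The plan is to exhibit $\mathscr{C}$ explicitly as a presentation of the form $\TT(\mathscr{Q})/(R)$, i.e.\ a free $\Ain$-category on a suitable DG-quiver modulo a system of relations in the sense of Definition--Theorem \ref{relaz}. First let $\sim$ be the equivalence relation on $\mbox{Ob}(\B)$ generated by $\mathsf{F}^0(a)\sim\mathsf{G}^0(a)$ for $a\in\A$, and set $\mbox{Ob}(\mathscr{C}):=\mbox{Ob}(\B)/{\sim}$ with quotient map $\pi$. The reflexive assumption $\mathsf{F}\cdot r=\mathsf{G}\cdot r=\Id_{\B}$ guarantees that every object of $\B$ is witnessed by a diagonal equality $\mathsf{F}^0(r^0(b))=\mathsf{G}^0(r^0(b))=b$, which will play a key role below. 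Define the DG-quiver $\mathscr{Q}$ with $\mbox{Ob}(\mathscr{Q})=\mbox{Ob}(\mathscr{C})$ and
\[
\Hom_{\mathscr{Q}}(x,y):=\bigoplus_{\pi(b)=x,\,\pi(b')=y}\Hom_{\B}(b,b').
\]

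Form the free $\Ain$-category $\TT(\mathscr{Q})$ and consider the graded sub-quiver $R\subset|\TT(\mathscr{Q})|$ generated by the two families
\[
\alpha_{\mathscr{Q}}\bigl(m^n_{\B}(f_n,\ldots,f_1)\bigr)\;-\;\bigl(\mathfrak{T}_n;\alpha_{\mathscr{Q}}(f_n),\ldots,\alpha_{\mathscr{Q}}(f_1)\bigr),
\]
for every $n>1$ and composable $f_i\in\B$ (as in Example \ref{ex2}), together with
\[
\alpha_{\mathscr{Q}}\bigl(\mathsf{F}^1(h)\bigr)\;-\;\alpha_{\mathscr{Q}}\bigl(\mathsf{G}^1(h)\bigr),
\]
for every $h\in\A$. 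Set $\mathscr{C}:=\TT(\mathscr{Q})/(R)$. The strict $\Ain$-functor $q:\B\to\mathscr{C}$ is induced by the summand inclusions $\Hom_{\B}(b,b')\hookrightarrow\Hom_{\mathscr{Q}}(\pi(b),\pi(b'))$ followed by $\alpha_{\mathscr{Q}}$ and the projection to the quotient: the first family of relations forces $q$ to be strict in the sense of Remark \ref{SFE}, while the second yields $q\cdot\mathsf{F}=q\cdot\mathsf{G}$. The universal property is then formal: any strict $\Ain$-functor $\mathsf{H}:\B\to\mathscr{D}$ equalizing $\mathsf{F}$ and $\mathsf{G}$ induces a DG-quiver morphism $\mathscr{Q}\to|\mathscr{D}|$, extends uniquely to a strict $\Ain$-functor $\TT(\mathscr{Q})\to\mathscr{D}$ via the adjunction (\ref{adjunct}) and Theorem \ref{2.3PropLM1}, and factors through $\mathscr{C}$ by Theorem \ref{2.5PropLM2}, since both generating families of $R$ are killed. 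The strictly unital case $\aCat_{\mathrm{strict}}$ is handled identically by replacing $\TT(\mathscr{Q})$ with its strictly unital version $\TT(\mathscr{Q})_+$ of Construction \ref{UNITTTT}.

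The main obstacle is verifying that $R$ is genuinely a system of relations, i.e.\ that the differential of every generator lies in the ideal $(R)\subset\TT(\mathscr{Q})$. For the first family this is essentially the content of Example \ref{ex2} applied to $\B$. For the second family, and more delicately for the interaction between the two families under $m^1_{\TT(\mathscr{Q})}$, one uses the strictness identity of Remark \ref{SFE}, which gives $m^n_{\B}(\mathsf{F}^1(h_n),\ldots,\mathsf{F}^1(h_1))=\mathsf{F}^1(m^n_{\A}(h_n,\ldots,h_1))$ and analogously for $\mathsf{G}$. The reflexive section $r:\B\to\A$ is exactly what allows one to rewrite the mixed $\mathsf{F}^1/\mathsf{G}^1$-expressions produced by the differential as telescoping sums of $\mathsf{F}^1$- or $\mathsf{G}^1$-images, keeping them inside $(R)$; without such a section these differentials could escape $(R)$ and the quotient would fail to be an $\Ain$-category, which is precisely why the hypothesis of the lemma is needed.
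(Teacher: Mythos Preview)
Your construction is far more elaborate than what the paper does, and your diagnosis of where the reflexive hypothesis enters is off.  The paper's proof is a two-line computation: using $f_i=\mathsf{F}r(f_i)=\mathsf{G}r(f_i)$ for every $f_i\in\B$ together with the strictness identity of Remark~\ref{SFE}, one gets
\[
m^n_{\B}\bigl(f_n,\ldots,(\mathsf{F}-\mathsf{G})(f),\ldots,f_1\bigr)
=(\mathsf{F}-\mathsf{G})\bigl(m^n_{\A}(r(f_n),\ldots,f,\ldots,r(f_1))\bigr),
\]
so $\mathrm{Im}(\mathsf{F}-\mathsf{G})$ is already an $\Ain$ideal of $\B$, and the coequalizer is simply the quotient $\B/\mathrm{Im}(\mathsf{F}-\mathsf{G})$.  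The section $r$ is used precisely here, to rewrite arbitrary $f_i\in\B$ as images under both $\mathsf{F}$ and $\mathsf{G}$; without it the image need not be closed under $m^n_{\B}$.

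In your approach, by contrast, the section is not actually used anywhere.  The differential of a generator in your second family lands back in the second family (since $m^1_\B\mathsf{F}^1=\mathsf{F}^1 m^1_\A$), and the differential of a first-family generator lands in the ideal generated by the first family exactly as in Example~\ref{ex2}; there is no ``interaction'' between the two families under $m^1_{\TT(\mathscr{Q})}$, and no telescoping argument is needed.  So your last paragraph misidentifies the role of $r$.  In fact, if your argument is carried through as written it yields the coequalizer of an \emph{arbitrary} pair of strict $\Ain$functors, bypassing Lemma~\ref{reflex} entirely and proving Theorem~\ref{coeq} directly.  That is a legitimate alternative route, but it means the statement you are proving is not the one you think, and the machinery of $\TT(\mathscr{Q})$, summand inclusions, and Theorem~\ref{2.5PropLM2} is considerable overkill for the reflexive case the lemma actually asks about.
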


\begin{proof}
First we note that the image of $(F-G)$ is an $\Ain$ideal of $\B$.\ To see this we calculate: 
\begin{align*}
m^n_{\B}(f_n,..,(F-G)(f),...,f_1)&=m^n_{\B}(f_n,..,F(f),...,f_1)-m^n_{\B}(f_n,..,G(f),...,f_1).\\
&=m^n_{\B}\big(F\cdot r(f_n),..,F(f),...,F\cdot r( f_1))-m^n_{\B}(G\cdot r(f_n),..,G(f),...,G\cdot r(f_1)\big)\\
&=F\big(m^n_{\A}\big( r(f_n),..,f,..., r( f_1)\big)-G\big(m^n_{\A}\big( r(f_n),..,f,..., r( f_1)\big)\\
&=(F-G)\big(m^n_{\A}\big( r(f_n),..,f,..., r( f_1)\big).
\end{align*}
It easy to see that the quotient $\B/(F-G)$ is the coequalizer in $\aCat^{\tiny\mbox{(nu)}}_{\tiny\mbox{strict}}$.\\
Clearly if $\A$, $\B$, $F$ and $G$ are strictly unital then even the coequalizer is so.
\end{proof}

\begin{exmp}\label{frotot}
Let $\A$ be an $\Ain$category.\ The diagram 
\begin{align*}
\xymatrix@C=3.4em{
\TT\big(|\TT(|\A|)|\big)\ar@<1ex>[r]^-{\TT|\beta_{\A}|}\ar@<-0.5ex>[r]_-{\beta_{\tiny\TT(|\A|)}}& \TT(|\A|)
}
\end{align*}
satisfies the hypothesis of Lemma \ref{reflex}.\ 
The strict $\Ain$functor $\TT\cdot\alpha_{|\A|}$ plays the role of $r$.\ 
This is clear since $\TT\dashv |\mbox{-}|$ and $\alpha$, $\beta$ (defined in \ref{unitiono} and \ref{beta}) are respectively the unit and the counit of the the adjunction.\\
In this case the coequalizer is given by 
\begin{align*}
\xymatrix@C=2.7em{
\TT\big(|\TT(|\A|)|\big)\ar@<1ex>[r]^-{\TT|\beta_{\A}|}\ar@<-0.5ex>[r]_-{\beta_{\tiny\TT(|\A|)}}& \TT(|\A|) \ar@{->>}[r]& \TT(|\A|)/R_{\A}
}
\end{align*}
Where $R_{\A}$ is the system of relations of Example \ref{ex2}.\ So $\A$ (with the counit $\beta_{\A}:\TT(|\A|)\to\A$) is a coequalizer.
\end{exmp}
Actually Lemma \ref{reflex} is sufficient to prove that $\aCat^{\tiny\mbox{(nu)}}_{\tiny\mbox{strict}}$ has coequalizers.\ 
In order to do that we recall the following result due to [Duskin; pp 77-78].

\begin{lem}
Given a diagram in any category:
\begin{align}\label{dusk}
\xymatrix{
A_1\ar@<0.8ex>[r]^{h_2}\ar@<-0.1ex>[r]_{h_1}\ar@<0.8ex>[d]^{\alpha_2}\ar@<-0.2ex>[d]_{\alpha_1}&B_1\ar[r]^{h_3}\ar@<0.8ex>[d]^{\beta_2}\ar@<-0.2ex>[d]_{\beta_1}&C_1\ar@<0.8ex>[d]^{\gamma_2}\ar@<-0.2ex>[d]_{\gamma_1}\\
A_2\ar@<0.8ex>[r]^{g_2}\ar@<-0.1ex>[r]_{g_1}\ar[d]_{\alpha_3}&B_2\ar[r]^{g_3}\ar[d]_{\beta_3}&C_2 \ar[d]_{\gamma_3} \\
A_3\ar@<0.8ex>[r]^{f_2}\ar@<-0.1ex>[r]_{f_1}&B_3\ar[r]^{f_3}&C_3\\
}
\end{align}
Such that: 
\begin{itemize}
\item[1.] $f_i\cdot\alpha_3=\beta_3\cdot g_i$, $f_3\cdot\beta_3=\gamma_3\cdot g_3$, $g_i\cdot\alpha_i=\beta_i\cdot h_i$ and $g_3\cdot\beta_i=\gamma_i\cdot h_3$ for every $i=1,3$.
\item[2.] The top two rows are coequalizers.
\item[3.] The two left columns are coequalizers.
\end{itemize}
The following are equivalent:
\begin{itemize}
\item[i.] The bottom row is a coequalizer.
\item[ii.] The right-hand columns is a coequalizer.
\item[(iii.] The down-right square is a pushout.)
\end{itemize}
\end{lem}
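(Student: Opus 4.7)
My plan is to prove the chain of equivalences (i) $\Leftrightarrow$ (ii) $\Leftrightarrow$ (iii) by direct diagram chases, using only the universal properties hypothesized in conditions 1--3 together with the fact that any coequalizer is an epimorphism. In particular, the four quotient maps $h_3$, $g_3$, $\alpha_3$, $\beta_3$ are epi, and from their coequalizer property we get $h_3h_1=h_3h_2$, $g_3g_1=g_3g_2$, $\alpha_3\alpha_1=\alpha_3\alpha_2$, $\beta_3\beta_1=\beta_3\beta_2$. These identities, together with the commutativity relations in condition 1, are the only ingredients I will use.

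The heart of the argument is (i) $\Rightarrow$ (ii): given $t\colon C_2\to X$ with $t\gamma_1=t\gamma_2$, I first transport $t$ along $g_3$. From $g_3\beta_i=\gamma_i h_3$ I get $tg_3\beta_1=t\gamma_1h_3=t\gamma_2h_3=tg_3\beta_2$, so the middle-column coequalizer yields a unique $v\colon B_3\to X$ with $v\beta_3=tg_3$. Next I check that $v$ coequalizes $f_1,f_2$: indeed $vf_i\alpha_3=v\beta_3 g_i=tg_3g_i$ and $g_3g_1=g_3g_2$, so $vf_1\alpha_3=vf_2\alpha_3$, and cancelling the epi $\alpha_3$ gives $vf_1=vf_2$. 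Applying (i) produces a unique $u\colon C_3\to X$ with $uf_3=v$, and then $u\gamma_3g_3=uf_3\beta_3=v\beta_3=tg_3$ combined with $g_3$ epi gives $u\gamma_3=t$. Uniqueness of $u$ as a factorization through $\gamma_3$ comes for free, since any such factorization satisfies the same chain of epi-cancellations. The direction (ii) $\Rightarrow$ (i) is obtained by reflecting the argument across the diagonal: push a map out of $B_3$ first through $\beta_3$ (middle-column coequalizer), then through the middle-row coequalizer, using that $\beta_3$ and $h_3$ are epi.

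Finally, (i) $\Leftrightarrow$ (iii) is essentially the same chase. From (i): a pushout cocone $(v\colon B_3\to X, w\colon C_2\to X)$ with $v\beta_3=wg_3$ automatically satisfies $vf_1=vf_2$ by the computation above, so (i) supplies $u$ with $uf_3=v$, and $u\gamma_3=w$ then follows from $u\gamma_3g_3=uf_3\beta_3=v\beta_3=wg_3$ and $g_3$ epi. Conversely, from (iii): a map $s\colon B_3\to X$ with $sf_1=sf_2$ gives $s\beta_3 g_1=s\beta_3 g_2$, the middle-row coequalizer produces $w\colon C_2\to X$ with $wg_3=s\beta_3$, the pair $(s,w)$ is a pushout cocone, and (iii) yields the desired factorization. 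The only real obstacle is bookkeeping: the chase touches six commuting squares and four epimorphism cancellations, and the challenge is keeping the indices $i=1,2$ and the four coequalizers straight, rather than any conceptual difficulty.
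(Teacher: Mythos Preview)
Your proof is correct. The paper does not actually prove this lemma: it is stated as a recollection from Duskin (\cite[pp.~77--78]{Dusk}) and used as a black box in the proof of Theorem~\ref{coeq}. Your direct diagram chase is exactly the standard argument and is complete as far as the universal property goes.

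One small point you omit: to show the right column is a coequalizer you must also verify the fork condition $\gamma_3\gamma_1=\gamma_3\gamma_2$ (and symmetrically $f_3f_1=f_3f_2$ for the bottom row). This follows by the same mechanism you use elsewhere: $\gamma_3\gamma_i h_3=\gamma_3 g_3\beta_i=f_3\beta_3\beta_i$, then $\beta_3\beta_1=\beta_3\beta_2$ and $h_3$ epi give the claim. It is worth recording this one line, since without it the right column is only shown to satisfy the universal property, not to be a fork.
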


\begin{proof}[Proof of \ref{coeq}]
We want to deduce the existence of the coequalizer of
\begin{align*}
\xymatrix{
\A\ar@<0.8ex>[r]^{\mathsf{F}}\ar@<-0.1ex>[r]_{\mathsf{G}}&\B
}
\end{align*}
using the previous Lemma.\ So we want to build the $3\times 3$ diagram of the form (\ref{dusk}).\ 
We start by taking $E$ the coequalizers of
\begin{align}\label{diagrammino}
\xymatrix{
|\A|\ar@<0.8ex>[r]^{|\mathsf{F}|}\ar@<-0.1ex>[r]_{|\mathsf{G}|}&|\B|\ar[r]^q&E
}
\end{align}
in the category of DG Quivers.\\
Now we consider the diagram:
\begin{align}\label{diagrammone}
\xymatrix@=4em{
\TT\big(|\TT(|\A|)|\big)\ar@<0.8ex>[r]^{h_2}\ar@<-0.1ex>[r]_{h_1}\ar@<0.8ex>[d]^{\alpha_2}\ar@<-0.2ex>[d]_{\alpha_1}&\TT\big(|\TT(|\B|)|\big)\ar[r]^{\TT(q' )} \ar@<0.8ex>[d]^{\beta_2}\ar@<-0.2ex>[d]_{\beta_1}&\TT(E')\ar@<0.8ex>[d]^{\gamma_2}\ar@<-0.2ex>[d]_{\gamma_1}\\
\TT(|\A|)\ar@<0.8ex>[r]^{g_2}\ar@<-0.1ex>[r]_{g_1}\ar[d]_{\beta_{\A}}&\TT(|\B|) \ar[d]_{\beta_{\B}}\ar[r]^{\TT(q)}&\TT(E)\ar[d]_{\gamma_3}\\
\A\ar@<0.8ex>[r]^{\mathsf{F}}\ar@<-0.1ex>[r]_{\mathsf{G}}&\B\ar[r]^p&\C
}
\end{align}
Where:
\begin{itemize}
\item[] $h_2= \TT\big(|\TT(|\mathsf{F}|)|\big)$, $h_1= \TT\big(|\TT(|\mathsf{G}|)|\big)$.
\item[] $g_2= \TT(|\mathsf{F}|)$, $g_1= \TT(|\mathsf{G}|)$.
\item[] $\alpha_1=\TT|\beta_{\A}|$, $\alpha_2=\beta_{\tiny\TT(|\A|)}$ and $\beta_1=\TT|\beta_{\B}|$, $\beta_2=\beta_{\tiny\TT(|\B|)}$
\end{itemize}
Since $E$ is the coequalizer of (\ref{diagrammino}) then $\TT(E)$ is the coequalizer of the second row.\ 
In the same vein $\TT(E')$ is the coequalizer of the first row, where $E'$ is the coequalizer: 
\begin{align}
\xymatrix{
|\TT(|\A|)|\ar@<0.8ex>[r]^{|\TT(\mathsf{F})|}\ar@<-0.1ex>[r]_{|\TT(\mathsf{G})|}&|\TT(|\B|)|\ar[r]^-{q'} &E'.
}
\end{align}
On the other hand by Example \ref{frotot} we know that the first, and the second column, are coequalizers.\\ 
The existence (and uniqueness) of $\gamma_2$ and $\gamma_1$ is given by the universal property of the (first row) coequalizer taking respectively $\TT(q)\cdot \beta_2$ and $\TT(q)\cdot \beta_1$.\ The right down vertical arrow $\gamma_3$ is the coequalizer provided by Lemma \ref{reflex} (note that the hypothesis are satistied) and $p:\B\to\C$ is given by the universal property of the (middle column) coequalizer.\\
By the implication ii.$\Rightarrow$ i. of Lemma \ref{dusk} we have that (\ref{tongo}) has a coequalizer which is given by the bottom row of diagram (\ref{diagrammone}).\\
It is not difficult to prove that, if $\A$, $\B$ and $\mathsf{F}$, $\mathsf{G}$ are strictly unital then $\C$ is strictly unital.
\end{proof}

The strategy we used to prove Theorem \ref{coeq} is not new (see for example \cite[Proposition 2.11]{Wolff}).\ More recently was used by \cite[\S1.2.6]{PS} to proved that the category of weakly unital DG Categories has coequalizers.\ 
It is a (kind of a) consequence of (one of the formulation of) Beck (Crude or Duskin) Monadicity Theorem.\ 
%We suggest the reading of the paper \cite{PS} to whom is not familiar with these techniques.\ 
In \cite[\S 1.2.4]{PS} one can found also the explicit (standard) construction of the coequalizers in the category of DG quivers if needed that we omitted.\\ 
%To be honest 
Even in the case of the category of DG categories the construction of coequalizers is not trivial at all (see \cite{MO}).\ 
One can deduce the existence of coequalizers in $\DgCat$ from Proposition 7.2-7.4 and Lemma 6.6 of \cite[Chapter II]{EKMM}.\ 
Alternatively, %as was noted in \cite[Lemma 2.8]{HL}, 
we can describe explicitly the coequalizers in $\DgCat$ using again the same strategy of Theorem \ref{coeq}.\
%Taking the adjunction 
%\begin{align}
%\xymatrix{
%|\mbox{-}|: \DgCat\ar@<0.8ex>[r]&\ar@<0.3ex>[l]\mbox{GrCat}: F
%}
%\end{align}
%where GrCat denotes the category of graded-categories.\

\begin{thm}\label{Cocompletenesss}
The categories $\aCat^{\tiny\mbox{nu}}_{\tiny\mbox{strict}}$ and $\aCat_{\tiny\mbox{strict}}$ are complete and cocomplete.
\end{thm}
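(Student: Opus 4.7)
The plan is to reduce completeness and cocompleteness to the existence of small products (resp. coproducts) together with equalizers (resp. coequalizers), invoking the standard categorical fact that a category has all small limits if and only if it has all small products and all equalizers (dually for colimits). Since Theorems \ref{equizz} and \ref{coeq} already furnish equalizers and coequalizers in $\aCat^{\tiny\mbox{nu}}_{\tiny\mbox{strict}}$ and $\aCat_{\tiny\mbox{strict}}$, the remaining task is to produce arbitrary small products and coproducts.

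First I would extend the binary product construction given at the beginning of this section to an arbitrary small family $\{\A_i\}_{i\in I}$. Define $\prod_{i\in I}\A_i$ to have object set $\prod_{i\in I}\mbox{Ob}(\A_i)$, hom-spaces
\begin{align*}
\Hom_{\prod \A_i}\big((a_i)_i,(a'_i)_i\big):=\bigoplus_{i\in I}\Hom_{\A_i}(a_i,a'_i),
\end{align*}
and $\Ain$-structure obtained componentwise from the $m^n_{\A_i}$. The projection strict $\Ain$-functors $\pi_j:\prod_i\A_i\to\A_j$ are strict by construction, and universality in $\aCat^{\tiny\mbox{nu}}_{\tiny\mbox{strict}}$ follows immediately from the componentwise nature of the operations together with Remark \ref{SFE}. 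Strict unitality of the factors transfers to the product, so the same construction serves in $\aCat_{\tiny\mbox{strict}}$.

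Next I would extend the binary coproduct to an arbitrary family $\{\A_i\}_{i\in I}$: take $\mbox{Ob}(\coprod_i\A_i):=\coprod_i\mbox{Ob}(\A_i)$, set hom-spaces equal to $\Hom_{\A_i}(x,y)$ when $x,y$ both lie in the same $\A_i$ and to $0$ otherwise, and let the $\Ain$-structure be inherited from each $\A_i$. The canonical inclusions are strict $\Ain$-functors; universality is verified just as in the binary case, since any cocone must vanish on the zero hom-spaces and is forced on each summand. Again strictly unital data yield a strictly unital coproduct, handling $\aCat_{\tiny\mbox{strict}}$.

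With arbitrary small products and equalizers at hand (Theorem \ref{equizz}), completeness follows from the standard construction of a limit of a diagram $D:\mathcal{J}\to\aCat^{\tiny\mbox{nu}}_{\tiny\mbox{strict}}$ as the equalizer of the two evident strict $\Ain$-functors $\prod_{j\in\mathcal{J}}D(j)\rightrightarrows\prod_{\alpha:j\to k}D(k)$. Dually, arbitrary small coproducts together with Theorem \ref{coeq} give cocompleteness via the coequalizer of the analogous pair out of $\coprod_{\alpha:j\to k}D(j)$. The same argument runs verbatim in $\aCat_{\tiny\mbox{strict}}$. The only point that requires any real attention is verifying that the strict-unitality is preserved under the product/coproduct constructions (which is immediate from the explicit formulas); no other step should present a genuine obstacle, since all the nontrivial work was already carried out in Theorems \ref{equizz} and \ref{coeq}.
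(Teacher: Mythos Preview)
Your approach is exactly the paper's: reduce to products plus equalizers (Theorem \ref{equizz}) and coproducts plus coequalizers (Theorem \ref{coeq}), then invoke the standard construction of (co)limits. The paper's own proof is a single sentence to this effect, together with the observation that the terminal object is the zero algebra and the initial object is $\emptyset$.

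There is, however, one genuine slip in your extension to infinite families. For the product you set
\[
\Hom_{\prod_i \A_i}\big((a_i)_i,(a'_i)_i\big):=\bigoplus_{i\in I}\Hom_{\A_i}(a_i,a'_i),
\]
but for infinite $I$ this must be the \emph{product} $\prod_{i\in I}$ of graded $R$-modules, not the direct sum. With $\bigoplus$ the universal property fails: given a cone of strict $\Ain$functors $\sF_i:\C\to\A_i$, the unique candidate on morphisms is $f\mapsto (\sF_i^1(f))_{i\in I}$, and this tuple will in general have infinitely many nonzero components, hence does not lie in the direct sum. Replacing $\bigoplus$ by $\prod$ fixes this immediately; the componentwise $m^n$ is still well-defined since one simply maps into each factor separately. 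The paper's binary description uses $\oplus$ only because finite sums and finite products of $R$-modules coincide. Your coproduct construction, by contrast, is correct as stated for arbitrary $I$.
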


\begin{proof}
It follows immediately from the existence of product + Theorem \ref{equizz} and the existence of coproducts + Theorem \ref{coeq}.\
Let us note that the final object is the zero algebra $0$ and the initial object is the empty set $\emptyset$.
\end{proof}

\newpage

\section{Semi-free resolutions of $\Ain$categories}\label{hprojec}

In this section we give the notion of semi-free $\Ain$category and semi-free resolution of an $\Ain$category.\ 
The main result of this section is that every strictly unital $\Ain$category has a semi-free resolution.\\
\\
We start recalling a few of definitions in the framework of DG categories.\
The first two are due to Drinfeld (cf.\cite[B.4]{Dri})

\begin{defn}[Semi-free over a DG category]\label{sfreeDg}
Let $\mathcal{A}$ and $\mathcal{K}$ be two DG categories and $\mathsf{E}:\mathcal{K}\to\mathcal{A}$ a DG functor.\
We say that $\mathcal{A}$ is \emph{semi-free} over $\mathcal{K}$ if $\mathcal{A}$ can be represented as the union of an increasing sequence of DG subcategories 
\begin{align*}
\mathcal{A}_0 \subset \mathcal{A}_1 \subset \mathcal{A}_3\subset ... \subset \mathcal{A}
\end{align*}
with the following properties:
\begin{itemize}
\item[1.] Every $\mathcal{A}_i$ has the same objects of $\mathcal{A}$.
\item[2.] The DG functor $\mathsf{E}$ maps isomorphically $\mathcal{K}$ onto $\mathcal{A}_0$.
\item[3.] For every $i>0$, $\mathcal{A}_i$ is freely generated, as a graded $R$-category, 
over $\mathcal{A}_{i-1}$ by a family of homogeneous morphisms $f_{\alpha}$, such that $d f_{\alpha}\in\mathcal{A}_{i-1}$.
\end{itemize}
\end{defn}

\begin{defn}[Semi-free DG category]\label{DGGGG}
A DG category $\mathcal{A}$ is \emph{semi-free} if it is semi-free (according to Definition \ref{sfreeDg}) 
over the discrete category $\mathsf{I}_{\mathcal{A}}$ (see Definition \ref{disco}).
\end{defn}

On the other hand, a definition of \emph{semi-free} $\Ain$category is not a straightforward generalization of Definition \ref{sfreeDg} and \ref{DGGGG}.\ 
To do that we start with the notion of \emph{relatively-free} unital $\Ain$categories due to Lyubashenko and Manzyuk (see \cite[\S 1.7]{LM2}.\ 

\begin{defn}[Relatively free $\Ain$categories]\label{reflfree}
Let $\mathsf{E}:\mathscr{K}\to\A$ be a strict $\Ain$functor between two unital $\Ain$categories.\\
We say that $\A$ is \emph{relatively-free} over $\mathscr{K}$ if $\A$ can be represented as the union of an increasing sequence of its $\Ain$subcategories $\A_j$ and DG quivers $\Q_j$ such that:
\begin{align}\label{sequz}
|{\A}_0| \subset {\Q}_1\subset |{\A}_1|\subset {\Q}_2\subset |{\A}_2|\subset \Q_3\subset... \subset |{\A}|.
\end{align}
All graded quivers have the same objects of $\A$.\ 
Moreover we have:
\begin{itemize}
\item[1.] $\mathsf{E}^0$ is an isomorphism, $\mathsf{E}^1$ is an embedding such that $\mathsf{E}^1(\mathscr{K})=\A_0$.
\item[2.] For every $n>0$ we have an isomorphism of graded (non DG) quivers:
\begin{align}\label{nonzono}
\Q_n\simeq |\A_{n-1}|+ \B_n
\end{align}
for a certain graded quiver $\B_n$ (cf. Definition \ref{sumqui}). 
\item[3.] For every $n>0$ we have an isomorphism
\begin{align*}
\A_n\simeq\TT(\Q_n)/(R_{n})
\end{align*}
where $R_{n}$ is the system of relations (contained in $\TT(\Q_{n})$) given in Example \ref{ex2}, with 
\begin{align*}
i:|\A_{n-1}|\hookrightarrow\TT(\Q_n).
\end{align*}
\end{itemize}
\end{defn}

\begin{defn}[Semi-free $\Ain$categories]\label{sfreeAC}
Let $\mathsf{E}:\mathscr{K}\to\A$ be a strict $\Ain$functor.\ 
We say that $\A$ is \emph{semi-free} over $\mathscr{K}$, if $\A$ is relatively free over $\mathscr{K}$ and the sequence (\ref{sequz})
satisfies the extra hypoteses:
\begin{itemize}
\item[4.] The quotient of DG quivers $\Q_n/|\A_{n-1}|$ has zero differential and the $R$-module $\Hom_{\Q_n/|\A_{n-1}|}(x,y)$ is free, for every $x,y\in\A$.
\end{itemize}
\end{defn}

\begin{rem}
We point out that item 4. of Definition \ref{sfreeAC} corresponds to item 3. of Definition \ref{sfreeDg}.\ 
Suppose that $\A$ is \emph{semi-free} over $\mathscr{K}$.\ Item 4. means that: 
for every $n>0$ and $x,y\in\A$, the hom-space $\Hom_{\B_n}(x,y)$ of the graded quiver $\B_n$ given in (\ref{nonzono}):
has a basis and, given $f\in \Hom_{\B_n}(x,y)$, the differential, induced by (\ref{nonzono}), 
is such that $d_{\Q_n}(f)\subset \Hom_{|\A_{n-1}|}(x,y)$.
\end{rem}

The following definition comes naturally. 
\begin{defn}[Semi-free $\Ain$categories]\label{semifrollo}
A strictly unital $\Ain$category $\A$ is \emph{semi-free} if it is semi-free (in the sense of Definition \ref{sfreeAC}) over the discrete category $\mathsf{I}_{\A}$.
\end{defn}

\begin{defn}[Semi-free resolutions]
We say that a strictly unital $\Ain$category $\A$ (resp. DG category) has a semi-free resolution if, 
there exists a semi-free $\Ain$category $\tilde{\A}$, and a strict $\Ain$functor $\sF:\tilde{\A}\to\A$ (resp. DG category) such that $\sF$ is quasi-equivalence.
\end{defn}

\begin{defn}[H-projective and h-flat resolutions]
We say that a $\star$category $\A$ has a h-projective (resp. h-flat) resolution if, 
there exists a h-projective (resp. h-flat) $\star$category $\tilde{\A}$, and a strict $\star$functor $\sF:\tilde{\A}\to\A$ such that $\sF$ is quasi-equivalence.\\
%\\
Here $\star$ is strictly unital $\Ain$, cohomological unital $\Ain$, unital $\Ain$, non unital $\Ain$ or DG.
\end{defn}

 \subsection{Semi-free resolutions of $\Ain$categories}\label{semifreeee}

It is well known that if ${\A}$ is a DG category, then $\A$ has a semi-free resolution \cite[Lemma B.5]{Dri}.\ 
In this subsection we prove the same result is the framework of $\Ain$categories.\ We 

\begin{thm}\label{semifree}
Every strictly unital $\Ain$category $\A$ admits a semi-free resolution.\\
It means that there exists a strictly unital semi-free $\Ain$category $\A^{\tiny\mbox{sf}}$ 
with a (surjective on the morphisms) strict quasi-equivalence $\Psi:\A^{\tiny\mbox{sf}}\to\A$.
\end{thm}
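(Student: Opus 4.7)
The plan is to mimic Drinfeld's cell-attachment construction (\cite[Lemma B.5]{Dri}) in the $\Ain$ setting, using the free $\Ain$category functor $\TT(-)$, the quotient by the canonical system of relations from Example \ref{ex2}, and the cocompleteness of $\aCat_{\tiny\mbox{strict}}$ established in Theorem \ref{Cocompletenesss}. I will build $\A^{\tiny\mbox{sf}}$ as a countable union of strictly unital $\Ain$subcategories $\A_0 \subset \A_1 \subset \A_2 \subset \ldots$ together with strict strictly unital $\Ain$functors $\Psi_n : \A_n \to \A$, refining $\Psi_n$ into a quasi-equivalence in the colimit. For the base case take $\A_0 := \mbox{disc}(\A)$ (so $|\A_0| = \mathsf{I}_{\A}$) and let $\Psi_0$ send each unit of $\A_0$ to the corresponding unit of $\A$; this satisfies condition~1 of Definition \ref{reflfree}.

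\textbf{Inductive step.} Given $\A_n \simeq \TT(\Q_n)/(R_n)$ and $\Psi_n$, I define a graded quiver $\B_{n+1}$ with zero differential as follows. For every pair $x,y$ and every homogeneous element $\phi \in \Hom_{\A}\big(\Psi_n^0(x),\Psi_n^0(y)\big)$, adjoin a free generator $g_\phi$ of degree $\dego(\phi)$; this will ensure surjectivity on morphisms and on cohomology. For every pair $x,y$ and every closed morphism $\alpha \in \Hom_{\A_n}(x,y)$ such that $\Psi_n^1(\alpha) = m_{\A}^1(\beta)$ for some $\beta \in \A$, adjoin a free generator $h_{\alpha,\beta}$ of degree $\dego(\alpha)-1$; this will ensure injectivity on cohomology. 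Now set
\[
\Q_{n+1} := |\A_n| + \B_{n+1}, \qquad \A_{n+1} := \TT(\Q_{n+1})/(R_{n+1}),
\]
where the differential on $\Q_{n+1}$ restricted to $\B_{n+1}$ sends $g_\phi \mapsto 0$ and $h_{\alpha,\beta} \mapsto \alpha \in |\A_n|$, and $R_{n+1}$ is the system of relations of Example \ref{ex2} associated to $\A_n$. By Remark \ref{primacostr}, to define a strict strictly unital $\Ain$functor $\Psi_{n+1} : \A_{n+1} \to \A$ it suffices to specify a DG quiver morphism $\Q_{n+1} \to |\A|$, and then to check via Theorem \ref{2.5PropLM2} that it kills $R_{n+1}$. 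The prescription is the obvious one: extend $\Psi_n$ on $|\A_n|$, send $g_\phi \mapsto \phi$, and send $h_{\alpha,\beta} \mapsto \beta$. Vanishing on $R_{n+1}$ follows because $\Psi_n$ is already strict $\Ain$ on $\A_n$, so $\Psi_n^1\big(m^k_{\A_n}(f_k,\ldots,f_1)\big) = m^k_{\A}\big(\Psi_n^1(f_k),\ldots,\Psi_n^1(f_1)\big)$.

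\textbf{Colimit and verification.} Setting $\A^{\tiny\mbox{sf}} := \mbox{colim}_n\, \A_n$ in $\aCat_{\tiny\mbox{strict}}$ (which exists by Theorem \ref{Cocompletenesss}), the $\Psi_n$ assemble into a strict strictly unital $\Ain$functor $\Psi : \A^{\tiny\mbox{sf}} \to \A$. By construction the filtration $\A_0 \subset \Q_1 \subset |\A_1| \subset \ldots$ satisfies conditions 1--3 of Definition \ref{reflfree}, and item 4 of Definition \ref{sfreeAC} is immediate since each $\B_{n+1}$ is free with zero differential on its generators and the differentials of those generators land in $|\A_n|$. Surjectivity of $\Psi$ on morphisms is guaranteed already at level $n=1$ by the generators $g_\phi$. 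For the quasi-equivalence: essential surjectivity of $H^0(\Psi)$ is free (objects are the same); surjectivity of $H^\bullet(\Psi^1)$ follows because every cycle in $\A$ has a pre-image $g_\phi \in \A_1$; injectivity follows because whenever $\Psi^1(\alpha)$ is a boundary, the generator $h_{\alpha,\beta} \in \A_{n+1}$ provides a primitive for $\alpha$.

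\textbf{Main obstacle.} The delicate point is the interplay between the quotient by the system of relations $R_{n+1}$ and the semi-freeness condition: I must ensure that the passage $\TT(\Q_{n+1}) \twoheadrightarrow \TT(\Q_{n+1})/(R_{n+1}) = \A_{n+1}$ does not destroy the freeness of the new generators in $\B_{n+1}$, i.e.\ that the composite $\B_{n+1} \hookrightarrow \Q_{n+1} \hookrightarrow |\TT(\Q_{n+1})| \twoheadrightarrow |\A_{n+1}|$ remains an embedding onto a free complement of $|\A_n|$, and that the differential of $h_{\alpha,\beta}$ really is (the class of) $\alpha$ in $|\A_{n+1}|$. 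This is the content of \cite[\S 1.7--1.8]{LM2} adapted to the strictly unital setting, and it is where the unitality of $\A_0 = \mbox{disc}(\A)$ and the careful choice of $R_{n+1}$ (which only relates operations coming from $\A_n$, not new brackets involving $\B_{n+1}$) are crucial. A secondary but routine point is the preservation of strict units along the colimit, which follows from the functoriality of Construction \ref{UNITTTT}.
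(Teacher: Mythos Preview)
Your approach is essentially the paper's, but there are two genuine gaps.

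First, your generators $g_\phi$ cannot be indexed by \emph{all} homogeneous $\phi\in\A$: if $d_{\Q_{n+1}}(g_\phi)=0$ while $\Psi_{n+1}(g_\phi)=\phi$, then compatibility with the differential forces $m^1_{\A}(\phi)=0$. So the claim that surjectivity on morphisms holds ``already at level $n=1$ by the generators $g_\phi$'' is wrong as stated. The paper restricts to closed $\phi$ at step~$1$ (so $\Psi_1$ surjects only onto cycles), and from step~$2$ onward adjoins a single family of generators indexed by pairs $(a,\mathfrak{t})$ with $a\in\A$ arbitrary, $\mathfrak{t}\in\tilde{\A}_{n-1}$ closed, and $m^1_{\A}(a)=\Psi_{n-1}(\mathfrak{t})$; the generator is sent to $a$ and its differential is $\mathfrak{t}$. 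These do double duty---they kill the cycles that should be boundaries (your $h_{\alpha,\beta}$) \emph{and} hit every morphism, since for any $a$ the cycle $m^1_{\A}(a)$ already lies in the image of $\Psi_{n-1}$. Your $h_{\alpha,\beta}$ alone would also yield full surjectivity from step~$2$; the error is only in the justification, not in the architecture.

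Second, and more seriously, your $\A_{n+1}:=\TT(\Q_{n+1})/(R_{n+1})$ is \emph{not} strictly unital. The relations $R_{n+1}$ of Example~\ref{ex2} only identify $m^k_{\TT}$ with $m^k_{\A_n}$ on tuples of morphisms from $\A_n$; they impose nothing on $m^2_{\A_{n+1}}\big((\bullet;1_x),g_\phi\big)$ or on $m^k_{\A_{n+1}}(\ldots,1_x,\ldots)$ when a new generator appears. Hence the image of $1_x\in\A_0$ is not a strict unit for the new cells, and appealing to Construction~\ref{UNITTTT} ``at the colimit'' does not repair this retroactively. The paper handles it at every step: it passes to $\big(\TT(\Q_n)/I_n\big)_+$, acquiring a fresh strict unit $1_x^{\tilde{\A}_n}$, and then quotients by the ideal $I^n_+$ generated by $(\bullet;1_x^{\tilde{\A}_n})-(\bullet;1_x^{\tilde{\A}_0})$ to identify it with the original one, checking via Theorem~\ref{2.5PropLM2} that $\Psi_n$ descends. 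This is not a routine afterthought---it is precisely what keeps the resolution inside $\aCat$ rather than $\aCat^{\tiny\mbox{nu}}$, and without it your inductive hypothesis ``$\Psi_n$ is strict strictly unital'' never gets off the ground.
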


\begin{proof}[Proof of Theorem \ref{semifree}]
We need to find a sequence of $\Ain$categories:
\begin{align}
\tilde{\A}_0\subset\tilde{\A}_1\subset\tilde{\A}_2\subset...\subset\tilde{\A}_{n}\subset...
\end{align}
satisfying items 1, 2, 3 of Definition \ref{sfreeAC} and item 4 of Definition \ref{semifrollo}.\
Where $\tilde{\A}_0$ is the discrete DG category $\mbox{disc}(\A)$ (see Definition \ref{disco}).\\
We have a strict strictly unital $\Ain$functor, defined as the identity on the objects, and on the morphisms as:
\begin{align*}
\Psi_0:\tilde{\A}_0&\to \A \\
1^{\tilde{\A}_0}_x&\mapsto 1_x.
\end{align*}
Now we want to define the strictly unital $\Ain$category $\tilde{\A}_1$.\\
We take the set $S_1=\mathcal{f}S_{x,y}\mathcal{g}_{x,y\in\tiny\mbox{Ob}(\A)}$ of (sets of) morphisms, where  
\begin{align}\label{homoclo}
S_{x,y}:=&\big\{\mbox{$f_s\in\Hom_{\A}(x,y)$ is homogeneous and closed}\big\}
\end{align}
if $x\not=y$, otherwise
\begin{align}\label{homoclone}
S_{x,x}:=&\big\{\mbox{$f_s\in\Hom_{\A}(x,x)$ is homogeneous such that it is closed or $m_{\A}^1(f_s)=1^{\A}_{x}$}\big\}.
\end{align}
Now we take the graded quiver $\Q_{S_1}$ whose objects are the same of $\A$ and 
whose morphisms are freely generated by the set $S_1$ (namely the set $S_{x,y}$ is a basis for the graded $R$-module $\Hom_{\Q_{S_1}}(x,y)$).\\
We take the graded quiver $\Q_1:=\Q_{S_1}+|\tilde{\A}_0|$, which can be made a DG quiver taking, as differential,
$d_{\Q_{S_1}}(f_s)=1^{\tilde{\A}_0}_{x}$ if $m_{\A}^1(f_s)=1_x^{\A}$ and $d_{\Q_{S_1}}(f_s)=0$ if $m_{\A}^1(f_s)=0$.\\
We have an inclusion of DG quivers $|\tilde{\A}_0|\to \Q_1$, via the adjunction, 
we have a strict non unital $\Ain$functor:
\begin{align}\label{stronk}
\psi:\TT(\Q_1)\to \A.
\end{align}
We take the strictly unital $\Ain$category $\TT(\Q_1)_+$ (see Construction \ref{UNITTTT}), 
we denote by $1_{x}^{\tilde{\A}_1}$ the new unit of the object $x\in\A$.\
We can extend (uniquely) the functor $\psi$ to a strictly unital strict $\Ain$functor $\psi_+:\TT(\Q_1)_{+}\to\A$ (see Remark \ref{primacostr}).\\
Now we can define the strictly unital $\Ain$ category $\TT(\Q_1)_+/ I^1_+$, where $I^1_+$ is the $\Ain$ ideal generated by 
$$R^1_+:=\big((\bullet;1_{x}^{\tilde{\A}_1})-(\bullet; 1_{x}^{\tilde{\A}_0})\mbox{, $\forall x\in\A$} \big).$$
The strict $\Ain$functor $\psi_{+}$ factorizes over $\TT(\Q_1)_+/ I^1_+$.\ 
So we have a strict strictly unital $\Ain$functor:
\begin{align}\label{froggolotto}
\Psi_{S_1}:\TT(\Q_1)_+/ I^1_+\to{\A}.
\end{align}
Clearly the functor $(\ref{froggolotto})$ is surjective on the closed morphisms of ${\A}$.\\
We can choose a subset $\tilde{S}_1\subset S_1$ making (\ref{froggolotto}) surjective on the closed morphisms.\\
\\
We define $\tilde{\A}_1:=\TT(\Q_{\tilde{S}_1}+|\tilde{\A}_0|)_+/ I^1_+$ and $\Psi_1=\Psi_{\tilde{S}_1}$.\\
Clearly the choice of the basis $\tilde{S}_1$ (so the one of $\tilde{\A}_1$) is not unique.
\\
It is defined, as the identity on the objects and, on the morphisms, as
\begin{align*}
\Psi^1_1:\tilde{\A}_1&\to\A\\
\big(\bullet; f\big)&\mapsto f
\end{align*}
and $\Psi_1^{n>1}=0$.\\
We have an inclusion $\tilde{\A}_0\subset\tilde{\A}_1$ of strictly unital $\Ain$categories.\\
\\
Now we want to define $\tilde{\A}_2$.\\

We take the set $S_2=\mathcal{f}S_{x,y}\mathcal{g}_{x,y\in\tiny\mbox{Ob}(\A)}$ of (sets of) morphisms, where $S_{x,y}$ is given by the pairs 
\begin{align*}
S_{x,y}:=&\Big\{\big(a,(\mathfrak{t};f_1,...,f_n)\big)\hspace{0,2cm}\mbox{s.t. $a\in\Hom_{\A}(x,y)$ and $(\mathfrak{t};f_1,...,f_n)\in\Hom_{\tilde{\A}_1}(x,y)$ is homogeneous closed}\\ 
&\mbox{ such that $m_{\A}^1(a)=\Psi_1\big((\mathfrak{t};f_1,...,f_n)\big)$}\Big\}.
\end{align*}
We denote by $\Q_{S_2}$ the graded (non DG) quiver generated by $S_2$.\\%, note that the objects are the same of $\A$.\\
\\
Now we take the (graded) quiver $\Q_2:=\Q_{S_2}+|\tilde{\A}_1|$.\\
We can make $\Q_2$ a DG quiver as follows:\ the differential in $\Q_2$ agrees with the differential of $|\tilde{\A}_1|$ 
(namely $d_{\Q_2}|_{|\tilde{\A}_1|}=d_{|\tilde{\A}_1|}$) and, on the morphisms of $\Q_{S_2}$, is defined as follows:
\begin{align*}
d_{\Q_{2}}\Big(\big(a,(\mathfrak{t};f_1,...,f_n)\big) \Big)&=(\mathfrak{t};f_1,...,f_n)\in|\tilde{\A}_1|\subset\Q_2.
\end{align*}
We have a strictly unital $\Ain$category $\TT(\Q_2)_{+}$, and an inclusion of DG quivers:
\begin{align}\label{inco}
\xymatrix{
|\tilde{\A}_1|\subset|\TT(\Q_2)_{+}|
}
\end{align}
Note that (\ref{inco}) is not an inclusion of $\Ain$categories despite both actually have an $\Ain$structure.\\
We take the $\Ain$ideal $I_2$ of $\TT(\Q_2)$ generated (via the $m^n_{\TT(\Q_2)}$) by the system of relations $R_2$ spanned by the morphisms of the form:
\begin{align}\label{R2}
R_2:=\left(m^n_{\TT(\Q_2)}\big( (\bullet;f_1),...,(\bullet;f_n) \big)-\big(\bullet; m^n_{\tilde{\A}_1}(f_1,...,f_n )\big) \right)&
\end{align}
for every $n\ge1$ and $f_1,...,f_n\in\tilde{\A}_1$.\ See Example \ref{ex2}.\\
We take the non unital $\Ain$category $\TT(\Q_2)/I_2$.\ We have a functor:
\begin{align*}
G_2:\Q_2&\to|\A|\\
\big(\mathfrak{t};f_1,...,f_n\big)&\mapsto \Psi_1((\mathfrak{t};f_1,...,f_n)\big))\\
\big(a,(\mathfrak{t};f_1,...,f_n)\big)&\mapsto a.
\end{align*}
%%%%%%%%%%%%%%%%%%%%%%%%%%%%%%%%%%%%%%%%%%%%%%%%%%%%%%%%%%%%%%%%%%%%%%%%%%%%%%%
It is a map of DG quivers since, $|\Psi_1|:|\tilde{\A}_1|\to|\A|$ is a map of DG quivers, and:
\begin{align*}
G_2(d_{\Q_2}\big(a,(\mathfrak{t};f_1,...,f_n)\big))&=G_2(\mathfrak{t};f_1,...,f_n)\\
&=|\Psi_1|(\mathfrak{t};f_1,...,f_n)\\
&= m_{\A}^1(a)\\
&= m_{\A}^1\big( G_2\big(a,(\mathfrak{t};f_1,...,f_n)\big) \big).
\end{align*}
Via the adjunction (\ref{adjunct}) we have a non unital $\Ain$functor:
\begin{align*}
\beta_{\A}\cdot \TT(G_2):\TT(\Q_2)&\to\A.
\end{align*}
Clearly we have that $\big(\beta_{\A}\cdot \TT(G_2)\big)(R_2)=0$, 
then $\beta_{\A}\cdot\TT(G_2)$ factorizes as follows:
\begin{align}\label{dingfo}
\xymatrix@=1em{
\TT(\Q_2)\ar[rr]^{\beta_{\A}\cdot\TT(G_2)}\ar[dr]_q&&\A \\
&\TT(\Q_2)/I_2\ar@{-->}[ur]&
}
\end{align}
%The strict $\Ain$functor\footnote{this is strict by Lemma \ref{factone} since $\beta_{\A}\cdot\TT(G_2)$ is strict and $q$ is strict and surjective on the morphisms.}:
The dotted strict $\Ain$functor in (\ref{dingfo}) is denoted by
\begin{align}\label{rombolo}
\tilde{\Psi}_2:\TT(\Q_2)/I_2\to&\A
\end{align}
Explicitly 
\begin{align*}
\tilde{\Psi}_2:\TT(\Q_2)/I_2\to&\A\\
(\mathfrak{t};\textbf{f}_n,...,\textbf{f}_1) \mapsto& \mathfrak{t}_{m^{\A}}(G_2(\textbf{f}_n),...,G_2(\textbf{f}_1)).
\end{align*}
Now we take the strictly unital $\Ain$category $\big(\TT(\Q_2)/I_2\big)_+$.\\ 
We denote by $1^{\tilde{\A}_2}_x$ the unit added in this step.\ We can extend $\tilde{\Psi}_2$ to a strict $\Ain$functor:
\begin{align*}
\hat{\Psi}_2: \big(\TT(\Q_2)/I_2\big)_+&\to\A \\
1^{\tilde{\A}_2}_x&\mapsto 1_x.
\end{align*}
We denote by $\tilde{\A}_{2,S}$ the quotient $\big(\TT(\Q_2)/I_2\big)_+/I^2_+$.\ 
Where, as before 
$$I^2_+:=(R^2_+)=\left((\bullet;1_{x}^{\tilde{\A}_2})-(\bullet; 1_{x}^{\tilde{\A}_0})\mbox{, $\forall x\in\A$} \right).$$
Again using Theorem \ref{2.5PropLM2} we have that $\hat{\Psi}_2$ factorize over $I^2_{+}$ since 
\begin{align*}
\hat{\Psi}_2\big( (\bullet;1^{\tilde{\A}_0}_x) \big)=\Psi_1\big((\bullet;1^{\tilde{\A}_0}_x)\big)=1_x.
\end{align*}
We call $\tilde{\Psi}_2$ the strict strictly unital $\Ain$functor 
\begin{align}\label{strokere}
\tilde{\Psi}_2:\tilde{\A}_{2,S} \to\A
\end{align}
The strictly unital $\Ain$category $\tilde{\A}_{2,S}$ and $\tilde{\Psi}_2$ are such that:
\begin{itemize}
\item[1.] $\tilde{\Psi}_2$ is surjective on the morphisms.
\item [2.] Every cycle $\mathfrak{t}\in\tilde{\A}_1$ whose image in $\Psi_1$ is a boundary is a boundary in $\tilde{\A}_{2,S}$.
\end{itemize}
To get $\tilde{\A}_2$ it suffices to take a subset $\tilde{S}_2$ of $S_2$ such that (\ref{strokere}) satisfies 1. and 2.\\
Then, we can define $\tilde{\A}_2:=\tilde{\A}_{2,\tilde{S}}$ and $\Psi_{2}:=\tilde{\Psi}_2$.\\
We have an inclusion of $\Ain$categories:
\begin{align*}
\tilde{\A}_0\subset\tilde{\A}_1\subset\tilde{\A}_2.
\end{align*}
Moreover, by definition, we have
\begin{align*}
\restr{\Psi_2}{\tilde{\A}_1}=\Psi_{1}:\tilde{\A}_1\to&\A.
\end{align*}
Now we can iterate the process to get $\tilde{\A}_n$\\ 
First we define the quiver $\Q_{S_n}$ whose objects are the same of $\A$.\\
The morphisms are generated by the set $S_n$
\begin{align*}
S_n:=&\Big\{\big(a,(\mathfrak{t};f_1,...,f_n)\big)\hspace{0,2cm}\mbox{where $a\in\A$ and $(\mathfrak{t};f_1,...,f_n)\in\tilde{\A}_{n-1}$ is homogeneous closed}\\ 
&\mbox{ such that $m_{\A}^1(a)=\Psi_{n-1}\big((\mathfrak{t};f_1,...,f_n)\big)$}\Big\}.
\end{align*}
As before we can define the DG quiver $\Q_{n}:=|\tilde{\A}_{n-1}|+\Q_{S_n}$.\\
The differential in $\Q_{n}$ is defined on the morphisms of $\Q_{S_n}$ as:
\begin{align*}
d_{\Q_{n-1}}\big(a,(\mathfrak{t};f_1,...,f_n)\big)&=(\mathfrak{t};f_1,...,f_n)\in|\tilde{\A}_{n-1}|\subset\Q_{n}.
\end{align*}
In $\TT(\Q_{n})$ we take the $\Ain$ideal $I_n$ which is generated by the system of relations $R_n$.\ 
Note that $R_n$ is spanned by the morphisms of the form:
\begin{align}\label{relazionerzo}
R_n:=\left(m^n_{\TT(\Q_{n})}\big((\bullet; f_1),...,(\bullet; f_n) \big)-\big(\bullet; m^n_{\tilde{\A}_{n-1}}(f_1,...,f_n )\big) \right).
\end{align}
For every $n\ge1$ and $f_1,...,f_n\in\tilde{\A}_{n-1}$.\\
Proceeding as before, we get a strict strictly unital $\Ain$functor and an $\Ain$category $\tilde{\A}_n$ 
(depending on a subset $\tilde{S}_n\subset S_n$) such that:
\begin{itemize}
\item[1.] $\tilde{\Psi}_n:\tilde{\A}_n\to \A$ is surjective on the morphisms.
\item [2.] Every cycle $\mathfrak{t}\in\tilde{\A}_{n-1}$ whose image in $\Psi_{n-1}$ is a boundary is a boundary in $\tilde{\A}_{n}$.
\end{itemize}
We have an inclusion of strictly unital $\Ain$categories:
\begin{align}\label{incluss}
\tilde{\A}_0\subset\tilde{\A}_1\subset\tilde{\A}_2\subset...\subset\tilde{\A}_{n-1}\subset\tilde{\A}_n.
\end{align}
We have:
\begin{align*}
\restr{\Psi_n}{\tilde{\A}_{n-1}}=\Psi_{n-1}:\tilde{\A}_{n-1}\to&\A.
\end{align*}
Taking the colimit $n\to+\infty$ in $\aCat_{\tiny\mbox{strict}}$, which we know is cocomplete (see Theorem \ref{Cocompletenesss}) we get a strictly unital $\Ain$category ${\A^{\tiny\mbox{sf}}}$ and a strict strictly unital $\Ain$functor:
\begin{align}
{\Psi}:{\A^{\tiny\mbox{sf}}}&\to\A.
\end{align}
which is surjective on the morphisms and a quasi-equivalence of $\Ain$categories.
\end{proof}

\begin{rem}\label{Drinni}
Let $\A$ be a DG category, considering $\A$ as an $\Ain$category we can take the semi-free resolution $\A^{\tiny\mbox{sf}}$ provided by Theorem \ref{semifree}.\ 
We note that $\A^{\tiny\mbox{sf}}$ is not a semi-free DG category (not even a DG category).\
We can easily modify the proof of Theorem \ref{semifree} in order to prove that every DG category has a semi-free resolution.\ 
It suffices to take $\TT_{\tiny{\mbox{DG}}}(\mbox{-})_{+}$ instead of $\TT(\mbox{-})_{+}$ (see Construction \ref{Drinniborguz}).\\
On the other hand, if $\A$ is an $\Ain$category then it is still possible to take the free DG category associated to $\Q$ (where $\Q_S$ is the DG quiver as in (\ref{homoclo})).\ But the induced strict $\Ain$functor $\TT(\Q_S)_{+}\to \mathcal{\A}$ factorizes over the ideal $\mathsf{As}$ only if $\mathcal{\A}$ is associative (namely if $\mathcal{\A}$ is a DG category).\\ 
In Corollary \ref{cor1} we prove that, if $\mathcal{A}$ is a DG category, then the resolution as $\Ain$category and as DG category are weakly equivalent.\ 
\end{rem}
 
\begin{thm}\label{augmo}
Given a strictly unital $\Ain$ (resp. DG)category $\A$, its semi-free resolution ${\A^{\tiny\mbox{sf}}}$ has nice unit (according to Definition \ref{snu}).
\end{thm}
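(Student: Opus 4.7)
The plan is to proceed by induction along the filtration $\tilde{\A}_0\subset\tilde{\A}_1\subset\cdots$ built in Theorem \ref{semifree}, maintaining at each stage the hypothesis that $\Hom_{\tilde{\A}_n}(x,x) = R\cdot 1_x\oplus M_x^n$ as graded $R$-modules, with $R\cdot 1_x\cong R$ and with the submodules $M_x^n$ nested compatibly with the inclusions. Freeness of $R\cdot 1_x$ is immediate: $\tilde{\A}_0=\mbox{disc}(\A)$ satisfies $\Hom(x,x)=R\cdot 1_x\cong R$ by Definition \ref{disco}, and each inclusion $\tilde{\A}_{n-1}\hookrightarrow\tilde{\A}_n$ is an embedding on morphisms by item~1 of Definition \ref{reflfree}, so rank-one freeness persists through the colimit $\A^{\sem}$.

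For the splitting, the base case $n=0$ is trivial with $M_x^0=0$. For the inductive step, write $\tilde{\A}_n=(\TT(\Q_n)/I_n)_+/I^n_+$ with $\Q_n=|\tilde{\A}_{n-1}|+\B_n$ and argue in two moves. First, the relation $I^n_+$ identifies the abstract added unit $1_x^{\tilde{\A}_n}$ with the single-leaf tree $(\bullet;\,1_x^{\tilde{\A}_0})$; because $I^n_+$ is an $\Ain$-ideal and $1_x^{\tilde{\A}_n}$ already satisfies the strict unit axioms in $\TT(\Q_n)_+$ (through $I_u$), those axioms are forced on $(\bullet;\,1_x^{\tilde{\A}_0})$ in the quotient. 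Under the chain $|\tilde{\A}_0|\hookrightarrow|\tilde{\A}_{n-1}|\hookrightarrow \Q_n$ together with the inductively established identifications of units inside $\tilde{\A}_{n-1}$, the element $(\bullet;\,1_x^{\tilde{\A}_0})$ is precisely the inherited strict unit $1_x$ of $\tilde{\A}_{n-1}$. Second, the generators (\ref{relazionerzo}) of $I_n$ rewrite every tree $(\mathfrak{T}_k; f_1,\dots,f_k)$ with $f_i\in|\tilde{\A}_{n-1}|$ as the single-leaf tree $(\bullet;\, m^k_{\tilde{\A}_{n-1}}(f_1,\dots,f_k))$; iterating this reduction (together with the strict unit simplifications above) produces a normal form in which every element of $\Hom_{\tilde{\A}_n}(x,y)$ is a sum of (a)~single-leaf trees $(\bullet;\, f)$ with $f\in\tilde{\A}_{n-1}(x,y)$ and (b)~reduced trees carrying at least one leaf from $\B_n$. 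Because these reductions never cross the $\B_n/|\tilde{\A}_{n-1}|$ boundary, one obtains a direct sum decomposition of graded $R$-modules
\begin{align*}
\Hom_{\tilde{\A}_n}(x,y)\;=\;\Hom_{\tilde{\A}_{n-1}}(x,y)\;\oplus\;\bar{N}_n(x,y),
\end{align*}
with $\bar{N}_n(x,y)$ spanned by the reduced $\B_n$-leaf trees. Combined with the inductive hypothesis $\tilde{\A}_{n-1}(x,x)=R\cdot 1_x\oplus M_x^{n-1}$, this gives $\Hom_{\tilde{\A}_n}(x,x)=R\cdot 1_x\oplus M_x^n$ with $M_x^n:=M_x^{n-1}\oplus \bar{N}_n(x,x)$; passing to the filtered colimit delivers the desired splitting for $\A^{\sem}$. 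The DG case runs verbatim after replacing $\TT(\cdot)_+$ with $\TT_{\tiny{\mbox{DG}}}(\cdot)_{+}$ (Construction \ref{Drinniborguz}, Remark \ref{Drinni}).

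The main obstacle is justifying the normal form used in the second move: since $I_n$ is an $\Ain$-ideal, its generators can appear inside arbitrary larger trees via the multilinear operations $m^k_{\TT(\Q_n)_+}$, and one must verify that every such composite relation is already captured by the ``collapse every maximal $|\tilde{\A}_{n-1}|$-subtree to a single leaf'' procedure and never forces a nontrivial identification across the $\B_n/|\tilde{\A}_{n-1}|$ boundary. A conceptually clean way to package this is to describe $\TT(\Q_n)/I_n$ as the coproduct in $\aCat^{\tiny\mbox{nu}}_{\tiny\mbox{strict}}$ of $\tilde{\A}_{n-1}$ with the free non-unital $\Ain$-category on $\B_n$ (which exists by the cocompleteness of Theorem \ref{Cocompletenesss}); the direct sum decomposition on hom-spaces is then inherited from the explicit construction of such coproducts carried out in Section \ref{cocompletone}.
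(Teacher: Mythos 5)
There is a genuine gap, and it sits exactly at the step you yourself flag as the main obstacle. The direct-sum decomposition $\Hom_{\tilde{\A}_n}(x,y)=\Hom_{\tilde{\A}_{n-1}}(x,y)\oplus\bar{N}_n(x,y)$ is not established by your final packaging. The colimit you invoke is not the coproduct of Section \ref{cocompletone}: the coproduct constructed there has disjoint object sets and \emph{zero} hom-spaces between the two components, whereas $\TT(\Q_n)/I_n$ amalgamates $\tilde{\A}_{n-1}$ and the new generators over one and the same object set; what would be relevant is a pushout (for instance of $\beta_{\tilde{\A}_{n-1}}:\TT(|\tilde{\A}_{n-1}|)\to\tilde{\A}_{n-1}$ along the inclusion $\TT(|\tilde{\A}_{n-1}|)\hookrightarrow\TT(\Q_n)$), and the coequalizer construction of Theorem \ref{coeq} gives no direct-sum description of the hom-spaces of such a pushout. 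Moreover $\B_n=\Q_{S_n}$ is only a graded quiver here: $d_{\Q_n}$ of a new generator $(a,\mathfrak{t})$ lies in $|\tilde{\A}_{n-1}|$, so ``the free non unital $\Ain$category on $\B_n$'' does not even record the differential. So the normal-form claim, on which your whole induction rests, remains unproved in your write-up. It can be repaired without the coproduct language: the generators (\ref{relazionerzo}) of $I_n$ only rewrite subtrees all of whose leaves lie in $|\tilde{\A}_{n-1}|$, hence they preserve the number of leaves labelled by $\Q_{S_n}$; grading $\TT(\Q_n)/I_n$ by that number, the degree-zero part is $\TT(|\tilde{\A}_{n-1}|)/(R_{\tilde{\A}_{n-1}})\simeq\tilde{\A}_{n-1}$ (Example \ref{ex2}), and the positive part is your $\bar{N}_n$; alternatively one can quote the filtration $\overline{U}_{m,l,j}$ built in the proof of Theorem \ref{h-proj}, whose bottom term is $|\tilde{\A}_{n-1}|$.

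You should also be aware that all of this is more machinery than the statement needs, and the paper takes a shorter route: it only uses that each successive quotient $\Hom_{\tilde{\A}_n}(x,x)/\Hom_{\tilde{\A}_{n-1}}(x,x)$ is a \emph{free} graded $R$-module (this is built into the semi-free construction, cf.\ item 4 of Definition \ref{sfreeAC}), together with $\Hom_{\tilde{\A}_0}(x,x)=R\cdot 1_x$. A graded module admitting an exhaustive filtration with free subquotients is free, so $\Hom_{\A^{\sem}}(x,x)/R\cdot 1_x$ is free and the short exact sequence of Definition \ref{snu} splits; no explicit nested complements $M^n_x$, no identification of units in the quotients, and no normal form are required. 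In short: your strategy is salvageable, but as written the crucial decomposition is unjustified, and the splitting can be obtained directly from projectivity of the quotient rather than from an explicit complement.
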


\begin{proof}
We have the inclusions:
\begin{align*}
\tilde{\A}_0\subset\tilde{\A}_1\subset\tilde{\A}_2\subset...\subset \tilde{\A}_{n-1}\subset \tilde{\A}_n\subset...\subset{\A}^{\tiny\mbox{sf}}.
\end{align*}
Fixing an object $x\in\A$ we have the following inclusions of (graded) $R$-modules:
\begin{align*}
0\subset \Hom_{\tilde{\A}_0}(x,x)\subset \Hom_{\tilde{\A}_1}(x,x)\subset \Hom_{\tilde{\A}_2}(x,x)\subset...\subset\Hom_{\A^{\tiny\mbox{sf}}}(x,x).
\end{align*}
Where $\Hom_{\tilde{\A}_0}(x,x):=\Hom_{\tiny\mbox{disc}(\A)}(x,x)\simeq R$.\\
In particular we have the short exact sequence:
\begin{align*}
\xymatrix{
0\to R\cdot 1_x\ar@{^(->}[r]& \Hom_{\tilde{\A}_1}(x,x)\ar@{->>}[r]& \Hom_{\tilde{\A}_1}(x,x)/\Hom_{\tilde{\A}_0}(x,x)\ar[r]&0.
}
\end{align*}
This is splitting since $\Hom_{\tilde{\A}_1}(x,x)/\Hom_{\tilde{\A}_0}(x,x)$ is free.\\
So the inclusion $R\cdot 1_x\to \Hom_{\tilde{\A}_1}(x,x)$ has a retraction.\ On the other hand, all of the quotients (of graded $R$-modules) are free by construction.\ It means that
\begin{align*}
\xymatrix{
0\to R\cdot 1_x\ar@{^(->}[r]& \Hom_{{\A}^{\tiny\mbox{sf}}}(x,x)\ar@{->>}[r]& \Hom_{{\A}^{\tiny\mbox{sf}}}(x,x)/R\cdot 1_x\ar[r]&0.
}
\end{align*}
splits and we are done.
\end{proof}

\newpage

\section{Homotopy Theory of $\aCat$}\label{results}

In this last section we investigate the homotopy category of the (strictly unital) $\Ain$categories.\ 
The key tools are the semi-free resolutions of $\Ain$categories.\ 
The main point is that the semi-free $\Ain$categories behave like the cofibrant objects in model categories.\ 

We introduce the notion of \emph{category with cofibrant morphisms} which is a generalization of h-projective category.\
In particular we have two important properties: the semi-free $\Ain$categories are categories with cofibrant morphisms (Theorem \ref{h-proj}), 
and the semi-free $\Ain$categories have (a weak) lifting property with respect to quasi-equivalences surjective on the morphisms (Theorem \ref{lemlift}).

\subsection{Properties of Semi-free $\Ain$categories}\label{cofibrenzio}
 
We start with two Definitions well known to the experts, we recall that: a (unbounded) DG $R$-module $(M,d_M)$ is \emph{h-projective} (resp. \emph{h-flat}) if, 
for every acyclic complex $N$, the DG $R$-module $\mbox{Hom}(M,N)$ (resp. ${M}\otimes N$) is acyclic.\ 
Note that we use the Drinfeld's notation \emph{h-proj} (standing for \emph{homotopically projective}) 
instead the Spaltenstein's one \emph{k-proj} (see \cite[\S1]{Spa}).

\begin{defn}[Homotopically projective DG categories]
A \emph{h-projective} DG category $\mathcal{A}$ is a DG category such that, for every pair of objects $ x,y\in\mathcal{A}$, 
the DG $R$-module $\big(\Hom_{\mathcal{A}}(x,y), d_{\mathcal{A}}\big)$ is h-projective. 
\end{defn}

\begin{defn}[Homotopically flat DG categories]
A \emph{h-flat} DG category $\mathcal{A}$ is a DG category such that, for every pair of objects $ x,y\in\mathcal{A}$, 
the DG $R$-module $\big(\Hom_{\mathcal{A}}(x,y), d_{\mathcal{A}}\big)$ is a h-flat.
\end{defn}

The definitions of $h$-projective and $h$-flat DG categories can be easily extended in the framework of $\Ain$categories (unital, strictly unital or cohomological unital).\ 
Namely an $\Ain$category $\A$ is \emph{h-projective} (resp. \emph{h-flat}) if the DG module $\big(\Hom_{\A}(x,y), m^1_{\A}\big)$ is h-projective (resp. h-flat), for every pair of objects $ x,y\in\A$.

\begin{rem}\label{cofibrelli}
The category of unbounded DG $R$-modules has a well known model structure \cite[Definition 2.3.3]{Hov}.\ 
The weak equivalences are the quasi-isomorphisms, the fibrations are the morphisms degrewise surjective and the cofibrant objects are the h-projective modules degreewise projectives \cite[Theorem 9.6.1 (ii') iff (v')]{AFH}.\
\end{rem}

\begin{rem}
Note that the tensor product of two cofibrant DG $R$-modules is cofibrant.\ On the other hand if $\C$ and $\C'$ are cofibrant DG categories (in Tabuada's model structure) then $\C\otimes\C'$ is not a cofibrant DG category.
\end{rem}

\begin{defn}[Quivers/Categories with cofibrant morphisms]
Let $\C$ be a DG quiver of an $\Ain$(or DG)category, we say that $\C$ \emph{has cofibrant morphisms} if $\big(\Hom_{\C}(x,y),m^1_{\C}\big)$ is a cofibrant object in the category of DG $R$-modules (according to Remark \ref{cofibrelli}).
\end{defn}

\begin{thm}\label{h-proj}
Every semi-free $\Ain$category has cofibrant morphisms.
\end{thm}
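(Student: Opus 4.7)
The plan is to induct along the semi-free filtration $\tilde{\A}_0 \subset \tilde{\A}_1 \subset \tilde{\A}_2 \subset \cdots$ of Definition \ref{sfreeAC}. Fixing objects $x,y \in \A$, this filtration induces a sequence of inclusions of DG $R$-modules
\[
\Hom_{\tilde{\A}_0}(x,y) \hookrightarrow \Hom_{\tilde{\A}_1}(x,y) \hookrightarrow \Hom_{\tilde{\A}_2}(x,y) \hookrightarrow \cdots
\]
whose colimit in $\Ch$ is $\Hom_{\A^{\sem}}(x,y)$. Since cofibrant objects in the projective model structure on $\Ch$ (Remark \ref{cofibrelli}) are closed under transfinite composition of cofibrations, it suffices to verify that $\Hom_{\tilde{\A}_0}(x,y)$ is cofibrant and that each inclusion $\Hom_{\tilde{\A}_{n-1}}(x,y) \hookrightarrow \Hom_{\tilde{\A}_n}(x,y)$ is a cofibration. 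The base case is immediate: $\tilde{\A}_0 = \mathrm{disc}(\A)$, so $\Hom_{\tilde{\A}_0}(x,y)$ is either $R$ or $0$, both cofibrant.

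For the inductive step I want to exhibit $\Hom_{\tilde{\A}_n}(x,y)$ as being obtained from $\Hom_{\tilde{\A}_{n-1}}(x,y)$ by cell attachments indexed by the generators added at stage $n$. By Definition \ref{reflfree} we have $\tilde{\A}_n \simeq \TT(\Q_n)/(R_n)$ with $\Q_n \simeq |\tilde{\A}_{n-1}| + \B_n$, where $\B_n$ is a graded quiver whose hom-modules are free over $R$ on the set $\tilde{S}_n$ chosen in the proof of Theorem \ref{semifree}. The crucial point is that the system of relations $R_n$ (in the shape of Example \ref{ex2}) only identifies trees all of whose leaves lie in $|\tilde{\A}_{n-1}|$ with their ``collapsed'' single-leaf representatives; relations do not interact with the leaves coming from $\B_n$.

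The combinatorial heart of the argument is a normal form: every morphism of $\tilde{\A}_n$ is a unique finite sum of trees whose leaves are either single generators $(\bullet;b)$ with $b \in \B_n$ or single $(\bullet;f)$ with $f \in |\tilde{\A}_{n-1}|$, subject to the constraint that no internal subtree has all its leaves of the second type (otherwise $R_n$ contracts it). This yields a splitting of graded $R$-modules $\Hom_{\tilde{\A}_n}(x,y) \simeq \Hom_{\tilde{\A}_{n-1}}(x,y) \oplus M_n(x,y)$, in which $M_n(x,y)$ is \emph{free} as a graded $R$-module on such trees using at least one generator from $\B_n$. Moreover, by the construction in Theorem \ref{semifree} each new generator $a \in \tilde{S}_n$ has $d(a) \in |\tilde{\A}_{n-1}|$ (either zero or an old cycle), and so the $\Ain$-differential applied to a tree using $k$ generators from $\B_n$ is a sum of trees using at most $k$ such generators, with strictly fewer in the terms hitting previous stages.

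Filtering $M_n(x,y)$ by the number of new generators used in each tree, the inclusion $\Hom_{\tilde{\A}_{n-1}}(x,y) \hookrightarrow \Hom_{\tilde{\A}_n}(x,y)$ is exhibited as a transfinite composition of pushouts of the generating cofibrations of $\Ch$ (the free disks $R[k-1] \hookrightarrow D(k)$ and $0 \hookrightarrow D(k)$), one for each tree of fixed ``new leaf count'', attached in increasing order of that count. Hence each step of the filtration is a cofibration, completing the induction. The main obstacle is establishing the normal form rigorously: one must check that the quotient by $R_n$ is compatible with the described splitting, and that the differential respects the filtration by new-generator count so that the cell attachments can be well-ordered. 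This is where the specific combinatorial form of $R_n$ in Example \ref{ex2} (no leaves from $\B_n$ appear in the relations) is essential.
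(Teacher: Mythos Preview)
Your overall architecture matches the paper's: both argue by induction along the semi-free filtration $\tilde{\A}_0\subset\tilde{\A}_1\subset\cdots$, and both need a further filtration inside each step to recognise $\Hom_{\tilde{\A}_n}(x,y)$ as built by cofibrations over $\Hom_{\tilde{\A}_{n-1}}(x,y)$.  The gap is that your filtration by $k=$ (number of $\B_n$-leaves) is too coarse to yield the cell description you claim.  You correctly observe that the differential of a normal-form tree with $k$ new leaves lands in trees with \emph{at most} $k$ new leaves, but then assert that each stage of the $k$-filtration is a pushout of generating cofibrations of $\Ch$.  That would require the differential to land in trees with \emph{strictly fewer} new leaves, which is false: two contributions stay at level $k$, namely (i) differentiating an $|\tilde{\A}_{n-1}|$-labelled leaf, and (ii) the tree-structure terms coming from the $\Ain$-relations in $\TT(\Q_n)$ (cf.\ Example~\ref{diffex}, where $\mTT^1$ of a corolla produces trees with the \emph{same} leaf set but different shape).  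So the associated graded at level $k$ is not a wedge of disks, and you have not shown it is cofibrant.

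The paper fixes this with a triple index $(m,l,j)$: here $m$ is the total number of leaves, $l$ records the tree shape via $PT^l_m$, and $j$ is your $k$.  The extra index $l$ is exactly what absorbs the tree-structure differential (which strictly decreases $l$), so that in each subquotient only the leaf differential survives.  Those subquotients are then finite direct sums of tensor products $(\Q_{\tilde S_n},0)^{\otimes j}\otimes(|\tilde{\A}_{n-1}|,m^1)^{\otimes(m-j)}$, cofibrant by the inductive hypothesis and closure of cofibrant objects under tensor and direct sum; Lemma~\ref{lucif} then finishes.  Your normal-form description is essentially the passage to $\overline U_{m,l,j}$ in the paper, but without the $(m,l)$ refinement (or an equivalent device) the argument does not close.
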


To prove Theorem \ref{h-proj}, we need a few Lemmas.\ 
The first is a particular case of \cite[9.3.4 Lemma (2)]{AFH} when $R$ is a commutative ring (not DG).

\begin{lem}\label{lucif}
If $M$ is a DG $R$-module, such that $M=\displaystyle\cup_{u>0}M^u$, for a sequence of DG $R$-submodules $M^{u-1}\subset M^u$, such that $C^u:=M^u/M^{u-1}$ is cofibrant, then $M$ is cofibrant.
\end{lem}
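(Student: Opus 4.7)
The plan is to recognize the statement as a transfinite-composition argument in the projective model structure on unbounded chain complexes of $R$-modules described in Remark \ref{cofibrelli}. Setting $M^0 := 0$, one views $M$ as the sequential colimit of the filtration
\[
M^0 \subset M^1 \subset M^2 \subset \cdots \subset M = \bigcup_{u>0} M^u,
\]
in which each consecutive quotient $M^u/M^{u-1} = C^u$ is cofibrant by hypothesis.

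The first step I would carry out is to show that each inclusion $\iota_u : M^{u-1} \hookrightarrow M^u$ is a cofibration. Because $C^u$ is cofibrant it is in particular degreewise projective, so the short exact sequence of graded $R$-modules
\[
0 \to M^{u-1} \to M^u \to C^u \to 0
\]
splits in every degree. Degreewise splitting together with cofibrancy of the cokernel is exactly the standard characterization of cofibrations in the projective model structure on $\Ch$. Then, since cofibrations are closed under sequential (transfinite) composition in any model category, the map $0 \to M$, which is the composition of the chain $0 \hookrightarrow M^1 \hookrightarrow M^2 \hookrightarrow \cdots$, is itself a cofibration; equivalently $M$ is cofibrant, as required.

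If one prefers to stay closer to the more concrete definition of cofibrant recorded in Remark \ref{cofibrelli} (h-projective and degreewise projective), the same induction can be carried out directly via the lifting property: given a trivial fibration $p : X \twoheadrightarrow Y$ and a map $f : M \to Y$, I would construct a compatible family of lifts $\tilde{f}_u : M^u \to X$ by induction on $u$, extending $\tilde{f}_{u-1}$ by picking a degreewise section of $M^u \twoheadrightarrow C^u$ and lifting the induced map $C^u \to Y$ against $p$ using the cofibrancy of $C^u$. The only delicate point is tracking the differential when patching the old lift on $M^{u-1}$ with the new lift on the $C^u$-summand, and this is precisely where the degreewise projectivity of $C^u$ is used; everything else is formal. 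This is essentially the content of \cite[9.3.4 Lemma (2)]{AFH}, and I would simply specialise that argument to the non-DG base ring $R$ at hand.
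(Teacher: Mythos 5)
Your argument is correct, but it is not the route the paper takes: the paper does not prove Lemma \ref{lucif} at all, it simply records that the statement is a special case of \cite[9.3.4 Lemma (2)]{AFH} for an ordinary (non-DG) base ring. Your primary argument is instead a self-contained model-categorical one: after setting $M^0=0$ (a convention consistent with how the lemma is applied later, where the filtrations start at $0$), you invoke the characterization of cofibrations in the projective model structure on unbounded complexes of $R$-modules as the degreewise split injections with cofibrant cokernel (see \cite[\S 2.3]{Hov}), noting that cofibrancy of $C^u$ gives degreewise projectivity and hence the needed degreewise splittings of $0\to M^{u-1}\to M^u\to C^u\to 0$, and then you conclude by closure of cofibrations under sequential (transfinite) composition, identifying $0\to M$ with the composite of the inclusions $M^{u-1}\hookrightarrow M^u$ because $M=\bigcup_{u>0}M^u$ is the colimit of the filtration. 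What this buys is a proof resting only on quotable facts about the model structure already invoked in Remark \ref{cofibrelli}; what the paper's citation buys is greater generality (the AFH lemma is formulated over a DG ring) at the cost of being a black box. Your secondary sketch, constructing lifts against a trivial fibration inductively along the filtration using degreewise sections, is essentially the AFH argument itself, as you note, so either version closes the gap the paper leaves to the reference.
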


\begin{lem}\label{freenzo}
If $\Q$ is a DG quiver with cofibrant morphisms then $\TT(\Q)$ is an $\Ain$category with cofibrant morphisms.
\end{lem}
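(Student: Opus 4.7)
The plan is to exhibit $\Hom_{\TT(\Q)}(x,y)$ as an exhaustive increasing union of DG $R$-submodules with cofibrant subquotients and then invoke Lemma \ref{lucif}. As a first step I would use Notation \ref{notello}: since $\mTT^1$ preserves the number of leaves, one has a direct-sum decomposition of DG $R$-modules
$$\Hom_{\TT(\Q)}(x,y)\;=\;\bigoplus_{n\ge 1} PT_n(\Q)(x,y).$$
Because coproducts of cofibrant DG $R$-modules are cofibrant, it is enough to show each $PT_n(\Q)(x,y)$ is cofibrant.

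Fix $n$, and refine the decomposition further as $PT_n(\Q)(x,y)=\bigoplus_l PT^l_n(\Q)(x,y)$. Inspecting formula (\ref{grango}) and Example \ref{diffex}, the action of $\mTT^1$ on a summand indexed by a tree $\mathfrak{t}\in PT^l_n$ splits into two types of contributions. The ``internal'' ones, obtained by applying $d_\Q$ to a single tensor factor, stay on the tree $\mathfrak{t}$ itself, hence in $PT^l_n$. The ``expansion'' ones, coming from the $\Ain$-relations, replace a single node of $\mathfrak{t}$ of arity $m\ge 3$ by two nodes of smaller arities $a$ and $m-a+1$ (one feeding into the other), hence add exactly one node to the tree. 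Therefore
$$\mTT^1\bigl(PT^l_n(\Q)(x,y)\bigr)\;\subset\; PT^l_n(\Q)(x,y)\,\oplus\, PT^{l-1}_n(\Q)(x,y),$$
and setting $F^k_n:=\bigoplus_{l\le k} PT^l_n(\Q)(x,y)$ yields an exhaustive increasing filtration of $PT_n(\Q)(x,y)$ by DG submodules.

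Next I would identify the subquotients: modulo $F^{k-1}_n$ the expansion part of $\mTT^1$ vanishes, so the induced differential on $F^k_n/F^{k-1}_n\cong PT^k_n(\Q)(x,y)$ is just the tensor-product differential inherited from $d_\Q$. Unwinding the definition,
$$F^k_n/F^{k-1}_n\;\cong\;\bigoplus_{\substack{\mathfrak{t}\in PT^k_n\\ x=x_0,\ldots,x_n=y}}\Hom_\Q(x_0,x_1)\otimes\cdots\otimes\Hom_\Q(x_{n-1},x_n),$$
each summand being a tensor product of cofibrant DG $R$-modules by the hypothesis on $\Q$. Since cofibrant DG $R$-modules are closed under tensor products and coproducts (the model structure on $\Ch$ being monoidal), each subquotient is cofibrant; Lemma \ref{lucif} then gives that $PT_n(\Q)(x,y)$ is cofibrant, and coproducting over $n$ concludes the argument.

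The delicate point is the displayed containment above: the non-$d_\Q$ part of $\mTT^1$ must land in $PT^{l-1}_n$, and not further down in some $PT^{l'}_n$ with $l'<l-1$. This is a consequence of the quadratic nature of the $\Ain$-relations -- solving for $m^1$ in Construction \ref{Liberanzo} produces, per relation, exactly one insertion of a higher $m^m$, and so adds exactly one node -- without which the successive quotients would fail to be the clean tensor products above and the filtration argument would collapse.
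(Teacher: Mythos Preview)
Your proof is correct and follows essentially the same strategy as the paper: filter $\TT(\Q)$ by tree shape so that the ``expansion'' part of $\mTT^1$ (coming from the $\Ain$-relations) lands in a lower filtration piece, leaving only the tensor-product differential $d_\Q^{\otimes}$ on the subquotients, which are then cofibrant as finite sums of tensor powers of cofibrant modules; Lemma~\ref{lucif} finishes. The only organisational difference is that you first split off the direct sum over the number of leaves $n$ (using that $\mTT^1$ preserves $n$) and then filter each $PT_n$ by $l$, whereas the paper rolls both indices into a single lexicographic filtration $U_{m,l}$ of all of $\TT(\Q)$; the resulting subquotients are identical.
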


\begin{proof}[Proof of Lemma \ref{freenzo}]
Without loss of generality, we can suppose that $\Q$ has only one object (i.e. $\Q$ is a cofibrant DG $R$-module).\\ 
Fixed $m>1$ we consider the graded $R$-module: 
\begin{align}\label{sequenzonello}
U_{m,l}=PT^0_1(\Q) \oplus \displaystyle\bigoplus^{m-1}_{\tilde{m}=2}\displaystyle\bigoplus^{\tilde{m}-1}_{\tilde{l}=1}PT^{\tilde{l}}_{\tilde{m}}(\Q) \oplus \displaystyle\bigoplus^l_{\tilde{l}=1}PT^{\tilde{l}}_m(\Q)
\end{align} 
Where $U_{1,0}=PT^0_1(\Q)$.\\ 
As we pointed out in \ref{notello}, given $T\in PT^l_m(\Q)$ and $l>1$, then $\mTT^1(T)\not\subset PT^l_m(\Q)$.\\ 
On the other hand, if $T\in U_{m,l}$ then $\mTT^1(T)\subset U_{m,l}$.\ So $U_{m,l}$ is a DG $R$-module.\\

Clearly if $(m,l)<(m',l')$ then $U_{m,l}\subset U_{m',l'}$.\ We have the following sequence of DG $R$-modules:
\begin{align}\label{sequenzone}
0\subset U_{1,0} \subset U_{2,1} \subset U_{3,1} \subset U_{3,2} \subset ... \subset U_{m,l} \subset ... \subset \TT(\Q). 
\end{align}
Where $m>1$ and $1\le l \le m-1$.\\ 
Now we want to prove that the sequence (\ref{sequenzone}) fulfills the hypothesis of Lemma \ref{lucif}.\\
First we note that $U_{1,0}\simeq\Q$ so it is cofibrant.\
We fix $m>1$:
\begin{itemize}
\item[i.] If $1\le l < m-1$, then as graded $R$-modules, we have an isomorphism:
\begin{align}
U_{m,l+1}/U_{m,l}\simeq PT^{l+1}_m(\Q).
\end{align}
As DG $R$-module $U_{m,l+1}/U_{m,l}$ is
\begin{align}
U_{m,l+1}/U_{m,l}\simeq \displaystyle\bigoplus_{PT^{l+1}_m}(\Q,d_{\Q})\otimes ...\otimes(\Q,d_{\Q}).
\end{align}
i.e. a finite number of copies of $(\Q,d_{\Q})^{\otimes m}$.
\item[ii.] If $l=m-1$, then as graded $R$-modules, we have an isomorphism:
\begin{align}
U_{m+1,1}/U_{m,m-1}\simeq PT^{1}_{m+1}(\Q).
\end{align}
As DG $R$-module $U_{m+1,1}/U_{m,m-1}$ is equal to $(\Q,d_{\Q})^{\otimes (m+1)}$.
\end{itemize}
So (\ref{sequenzone}) fulfills the hypothesis of Lemma \ref{lucif} and we are done.
\end{proof}

\begin{cor}\label{coquive}
If $\Q$ is a DG quiver with cofibrant morphisms, then the categories $\TT(\Q)_{+}$, $\TT_{DG}(\Q)$ and $\TT_{DG}(\Q)_{+}$ are categories with cofibrant morphisms.
\end{cor}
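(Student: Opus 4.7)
The plan is to give, for each of the three categories, an explicit direct-sum decomposition of each hom space as a DG $R$-module, and then to invoke closure of cofibrancy under direct sums and finite tensor products (which are the two closure properties already exploited in Lemma \ref{freenzo}).

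First I would treat $\TT_{DG}(\Q) = \TT(\Q)/\mathsf{As}$. Quotienting $\TT(\Q)$ by the associativity relations $R_{\mathsf{As}}$ collapses the tree-indexed direct sum defining $\Hom_{\TT(\Q)}(x,y)$ into a path-indexed one:
$$\Hom_{\TT_{DG}(\Q)}(x,y) \simeq \bigoplus_{n\ge 1}\bigoplus_{x=x_0,\dots,x_n=y}\Hom_\Q(x_0,x_1)\otimes\cdots\otimes\Hom_\Q(x_{n-1},x_n),$$
equipped with the standard tensor-product differential. Each summand is then a finite tensor product of cofibrant DG $R$-modules, hence cofibrant, and cofibrancy is preserved under arbitrary direct sums.

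Next, for $\TT(\Q)_+$, I would establish the DG $R$-module splitting
$$\Hom_{\TT(\Q)_+}(x,y) \simeq \Hom_{\TT(\Q)}(x,y) \oplus \delta_{x,y}\, R \cdot 1_x,$$
where the second summand is present only when $x=y$ and carries the trivial differential. This splitting follows from Construction \ref{UNITTTT}: the first two families of generators of $R_u$ absorb any occurrence of a unit $1_z$ sitting at a $\mathfrak{T}_2$-vertex, while the third family kills any occurrence of $1_z$ inside a genuine corolla $\mathfrak{T}_n$ with $n>2$. Iterating this simplification leaves only trees labeled entirely by morphisms of $\Q$, together with the single closed element $1_x$ itself. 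Since $R$ with zero differential is cofibrant and $\Hom_{\TT(\Q)}(x,y)$ is cofibrant by Lemma \ref{freenzo}, the direct sum is cofibrant. The argument for $\TT_{DG}(\Q)_+$ is identical, with $\TT_{DG}(\Q)$ playing the role of $\TT(\Q)$ in the splitting above.

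The main subtlety will be to verify that the two splittings are respected by the differentials, and not merely by the underlying graded structure. For $\TT_{DG}(\Q)$ this is automatic, since $\mTT^1$ acts by the graded Leibniz rule within each fixed path-component and does not mix distinct sequences of composable morphisms. For the two unital variants, one must check that the adjoined units $1_x$ are closed of degree zero -- an immediate consequence of their defining role in Construction \ref{UNITTTT} -- so that the summand $R \cdot 1_x$ is stable under $\mTT^1$ and the graded decomposition does descend to a splitting of DG $R$-modules.
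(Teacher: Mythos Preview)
Your proposal is correct. The underlying mechanism is the same as the paper's---closure of cofibrancy under tensor products and sequential colimits---but the packaging differs. The paper simply says to rerun the filtration argument of Lemma~\ref{freenzo} with a modified filtration (and spells this out only for $\TT_{DG}(\Q)$, taking $U_m = PT^0_1(\Q)\oplus\bigoplus_{\tilde m=2}^m PT^1_{\tilde m}(\Q)$). You instead exhibit honest direct-sum decompositions of DG $R$-modules, which is possible here precisely because the differentials in the three quotient categories no longer mix the pieces (unlike in $\TT(\Q)$ itself, where higher corollas force a genuine filtration). For $\TT(\Q)_+$ and $\TT_{DG}(\Q)_+$ your route is more economical: rather than rebuilding a filtration from scratch, you invoke Lemma~\ref{freenzo} (resp.\ the $\TT_{DG}$ case) as a black box and add a single free rank-one summand. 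The paper's filtration approach, on the other hand, is more uniform and requires no separate verification that the quotient by $I_u$ really splits as you claim; that splitting is correct (it is exactly the augmentation splitting $\overline{\TT(\Q)_+}\oplus\mathsf{I}_\Q$), but you should make the injectivity half explicit, e.g.\ via the augmentation $\epsilon:\TT(\Q)_+\to\mbox{disc}(\Q)$, rather than leaving it at ``iterating this simplification.''
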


\begin{proof}
The proof is the same of Theorem \ref{freenzo} changing opportunely the filtration (\ref{sequenzone}).\\ 
For example, to prove that $\TT_{DG}(\Q)$ has cofibrant objects one has to take the filtration:
\begin{align*}
0\subset U_{1,0} \subset U_{2} \subset U_{3} \subset ... \subset U_{m} \subset ... \subset \TT_{DG}(\Q),
\end{align*}
where
%\begin{align*}
$U_m:=PT^0_1(\Q)\oplus \displaystyle\bigoplus^{m}_{\tilde{m}=2}PT^{1}_{\tilde{m}}(\Q).$
%\end{align*}
\end{proof}

We need to introduce a new notation in the same spirit of \ref{notello}.\

\begin{notatu}\label{notellone}
Let $\Q_1$ and $\Q_2$ two graded quivers with the same objects.\
Fixed two objects $x_1,x_2\in\Q_1$, we denoted by
\begin{align*}
PT^l_m(\Q_1^{\otimes j},\Q_2^{\otimes(n-j)})(x,y):=\displaystyle\bigoplus_{\substack{\mathfrak{t}\in PT^l_m \\ x=x_1,...,x_{m+1}=y}} \ 
\displaystyle\bigoplus_{\substack{ {i_k}\in\mathcal{f}1,2\mathcal{g}\\ \sharp \mathcal{f} i_k =1\mathcal{g} = j} }
\Hom_{\Q_{i_1}}(x_1,x_{2}) \otimes ...\otimes \Hom_{\Q_{i_m}}(x_{m},x_{m+1}).
\end{align*}
Namely, if $\Q_1,\Q_2$ are $R$-module the 
Where ${i_k}\in\mathcal{f}1,2\mathcal{g}$, for $1\ge k\ge n$, and $\sharp \mathcal{f}i_k \mbox{ s.t. } i_k =1\mathcal{g} = j$.\\
We denote:
\begin{align*}
PT^l_m(\Q_1^{\otimes j},\Q_2^{\otimes(m-j)}):=\displaystyle\bigoplus_{x_1,x_{n+1}\in\Q_1} PT^l_m(\Q_1^{\otimes j},\Q_2^{\otimes(m-j)})(x_1,x_{n+1}).
\end{align*}
\end{notatu}

\begin{proof}[Proof of Theorem \ref{h-proj}]
Without loss of generality we can suppose that $\A$ has only one object.\ 
We have a sequence of $\Ain$algebras (as in Theorem \ref{semifree}).\
\begin{align*}
\tilde{\A}_0\subset\tilde{\A}_1\subset\tilde{\A}_2\subset...\subset \tilde{\A}_{n-1}\subset \tilde{\A}_n\subset...\subset{\A}^{\tiny\mbox{sf}}.
\end{align*}
We prove by induction that, for every positive integer $n$, the DG $R$-module $(\tilde{\A}_{n},m^1_{\tilde{\A}_n})$ is cofibrant.\\
\\
First we prove that $\tilde{\A}_1$ has cofibrant morphisms:\ If $\A$ is not cohomologically trivial then $\A$ has cofibrant morphisms since $\tilde{\A}_1\simeq\TT(\Q)_{+}$ and $\Q$ is a free $R$-module, see (\ref{homoclone}).\\
If $\A$ is cohomologically trivial then, there exists $h\in\A$ such that $m^1_{\A}(h)=1$.\
We have:
\begin{align*}
\tilde{\A}_1:=\TT(\Q_1)_{+}/(1_{+}-1_{\tilde{\A}_0}).
\end{align*}
Note that $\Q_1:=\Q_S+R\cdot h + R\cdot 1_{\tilde{\A}_0}$ is a DG $R$-quiver such that $d_{\Q_1}(h)=1_{\tilde{\A}_0}$.\\
With an abuse of notation we denote by $h$ the graded $R$-quiver with one object such that $h\simeq R$.\\
We have a filtration of DG $R$-modules: 
\begin{align*}
0\subset U_{1,0,0}\subset U_{1,0,1} \subset U_{2,1,0} \subset U_{2,1,1} \subset U_{2,1,2} \subset U_{3,1,0} \subset ... \subset U_{m,l,j} \subset ... \subset |\tilde{\A}_1|.
\end{align*}
Where $U_{1,0,0}:=PT^0_1(\Q_S)$, $U_{1,0,1} = U_{1,0,0} \oplus PT^0_1(h)$, and
\begin{align*}
U_{m,l,j}& :=U_{1,0,1}\oplus \displaystyle\bigoplus_{\tilde{m}=2}^{m-1}\displaystyle\bigoplus_{\tilde{l}=1}^{\tilde{m}-1}\displaystyle\bigoplus_{\tilde{j}=0}^{\tilde{m}}PT_{\tilde{m}}^{\tilde{l}}\big(\Q_S^{\otimes\tilde{j}},h^{\otimes(\tilde{m}-\tilde{j})}\big) 
\oplus \displaystyle\bigoplus_{\tilde{l}=1}^{l-1}\displaystyle\bigoplus_{\tilde{j}=0}^{{m}}PT_{{m}}^{\tilde{l}}\big(\Q_S^{\otimes\tilde{j}},h^{\otimes(\tilde{m}-\tilde{j})}\big) 
\oplus 
\displaystyle\bigoplus_{\tilde{j}=0}^{j}PT_{m}^{l}\big(\Q_S^{\otimes\tilde{j}},h^{\otimes(\tilde{m}-\tilde{j})}\big),
\end{align*}
for $m>1$, $1\le l\le m-1$ and $0\le j\le m$.\\
Clearly $U_{1,0,0}\simeq (\Q_S,0)$ and $U_{1,0,1}/U_{1,0,0}\simeq (R,0)$ is cofibrant.\
In general, we have:
\begin{align}\label{qui}
U_{m,1,0}/U_{m-1,1,m-1}\simeq \displaystyle\bigoplus_{PT^1_m}(\Q_S,0)^{\otimes m}.
\end{align}
If $l>1$:
\begin{align}\label{quo}
U_{m,l,0}/U_{m-1,l-1,m-1}\simeq \displaystyle\bigoplus_{PT^l_m}(\Q_S,0)^{\otimes m}.
\end{align}
If $j>1$ then
\begin{align}\label{quenzo}
U_{m,l,j}/U_{m,l,j-1}\simeq \displaystyle\bigoplus_{N\le\sharp(PT^l_m)}(R,0)^{\otimes j}\otimes (\Q_S,0)^{\otimes (m-j)}.
\end{align}
The quotients (\ref{qui}), (\ref{quo}) and (\ref{quenzo}) are cofibrant since they are a finite number of free $R$-modules.\
By Lemma \ref{freenzo} we have that $\tilde{\A}_1$ has cofibrant morphisms.\\
\\
Now suppose that $(\tilde{\A}_{n-1},m^1_{\tilde{\A}_{n-1}})$ is cofibrant, we prove that $(\tilde{\A}_{n},m^1_{\tilde{\A}_n})$ is cofibrant.\\
\\
As before, we want to find a filtration for $|\tilde{\A}_n|$ fulfilling the hypotheses of Lemma \ref{lucif}.\\
First we define a filtration for the DG $R$-module $|\TT(\Q_n)|$, where $\Q_n:=\Q_{S_n}+\tA$.\ We have:
\begin{itemize}
\item For $m=1$, $U_{1,0,0}:=PT^0_1(\tA)$ and $U_{1,0,1} := U_{1,0,0} \oplus PT^0_1(\Qs)$.
\item For $m>1$, $1\le l\le m-1$ and $0\le j\le m$, we have:
\begin{align*}
U_{m,l,j}& :=U_{1,0,1}\oplus \displaystyle\bigoplus_{\tilde{m}=2}^{m-1}\displaystyle\bigoplus_{\tilde{l}=1}^{\tilde{m}-1}\displaystyle\bigoplus_{\tilde{j}=0}^{\tilde{m}}PT_{\tilde{m}}^{\tilde{l}}\big(\Qs^{\otimes\tilde{j}},\tA^{\otimes(\tilde{m}-\tilde{j})}\big) \\
&\oplus \displaystyle\bigoplus_{\tilde{l}=1}^{l-1}\displaystyle\bigoplus_{\tilde{j}=0}^{{m}}PT_{{m}}^{\tilde{l}}\big(\Qs^{\otimes\tilde{j}},\tA^{\otimes(\tilde{m}-\tilde{j})}\big) \\ 
& \oplus 
\displaystyle\bigoplus_{\tilde{j}=0}^{j}PT_{m}^{l}\big(\Qs^{\otimes\tilde{j}},\tA^{\otimes(\tilde{m}-\tilde{j})}\big) 
\end{align*}
\end{itemize}
According to \ref{notellone}, the $U_{m,l,j}$ defined above are graded $R$-modules, they can be made DG taking the differential of ${\TT(\Q_n)}$.\\
It is clear that if $(m,l,j)<(m',l',j')$ then $U_{m,l,j}\subset U_{m',l',j'}$, so we have the following sequence of DG $R$-modules:
\begin{align*}
0\subset U_{1,0,0}\subset U_{1,0,1} \subset U_{2,1,0} \subset U_{2,1,1} \subset U_{2,1,2} \subset U_{3,1,0} \subset ... \subset U_{m,l,j} \subset ... \subset |\TT(\Q_n)|.
\end{align*}
As a DG $R$-module, we have:
$|\tilde{\A}_n|\simeq |\TT(\Q_n)|/I_n$, where $I_n$ is the DG $R$-module given by the system of relations $R_n$, see (\ref{relazionerzo}).\\
We have an induced filtration:
\begin{align}\label{sequewnza}
0\subset \overline{U}_{1,0,1}\subset \overline{U}_{2,1,2} \subset \overline{U}_{3,1,1} &\subset ...\subset \overline{U}_{m-1,m-2,m-1}\subset \overline{U}_{m,1,1} \subset ...\\
&...\subset \overline{U}_{m,l-1,m}\subset \overline{U}_{m,l,1}\subset ...\subset \overline{U}_{m,l,j-1}\subset \overline{U}_{m,l,j} \subset ... \subset |\tilde{\A}_n|.
\end{align}
Note that, $\overline{U}_{1,0,0}=U_{1,0,0}\simeq|\tilde{\A}_{n-1}|$, $\overline{U}_{1,0,1}=U_{1,0,1}$ and $\overline{U}_{m,l,0}=\overline{U}_{m,l-1,m}$, if $l> 1$.\\
We have the following isomorphisms of DG $R$-modules:
\begin{itemize}
\item If $j>1$ then:
\begin{align*}
\overline{U}_{m,l,j}/\overline{U}_{m,l,j-1}\simeq \displaystyle\bigoplus_{N \le \sharp(PT^l_m)}(\Q,0)^{\otimes j}\otimes (\tilde{\A}_{n-1},m^1_{\tilde{\A}_{n-1}})^{\otimes (m-j)}.
\end{align*}
\item If $j=1$ then: 
\begin{itemize}
\item if $l=1$:
\begin{align*}
\overline{U}_{m,1,1}/\overline{U}_{m-1,m-2,m-1}\simeq \displaystyle\bigoplus_{N\le \sharp(PT^1_m)}(\Q,0)\otimes (\tilde{\A}_{n-1},m^1_{\tilde{\A}_{n-1}})^{\otimes (m-1)},
\end{align*}
\item
if $l>1$:
\begin{align*}
\overline{U}_{m,l,1}/\overline{U}_{m,l-1,m}\simeq \displaystyle\bigoplus_{N\le\sharp(PT^l_m)}(\Q,0)\otimes (\tilde{\A}_{n-1},m^1_{\tilde{\A}_{n-1}})^{\otimes (m-1)}.
\end{align*}
\end{itemize}
\end{itemize}
Since $\overline{U}_{1,0,1}$ is cofibrant, and every quotient in (\ref{sequewnza}) is cofibrant by Lemma \ref{lucif} we are done.
\end{proof}

\begin{rem}\label{DGSEM}
We can use the same strategy of Theorem \ref{h-proj} to prove that semi-free DG categories have cofibrant morphisms.\ 
One has to change opportunely the filtration as in Corollary \ref{coquive}.
\end{rem}

We recall that an $\Ain$category $\A$ is semi-free (cf. Definition \ref{semifrollo}) if it is relatively free over the discrete category $\mathsf{I}_{\A}$.\
It means that we need a (strict) strictly unital $\Ain$functor
\begin{align*}
\mathsf{E}:\mathsf{I}_{\A}\to\A.
\end{align*}
If $\A$ is a non unital $\Ain$category the definition of \emph{semi-free} does not make sense.\\
On the other hand the definition of $\Ain$category with cofibrant morphisms makes sense even in the case of non unital $\Ain$category.\
We have the following result: 

\begin{thm}\label{cofmor}
Given a non unital $\Ain$category $\A$ there exists a non unital $\Ain$category $\A^{\tiny\mbox{cm}}$ with cofibrant morphisms and a strict $\Ain$functor $\Psi:{\A}^{\tiny\mbox{cm}}\to\A$.\ 
The category $\A^{\tiny\mbox{cm}}$ has the same objects of $\A$ and $\Psi$ is a quasi-equivalence, surjective on the morphisms.
\end{thm}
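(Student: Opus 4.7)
The strategy is to mimic the inductive construction of Theorem~\ref{semifree}, but without ever adjoining a unit. Since there is no strictly unital requirement, I would begin with $\tilde{\A}_0$ the graded $R$-linear quiver having the same objects as $\A$ and zero hom-spaces, rather than with the discrete category $\mathsf{I}_{\A}$. This sidesteps the use of $\TT(-)_+$ and the auxiliary ideals $I^n_+$, which are precisely the constructions that force strict unitality. Everywhere else the construction runs in parallel to the proof of Theorem~\ref{semifree}, now taking place inside the cocomplete category $\aCat^{\tiny\mbox{nu}}_{\tiny\mbox{strict}}$ (Theorem~\ref{Cocompletenesss}).

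Concretely, at the first step I take $S_1 = \{S_{x,y}\}_{x,y\in\A}$ with $S_{x,y}$ the set of homogeneous closed morphisms of $\A(x,y)$, form the DG quiver $\Q_1 := \Q_{S_1}$ (free graded $R$-linear quiver on $S_1$ with zero differential), and set $\tilde{\A}_1 := \TT(\Q_1)$, equipped with the strict non unital $\Ain$functor $\Psi_1 : \tilde{\A}_1 \to \A$ determined by $\Psi_1^1(\bullet;f) = f$ via the adjunction \eqref{adjunct}. For the inductive step, given $\Psi_{n-1}:\tilde{\A}_{n-1}\to\A$, I put
\begin{align*}
S_n := \Big\{\big(a,(\mathfrak{t};f_1,\dots,f_k)\big)\ \Big|\ a\in\A,\ (\mathfrak{t};f_1,\dots,f_k)\in\tilde{\A}_{n-1}\ \text{closed homogeneous with}\ m^1_{\A}(a)=\Psi_{n-1}(\mathfrak{t})\Big\},
\end{align*}
form $\Q_n := \Q_{S_n} + |\tilde{\A}_{n-1}|$ with differential $d_{\Q_n}(a,\mathfrak{t}) := \mathfrak{t}$ extending $d_{|\tilde{\A}_{n-1}|}$, take $I_n\subset\TT(\Q_n)$ generated by the system of relations \eqref{relazionerzo}, and set $\tilde{\A}_n := \TT(\Q_n)/I_n$. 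By Theorem~\ref{2.5PropLM2} the natural strict non unital $\Ain$functor $\beta_{\A}\cdot\TT(G_n)$ descends to $\Psi_n:\tilde{\A}_n\to\A$, and one verifies by construction that $\tilde{\A}_{n-1}\subset\tilde{\A}_n$, $\Psi_n|_{\tilde{\A}_{n-1}}=\Psi_{n-1}$, and that $\Psi_n$ is surjective on the morphisms whose sources/targets lie in $\tilde{\A}_{n-1}$ while every cycle of $\tilde{\A}_{n-1}$ mapping to a boundary in $\A$ becomes a boundary in $\tilde{\A}_n$. Taking the colimit in $\aCat^{\tiny\mbox{nu}}_{\tiny\mbox{strict}}$ yields $\A^{\tiny\mbox{cm}}$ and a strict quasi-equivalence $\Psi:\A^{\tiny\mbox{cm}}\to\A$, surjective on the morphisms.

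For the cofibrancy of the hom-spaces, I would proceed by induction on $n$, transporting verbatim the filtration argument of Theorem~\ref{h-proj}: at stage $n$ the DG $R$-module $|\tilde{\A}_n|$ admits the filtration $\overline{U}_{m,l,j}$ indexed by $PT^l_m\bigl(\Q_{S_n}^{\otimes j},|\tilde{\A}_{n-1}|^{\otimes(m-j)}\bigr)$ with successive quotients of the form $\bigoplus(\Q_{S_n},0)^{\otimes j}\otimes(\tilde{\A}_{n-1},m^1)^{\otimes (m-j)}$, each cofibrant by the inductive hypothesis and Lemma~\ref{lucif}, and Lemma~\ref{lucif} then gives cofibrancy of the union. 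The main obstacle I anticipate is purely bookkeeping rather than conceptual: one must check that removing the discrete starting category and the entire $(-)_+$/$I_+^n$ apparatus does not break the systems of relations used at each inductive step, and in particular that the factorization argument through $I_n$ still produces a well-defined strict $\Ain$functor to $\A$ in the non unital setting. This is guaranteed by Theorem~\ref{2.5PropLM2}, whose hypotheses are met because $G_n$ is defined exactly so that $\bigl(\beta_{\A}\cdot\TT(G_n)\bigr)(R_n)=0$. Functoriality in $\sF$ (the compatibility diagram in Theorem~\ref{cofmor}'s stronger form) follows, as in Remark~\ref{functororne}, from the explicit coherent choices of the sets $\tilde S_n$ along a morphism of non unital $\Ain$categories.
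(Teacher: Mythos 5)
Your proposal follows essentially the same route as the paper: the paper likewise sets $\tilde{\A}_0=0$, takes $\tilde{\A}_1=\TT(\Q_{S_1})$ on the closed homogeneous morphisms (with the full set $S_1$ when functoriality is wanted), defines the higher stages as $\TT(\Q_n)/I_n$ with the relations (\ref{relazionerzo}), passes to the colimit in $\aCat^{\tiny\mbox{nu}}_{\tiny\mbox{strict}}$, and obtains cofibrancy of the hom-spaces by repeating the filtration argument of Theorem \ref{h-proj} \emph{mutatis mutandis}. Your treatment is correct and, if anything, spells out a bit more explicitly than the paper the points where dropping the unit apparatus simplifies rather than obstructs the construction.
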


\begin{proof}
To construct the $\Ain$category $\A^{\tiny\mbox{cm}}$ we proceed as in the proof of Theorem \ref{freenzo}.\\ 
We need to find a sequence of (non unital) $\Ain$category: 
\begin{align}
\tilde{\A}_0\subset\tilde{\A}_1\subset\tilde{\A}_2\subset...\subset\tilde{\A}_{n}\subset...  
\end{align}
We define $\tilde{\A}_0=0$.\ 
The non unital $\Ain$category $\tilde{\A}_1$ is the free $\Ain$category $\TT(\Q_{\tilde{S}_1})$.\ 
Here $\Q_{S_1}$ is the free graded quiver generated by the set 
$S_1=\mathcal{f}S_{x,y}\mathcal{g}_{x,y\in\tiny\mbox{Ob}(\A)}$ of (sets of) morphisms, where  
\begin{align}\label{homoclo}
S_{x,y}:=&\big\{\mbox{$f_s\in\Hom_{\A}(x,y)$ is homogeneous and closed}\big\}
\end{align}
for every $x,y\in\A$.\ 
$\tilde{S}_1$ is a subset of $S_1$ making the induced $\Ain$functor $\Psi_1:\TT(\Q_{\tilde{S}_1})\to \A$ surjective on the closed morphisms of $\A$.\\
Clearly if we want a functorial construction we need to take $\tilde{S}_1=S_1$ (see Remark \ref{functororne}).\\ 
The non unital $\Ain$category is defined as $\A^{\tiny\mbox{cm}}_n:=\TT(\Q_n)/I_n$, see (\ref{relazionerzo}).\\ 
Taking the colimit (in the category of non unital $\Ain$categories with strict $\Ain$functors) of the sequence above, we get a non unital $\Ain$category $\A^{\tiny\mbox{cm}}$ and a quasi-equivalence $\Psi:\A^{\tiny\mbox{cm}}\to\A$.\
To prove that $\A^{\tiny\mbox{cm}}$ has cofibrant morphisms one can easily use the same the proof of Theorem \ref{h-proj} \emph{mutatis mutandis}.
\end{proof}

\begin{rem}\label{gorto}
In the same vein of Remark \ref{Drinni}, it is easy to prove that, given a non unital DG category $\C$ we can find a non unital DG category with cofibrant morphisms with the same properties of the one in Theorem \ref{cofmor}.\ To prove it is suffices to take $\TT_{DG}$ instead of $\TT$ in the proof of Theorem \ref{cofmor}.\ 
\end{rem}

\begin{rem}\label{crachen}
We recall that every {h-projective DG module} is {h-flat}.\ It implies:  
\emph{(semi-free $\Ain$(resp. DG)category} $\Rightarrow)$ \emph{$\Ain$(resp. DG)category with cofibrant morphisms} $\Rightarrow$ \emph{h-projective $\Ain$(resp. DG)category} $\Rightarrow$ \emph{h-flat $\Ain$(resp. DG)category}.
%To prove namely \cite[]{Toe}.
\end{rem}

\newpage

\subsection{Lifting property}

We proved that semi-free $\Ain$categories have cofibrant morphisms, now we prove that they have a lifting property \`a la Drinfeld \cite[Lemma 13.6]{Dri}.\ 

\begin{lem}\label{lemlift}
Let $\A$ be a semi-free $\Ain$category and $\B$, $\C$ two strictly unital $\Ain$categories.\\
Given $\mathsf{F}:\A\to\B$ a strict strictly unital $\Ain$functor and $\mathsf{G}:\C\to\B$ a strict quasi-equivalence, surjective on the morphisms, there exists a (non unique) strict $\Ain$functor $\tilde{\mathsf{F}}:\A\to\C$ making the diagram
\[
\xymatrix{
&{\C}\ar@{->>}[d]_{\simeq}^{\mathsf{G}}\\
\A\ar@{-->}[ur]^{\tilde{\sF}}\ar[r]^{\sF}&\B
}
\]
commutative in $\aCat_{\tiny\mbox{strict}}$.
\end{lem}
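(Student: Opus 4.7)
The strategy is to construct $\tilde{\sF}$ inductively along the semi-free filtration $\mathsf{I}_{\A} = \tilde{\A}_0 \subset \tilde{\A}_1 \subset \tilde{\A}_2 \subset \cdots$ of Definition \ref{sfreeAC}, building at stage $n$ a strict $\Ain$functor $\tilde{\sF}_n \colon \tilde{\A}_n \to \C$ that restricts to $\tilde{\sF}_{n-1}$ and satisfies $\sG \cdot \tilde{\sF}_n = \sF|_{\tilde{\A}_n}$, then passing to the colimit in $\aCat_{\tiny\mbox{strict}}$ (which exists by Theorem \ref{Cocompletenesss}). For the base case, surjectivity of $\sG^1$ on hom-complexes forces $\sG^0$ to be surjective on objects (every identity $1_b$ must lie in the image), so for each $x \in \mbox{Ob}(\A)$ one may pick $\tilde{\sF}^0(x) \in \mbox{Ob}(\C)$ with $\sG^0(\tilde{\sF}^0(x)) = \sF^0(x)$, and declare $\tilde{\sF}_0^1(1_x) := 1_{\tilde{\sF}^0(x)}$ on the discrete category $\tilde{\A}_0$.

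Assume $\tilde{\sF}_{n-1}$ has been built. By Definition \ref{sfreeAC} one has $\tilde{\A}_n \cong \TT(\Q_n)/(R_n)$ with $\Q_n \cong |\tilde{\A}_{n-1}| + \mathsf{B}_n$, where the hom-spaces of the free part $\mathsf{B}_n$ admit a basis of homogeneous elements $f_\alpha$ each satisfying $d_{\Q_n}(f_\alpha) \in |\tilde{\A}_{n-1}|$. By the adjunction (\ref{adjunct}) together with Theorem \ref{2.5PropLM2}, extending $\tilde{\sF}_{n-1}$ to a strict $\Ain$functor on $\tilde{\A}_n$ amounts to producing a morphism of DG quivers $\Q_n \to |\C|$ that prolongs $\tilde{\sF}_{n-1}^1$ on $|\tilde{\A}_{n-1}|$ and annihilates the system of relations $R_n$ of Example \ref{ex2}. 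For each generator $f_\alpha$, use surjectivity of $\sG^1$ to choose $g_\alpha \in \Hom_{\C}(\tilde{\sF}^0(x),\tilde{\sF}^0(y))$ with $\sG^1(g_\alpha) = \sF^1(f_\alpha)$, and set $c_\alpha := m^1_{\C}(g_\alpha) - \tilde{\sF}_{n-1}^1(d_{\Q_n}(f_\alpha))$; strictness of $\sF$, $\sG$, $\tilde{\sF}_{n-1}$ gives $\sG^1(c_\alpha) = 0$ and $m^1_{\C}(c_\alpha) = 0$, so $c_\alpha$ is an $m^1_{\C}$-cycle in $\ker(\sG^1)$.

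The short exact sequence of hom-complexes $0 \to \ker(\sG^1) \to \Hom_{\C} \to \Hom_{\B} \to 0$ combined with $\sG$ being a quasi-equivalence forces $\ker(\sG^1)$ to be acyclic on every pair of objects; hence $c_\alpha = m^1_{\C}(h_\alpha)$ for some $h_\alpha \in \ker(\sG^1)$. Setting $\tilde{\sF}^1(f_\alpha) := g_\alpha - h_\alpha$ yields $\sG^1(\tilde{\sF}^1(f_\alpha)) = \sF^1(f_\alpha)$ and $m^1_{\C}(\tilde{\sF}^1(f_\alpha)) = \tilde{\sF}_{n-1}^1(d_{\Q_n}(f_\alpha))$, so the assembled map $\tilde{\sF}^1 \colon \Q_n \to |\C|$ is a morphism of DG quivers. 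By Remark \ref{primacostr} it extends freely to a strict $\Ain$functor $\TT(\Q_n) \to \C$; strictness of $\tilde{\sF}_{n-1}$ forces the two sides of each defining relation of $R_n$ to acquire equal images, so by Theorem \ref{2.5PropLM2} the map descends to the desired $\tilde{\sF}_n \colon \tilde{\A}_n \to \C$. Strict unitality persists because units were fixed at the base step, and $\sG \cdot \tilde{\sF}_n = \sF|_{\tilde{\A}_n}$ holds on generators by construction and on the rest by strictness.

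The main obstacle is the generator lifting, which hinges on three ingredients built into the hypotheses: freeness of $\mathsf{B}_n$ (from semi-freeness) reduces the task to one choice per basis vector; surjectivity of $\sG^1$ furnishes an initial lift $g_\alpha$ with the correct image under $\sG$; and acyclicity of $\ker(\sG^1)$—forced by the combination of quasi-equivalence and surjectivity—produces the correction $h_\alpha$ that turns $g_\alpha$ into a chain-level lift compatible with the differential on $\Q_n$. Compatibility with the relations $R_n$, with the units, and with the inductive hypothesis is then automatic from strictness and the explicit shape of the semi-free tower, and the assembly at the colimit poses no further difficulty by Theorem \ref{Cocompletenesss}.
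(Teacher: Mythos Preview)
Your proof is correct and follows essentially the same inductive strategy as the paper: climb the semi-free filtration, use freeness to reduce the lift to generators, and exploit the surjective quasi-isomorphism on hom-complexes to correct an arbitrary preimage into one compatible with the differential. The only cosmetic difference is that where the paper invokes Remark \ref{remarzo} (Drinfeld's observation that a surjective quasi-isomorphism admits lifts of boundaries to boundaries), you reprove the same fact inline via acyclicity of $\ker(\sG^1)$; these are equivalent formulations of the same chain-level lifting lemma.
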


\begin{rem}\label{remarzo}
As Drinfeld pointed out in proof of \cite[Lemma 13.6]{Dri}, if $g^{\bullet}:X^{\bullet}\to Y^{\bullet}$ is a surjective quasi-isomorphism of complexes then, if $g^{\bullet}(x)=d_{Y^{\bullet}}(y)$ where $x\in X^{\bullet}$, $y\in Y^{\bullet}$ and $d_{\tiny X^{\bullet}}(x)=0$ there is an $x'\in X^{\bullet}$ such that $g^{\bullet}(x')=y$ and $x=d_{X^{\bullet}}(x')$.
\end{rem}

\begin{proof}[Proof of Lemma \ref{lemlift}]
First we can suppose that $\A$ admits a finite filtration:
\begin{align*}
\tilde{\A}_0\subset\tilde{\A}_1\subset\tilde{\A}_2\subset...\subset\tilde{\A}_{n-1}\subset\tilde{\A}_n=\A.
\end{align*}
We will prove it by induction.\\
Step 1: we recall that $\tilde{\A}_1:=\TT(\Q_1)_+/I_0$.\ 
Where $\Q_1$ are the closed.\\
So we have a strict non unital $\Ain$functor $p:\TT(\Q_1+\mathsf{I}_{\A})\to\tilde{\A}_1$, and a diagram
\[
\xymatrix{
&&{\C}\ar@{->>}[d]_{\simeq}^{\mathsf{G}}\\
\TT(\Q_1+\mathsf{I}_{\A})\ar[r]_-{p}&\tilde{\A}_1\ar[r]_{\sF|_{\tilde{\A}_1}}&\B
}
\]
Since $\TT(\Q_1+\mathsf{I}_{\A})$ is free then, by Theorem \ref{2.3PropLM1}, $\mathsf{F}|_{\tilde{\A}_1}\cdot p$ is given by 
$\big( (\mathsf{F}|_{\tilde{\A}_1}\cdot p)^1,0,...\big)$.\\ 
Where $(\mathsf{F}\cdot p)^1$ is defined on the DG quiver $\Q_1+\mathsf{I}_{\A}$.\\ 
To define a strict non unital $\Ain$functor $\TT(\Q_1+\mathsf{I}_{\A})\to\C$ it suffices to define a morphisms of DG quivers 
$$\tilde{\mathsf{G}}:\Q_1+\mathsf{I}_{\A}\to|\C|.$$ 
Given $f:x\to y\in\Q_1+\mathsf{I}_{\A}$ then $(\mathsf{F}\cdot p)_1(f)\in\Hom_{\B}\big((\mathsf{F}\cdot p)^0(x), (\mathsf{F}\cdot p)^0(y)\big)$.\\
By the axiom of choice we can take a morphism $g\in\C$ such that $\mathsf{G}(g)=(\mathsf{F}\cdot p)^1(f)$.\\
On the other hand, since $\mathsf{G}$ is a (strict) strictly unital $\Ain$functor then $\mathsf{G}(1_x)=1_{\mathsf{G}_0(x)}$.\\
We can define a morphism of quivers.
\begin{align*}
\tilde{\mathsf{G}}(f):\Q_1+\mathsf{I}_{\A}&\to |\B|\\
f+1_x&\mapsto g+1_{\mathsf{G}_0(x)}.
\end{align*}
Since $\B$ is strictly unital then the strict non unital $\Ain$functor $\beta_{\C}\cdot\TT(\tilde{\mathsf{G}})$ factorizes over the ideals $I_0$ and $I_+$, where $I_+$ is the ideal identifying the units of $\tilde{\A}_0$ and $\tilde{\A}_1$.\ 
So we get a strict strictly unital $\Ain$functor $\tilde{\sF}|_{\tilde{\A}_1}$ fitting:
\[
\xymatrix{
&{\C}\ar@{->>}[d]_{\simeq}^{\mathsf{G}}\\
\tilde{\A}_1\ar@{-->}[ur]^{\exists \tilde{\sF}|_{\tilde{\A}_1}}\ar[r]_{\sF|_{\tilde{\A}_1}}&\B.
}
\]
We suppose that for $k$ we have a strict $\Ain$functor:
\begin{align*}
\tilde{\sF}|_{\tilde{\A}_{k}}:\tilde{\A}_k\to\C \mbox{   such that   } \sG\cdot\tilde{\sF}|_{\tilde{\A}_{k}} ={\sF}|_{\tilde{\A}_{k}}.
\end{align*}
Step $k+1$:\ we have the commutative diagram:
\begin{align}\label{ronzo}
\xymatrix@=3em{
\A_k\ar@{^(->}[d]\ar[r]^-{\tilde{\sF}|_{\tilde{\A}_{k}}}&{\C}\ar@{->>}[d]_{\simeq}^-{\mathsf{G}}\\
\A_{k+1}\ar[r]_-{\mathsf{F}|_{\tilde{\A}_{k+1}}}&\B
}
\end{align}
We want to define the strict $\Ain$functor:
\begin{align*}
\tilde{\sF}|_{\tilde{\A}_{k+1}}:\tilde{\A}_{k+1}\to \C.
\end{align*}
Since it is free over $\tilde{\A}_{k}$ then we can extend the diagram (\ref{ronzo}) as follows:
\begin{align*}
\xymatrix@=3em{
&\A_k\ar@{^(->}[d]\ar[r]^-{\tilde{\sF}|_{\tilde{\A}_{k}}}&{\C}\ar@{->>}[d]_{\simeq}^-{\mathsf{G}}\\
\TT\big(\Q_{k+1}+|\tilde{\A}_k|+\mathsf{I}_{\A}\big)\ar[r]_-p&\A_{k+1}\ar[r]_-{\mathsf{F}|_{\tilde{\A}_{k+1}}}&\B
}
\end{align*}
As before we define the morphism on the DG quivers:
\begin{align*}
\tilde{\mathsf{G}}_{k+1}:\TT\big(\Q_{k+1}+|\tilde{\A}_k|+\mathsf{I}_{\A}\big)&\to |\C|.
\end{align*}
The only thing to define is $\tilde{\sG}_{k+1}\big( (a,\mathfrak{t})\big)$ where $(a;\mathfrak{t})\in \Q_k$.\\
We have $(\sF|_{\tilde{\A}_{k+1}}\cdot p)|_{\Q_{k+1}+|\tilde{\A}_k|+\mathsf{I}_{\A}}\big( (\bullet; (a;\mathfrak{t}) )\big)\in\B$ such that 
$$d_{\B}\left( (\sF|_{\Q_{k+1}+|\tilde{\A}_k|+\tiny \mathsf{I}_{\A}}\cdot p)\big( (\bullet; (a;\mathfrak{t}))\big) \right)= {\sF}|_{\tilde{\A}_{k}}( \mathfrak{t})=\sG\cdot\tilde{\sF}|_{\tilde{\A}_{k}}(\mathfrak{t}).$$
We can use Remark \ref{remarzo} to define $\tilde{\sG}_{k+1}\big( (a,\mathfrak{t})\big)$.\ 
We can procede as before to get a strict strictly unital $\Ain$functor $\tilde{\sF}|_{\tilde{\A}_n}:{\tilde{\A}_n}\to \C$.\\
So we get a strict strictly unital $\Ain$functor $\tilde{\sF}$ fitting:
\begin{align*}
\xymatrix@=3em{
&{\C}\ar@{->>}[d]_{\simeq}^-{\mathsf{G}}\\
\tilde{\A}\ar@{-->}[ur]^{\tilde{\sF}}\ar[r]_-{\mathsf{F}}&\B
}
\end{align*}
and we are done.
\end{proof}

\begin{thm}\label{Invertone}
Let $\A$, $\B$ and $\C$ be three strictly non unital $\Ain$categories.\\
Given $\mathsf{F}:\A^{\tiny\mbox{cm}}\to\B$ and $\mathsf{G}:\C\to\B$ two strict $\Ain$functors, such that $\mathsf{G}$ is a quasi-equivalence surjective on the morphisms.\
There exists a (non unique) strict $\Ain$functor $\tilde{\mathsf{F}}:\A^{\tiny\mbox{cm}}\to\C$ making the diagram
\[
\xymatrix{
&{\C}\ar@{->>}[d]_{\simeq}^{\mathsf{G}}\\
\A^{\tiny\mbox{cm}}\ar@{-->}[ur]^{\tilde{\mathsf{F}}}\ar[r]^{\mathsf{F}}&\B
}
\]
commutative in $\aCat^{\tiny\mbox{nu}}$.\ Here $\A^{\tiny\mbox{cm}}$ is the one in Theorem \ref{cofmor}.
\end{thm}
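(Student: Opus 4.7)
The plan is to follow the strategy of Lemma \ref{lemlift} (Lemma D) essentially verbatim, with the simplification that in the non unital setting we do not have to worry about the discrete subcategory $\mathsf{I}_{\A}$ nor about the unit ideals $I_+$, $I_+^n$. Recall from the proof of Theorem \ref{cofmor} that $\A^{\tiny\mbox{cm}}$ is built as a colimit of a sequence
\[
0=\tilde{\A}_0\subset \tilde{\A}_1\subset \tilde{\A}_2\subset \cdots \subset \A^{\tiny\mbox{cm}},
\]
with $\tilde{\A}_1=\TT(\Q_{\tilde{S}_1})$ a free non unital $\Ain$category on a DG quiver of closed homogeneous morphisms, and $\tilde{\A}_{n+1}=\TT(\Q_{n+1})/I_{n+1}$ where $\Q_{n+1}=\Q_{\tilde{S}_{n+1}}+|\tilde{\A}_n|$ and $I_{n+1}$ is the ideal generated by the system of relations $R_{n+1}$ of Example \ref{ex2}. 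By naturality of colimits in $\aCat^{\tiny\mbox{nu}}_{\tiny\mbox{strict}}$ (Theorem \ref{Cocompletenesss}), it suffices to construct compatible lifts $\tilde{\sF}|_{\tilde{\A}_n}$ at each stage and then pass to the colimit.

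For the base case, since $\tilde{\A}_1=\TT(\Q_{\tilde{S}_1})$ is free and $\sG$ is surjective on morphisms, the axiom of choice produces, for each generator $f\in\Q_{\tilde{S}_1}$, a preimage $g\in\C$ with $\sG^1(g)=\sF^1(f)$; this defines a functor of DG quivers $\tilde{\sG}_1:\Q_{\tilde{S}_1}\to|\C|$ (one must observe that $d_{\C}(g)$ has the correct image, which follows from $f$ being closed together with Remark \ref{remarzo} applied if necessary), and by the adjunction (\ref{adjunct}) this extends to a strict non unital $\Ain$functor $\tilde{\sF}|_{\tilde{\A}_1}:\tilde{\A}_1\to\C$ with $\sG\cdot\tilde{\sF}|_{\tilde{\A}_1}=\sF|_{\tilde{\A}_1}$. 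The inductive step is the core of the argument: assuming $\tilde{\sF}|_{\tilde{\A}_n}$ is constructed, we must define $\tilde{\sF}|_{\tilde{\A}_{n+1}}$. For each generator $(a,\mathfrak{t})\in\Q_{\tilde{S}_{n+1}}$, we have $a\in\A$ with $m_{\A}^1(a)=\Psi_n(\mathfrak{t})$, and so $\sF^1(a)\in\B$ satisfies $d_{\B}\bigl(\sF^1(a)\bigr)=\sF|_{\tilde{\A}_n}^1(\mathfrak{t})=\sG^1\bigl(\tilde{\sF}|_{\tilde{\A}_n}^1(\mathfrak{t})\bigr)$. Applying Remark \ref{remarzo} to the surjective quasi-isomorphism of hom-complexes $\sG^1:\Hom_\C\to\Hom_\B$ with the closed element $\tilde{\sF}|_{\tilde{\A}_n}^1(\mathfrak{t})$ and its boundary-preimage $\sF^1(a)$, we obtain $\tilde{a}\in\C$ with $\sG^1(\tilde{a})=\sF^1(a)$ and $d_{\C}(\tilde{a})=\tilde{\sF}|_{\tilde{\A}_n}^1(\mathfrak{t})$. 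Setting $\tilde{\sG}_{n+1}\bigl((a,\mathfrak{t})\bigr):=\tilde{a}$ and keeping $\tilde{\sG}_{n+1}|_{|\tilde{\A}_n|}:=\tilde{\sF}|_{\tilde{\A}_n}^1$, we get a DG quiver morphism $\tilde{\sG}_{n+1}:\Q_{n+1}\to|\C|$.

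The main technical obstacle, and the reason to follow the Drinfeld-style trick of Lemma \ref{lemlift} carefully, is to check that the induced strict non unital $\Ain$functor $\beta_{\C}\cdot\TT(\tilde{\sG}_{n+1}):\TT(\Q_{n+1})\to\C$ factors through the ideal $I_{n+1}$, so that it descends to a strict $\Ain$functor $\tilde{\A}_{n+1}\to\C$. By Theorem \ref{2.5PropLM2}, it is enough to verify vanishing on the generators of $R_{n+1}$, i.e.\ that
\[
m^k_\C\bigl(\tilde{\sG}_{n+1}(f_1),\dots,\tilde{\sG}_{n+1}(f_k)\bigr)=\tilde{\sG}_{n+1}\bigl(m^k_{\tilde{\A}_n}(f_1,\dots,f_k)\bigr)
\]
for $f_1,\dots,f_k\in\tilde{\A}_n$; but on $|\tilde{\A}_n|$ we defined $\tilde{\sG}_{n+1}$ as the strict $\Ain$functor $\tilde{\sF}|_{\tilde{\A}_n}^1$, so this identity is simply Remark \ref{SFE} applied to $\tilde{\sF}|_{\tilde{\A}_n}$. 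Commutativity $\sG\cdot\tilde{\sF}|_{\tilde{\A}_{n+1}}=\sF|_{\tilde{\A}_{n+1}}$ is immediate by construction on generators and hence holds on all of $\tilde{\A}_{n+1}$ by Lemma \ref{factone}. Finally, passing to the colimit yields the desired $\tilde{\sF}:\A^{\tiny\mbox{cm}}\to\C$ with $\sG\cdot\tilde{\sF}=\sF$, concluding the proof.
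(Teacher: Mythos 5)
Your proposal is correct and follows exactly the route the paper intends: the paper's own proof of this theorem simply says it is the same as Lemma \ref{lemlift}, and you carry out that adaptation (induction over the filtration of $\A^{\tiny\mbox{cm}}$, lifting generators via Remark \ref{remarzo}, factoring through $I_{n+1}$ by Theorem \ref{2.5PropLM2} and Remark \ref{SFE}, then passing to the colimit), with the correct simplification that no unit ideals appear. The only cosmetic point is that where you write $\sF^1(a)$ you mean the image $\sF^1\bigl((a,\mathfrak{t})\bigr)$ of the generator of $\A^{\tiny\mbox{cm}}$, since $\sF$ is defined on $\A^{\tiny\mbox{cm}}$ rather than on $\A$.
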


\begin{proof}
The proof is the same of Theorem \ref{lemlift}.
\end{proof}

\begin{rem}
Note that not all the categories with cofibrant morphisms have the lifting property of Theorem \ref{Invertone}, only the one of the form $\A^{\tiny\mbox{cm}}$.\
\end{rem}

\begin{rem}\label{functororne}
The categories found in Theorem \ref{semifree} and in Theorem \ref{cofmor} are not functorial in $\aCat_{\tiny\mbox{strict}}$ 
since they depend on the choice of the sets $\tilde{S}_n$, for every $n\ge1$.\ It is possible to make them functorial taking $\tilde{S}_n=S_n$, for every $n>0$.\ 
In this way, taking a strict $\Ain$functor $\mathsf{F}:\A\to\B$, the "lifted" functor $\tilde{\sF\cdot\Psi_{\A}}$ fitting the diagram:
\begin{align}
\xymatrix{
\A^{\tiny\mbox{sf}/\mbox{cm}}\ar[d]^{\Psi_{\A}}\ar@{-->}[r]^{\tilde{\sF\cdot\Psi_{\A}}}&\ar[d]^{\Psi_{\B}}\B^{\tiny\mbox{sf}/\mbox{cm}}\\
\A\ar[r]^{\sF}&\B\\
}
\end{align}
given by Theorem \ref{lemlift} (resp. Theorem \ref{Invertone}), is unique.\ We denote by $\sF^{\tiny\mbox{sf}/\mbox{cm}}$ the unique $\Ain$functor $\tilde{\sF\cdot\Psi_{\A}}$.\
This is the same strategy that Canonaco, Neeman and Stellari used in \cite[\S 3.2]{CNS} to make Drinfeld's construction \cite[Lemma B.5]{Dri} functorial.\ 
\end{rem}

We conclude with some easy corollaries: 
\begin{cor}
Every (non unital) $\Ain$(or DG)category has a (functorial in $\aCat_{\tiny\mbox{strict}}$) h-projective resolution.\
\end{cor}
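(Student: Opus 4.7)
The plan is to assemble the corollary directly from the results established earlier in the paper, with essentially no new work required. The chain of implications recorded in Remark \ref{crachen} shows that \emph{semi-free} (or \emph{cofibrant morphisms}) already implies \emph{h-projective}, so the existence of the required resolution is reduced to the existence of the resolutions built in Theorems \ref{semifree} and \ref{cofmor}.

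Concretely, first I would split into cases according to the flavor of the input. If $\A$ is a strictly unital $\Ain$category (resp.\ a DG category), I would take $\A^{\tiny\mbox{sf}}$ together with the strict quasi-equivalence $\Psi_{\A}:\A^{\tiny\mbox{sf}}\to\A$ provided by Theorem \ref{semifree} (resp.\ by the DG version sketched in Remark \ref{Drinni}). Theorem \ref{h-proj} (resp.\ Remark \ref{DGSEM}) says $\A^{\tiny\mbox{sf}}$ has cofibrant morphisms, and by Remark \ref{crachen} this forces every hom-complex $\Hom_{\A^{\tiny\mbox{sf}}}(x,y)$ to be h-projective; hence $\A^{\tiny\mbox{sf}}$ is an h-projective resolution of $\A$. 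If instead $\A$ is a non unital $\Ain$category (resp.\ a non unital DG category), the same argument applies with $\A^{\tiny\mbox{cm}}\to\A$ from Theorem \ref{cofmor} (resp.\ from Remark \ref{gorto}) in place of the semi-free resolution.

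For functoriality in $\aCat_{\tiny\mbox{strict}}$, I would invoke Remark \ref{functororne}: by making the canonical choice $\tilde S_n=S_n$ at every stage of the inductive construction in the proofs of Theorems \ref{semifree} and \ref{cofmor}, the assignment $\A\mapsto\A^{\tiny\mbox{sf}/\mbox{cm}}$ becomes functorial and, for every strict $\Ain$functor $\sF:\A\to\B$, produces a unique lift $\sF^{\tiny\mbox{sf}/\mbox{cm}}:\A^{\tiny\mbox{sf}/\mbox{cm}}\to\B^{\tiny\mbox{sf}/\mbox{cm}}$ making the square with $\Psi_{\A}$ and $\Psi_{\B}$ commute. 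Since this lift is obtained strictly (via the universal properties used in the inductive step), composition is preserved on the nose, so $(-)^{\tiny\mbox{sf}/\mbox{cm}}:\aCat_{\tiny\mbox{strict}}^{\star}\to\aCat_{\tiny\mbox{strict}}^{\star}$ is a genuine functor landing in h-projective objects, with $\Psi$ a natural transformation to the identity.

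There is essentially no obstacle here; the only thing that requires a moment's care is keeping track of which $\star\in\{\mbox{non unital},\mbox{cohomological unital},\mbox{unital},\mbox{strictly unital},\mbox{DG}\}$ is in play, so that one picks the correct construction ($\A^{\tiny\mbox{sf}}$ in the strictly unital/DG cases, $\A^{\tiny\mbox{cm}}$ otherwise) and the correct ambient category in which $\Psi_{\A}$ is declared to be a quasi-equivalence. Once that bookkeeping is fixed, the corollary is a one-line consequence of Theorem \ref{h-proj}/Remark \ref{DGSEM} plus Remark \ref{crachen} plus Remark \ref{functororne}.
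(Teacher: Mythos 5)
Your proposal is correct and follows essentially the same route as the paper, whose proof is a one-line citation of Theorem \ref{cofmor} together with Remarks \ref{gorto}, \ref{crachen} and \ref{functororne}. Your extra bookkeeping (splitting into the strictly unital/DG case via Theorem \ref{semifree} and Remark \ref{DGSEM} versus the non unital case via Theorem \ref{cofmor}) just makes explicit what the paper leaves implicit.
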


\begin{proof}
This is an immediate consequence of Theorem \ref{cofmor}, \ref{cofmor}, and Remark \ref{gorto}, \ref{crachen}, \ref{functororne}.
\end{proof}

\begin{rem}
If $\A$ is cohomological unital $\Ain$ (resp. DG)category then the category $\A^{\tiny\mbox{cm}}$ must be cohomological unital since they are a quasi-equivalent.\
On the hand $\A^{\tiny\mbox{cm}}$ is h-projective, it follows that $\A^{\tiny\mbox{cm}}$ is unital (see \cite[Lemma 3.13]{COS2}).\ 
\end{rem}

\begin{cor}\label{cor1}
Let $\A$ be a DG category and $\tilde{\A}$ its DG semi-free resolution (cf. \cite[Lemma 13.5]{Dri}).\
There exist two quasi-equivalences $\Phi:\A^{\tiny\mbox{sf}}\to\tilde{\A}$ and $\Phi':\tilde{\A}\to\A^{\tiny\mbox{sf}}$ such that 
$$\Phi\cdot\Phi'\approx\mbox{Id}_{\tilde{\A}}$$ and $$\Phi'\cdot\Phi\approx\mbox{Id}_{{\A}^{\tiny\mbox{sf}}}.$$
\end{cor}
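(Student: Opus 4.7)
The plan is to construct $\Phi$ directly as a lift, and then build $\Phi'$ as its weak inverse using Homological Perturbation Theory.

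First I would apply Lemma \ref{lemlift} (the lifting property of semi-free $\Ain$categories) to the strict quasi-equivalence $\tilde{\Psi}\colon \tilde{\A}\to\A$ coming from the DG semi-free resolution (surjective on morphisms by Drinfeld's construction \cite[Lemma 13.5]{Dri}): this produces a strict strictly unital $\Ain$functor $\Phi\colon\A^{\tiny\mbox{sf}}\to\tilde{\A}$ with $\tilde{\Psi}\cdot\Phi=\Psi$. Because $\Phi$, $\Psi$, $\tilde{\Psi}$ are all strict, on every hom-complex the identity $\tilde{\Psi}^1\circ\Phi^1=\Psi^1$ holds, and since $\Psi^1$, $\tilde{\Psi}^1$ are quasi-isomorphisms, two-out-of-three forces $\Phi^1$ to be a quasi-isomorphism. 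Both $\Psi^0$ and $\tilde{\Psi}^0$ are identities on objects, so $\Phi^0$ is the identity on objects too. Hence $\Phi$ is a quasi-equivalence.

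Next I would invert $\Phi$ up to the relation $\approx$ by means of Theorem \ref{DC} (Homological Perturbation Theory). By Theorem \ref{h-proj} the $\Ain$category $\A^{\tiny\mbox{sf}}$ has cofibrant morphisms; by Remark \ref{DGSEM} the same holds for the DG semi-free category $\tilde{\A}$. In particular both are h-projective (Remark \ref{crachen}), so each $\Phi^1$ is a quasi-isomorphism between cofibrant DG $R$-modules and therefore admits a two-sided chain homotopy inverse $\G^1$. Taking $h\colon\mbox{Ob}(\tilde{\A})\to\mbox{Ob}(\A^{\tiny\mbox{sf}})$ to be the identity of object sets, the hypotheses of Theorem \ref{DC} are met and produce a strictly unital $\Ain$functor $\Phi'\colon\tilde{\A}\to\A^{\tiny\mbox{sf}}$ such that $\Phi'\cdot\Phi\approx\Id_{\A^{\tiny\mbox{sf}}}$ and $\Phi\cdot\Phi'\approx\Id_{\tilde{\A}}$, which is exactly what is required.

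The main obstacle is checking the refined strictly-unital hypothesis of Theorem \ref{DC}, namely that the target $\A^{\tiny\mbox{sf}}$ has a \emph{nice} unit in the sense of Definition \ref{snu}; this is already granted by Theorem \ref{augmo}. The analogous statement for $\tilde{\A}$ is not needed for applying HPT with target $\A^{\tiny\mbox{sf}}$, but if one prefers to symmetrise the argument it follows from the same filtration reasoning as Theorem \ref{augmo}: the DG semi-free filtration $\tilde{\A}_0\subset\tilde{\A}_1\subset\dots$ has free graded subquotients, so the inclusion $R\cdot 1_x\hookrightarrow\Hom_{\tilde{\A}}(x,x)$ splits as a map of graded $R$-modules. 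With nice units and cofibrant morphisms on both sides, Theorem \ref{DC} supplies $\Phi'$ and the two homotopy-equivalence relations asserted in the statement.
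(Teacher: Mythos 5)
Your proposal is correct and takes essentially the paper's route: the paper's own proof is a one-line appeal to Homological Perturbation Theory (Theorem \ref{DC}) together with the h-projectivity of $\A^{\tiny\mbox{sf}}$ and $\tilde{\A}$, and your use of Lemma \ref{lemlift} plus two-out-of-three merely makes explicit the comparison functor $\Phi$ that the paper leaves implicit (the corollary sits in the lifting-property subsection precisely for this reason). One small slip: applying Theorem \ref{DC} to $\Phi:\A^{\tiny\mbox{sf}}\to\tilde{\A}$, the ``nice unit'' condition would concern the target $\tilde{\A}$ rather than $\A^{\tiny\mbox{sf}}$, but this is immaterial since the corollary does not require $\Phi'$ to be strictly unital, and you in any case verify the condition for $\tilde{\A}$ as well.
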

 
\begin{proof}
It follows directly from Homological Perturbation Theory (see subsection \ref{HPT}) noting that $\A^{\tiny\mbox{sf}}$ and $\tilde{\A}$ are both h-projective 
(see Theorem \ref{h-proj} and Remark \ref{gorto}).
\end{proof}

\begin{rem}\label{TOnzo}
One can prove that a semi-free DG category is h-projective by using the model structure of DGCat \cite[Proposition 2.3(3)]{Toe}, 
since semi-free DG categories are cofibrant in such a model structure.\
\end{rem}

\begin{rem}
In \cite{Orn4} we will prove that the category $\aCat_{\tiny\mbox{strict}}$ (resp. $\aCat^{\tiny\mbox{nu}}_{\tiny\mbox{strict}}$) has a model structure, 
whose standard cofibrations are exactly $\A^{\tiny\sem}$ (resp. $\aCat^{\tiny\mbox{nu}}_{\tiny\mbox{strict}}$).\ 
Note that in the case of non unital $\Ain$algebras the existence of a model structure follows (more or less) from \cite[2.2.1. Theorem]{Hin}.\\ 
It important to say that, if the base ring is a field then the situation change completely.\ 
Let us denote by $\mbox{Alg$_{\infty}$}$ the category of non unital $\Ain$algebras over a field.\ 
Lefevre-Hasegawa proved that $\mbox{Alg$_{\infty}$}$ has a model structure without limits (\cite[Theorem 1.3.3.1]{LH}).\ In such a model structure the weak equivalences are the quasi-isomorphisms, the fibrations (resp. cofibrations) are the $\Ain$functors $f$, such that $f_1$ is an epimorphism (resp. monomorphism).\ 
Note that, in this case, every $\Ain$algebra is cofibrant and fibrant.\   
To conclude we say that the category of $\Ain$categories (with any kind of unit), over a field, has a structure of fibrant object in the category of Relative Categories (\cite{Orn5}).

%On the other hand, in \cite{Orn5}, adopting the same techniques, it is proven that the category of non-unital (unital/ strictly unital) 
%$\Ain$categories is a fibrant category (see \cite[\S6]{Pas}.\\  

%One can ask if, at least the category $\aCat_{\tiny\mbox{strict}}$ has a model structure, since we have an explicit description of equalizers and products (see section \ref{cocompletone}).\
%This will be done in \cite{Orn4}.\ 

\end{rem}

\newpage

\subsection{Homotopy theory of the category of $\Ain$categories}\label{hyhyhhyhy}
We are finally ready to describe the homotopy of the category of $\Ain$categories in a very explicit way.\\

%\begin{rem}
%Note how to prove that the GZ localization exists Note section \ref{highercats}, in particular the proof of Theorem \ref{mspaces} the inclusion $[1]\to\overline{[1]}$.
%\end{rem}

First, we summarize a few of results of \cite{COS2}.\\ 
We have an adjunction:
\begin{align*}
\xymatrix{
U^n:\aCat^{\tiny\mbox{cu}}\ar@<-0.5ex>[r]&\ar@<-0.5ex>[l]\DgCat^{\tiny\mbox{cu}}:i^n
}
\end{align*}
We denote the unit and the counit of the adjunction $U^n\dashv i^n$ respectively by $\alpha^n$ and $\beta^n$.\\
Fixed a cohomological unital $\Ain$category $\A$, the $\Ain$functor $\alpha^n_{\A}:\A\to U^n(\A)$ has an 
inverse $\epsilon^n_{\A}:U^n(\A)\to\A$ up to $\approx$, see \cite[Proposition 2.1 + Remark 2.2]{COS2}.\\ 
\\
On the other hand, we have a functor $U:\aCat\to\DgCat$ providing an adjunction of categories
\begin{align*}
\xymatrix{
U:\aCat\ar@<-0.5ex>[r]&\ar@<-0.5ex>[l]\DgCat:i
}
\end{align*}
We denote by $\alpha$ and $\beta$, respectively the unit and the counit of the adjunction $U\dashv i$.\\
Fixed a strictly unital $\Ain$category $\A$, the (strictly unital) $\Ain$functor $\alpha_{\A}:\A\to U(\A)$ has an 
inverse $\epsilon_{\A}:U(\A)\to\A$ up to $\approx$, if $\A$ has nice unit (see \cite[Proposition 3.3]{COS2}.\\
\\
The constructions of $U^n$ and $U$ can be found in \cite[\S2.7]{Orn1}.\\
\\
Moreover we recall \cite[Proposition 4.10]{COS2}:

\begin{lem}\label{canzo}
Given two DG categories $\A$ and $\B$, with $\A$ h-projective, we have:
\begin{align}
\mbox{Ho}(\DgCat)(\A,\B)\simeq\aCat^{\tiny\mbox{u}}(\A,\B)/\approx.
\end{align}
\end{lem}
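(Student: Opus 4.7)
The plan is to combine Theorem A with a transport-of-structure argument powered by Homological Perturbation Theory. Since $\A$ and $\B$ are DG categories, hence strictly unital $\Ain$-categories, the chain of equivalences $\Ho(\DgCat)\simeq\Ho(\aCat)\simeq\Ho(\aCat^{\tiny\mbox{u}})$ recalled in subsection \ref{hcat} identifies
$$\Ho(\DgCat)(\A,\B)\simeq\Ho(\aCat)(\A,\B),$$
and Theorem A provides a bijection $\Ho(\aCat)(\A,\B)\simeq\aCat^{\tiny\mbox{(cu)}}(\A^{\sem},\B)/\approx$ implemented by pre-composition with the strict quasi-equivalence $\Psi_{\A}:\A^{\sem}\to\A$. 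So the entire statement reduces to exhibiting a natural bijection
$$\aCat^{\tiny\mbox{(cu)}}(\A^{\sem},\B)/\approx\;\simeq\;\aCat^{\tiny\mbox{u}}(\A,\B)/\approx.$$

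For this I would exploit the h-projectivity of $\A$. By Theorem \ref{h-proj} the semi-free resolution $\A^{\sem}$ is itself h-projective, and by Theorem \ref{augmo} it has nice unit. Consequently $\Psi_{\A}^{1}$ is a quasi-isomorphism between h-projective DG $R$-modules, hence a chain homotopy equivalence, so Theorem \ref{DC} produces a strictly unital $\Ain$-functor $\Phi_{\A}:\A\to\A^{\sem}$ satisfying $\Psi_{\A}\cdot\Phi_{\A}\approx\Id_{\A}$ and $\Phi_{\A}\cdot\Psi_{\A}\approx\Id_{\A^{\sem}}$. Pre-composition with $\Psi_{\A}$ and $\Phi_{\A}$ then yields mutually inverse maps
\begin{align*}
\Psi_{\A}^{*}&:\aCat^{\tiny\mbox{u}}(\A,\B)/\approx \to \aCat^{\tiny\mbox{(cu)}}(\A^{\sem},\B)/\approx,\quad [\F]\mapsto[\F\cdot\Psi_{\A}],\\
\Phi_{\A}^{*}&:\aCat^{\tiny\mbox{(cu)}}(\A^{\sem},\B)/\approx \to \aCat^{\tiny\mbox{u}}(\A,\B)/\approx,\quad [\G]\mapsto[\G\cdot\Phi_{\A}].
\end{align*}
Well-definedness uses the compatibility of $\approx$ with horizontal composition, together with the facts that $\Psi_{\A}$ is strict and strictly unital, and that $\Phi_{\A}$ is unital by Theorem \ref{DC}; mutual inverseness follows at once from the two $\approx$-identities above.

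The main obstacle I anticipate is the unitality bookkeeping, which splits into two checks. First, one must verify that $\G\cdot\Phi_{\A}$ actually lands in $\aCat^{\tiny\mbox{u}}(\A,\B)$ and not merely in $\aCat^{\tiny\mbox{cu}}(\A,\B)$; this is where the strict unitality of $\Phi_{\A}$ provided by the refined version of Theorem \ref{DC} (applicable precisely because $\A^{\sem}$ has nice unit) is essential, together with the fact that $\G$ is already cohomological unital and $\B$ is a DG category with strict units. Second, one must confirm that the bijection $\dagger_{1}$ of Theorem A coincides with pre-composition by $\Psi_{\A}$; this amounts to checking that in $\Ho(\aCat)$ the class $[\Psi_{\A}]$ is invertible, so that the assignment $[\F]\mapsto[\F\cdot\Psi_{\A}]$ is an isomorphism $\Ho(\aCat)(\A,\B)\to\Ho(\aCat)(\A^{\sem},\B)$. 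With both points settled, the two pre-composition maps glue the various identifications into the asserted bijection.
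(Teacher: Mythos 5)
Your argument has a circularity at its very first step. The paper does not prove Lemma \ref{canzo} at all: it is recalled from \cite[Proposition 4.10]{COS2}, where it is proved inside $\DgCat$ using Tabuada's model structure (cofibrant replacement in $\DgCat$ and the path object), and the introduction of the present paper states explicitly that this proposition is the technical ingredient on which the equivalences $\mbox{Hqe}\simeq\Ho(\aCat)\simeq\Ho(\aCat^{\tiny\mbox{u}})$ of \cite[Theorem B]{COS2} rest. Your opening move identifies $\Ho(\DgCat)(\A,\B)$ with $\Ho(\aCat)(\A,\B)$ by invoking exactly that chain of equivalences, i.e.\ \cite[Theorem B]{COS2}; so you are assuming a result whose proof depends on the statement you are trying to establish. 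The genuinely hard content of the lemma is the comparison between the localization of $\DgCat$ (whose morphisms are DG functors only) and $\Ain$-functors up to $\approx$, and nothing in your proposal gives an independent handle on $\Ho(\DgCat)(\A,\B)$: Theorem \ref{artoo} only computes hom-sets in $\Ho(\aCat)$, so it cannot by itself bridge the DG side. To make the argument non-circular you would need a substitute for the $\DgCat$-side computation, which is precisely what the cited model-structure argument of \cite{COS2} (or, conjecturally, a future strictification statement) provides.

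The remainder of the reduction, namely $\aCat^{\tiny\mbox{cu}}(\A^{\sem},\B)/\approx\;\simeq\;\aCat^{\tiny\mbox{u}}(\A,\B)/\approx$ via Homological Perturbation Theory, is essentially fine, with one bookkeeping correction: in your application of Theorem \ref{DC} the functor is $\Psi_{\A}:\A^{\sem}\to\A$, so the ``nice unit'' hypothesis required for a \emph{strictly} unital quasi-inverse sits on the target $\A$ (the given DG category, for which niceness of the unit is not automatic over a commutative ring), not on $\A^{\sem}$ as you claim. This slip is harmless for the statement at hand, since the first part of Theorem \ref{DC} already produces a unital quasi-inverse $\Phi_{\A}$ and $\aCat^{\tiny\mbox{u}}(\A,\B)$ only requires cohomological unital functors; and indeed $\A^{\sem}$ is h-projective (Theorem \ref{h-proj} together with Remark \ref{crachen}), so $\Psi^1_{\A}$ is a homotopy equivalence when $\A$ is h-projective, as you use. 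But this part only handles the $\Ain$ side; the DG side, which is the actual substance of the lemma, remains unaddressed.
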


We prove the following:

\begin{lem}\label{crunzoz}
Given two $\Ain$categories $\A^{\sem}$, $\B^{\sem}$, we have:
\begin{align*}
\mbox{Ho}(\aCat)(\A^{\sem},\B^{\sem})\simeq\aCat^{\tiny\mbox{u}}(\A^{\sem},\B^{\sem})/\approx.
\end{align*}
\end{lem}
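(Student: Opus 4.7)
The plan is to construct the natural map
$$\Phi\colon \aCat^{\tiny\mbox{u}}(\A^{\sem},\B^{\sem})/\approx \longrightarrow \mbox{Ho}(\aCat)(\A^{\sem},\B^{\sem}),\qquad [\F]_{\approx}\mapsto [\F]_{\mbox{Ho}},$$
and verify it is a bijection. Well-definedness is immediate from Theorem \ref{mspaces}: if $\F\approx\G$ then $[\F]=[\G]$ in $\mbox{Ho}(\aCat)$. The strategy mirrors the proof of Lemma \ref{canzo} (\cite[Proposition 4.10]{COS2}), but uses the tools developed in this paper (semi-free resolutions, Lemma \ref{lemlift}, the $\Ain$-nerve $\Na^\star$, and Theorem \ref{DC}) in place of the Tabuada model structure on $\DgCat$.

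For surjectivity, take $\phi\in\mbox{Ho}(\aCat)(\A^{\sem},\B^{\sem})$ represented by a zigzag of $\Ain$-functors alternating with quasi-equivalences. Applying the functorial semi-free construction (Remark \ref{functororne}) together with the natural surjective quasi-equivalences $\Psi_{(-)}\colon (-)^{\sem}\to (-)$, I will replace this zigzag by one living entirely in $\aCat_{\sem}$. Since semi-free $\Ain$-categories have cofibrant morphisms (Theorem \ref{h-proj}), they are in particular h-projective (Remark \ref{crachen}) and have nice unit (Theorem \ref{augmo}), so by Homological Perturbation Theory (Theorem \ref{DC}) every quasi-equivalence between them admits an inverse up to $\approx$. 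Inverting each backward arrow up to $\approx$ and composing (compatibility of $\approx$ with composition) produces a single $\Ain$-functor $\F\colon\A^{\sem}\to\B^{\sem}$ whose class in $\mbox{Ho}(\aCat)$ is $\phi$.

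For injectivity, suppose $\F,\G\colon\A^{\sem}\to\B^{\sem}$ satisfy $[\F]=[\G]$ in $\mbox{Ho}(\aCat)$. The key device is the path object ${\Na^\star(\B^{\sem})}_1$ from Section \ref{highercats}, which sits in the diagram
$$\xymatrix{
\B^{\sem}\ar@/^/[r]^-{i}_-{\simeq} & {\Na^\star(\B^{\sem})}_1 \ar@<0.5ex>[l]^-{t}_-{\simeq}\ar@<-0.5ex>[l]_-{s}^-{\simeq} \\
}$$
with $s\cdot i=t\cdot i=\mbox{Id}_{\B^{\sem}}$. I will produce a strict $\Ain$-functor $H\colon\A^{\sem}\to{\Na^\star(\B^{\sem})}_1$ such that $s\cdot H=\F$ and $t\cdot H=\G$, as follows. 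The hypothesis $[\F]=[\G]$ expands to a zigzag of quasi-equivalences in $\aCat$ witnessing the equality; by applying $(-)^{\sem}$ and Lemma \ref{lemlift}, I rectify this zigzag inside $\aCat_{\sem}$, and HPT again provides inverses up to $\approx$, yielding a map $H$ into the path object with the prescribed projections. By Corollary \ref{Corollaroni}, $H$ corresponds to a natural transformation $T\colon\F\Rightarrow\G$, and the fact that $H$ factors through $\Na^\star$ (not merely $\Na$) forces $T$ to be invertible (via Proposition \ref{Invertitransf}), giving $\F\approx\G$.

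The main obstacle is the injectivity step: converting the \emph{a priori} long zigzag witnessing $[\F]=[\G]$ in $\mbox{Ho}(\aCat)$ into the single factorization $H\colon\A^{\sem}\to{\Na^\star(\B^{\sem})}_1$ with $sH=\F$ and $tH=\G$. This is precisely where the lifting property of Lemma \ref{lemlift}, the functoriality of $(-)^{\sem}$, and Theorem \ref{DC} must be combined carefully (in particular, one has to track that the lifting can be arranged so that the two "endpoint" compositions remain strictly equal to $\F$ and $\G$, not merely weakly equivalent to them).
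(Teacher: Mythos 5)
The main gap is in your injectivity step. The hypothesis $[\F]=[\G]$ in $\Ho(\aCat)$ only says that the two morphisms become equal after a finite chain of formal manipulations in the Gabriel--Zisman localization; it does not hand you a zigzag in $\aCat$ that can be ``rectified'' into a single $\Ain$functor $H\colon\A^{\sem}\to{\Na^{\star}(\B^{\sem})}_1$ with $s\cdot H=\F$ and $t\cdot H=\G$. Converting equality in the localization into an actual homotopy is precisely the hard content of the statement; you flag it as ``the main obstacle'' but never supply the argument, and the tools you invoke do not apply as stated: Lemma \ref{lemlift} requires strict functors and a strict quasi-equivalence surjective on morphisms, which the arrows arising from the localization are not, and the semi-free construction is functorial only in $\aCat_{\tiny\mbox{strict}}$ (Remark \ref{functororne}), so it cannot be applied termwise to a zigzag of non-strict $\Ain$functors --- for that one needs the functor $(\mbox{-})^{\sem}_{\approx}$ on $\aCat/\approx$ of Theorem \ref{funtocat}, whose construction itself passes through the adjunction $U^n\dashv i^n$ and the results of \cite{COS2}. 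The same misuse of functoriality affects your surjectivity step, though that half could be repaired along the lines of Lemma \ref{lordod} and the roof-by-roof construction in Lemma \ref{rorororoor}. (Also, an $H$ with $s\cdot H=\F$ cannot be strict unless $\F$ is.)

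For comparison, the paper does not argue inside $\aCat$ at all: it transfers the statement through $U$ to DG categories via \cite[Theorem B]{COS2}, replaces $U(\A^{\sem})$ by its DG semi-free resolution, applies Lemma \ref{canzo} (that is, \cite[Proposition 4.10]{COS2}, proved with the Tabuada model structure --- exactly where ``equal in the homotopy category implies homotopic'' is established), and comes back via HPT (Theorem \ref{DC}) using h-projectivity of both resolutions. The model-structure-free route you are attempting is carried out later in the paper (Lemma \ref{rorororoor}), but with a different strategy: instead of proving injectivity by extracting a path-object homotopy from $[\F]=[\G]$, one constructs an explicit inverse to $\phi$ on words in arrows and formal inverses using $(\mbox{-})^{\sem}_{\approx}$ and checks that it respects the defining relations of the localization. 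To make your proposal work you should either adopt that ``define the inverse and verify the relations'' scheme, or actually supply the missing argument producing the factorization through ${\Na^{\star}(\B^{\sem})}_1$, which is not available by the means you cite.
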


\begin{proof}
We have the following sequence of isomorphisms of sets:
\begin{align*}
\mbox{Ho}(\aCat)(\A^{\sem},\B^{\sem})&\simeq^{\dagger_1}\mbox{Ho}(\DgCat)\big(U(\A^{\sem}),U(\B^{\sem})\big)\\
&\simeq^{\dagger_2}\mbox{Ho}(\DgCat)\big((U(\A^{\sem}))^{\tiny\mbox{sf}_{\tiny\mbox{DG}}},U(\B^{\tiny\mbox{sf}})\big)\\
&\simeq^{\dagger_3}\aCat^{\tiny\mbox{u}}\big( (U(\A^{\sem}))^{\tiny\mbox{sf}_{\tiny\mbox{DG}}},U(\B^{\sem})\big)/\approx\\
&\simeq^{\dagger_4}\aCat^{\tiny\mbox{u}}( \A^{\sem},\B^{\sem})/\approx
\end{align*}
The five isomorphisms are:
$\dagger_1$ is \cite[Theorem B]{COS2}, $\dagger_2$ is clear since $(U(\A^{\sem}))^{\tiny\mbox{sf}_{\tiny\mbox{DG}}}$ is the DG semi-free resolution of $(U(\A^{\sem}))$, $\dagger_3$ is Lemma \ref{canzo}.\ 
To conclude $\dagger_4$ comes from the composition of quasi-equivalences
\begin{align*}
\xymatrix{
(U(\A^{\sem}))^{\tiny\mbox{sf}_{\tiny\mbox{DG}}}\ar[r]&U(\A^{\sem})\ar[r]^{}&\A^{\sem}
}
\end{align*}
and Theorem \ref{DC}, noting that $(U(\A^{\sem}))^{\tiny\mbox{sf}_{\tiny\mbox{DG}}}$ and $\A^{\sem}$ are h-projective (see Remark \ref{DGSEM} or \ref{TOnzo} and Theorem \ref{h-proj}).
\end{proof}

%\begin{thm}
%Let $\A^{\tiny\mbox{sf}}$ and $\B$ be two strictly unital $\Ain$categories with $\A^{\tiny\mbox{sf}}$ semi-free and two strictly unital $\Ain$functors $\F,\G:\A^{\tiny\mbox{sf}}\to\B$.\
%If $[\F]=[\G]$ in $\mbox{Ho}(\aCat)$ then $\F\approx\G$.
%\end{thm}

%\begin{proof}
%Since $[\F]=[\G]$ in $\mbox{Ho}(\aCat)(\A^{\tiny\mbox{sf}},\B)$ then we have 
%\[
%\xymatrix{
%&\B\ar[d]^{u}_{\simeq}&\\
%\A^{\tiny\mbox{sf}}\ar[r]^{\delta}\ar[ur]^{\F}\ar[dr]_{\G}&\C&\ar[l]^u_{\simeq}\B\ar@{=}[ul]\ar@{=}[dl]\\
%&\B\ar[u]_{u}^{\simeq}&
%}
%\]
%where $u$ is a quasi-equivalence.\ We have $\delta=u\cdot\F=u\cdot\G$.\\
%Now we take the semi-free resolutions of $\B$ and $\C$, getting
%\[
%\xymatrix{
%\B^{\tiny\mbox{sf}}\ar@{->>}[d]^{\Psi^{\B}}_{\simeq}\ar@{-->}[r]^{\exists\tilde{u}}&\C^{\tiny\mbox{sf}}\ar@{->>}[d]^{\Psi^{\C}}_{\simeq}\\
%\B\ar[r]^u_{\simeq}&\C\\
%&\A^{\tiny\mbox{sf}}\ar[u]^{\delta}\ar[ul]^{\F}\ar@/^4.5pc/[uul]^{\tilde{\F}}
%}
%\]
%We have $\tilde{u}\cdot\tilde{\F}=\tilde{u}\cdot\tilde{\G}$, it implies $\tilde{\F}\approx\tilde{\G}$ and $\Psi^{\B}\cdot\tilde{\F}\approx\Psi^{\B}\cdot\tilde{\G}$ so $\F\approx\G$.
%\end{proof}

Let us prove the following easy Lemma:

\begin{lem}\label{donderbong}
The sets $\aCat^{\tiny\mbox{cu}}(\A^{\tiny\mbox{sf}},\B)/\approx$, $\aCat^{\tiny\mbox{u}}(\A^{\tiny\mbox{sf}},\B)/\approx$ and $\aCat(\A^{\tiny\mbox{sf}},\B)/\approx$ are isomorphic.\ 
\end{lem}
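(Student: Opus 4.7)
The plan is to form the chain of canonical maps
\begin{align*}
\aCat(\A^{\tiny\mbox{sf}},\B)/{\approx} \xrightarrow{q_1} \aCat^{\tiny\mbox{u}}(\A^{\tiny\mbox{sf}},\B)/{\approx} \xrightarrow{q_2} \aCat^{\tiny\mbox{cu}}(\A^{\tiny\mbox{sf}},\B)/{\approx}
\end{align*}
induced by the inclusions of categories $\aCat \subset \aCat^{\tiny\mbox{u}} \subset \aCat^{\tiny\mbox{cu}}$ recorded in $(\ref{inc1})$ and $(\ref{inc2})$, and then check that $q_1$ and $q_2$ are bijections. Both $\A^{\tiny\mbox{sf}}$ and $\B$ are strictly unital, in particular unital, so they are objects of $\aCat^{\tiny\mbox{u}}$; since $\aCat^{\tiny\mbox{u}}$ is a full subcategory of $\aCat^{\tiny\mbox{cu}}$, the two underlying hom-sets relevant for $q_2$ coincide. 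Moreover $\approx$ is defined as isomorphism in $H(\Fun(\A^{\tiny\mbox{sf}},\B))$ (Definition \ref{def1}) and depends only on the $\Ain$category $\Fun(\A^{\tiny\mbox{sf}},\B)$, not on the ambient category of $\Ain$categories in which $\A^{\tiny\mbox{sf}}$ and $\B$ are viewed; therefore $q_2$ is the identity, and $q_1$ is automatically injective.

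The content is thus the surjectivity of $q_1$: given a cohomological unital $\F\colon \A^{\tiny\mbox{sf}} \to \B$ I need to exhibit a strictly unital $\F'\colon \A^{\tiny\mbox{sf}}\to \B$ with $\F\approx \F'$. The crucial ingredient is that the semi-free resolution $\A^{\tiny\mbox{sf}}$ has nice unit (Theorem \ref{augmo}), so for every object $x$ the hom-space splits as $\Hom_{\A^{\tiny\mbox{sf}}}(x,x)\simeq R\cdot 1_x\oplus V_x$. Because $\F$ is cohomological unital, $\F^1(1_x)-1_{\F^0(x)}$ is an $m^1_\B$-boundary, say $m^1_\B(h_x)=\F^1(1_x)-1_{\F^0(x)}$. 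Proceeding inductively along the semi-free filtration $\tilde{\A}_0 \subset \tilde{\A}_1 \subset \tilde{\A}_2 \subset \cdots$ one extends $H^0(x):=h_x$ to a prenatural transformation $H$ of degree $-1$ whose higher components $H^n$ correct $\F$ on the $R\cdot 1_x$-direction of each unit slot so that the new $\Ain$functor $\F':=\F-\mathfrak{M}^1(H)$ satisfies $(\F')^1(1_x)=1_{\F^0(x)}$ and $(\F')^n(\ldots,1_x,\ldots)=0$ for $n\ge 2$, i.e.\ is strictly unital. Then $\F\sim \F'$ by construction, and $\F\approx \F'$ by Lemma \ref{sim}, as desired.

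The main obstacle is the inductive construction of $H$: at the step from $\tilde{\A}_{n-1}$ to $\tilde{\A}_{n}$ one has to extend $H^{<n}$ to $H^n$ on the newly added free generators in a way that simultaneously preserves the $\Ain$functor relations for $\F-\mathfrak{M}^1(H)$ and kills all non-strictly-unital components of $\F$ introduced at that level. The nice-unit splitting keeps the correction localized to the $R\cdot 1_x$-direction, while the freeness of the generators added at stage $n$ makes the extension unobstructed. This bookkeeping is parallel to the strictification arguments in Lemma \ref{sim} and in \cite[\S 3]{COS2}, and once it is carried out the lemma follows.
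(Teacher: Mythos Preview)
Your overall strategy coincides with the paper's: reduce to showing that every cohomological unital $\F:\A^{\sem}\to\B$ is $\approx$ to a strictly unital $\F'$, use that $\A^{\sem}$ has nice unit (Theorem~\ref{augmo}) to build a homotopy $\F\sim\F'$, and then invoke Lemma~\ref{sim}. The paper simply cites \cite[Lemma 4.1]{COS2} (or \cite[Lemma 3.7]{Orn1}) for the strictification step, whereas you attempt to sketch its content.

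Two technical slips in your sketch: by the paper's definition of $\sim$, the prenatural transformation $H$ must have degree $0$ (not $-1$) and must satisfy $H^0(x)=0$ for all $x$; the boundary witness $h_x$ goes into $H^1(1_x)$, not into $H^0$. With $H^0=0$ one has $(\mathfrak{M}^1(H))^1(1_x)=m^1_\B(H^1(1_x))$, which is exactly what you need. Also, the inductive construction of $H$ does not use the semi-free filtration $\tilde\A_0\subset\tilde\A_1\subset\cdots$ at all: the argument in \cite[Lemma 4.1]{COS2} works for any strictly unital source with nice unit, and proceeds by defining $H^n$ component-by-component via the splitting $\Hom_{\A^{\sem}}(x,x)\simeq R\cdot 1_x\oplus V_x$, setting $H^n$ to the appropriate correction on the $1_x$-slots and zero on the complement. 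Invoking the filtration here is harmless but misleading; the semi-freeness enters only through Theorem~\ref{augmo}, which guarantees the nice unit.
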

\begin{proof}
It suffices to prove that given a cohomological unital $\Ain$functor $\F:\A^{\sem}\to\B$, there exists $\F'$ such that $\F\approx\F'$.\
We can use \cite[Lemma 4.1]{COS2} (or implicitly \cite[Lemma 3.7]{Orn1}) + Lemma \ref{sim} taking into account that $\A^{\tiny\mbox{sf}}$ has a nice unit (Corollary \ref{augmo}).
\end{proof}

We proved that the resolutions $(\mbox{-})^{\sem}$ and $(\mbox{-})^{\tiny\mbox{cm}}$ are functorial in the categories with strict $\Ain$functors.\
Making use of (\ref{Dianzo3}) we can make these resolutions functorial in the category $\aCat^{\star}/\approx$.\\
On the other hand, denoting by $\aCat^{\diamond}_{\tiny\mbox{cm}}$ the category of $\Ain$categories with cofibrant morphisms (whose functors are the ones in $\diamond=\mathcal{f}\mbox{cu, u, nu}\mathcal{g}$) and by $\aCat_{\sem}$ the category of semi-free $\Ain$categories, we have the inclusion functors 
\begin{align*}
\aCat^{\diamond}_{\tiny\mbox{cm}}\hookrightarrow \aCat^{\diamond}
\end{align*}
and
\begin{align*}
\aCat_{\sem}\hookrightarrow \aCat.
\end{align*}
They induce the functors 
\begin{align}\label{tryru1}
\aCat^{\diamond}_{\tiny\mbox{cm}}/\approx \hookrightarrow \aCat^{\diamond}/\approx
\end{align}
and 
\begin{align}\label{tryru2}
\aCat_{\sem}/\approx\hookrightarrow \aCat/\approx.
\end{align}

\begin{thm}\label{funtocat}
We have a functor
\begin{align*}
(\mbox{-})^{\tiny\mbox{cm}}_{\approx}:\aCat^{\diamond}/\approx &\to \aCat^{\diamond}_{\tiny\mbox{cm}}/\approx\\
\A&\mapsto \A^{\tiny\mbox{cm}}\\
\F&\mapsto \F^{\tiny\mbox{cm}}_{\approx}.
\end{align*}
and a functor 
\begin{align*}
(\mbox{-})^{\sem}_{\approx}:\aCat/\approx &\to \aCat_{\sem}/\approx\\
\A&\mapsto \A^{\sem}\\
\F&\mapsto \F^{\sem}_{\approx}.
\end{align*}
which are the left inverses of (\ref{tryru1}) and (\ref{tryru2}) respectively.
\end{thm}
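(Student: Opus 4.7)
The plan is to deal first with $(\mbox{-})^{\sem}_{\approx}$; the argument for $(\mbox{-})^{\tiny\mbox{cm}}_{\approx}$ is entirely parallel, using Theorem \ref{Invertone} and the variants of the functorial resolution $\A\mapsto\A^{\tiny\mbox{cm}}$ for $\star\in\{\mbox{nu, cu, u}\}$ in place of Theorem \ref{lemlift}. On objects the functor sends $\A\mapsto\A^{\sem}$. To define it on a class $[\F]\in\aCat(\A,\B)/\approx$ I would apply the weak lifting property (Lemma \ref{lordod}) to $\F\cdot\Psi_{\A}:\A^{\sem}\to\B$ along the strict quasi-equivalence $\Psi_{\B}:\B^{\sem}\to\B$, obtaining $\F^{\sem}_{\approx}:\A^{\sem}\to\B^{\sem}$ with
\begin{align*}
\Psi_{\B}\cdot\F^{\sem}_{\approx}\approx\F\cdot\Psi_{\A}.
\end{align*}

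The core of the argument is well-definedness of this recipe up to $\approx$. If $\F\approx\F'$, compatibility of $\approx$ with composition gives $\F\cdot\Psi_{\A}\approx\F'\cdot\Psi_{\A}$, so any lift of one is a lift of the other. If two candidates $\tilde{\F}_1,\tilde{\F}_2:\A^{\sem}\to\B^{\sem}$ both satisfy $\Psi_{\B}\cdot\tilde{\F}_i\approx\F\cdot\Psi_{\A}$, then $\Psi_{\B}\cdot\tilde{\F}_1\approx\Psi_{\B}\cdot\tilde{\F}_2$, and here I invoke that $\B^{\sem}$ has nice unit (Theorem \ref{augmo}) and cofibrant morphisms (Theorem \ref{h-proj}), so Theorem \ref{DC} (HPT) produces an $\Ain$functor $\mathscr{P}_{\B}:\B\to\B^{\sem}$ with $\mathscr{P}_{\B}\cdot\Psi_{\B}\approx\Id_{\B^{\sem}}$; pre-composing then yields $\tilde{\F}_1\approx\tilde{\F}_2$. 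This is the step I expect to be the main obstacle, since it is the only place where something non-formal happens, and it relies crucially on the good homological properties of the semi-free resolution established earlier.

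Functoriality is then automatic from the same uniqueness principle: both $(\G\cdot\F)^{\sem}_{\approx}$ and $\G^{\sem}_{\approx}\cdot\F^{\sem}_{\approx}$ lift $(\G\cdot\F)\cdot\Psi_{\A}$ along $\Psi_{\C}$, because
\begin{align*}
\Psi_{\C}\cdot\G^{\sem}_{\approx}\cdot\F^{\sem}_{\approx}\approx\G\cdot\Psi_{\B}\cdot\F^{\sem}_{\approx}\approx\G\cdot\F\cdot\Psi_{\A},
\end{align*}
and preservation of identities follows by taking the lift $\Id_{\A^{\sem}}$ of $\Psi_{\A}$. Finally, to see that $(\mbox{-})^{\sem}_{\approx}$ is a left inverse of the inclusion (\ref{tryru2}) up to natural isomorphism, observe that for every $\A$ the class $[\Psi_{\A}]:\A^{\sem}\to\A$ is an isomorphism in $\aCat/\approx$ by Theorem \ref{DC}, and the defining relation $\Psi_{\B}\cdot\F^{\sem}_{\approx}\approx\F\cdot\Psi_{\A}$ is exactly the naturality square for $\Psi$; restricting to semi-free objects, where both source and target of $\Psi_{\A}$ lie in $\aCat_{\sem}/\approx$, produces the desired natural isomorphism between $(\mbox{-})^{\sem}_{\approx}\circ\iota$ and the identity functor.
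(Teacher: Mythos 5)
Your construction of the lift itself (via Lemma \ref{lordod}) is fine, but the well-definedness step, which you correctly identify as the crux, has a genuine gap. You want a functor $\mathscr{P}_{\B}:\B\to\B^{\sem}$ with $\mathscr{P}_{\B}\cdot\Psi_{\B}\approx\Id_{\B^{\sem}}$, obtained from Theorem \ref{DC}. But Theorem \ref{DC} requires that $\Psi_{\B}^1:\Hom_{\B^{\sem}}(x,y)\to\Hom_{\B}(x,y)$ admit a homotopy inverse of chain complexes, and this is exactly what fails for a general strictly unital $\Ain$category $\B$ over a commutative ring: $\Psi_{\B}^1$ is a surjective quasi-isomorphism from a cofibrant (h-projective) complex to an arbitrary complex, which need not be a homotopy equivalence (take $R=\mathbb{Z}$ and a hom complex $\mathbb{Z}/2$ concentrated in degree zero). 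The good homological properties of $\A^{\sem}$ and $\B^{\sem}$ do not help here, because the obstruction sits in $\B$. If $\Psi_{\B}$ were invertible up to $\approx$ for every $\B$, the distinction between $\aCat/\approx$ and $\Ho(\aCat)$ that the paper is careful to maintain would essentially disappear. Without this inverse your recipe $\F\mapsto\F^{\sem}_{\approx}$ is not well defined on $\approx$-classes, and the same uniqueness principle is what you use for compatibility with composition, so the functoriality argument collapses with it.

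The cancellation property you need (if $\Psi_{\B}\cdot\tilde{\F}_1\approx\Psi_{\B}\cdot\tilde{\F}_2$ with semi-free source, then $\tilde{\F}_1\approx\tilde{\F}_2$) is true, but in the paper it is the injectivity part of Lemma \ref{dodi}, which is proved \emph{after} Theorem \ref{funtocat} using Lemma \ref{rorororoor}, whose proof uses the functor $(\mbox{-})^{\sem}_{\approx}$ being constructed here; so invoking it at this stage would be circular, and its eventual proof is non-formal (it rests on Lemma \ref{canzo}, i.e.\ on \cite[Proposition 4.10]{COS2} and the model structure on $\DgCat$). The paper's own proof of Theorem \ref{funtocat} avoids inverting $\Psi_{\B}$ altogether: it transports $\F$ along the adjunction unit/counit $\alpha$, $\epsilon$ to the strict functor $U^n(\F)$ (resp.\ $U(\F)$), applies the resolution, which \emph{is} functorial for strict $\Ain$functors by Remark \ref{functororne}, and only inverts quasi-equivalences of the form $\Psi_{U^n(\A^{\tiny\mbox{cm}})}$, $\Psi_{U^n(\B^{\tiny\mbox{cm}})}$, whose source and target both have cofibrant morphisms, so that Theorem \ref{DC} genuinely applies. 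You would need to replace your uniqueness argument by some version of this detour (or by an independent path-object/cylinder argument) for the proposal to go through.
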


\begin{proof}
Let $\F:\A\to\B$ an $\Ain$functor, we have a diagram
\begin{align*}%\label{Dianzo4}
\xymatrix@=5em{
\big({U^{n}(\A^{\sem})}\big)^{\tiny\mbox{cm}}\ar[r]_{U^n(\Psi_{\A})^{\tiny\mbox{cm}}}\ar@<0.8ex>[d]^{\Psi_{U^n(\A^{\sem})}}&\big({U^{n}(\A)}\big)^{\tiny\mbox{cm}}\ar[r]_{({U^n(\F)} )^{\tiny\mbox{cm}}}\ar[d]^{\Psi_{U^n(\A)}}&\big({U^n(\B)}\big)^{\tiny\mbox{cm}}\ar[d]^{\Psi_{U^n(\B)}}\ar@<0.8ex>[r]^{(({U^n(\Psi_{\B})})^{\tiny\mbox{cm}})^{-1}}& \ar@<0.8ex>[l]^{({U^n(\Psi_{\B})})^{\tiny\mbox{cm}}} \big({U^n(\B^{\tiny\mbox{cm}})}\big)^{\tiny\mbox{cm}}\ar@<0.8ex>[d]^{\Psi_{U^n(\B^{\tiny\mbox{cm}})}}\\
U^{n}(\A^{\tiny\mbox{cm}}) \ar@<0.8ex>[u]^{\big(\Psi_{U^n(\A^{\tiny\mbox{cm}})}\big)^{-1}} \ar@<0.8ex>[d]^{\epsilon_{\A^{\tiny\mbox{cm}}}}\ar[r]_{U^n(\Psi_{\A})}& U^n(\A)\ar@<0.8ex>[d]^{\epsilon_{\A}}\ar[r]^{U^n(\F)} &U^n(\B) \ar@<0.8ex>[d]^{\epsilon_{\B}}& \ar[l]^{U^n(\Psi_{\B})} U^n(\B^{\tiny\mbox{cm}})\ar@<0.8ex>[d]^{\epsilon_{\B^{\tiny\mbox{cm}}}} \ar@<0.8ex>[u]^{\big(\Psi_{U^n(\B^{\tiny\mbox{cm}})}\big)^{-1}}\\
\A^{\sem}\ar[r]_{\Psi_{\A}} \ar@<0.8ex>[u]^{\alpha_{\A^{\sem}}}&\ar@<0.8ex>[u]^{\alpha_{\A}}\A \ar[r]_{\F}&\B \ar@<0.8ex>[u]^{\alpha_{\B}}&\B^{\tiny\mbox{cm}} \ar[l]^{\Psi_{\B}} \ar@<0.8ex>[u]^{\alpha_{\B^{\tiny\mbox{cm}}}}
}
\end{align*}
We defined 
\begin{align*}
\F^{\tiny\mbox{cm}}_{\approx}:=\epsilon_{\B^{\tiny\mbox{cm}}}\cdot \F' \cdot \alpha_{\A^{\tiny\mbox{cm}}}
\end{align*}
Where 
\begin{align*}
\F':=\Psi_{U^n(\B^{\tiny\mbox{cm}})}\cdot( U^n(\Psi_{\B}^{\tiny\mbox{cm}})^{-1}\cdot (U^n(\F))^{\tiny\mbox{cm}}\cdot (U^n(\Psi_{\A}))^{\tiny\mbox{cm}}\cdot (\Psi_{U^n(\A^{\tiny\mbox{cm}})})^{-1}.
\end{align*}
We have a functor 
\begin{align*}
(\mbox{-})^{\tiny\mbox{cm}}_{\approx}:\aCat^{\diamond}/\approx &\to \aCat^{\diamond}_{\tiny\mbox{cm}}/\approx\\
\A&\mapsto \A^{\tiny\mbox{cm}}\\
\F&\mapsto \F^{\tiny\mbox{cm}}_{\approx}.
\end{align*}
Where $\diamond=\mathcal{f}\mbox{cu, u, nu}\mathcal{g}$.\ In the same vein, taking $U$ instead of $U^n$, we have a functor 
\begin{align*}
(\mbox{-})^{\sem}_{\approx}:\aCat/\approx &\to \aCat_{\sem}/\approx\\
\A&\mapsto \A^{\sem}\\
\F&\mapsto \F^{\sem}_{\approx}.
\end{align*}
It is easy to see that the compositions
\begin{align*}
\xymatrix{
\aCat^{\diamond}_{\tiny\mbox{cm}}/\approx\ar@^{^(->}[r]&  \aCat^{\diamond}/\approx\ar@^{->>}[r] &\aCat^{\diamond}_{\tiny\mbox{cm}}/\approx.
}
\end{align*}
and 
\begin{align*}
\xymatrix{
\aCat_{\sem}/\approx\ar@^{^(->}[r]& \aCat/\approx\ar@^{->>}[r] &\aCat_{\sem}/\approx.
}
\end{align*}
are the identities.
Note that the functors $(\mbox{-})^{\sem}_{\approx}$ and $(\mbox{-})^{\tiny\mbox{cm}}_{\approx}$ are not equivalence in general.
\end{proof}

\begin{rem}
In \cite[Remark 1.8.]{Tan} the author provides a cofibrant resolution for a unital $\Ain$category\footnote{Note that the Tanaka calls "cofibrant" the h-projective $\Ain$categories} $\A$, taking the following composition: 
\begin{align*}
Y(\A)^{\tiny\mbox{cof}} \to Y(\A) \to \A .
\end{align*}
Here $Y$ is the $\Ain$ Yoneda functor (which has an inverse, up to $\approx$), see \cite[\S 14.7]{BLM}.\\ 
As the author pointed out \cite[Remark 3.38]{Tan} this construction is non functorial, the technical reason is that, 
given an $\Ain$functor $\F:\A\to\B$, the pushforward $Y(\A)\to Y(\B)$ is a non strict $\Ain$ functor!
\end{rem}

On the other hand, given two $\Ain$categories $\A$ and $\B$, we have a map:
\begin{align*}
\delta:\aCat(\A,\B)/\approx&\to\Ho(\aCat)(\A,\B)\\
\F&\mapsto [\F]
\end{align*}
since, if $\F\approx\G$ then $[\F]=[\G]$ (see Theorem \ref{mspaces}).\ 
We note that, since $\approx$ is compatible with the multiplication, we can extend $\delta$ to a functor of (1-)categories:
\begin{align*}
\delta:\aCat/\approx&\to\Ho(\aCat)\\
\F&\mapsto [\F]
\end{align*}
It follows easily that:
\begin{lem}\label{trrror}
Given a quasi-equivalence $u:\B\to\A$ with an inverse $v:\A\to\B$, up to $\approx$, 
then $[v]=[u]^{-1}$ in $\Ho(\aCat)(\A,\B)$.
\end{lem}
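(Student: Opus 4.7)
The plan is to apply the functor $\delta:\aCat/\!\approx\; \to \Ho(\aCat)$ (which is well-defined precisely because of Theorem~\ref{mspaces}) to the two weak-equivalence relations satisfied by $u$ and $v$. Since $u$ is a quasi-equivalence, its class $[u]$ in $\Ho(\aCat)$ is invertible by the very definition of the Gabriel--Zisman localization, so the only thing to verify is that $[v]$ plays the role of this inverse.

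Concretely, I would proceed as follows. First, from the hypothesis $u\cdot v \approx \Id_{\A}$ and $v\cdot u \approx \Id_{\B}$, apply Theorem~\ref{mspaces} to obtain the equalities
\begin{align*}
[u\cdot v]=[\Id_{\A}]=\Id_{[\A]} \quad\text{and}\quad [v\cdot u]=[\Id_{\B}]=\Id_{[\B]}
\end{align*}
in $\Ho(\aCat)$. Next, use the fact that the localization functor $\aCat \to \Ho(\aCat)$ is a functor, so $[u\cdot v]=[u]\cdot[v]$ and $[v\cdot u]=[v]\cdot[u]$. Combining these identities gives $[u]\cdot[v]=\Id_{[\A]}$ and $[v]\cdot[u]=\Id_{[\B]}$, which is exactly the statement that $[v]=[u]^{-1}$ in $\Ho(\aCat)$.

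There is no real obstacle here: the whole content of the lemma is packaged into Theorem~\ref{mspaces}, once one knows that $\approx$ descends through the localization. The only thing to be a bit careful about is to invoke functoriality of $\delta$ on compositions (which follows from the compatibility of $\approx$ with horizontal composition, already noted in the paper after Remark~\ref{defi2}) so that the relation $[u\cdot v] = [u]\cdot [v]$ is unambiguous. After that, the invertibility of $[u]$, which is granted because $u$ is a quasi-equivalence and quasi-equivalences become isomorphisms in $\Ho(\aCat)$ by construction, lets us conclude by cancellation.
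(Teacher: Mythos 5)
Your argument is correct and is exactly the one the paper intends: the lemma appears right after the construction of $\delta:\aCat/\!\approx\;\to\Ho(\aCat)$ with only the remark ``it follows easily,'' the content being precisely Theorem~\ref{mspaces} applied to $u\cdot v\approx\Id_{\A}$ and $v\cdot u\approx\Id_{\B}$ together with functoriality of the localization. Nothing is missing.
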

By Lemma \ref{crunzoz} we have that $\aCat(\A^{\sem},\B^{\sem})/\approx$ and $\Ho(\aCat)(\A^{\sem},\B^{\sem})$ are isomorphic.\\
Unfortunately, it is very difficult to unwind this isomorphism since it involve the model structure of $\DgCat$.\ We prove directly the following
\begin{lem}\label{rorororoor} 
Given two semi-free $\Ain$categories $\A^{\sem}$ and $\B^{\sem}$, the map
\begin{align*}
\phi:\aCat(\A^{\sem},\B^{\sem})/\approx &\to \mbox{Ho}(\aCat)(\A^{\sem},\B^{\sem})\\
\F&\mapsto [\F]
\end{align*}
is an isomorphism.\ 
\end{lem}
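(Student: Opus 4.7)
The map $\phi$ is well-defined by Theorem \ref{mspaces}: if $\F \approx \G$ then $[\F] = [\G]$ in $\Ho(\aCat)$. The plan for bijectivity is to identify $\phi$ with a composition of two bijections already established in the paper.

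First, by Lemma \ref{donderbong}, the canonical inclusion $\aCat^{\tiny\mbox{u}}(\A^{\sem},\B^{\sem})/{\approx} \hookrightarrow \aCat(\A^{\sem},\B^{\sem})/{\approx}$ is a bijection; concretely, every $\Ain$-functor $\F : \A^{\sem} \to \B^{\sem}$ is $\approx$-equivalent to a strictly unital one, using that $\A^{\sem}$ has nice unit (Theorem \ref{augmo}) combined with Lemma \ref{sim}. Let $\Theta_1$ denote the inverse bijection. Second, Lemma \ref{crunzoz} gives a bijection
$$\Theta_2 : \aCat^{\tiny\mbox{u}}(\A^{\sem},\B^{\sem})/{\approx}\;\longrightarrow\;\Ho(\aCat)(\A^{\sem},\B^{\sem}), \qquad [\F']_{\approx} \longmapsto [\F']_{\Ho},$$
induced by the localization functor $\aCat \to \Ho(\aCat)$.

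It remains to check that $\phi = \Theta_2 \circ \Theta_1$. For $\F \in \aCat(\A^{\sem},\B^{\sem})$, $\Theta_1([\F]_{\approx}) = [\F']_{\approx}$ for some strictly unital $\F' \approx \F$; applying $\Theta_2$ yields $[\F']_{\Ho}$; by Theorem \ref{mspaces}, $\F' \approx \F$ forces $[\F']_{\Ho} = [\F]_{\Ho} = \phi([\F]_{\approx})$. Hence $\phi = \Theta_2 \circ \Theta_1$ is a bijection.

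The main obstacle to a more hands-on proof is the following. Given a roof $\A^{\sem} \xrightarrow{\F} \C \xleftarrow{q} \B^{\sem}$ with $q$ a quasi-equivalence representing an element of $\Ho(\aCat)(\A^{\sem},\B^{\sem})$, the intermediate category $\C$ need not have cofibrant morphisms, so Theorem \ref{DC} does not directly produce an inverse of $q$ up to $\approx$. One could first replace $\C$ by a semi-free resolution and invoke the weak lifting property (Lemma D of the introduction), but reconciling the choices across different representing roofs, and verifying compatibility with the $\approx$-relation, requires nontrivial bookkeeping. The argument above bypasses this by delegating to Lemma \ref{crunzoz}, whose proof ultimately rests on Tabuada's model structure on $\DgCat$.
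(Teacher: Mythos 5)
There is a genuine gap, and it sits exactly where the paper warns you it would: in your map $\Theta_2$. Lemma \ref{crunzoz} only asserts an abstract bijection of sets between $\Ho(\aCat)(\A^{\sem},\B^{\sem})$ and $\aCat^{\tiny\mbox{u}}(\A^{\sem},\B^{\sem})/\approx$; its proof is the composite of the four identifications $\dagger_1,\dots,\dagger_4$, passing through $U(\A^{\sem})$, a DG semi-free resolution of $U(\A^{\sem})$, and Lemma \ref{canzo} (i.e.\ \cite[Proposition 4.10]{COS2}, which rests on Tabuada's model structure), finishing with Theorem \ref{DC}. Nothing in that proof identifies the resulting bijection with the map $[\F']_{\approx}\mapsto[\F']$ induced by the localization functor, and you assert this identification without argument. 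That identification is precisely the ``unwinding'' the paper declares very difficult in the sentence immediately preceding Lemma \ref{rorororoor}, and it is the entire content of the lemma: an abstract bijection between two sets does not imply that a particular given map between them is injective or surjective. As it stands, your argument re-proves Lemma \ref{crunzoz} (the sets are isomorphic) but not Lemma \ref{rorororoor} (the canonical map $\phi$ is an isomorphism). Your $\Theta_1$ (Lemma \ref{donderbong}, via the nice unit of $\A^{\sem}$ and Lemma \ref{sim}) and the well-definedness of $\phi$ via Theorem \ref{mspaces} are fine.

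For comparison, the paper proves the statement directly, avoiding the model structure: it builds an explicit inverse $\Phi$ of $\phi$ on zig-zags. A morphism in $\Ho(\aCat)(\A^{\sem},\B^{\sem})$ is a formal composite $(\F_n)^{\dagger_n}*\cdots*(\F_1)^{\dagger_1}$ with the $\F_j$ of exponent $-1$ quasi-equivalences; $\Phi$ applies the functor $(\mbox{-})^{\sem}_{\approx}$ of Theorem \ref{funtocat} to each leg and inverts, up to $\approx$, the resolved quasi-equivalences --- possible because quasi-equivalences between $\Ain$categories with cofibrant morphisms and nice units admit inverses up to $\approx$ (Theorem \ref{DC} together with Theorem \ref{h-proj} and Theorem \ref{augmo}) --- and then one checks that $\Phi$ respects the Gabriel--Zisman relations and that $\Phi\cdot\phi=\Id$, $\phi\cdot\Phi=\Id$. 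If you wish to rescue your route, you would have to show that the composite bijection of Lemma \ref{crunzoz} coincides (through Lemma \ref{donderbong}) with $\phi$, which in practice amounts to carrying out the same explicit construction.
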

\begin{proof}
We want to find an inverse $\Phi$ to $\phi$.\
We recall that a morphism $\hat{\F}$ in $\mbox{Ho}(\aCat)(\A^{\sem},\B^{\sem})$ is a roof of the form: 
\begin{align*}
\xymatrix@C=0.45cm{
\A^{\sem}\ar[r]_{\F_1}&\B_1&\ar[l]^-{{\F}_{2}}_-{\simeq}\B_2\ar[r]&...\ar[r]&\B_n&\ar[l]^-{\F_n}_-{\simeq}\B^{\sem}.
}
\end{align*}
More generally, 
\begin{align}
\hat{\F}:=(\F_n)^{\dagger_{n}}* ...* (\F_1)^{\dagger_{1}}
\end{align}
where $\dagger_j\in\mathcal{f}\mbox{ 1, -1}\mathcal{g}$.\ $\F_j:\B_{j-1}\to\B_j$ is an $\Ain$functor and, if $\dagger_j={-1}$, then $\F_j$ is a quasi-equivalence.\\
We can define 
\begin{align}\label{formulbello}
\Phi(\hat{\F}):= (\Psi_{\B^{\sem}})^{-1} \big((\F_n)^{\sem}_{\approx}\big)^{\dagger_{n}}\cdot ...\cdot \big((\F_1)^{\sem}_{\approx}\big)^{\dagger_{1}} \cdot\Psi_{\B^{\sem}}.
\end{align}
Note that all the $(\F_j)^{\sem}_{\approx}$ are $\Ain$functors between semi-free $\Ain$categories.\ 
In particular, if $\dagger_{j}=-1$ then $(\F_j)^{\sem}_{\approx}$ has an inverse up to $\approx$.\ 
So $\big((\F_j)^{\sem}_{\approx}\big)^{-1}$ in (\ref{formulbello}) denotes the inverse of $(\F_j)^{\sem}_{\approx}$ in $\aCat/\approx$.\\
Since $(\mbox{-})^{\sem}_{\approx}:\aCat\to \aCat_{\sem}/\approx$ is a functor it is easy to prove that $\Psi$ respect the relations \cite[]{GZ}.\\
Namely 
\begin{align*}
\Phi \big( (\F_n)^{\dagger_{n}}*...*(\F_j)*&(\F_{j-1})* ...* (\F_1)^{\dagger_{1}}\big)=\\
&=(\Psi_{\B^{\sem}})^{-1} \big((\F_n)^{\sem}_{\approx}\big)^{\dagger_{n}}\cdot ...\cdot (\F_j)^{\sem}_{\approx}\cdot (\F_{j-1})^{\sem}_{\approx} \cdot ... \cdot \big((\F_1)^{\sem}_{\approx}\big)^{\dagger_{1}} \cdot\Psi_{\B^{\sem}}\\
&=(\Psi_{\B^{\sem}})^{-1} \big((\F_n)^{\sem}_{\approx}\big)^{\dagger_{n}}\cdot ...\cdot \big((\F_j\cdot \F_{j-1})^{\sem}_{\approx}\big) \cdot ...\cdot \big((\F_1)^{\sem}_{\approx}\big)^{\dagger_{1}} \cdot\Psi_{\B^{\sem}}\\
&=\Psi \big( (\F_n)^{\dagger_{n}}*...*(\F_j\cdot \F_{j-1})* ...* (\F_1)^{\dagger_{1}}\big)\\
\end{align*}
and
\begin{align*}
\Phi \big( (\F_n)^{\dagger_{n}}*...*(\F_j)^{\dagger_j}*&(\F_{j-1})^{-\dagger_j}* ...* (\F_1)^{\dagger_{1}}\big)=\\
&=(\Psi_{\B^{\sem}})^{-1} \big((\F_n)^{\sem}_{\approx}\big)^{\dagger_{n}}\cdot ...\cdot \big((\F_j)^{\sem}_{\approx}\big)^{\dagger_j}\cdot\big((\F_{j-1})^{\sem}_{\approx}\big)^{-\dagger_j} \cdot ... \cdot \big((\F_1)^{\sem}_{\approx}\big)^{\dagger_{1}} \cdot\Psi_{\B^{\sem}}\\
&=(\Psi_{\B^{\sem}})^{-1} \big((\F_n)^{\sem}_{\approx}\big)^{\dagger_{n}}\cdot ...\cdot \Id \cdot ...\cdot \big((\F_1)^{\sem}_{\approx}\big)^{\dagger_{1}} \cdot\Psi_{\B^{\sem}}\\
&=\Psi \big( (\F_n)^{\dagger_{n}}*... * (\F_1)^{\dagger_{1}}\big)\\
\end{align*}
Clearly $\Phi\cdot \phi=\Id$ and $\phi\cdot\Phi=\Id$.
\end{proof}

We have a commutative diagram
\begin{align}\label{mteo}
\xymatrix{
\aCat^{u}(\A^{\sem},\B^{\sem})/\approx \ar[d] \ar[r]^{\epsilon}& \aCat^{u}(\A^{\sem},\B)/\approx \ar[d]^{\delta} \\
\ar[u]_{1:1}^{\phi} \mbox{Ho}(\aCat)(\A^{\sem},\B^{\sem})\ar[r]_{\phi'}^{1:1}&\ar[l]\mbox{Ho}(\aCat)(\A^{\sem},\B)
}
\end{align}
Where $\epsilon(\F):=\Psi_{\B}\F$, and $\delta(\F)=[\F]$ it is well defined by Theorem \ref{mspaces}. 

\begin{lem}\label{dodi}
The map $\epsilon$ is bijective.
\end{lem}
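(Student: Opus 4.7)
The plan is to get injectivity of $\epsilon$ essentially for free from diagram (\ref{mteo}), and to establish surjectivity by an explicit construction combining the weak lifting property with Theorem \ref{DC}.

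First, I would verify that diagram (\ref{mteo}) commutes: for $y\in\mbox{Ho}(\aCat)(\A^{\sem},\B^{\sem})$ with $\phi(y)=[\G]_{\approx}$, one has $\delta(\epsilon(\phi(y)))=[\Psi_{\B}\cdot\G]=[\Psi_{\B}]\cdot[\G]=\phi'(y)$, because $\phi'$ is post-composition with $[\Psi_{\B}]$. Hence $\delta\cdot\epsilon=\phi'\cdot\phi^{-1}$. The map $\phi$ is bijective by Lemma \ref{rorororoor} combined with Lemma \ref{donderbong} (to match unital conventions), and $\phi'$ is bijective since $[\Psi_{\B}]$ is invertible in the homotopy category ($\Psi_{\B}$ being a quasi-equivalence). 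Therefore $\delta\cdot\epsilon$ is a bijection, and in particular $\epsilon$ is injective.

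For surjectivity, given a class represented by $\F\colon\A^{\sem}\to\B$, I would apply the weak lifting property discussed right after Lemma D in the introduction (and supplied by Lemma \ref{lordod}) to the strict quasi-equivalence $\Psi_{\B}\colon\B^{\sem}\to\B$. This produces an $\Ain$-functor $\tilde{\F}\colon (\A^{\sem})^{\sem}\to\B^{\sem}$ with $\Psi_{\B}\cdot\tilde{\F}\approx\F\cdot\Psi_{\A^{\sem}}$ in $\aCat/\approx$. Now both $\A^{\sem}$ and $(\A^{\sem})^{\sem}$ are semi-free, hence h-projective (Theorem \ref{h-proj} together with Remark \ref{crachen}) and carry a nice unit (Corollary \ref{augmo}). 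Therefore Theorem \ref{DC} furnishes a strictly unital $\Ain$-functor $\Theta\colon\A^{\sem}\to(\A^{\sem})^{\sem}$ with $\Psi_{\A^{\sem}}\cdot\Theta\approx\mbox{Id}_{\A^{\sem}}$. Setting $\G:=\tilde{\F}\cdot\Theta$ one then computes
\[
\Psi_{\B}\cdot\G \;=\; \Psi_{\B}\cdot\tilde{\F}\cdot\Theta \;\approx\; \F\cdot\Psi_{\A^{\sem}}\cdot\Theta \;\approx\; \F,
\]
so $\epsilon([\G]_{\approx})=[\F]_{\approx}$, and $\epsilon$ is surjective.

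The main technical obstacle is bookkeeping the unital structure, namely checking that $\tilde{\F}$ and $\G$ define bona fide classes in $\aCat^{u}(\A^{\sem},\B^{\sem})/\approx$ rather than merely in $\aCat/\approx$; this is precisely the content of Lemma \ref{donderbong}, which identifies the different unital quotient sets. The crucial input is Theorem \ref{DC}, whose hypotheses (h-projectivity and nice unit on both source and target) are verified for any semi-free $\Ain$-category by Theorem \ref{h-proj} and Corollary \ref{augmo}.
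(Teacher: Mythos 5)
Your proof is correct and follows essentially the paper's own route: injectivity comes from the commutativity of diagram (\ref{mteo}) together with the bijectivity of $\phi$ (Lemma \ref{rorororoor}) and of post-composition with the invertible $[\Psi_{\B}]$, which is exactly the paper's retraction argument, and surjectivity rests on the weak lifting property of Lemma \ref{lordod}. The only (harmless) difference is that for surjectivity you lift to $(\A^{\sem})^{\sem}$ and then contract back to $\A^{\sem}$ via Theorem \ref{DC} (using Theorem \ref{h-proj} and the nice unit from Theorem \ref{augmo}), whereas the paper applies Lemma \ref{lordod} with source $\A^{\sem}$ itself, as its proof in fact constructs the lift directly on $\A^{\sem}$, making that extra resolution step unnecessary.
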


The surjectivity of $\epsilon$ follows from the more general Lemma:

\begin{lem}\label{lordod}
Given two $\Ain$functor $\F:\A\to\B$ and $\G:\C\to\B$ which is a quasi-equivalence.\ 
There exists a (non unique) $\Ain$functor $\tilde{\F}:\A^{\tiny\mbox{sf}}\to\C$ making the diagram
\[
\xymatrix{
&{\C}\ar[d]_{\simeq}^{\G}\\
\A^{\tiny\mbox{sf}}\ar@{-->}[ur]^{\tilde{\F}}\ar[r]^{\F}&\B
}
\]
commutative in $\aCat/\approx$
\end{lem}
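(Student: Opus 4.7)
The plan is to reduce the lifting problem to the semi-free setting, where one has better control, and then to invert the resulting quasi-equivalence using Homological Perturbation Theory (Theorem \ref{DC}).

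First, I would invoke the functorial semi-free resolution of Theorem \ref{funtocat} applied to both $\F$ and $\G$, producing $\Ain$functors $\F^{\sem}_{\approx}\colon \A^{\sem}\to \B^{\sem}$ and $\G^{\sem}_{\approx}\colon \C^{\sem}\to \B^{\sem}$ satisfying, modulo $\approx$, the naturality squares
\begin{align*}
\Psi_{\B}\cdot \F^{\sem}_{\approx}\approx \F\cdot \Psi_{\A},\qquad \Psi_{\B}\cdot \G^{\sem}_{\approx}\approx \G\cdot \Psi_{\C}.
\end{align*}
Passing to cohomology, $H(\Psi_{\B})$, $H(\Psi_{\C})$ and $H(\G)$ are all equivalences, so the cohomological square forces $H(\G^{\sem}_{\approx})$ to be one as well; hence $\G^{\sem}_{\approx}$ is itself a quasi-equivalence.

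Next I would invert $\G^{\sem}_{\approx}$ up to $\approx$ via Theorem \ref{DC}. The hypotheses are fulfilled: by Theorem \ref{h-proj} both $\C^{\sem}$ and $\B^{\sem}$ have cofibrant morphisms, hence each component $(\G^{\sem}_{\approx})^{1}$ is a quasi-isomorphism between h-projective chain complexes and so admits a chain homotopy inverse; moreover $\B^{\sem}$ has nice unit by Theorem \ref{augmo}, and a set-theoretic preimage on objects is available since $\G^{\sem}_{\approx}$ is essentially surjective on $H^{0}$. Therefore there exists a strictly unital $\Ain$functor $\G^{\dagger}\colon \B^{\sem}\to \C^{\sem}$ with $\G^{\sem}_{\approx}\cdot \G^{\dagger}\approx \Id_{\B^{\sem}}$ and $\G^{\dagger}\cdot \G^{\sem}_{\approx}\approx \Id_{\C^{\sem}}$.

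Setting $\tilde{\F}:=\Psi_{\C}\cdot \G^{\dagger}\cdot \F^{\sem}_{\approx}\colon \A^{\sem}\to \C$, the required commutativity in $\aCat/\approx$ follows by a direct chase:
\begin{align*}
\G\cdot \tilde{\F}=\G\cdot \Psi_{\C}\cdot \G^{\dagger}\cdot \F^{\sem}_{\approx}\approx \Psi_{\B}\cdot \G^{\sem}_{\approx}\cdot \G^{\dagger}\cdot \F^{\sem}_{\approx}\approx \Psi_{\B}\cdot \F^{\sem}_{\approx}\approx \F\cdot \Psi_{\A},
\end{align*}
invoking in turn the naturality square for $\G$, the HPT identity, and the naturality square for $\F$. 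The main obstacle is checking that $\G^{\sem}_{\approx}$ genuinely satisfies the hypotheses of Theorem \ref{DC} on each hom-complex; this reduces to the two structural properties of semi-free resolutions proved earlier, namely that they have cofibrant morphisms (Theorem \ref{h-proj}) and nice unit (Theorem \ref{augmo}), which together guarantee both the existence of chain homotopy inverses and the applicability of the strictly unital version of HPT.
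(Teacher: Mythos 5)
Your proposal is correct in substance and runs on the same engine as the paper's proof: functorial resolutions, inversion of the resolved quasi-equivalence via Homological Perturbation (Theorem \ref{DC}), and a chase up to $\approx$. The packaging differs: the paper descends to the DG world, applying $U^n$ and the $(\mbox{-})^{\tiny\mbox{cm}}$-resolutions, inverting $(U^n(\G))^{\tiny\mbox{cm}}$ and the relevant $\Psi$'s there, and conjugating back with $\alpha$ and $\epsilon$ (diagrams (\ref{Dianzo1})--(\ref{Dianzo3})), whereas you stay at the $\Ain$ level, resolving $\F$ and $\G$ by $(\mbox{-})^{\sem}_{\approx}$ and inverting $\G^{\sem}_{\approx}$ directly; this is legitimate because $\C^{\sem}$ and $\B^{\sem}$ have cofibrant, hence h-projective, morphisms (Theorem \ref{h-proj}, Remark \ref{crachen}) and $\B^{\sem}$ has nice unit (Theorem \ref{augmo}), and there is no circularity since Theorem \ref{funtocat} precedes Lemma \ref{lordod} and does not use it. The one step you should not merely cite is the pair of squares $\Psi_{\B}\cdot\F^{\sem}_{\approx}\approx\F\cdot\Psi_{\A}$ and $\Psi_{\B}\cdot\G^{\sem}_{\approx}\approx\G\cdot\Psi_{\C}$: Theorem \ref{funtocat} only asserts that $(\mbox{-})^{\sem}_{\approx}$ is a functor left inverse to the inclusion, not that $\Psi$ is a natural transformation to the identity up to $\approx$. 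That naturality is true, but establishing it requires unwinding $\F^{\sem}_{\approx}=\epsilon_{\B^{\sem}}\cdot\F'\cdot\alpha_{\A^{\sem}}$ and using the strict commutativity of the resolution squares (Remark \ref{functororne}) together with the naturality of $\alpha$ and the fact that $\epsilon$ is its $\approx$-inverse --- which is exactly the computation the paper performs inside its proof of Lemma \ref{lordod}; so your argument is a clean repackaging, with the real content relocated into that compatibility, which you should prove rather than attribute to Theorem \ref{funtocat}. Minor points: the two-out-of-three deduction that $\G^{\sem}_{\approx}$ is a quasi-equivalence is fine, since weakly equivalent functors induce isomorphic functors on cohomology; your conclusion $\G\cdot\tilde{\F}\approx\F\cdot\Psi_{\A}$ is the intended reading of the triangle (the paper writes $\F$ for the bottom arrow out of $\A^{\sem}$); and any residual issue about strict versus cohomological unitality of the lift is absorbed by Lemmas \ref{donderbong} and \ref{sim}, as in the paper.
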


\begin{proof}[Proof of Lemma \ref{lordod}]
We consider the two following diagrams in $\aCat^{u}$:
\begin{align}\label{Dianzo1}
\xymatrix@C=4.5em{
\big({U^{n}(\A^{\sem})}\big)^{\tiny\mbox{cm}}\ar[r]_{({U^n(\F)} )^{\tiny\mbox{cm}}}\ar[d]_{\simeq}^{\Psi_{U^n(\A^{\sem})}}&\big({U^n(\B^{\sem})}\big)^{\tiny\mbox{cm}}\ar[d]_{\simeq}^{\Psi_{U^n(\B)}}& \ar[l]_{\simeq}^{({U^n(\G)})^{\tiny\mbox{cm}}} \big({U^n(\C^{\sem})}\big)^{\tiny\mbox{cm}}\ar[d]_{\simeq}^{\Psi_{U^n(\C)}}\\
U^{n}(\A^{\sem})\ar[r]_{U^n(\F)}&U^n(\B) & \ar[l]_{\simeq}^{U^n(\G)} U^n(\C)\\
}
\end{align}
and
\begin{align}\label{Dianzo2}
\xymatrix@C=4.5em{
U^{n}(\A^{\sem})\ar[r]_{U^n(\F)}&U^n(\B) & \ar[l]_{\simeq}^{U^n(\G)} U^n(\C)\\
\A^{\sem}\ar[r]_{\F} \ar@<0.8ex>[u]_{\simeq}^{\alpha^n_{\A^{\sem}}}&\B \ar@<0.8ex>[u]_{\simeq}^{\alpha^n_{\B}}&\B^{\sem} \ar[l]_{\simeq}^{\G} \ar@<0.8ex>[u]_{\simeq}^{\alpha^n_{\C}}
}
\end{align}
Note that all the vertical arrows in (\ref{Dianzo2}), $\Psi_{{U^n({\A^{\sem}})}}$ and $(U^n(\G))^{\tiny\mbox{cm}}$ in (\ref{Dianzo2}) have an inverse up to $\approx$.\
So we have the diagram in $\aCat^{u}/\approx$:
\begin{align}\label{Dianzo3}
\xymatrix@=5em{
\big({U^{n}(\A^{\sem})}\big)^{\tiny\mbox{cm}}\ar[r]_{({U^n(\F)} )^{\tiny\mbox{cm}}}\ar@<0.8ex>[d]^{\Psi_{U^n(\A^{\sem})}}&\big({U^n(\B)}\big)^{\tiny\mbox{cm}}\ar[d]^{\Psi_{U^n(\B)}} \ar@<0.8ex>[r]^{(({U^n(\G)})^{\tiny\mbox{cm}})^{-1}}& \ar@<0.8ex>[l]^{({U^n(\G)})^{\tiny\mbox{cm}}} \big({U^n(\C)}\big)^{\tiny\mbox{cm}}\ar[d]^{\Psi_{U^n(\C)}}\\
U^{n}(\A^{\sem}) \ar@<0.8ex>[u]^{\big(\Psi_{U^n(\A^{\sem})}\big)^{-1}} \ar@<0.8ex>[d]^{\epsilon_{\A^{\sem}}}\ar[r]_{U^n(\F)}&U^n(\B) \ar@<0.8ex>[d]^{\epsilon_{\B}}& \ar[l]^{U^n(\G)} U^n(\B^{\sem})\ar@<0.8ex>[d]^{\epsilon_{\C}} \\
\A^{\sem}\ar[r]_{\F} \ar@<0.8ex>[u]^{\alpha_{\A^{\sem}}}&\B \ar@<0.8ex>[u]^{\alpha_{\B}}&\C \ar[l]^{\G} \ar@<0.8ex>[u]^{\alpha_{\C}}
}
\end{align}
By diagram (\ref{Dianzo3}), we have an $\Ain$functor 
\begin{align*}
\mathscr{H}':=\Psi_{U^n(\C)}\cdot \big( (U^n(\G)^{\tiny\mbox{cm}})\big)^{-1}\cdot (U^n(\F))^{\tiny\mbox{cm}} \cdot \big( \Psi_{U^n(\A^{\sem})} \big)^{-1}.
\end{align*}
Take $\mathscr{H}:=\epsilon_{\C}\cdot\mathscr{H}'\cdot\alpha_{\A^{\sem}}$ we have:
\begin{align*}
\G\cdot\mathscr{H}&=\G\cdot\big( \epsilon_{\C}\cdot\mathscr{H}'\cdot\alpha_{\A^{\sem}} \big)\\
&=\big(\G\cdot \epsilon_{\C} \big)\cdot \big( \big(  \Psi_{U^n(\C)}\cdot \big( (U^n(\G)^{\tiny\mbox{cm}})\big)^{-1}\cdot (U^n(\F))^{\tiny\mbox{cm}} \cdot \big( \Psi_{U^n(\A^{\sem})} \big)^{-1} \big)  \cdot\alpha_{\A^{\sem}} \big)\\
&\approx\big( \epsilon_{\B} \cdot U^n(\G) \big)\cdot \big( \big(  \Psi_{U^n(\C)}\cdot \big( (U^n(\G)^{\tiny\mbox{cm}})\big)^{-1}\cdot (U^n(\F))^{\tiny\mbox{cm}} \cdot \big( \Psi_{U^n(\A^{\sem})} \big)^{-1}   \big)  \cdot\alpha_{\A^{\sem}} \big)\\
%%%%%%%%%%%%%%%
&\approx \epsilon_{\B} \cdot \big( U^n(\G) \cdot   \Psi_{U^n(\C)}\cdot \big( (U^n(\G)^{\tiny\mbox{cm}})\big)^{-1}\cdot (U^n(\F))^{\tiny\mbox{cm}} \cdot \big( \Psi_{U^n(\A^{\sem})} \big)^{-1}   \big)  \cdot\alpha_{\A^{\sem}}\\
%%%%%%%%%%%%
&\approx \epsilon_{\B} \cdot \big( \Psi_{U^n(\B)} \cdot (U^n(\F))^{\tiny\mbox{cm}} \cdot \big( \Psi_{U^n(\A^{\sem})} \big)^{-1}   \big)  \cdot\alpha_{\A^{\sem}}\\
&\approx \epsilon_{\B} \cdot U^n(\F)  \cdot\alpha_{\A^{\sem}}\\
&\approx \F.
\end{align*}
\end{proof}

\begin{proof}[Proof of Theorem \ref{dodi}]
It remains to prove that $\epsilon$ is injective we prove that $r\cdot\epsilon(\F)=\F$, where $r:=\phi\cdot\phi'\cdot\delta$
We have 
\begin{align*}
r\cdot\epsilon(\F)&:=\phi\cdot\phi'\cdot\delta(\Psi_{\B}\F)\\
&= (\phi\cdot\phi')\big( [\Psi_{\B}\F] \big)\\
&= \phi(\big( [\Psi_{\B}]^{-1}[\Psi_{\B}\F] \big) )\\
&=\phi([\F])\\
&=\F
\end{align*}
and we are done.
\end{proof}

We have the following natural isomorphism:

\begin{thm}\label{artoo}
Given $\A,\B$ two strictly unital $\Ain$categories we have:
\begin{align}
\mbox{Ho}(\aCat)(\A,\B)\simeq\mbox{Ho}(\aCat)(\A^{\tiny\mbox{sf}},\B)\simeq\aCat^{\star}(\A^{\tiny\mbox{sf}},\B)/_{\approx}.
\end{align}
Where $\star=\{\mbox{ , u, cu}\}$.\ Namely $\aCat^{\tiny\mbox{cu}}(\A^{\tiny\mbox{sf}},\B)$ denotes the set of cohomological unital $\Ain$functors, $\aCat^{\tiny\mbox{u}}(\A^{\tiny\mbox{sf}},\B)$ denotes the set of unital $\Ain$functors and $\aCat(\A^{\tiny\mbox{sf}},\B)$ denotes the set of strictly unital $\Ain$functors.
\end{thm}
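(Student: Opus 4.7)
The plan is to assemble Theorem \ref{artoo} from the preceding lemmas of this section, rather than to construct anything genuinely new. All the real work has been done in building the semi-free resolution and proving that it is functorial in $\aCat/\approx$ (Theorem \ref{funtocat}), that it has cofibrant morphisms (Theorem \ref{h-proj}), and that Lifting Lemma \ref{lordod} holds.

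First, I would deal with the isomorphism $\dagger_1$. By Theorem \ref{semifree}, there is a strict quasi-equivalence $\Psi_{\A}:\A^{\sem}\to\A$, and hence $[\Psi_{\A}]$ is invertible in $\Ho(\aCat)$. Precomposition therefore gives a bijection
\[
(-)\cdot[\Psi_{\A}]:\Ho(\aCat)(\A,\B)\xrightarrow{\;\sim\;}\Ho(\aCat)(\A^{\sem},\B),
\]
which is the content of $\dagger_1$.

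For $\dagger_2$, I would exploit the commutative square (\ref{mteo}). The bottom arrow $\phi'$, which is postcomposition with the inverse of $[\Psi_{\B^{\sem}}]$ (noting $\B^{\sem}$ is a further semi-free resolution), is a bijection for the same reason as $\dagger_1$. The left vertical $\phi$ is bijective by Lemma \ref{rorororoor}; this is where the functor $(-)^{\sem}_{\approx}$ from Theorem \ref{funtocat} supplies the explicit inverse. The top arrow $\epsilon$, pushforward by $\Psi_{\B}$, is bijective by Lemma \ref{dodi} (whose surjectivity rests on Lemma \ref{lordod}, and whose injectivity is the short identity computation $r\cdot\epsilon=\mathrm{Id}$). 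Commutativity of (\ref{mteo}) then forces the right vertical $\delta$ to be a bijection, yielding
\[
\aCat(\A^{\sem},\B)/\approx\ \xrightarrow{\;\sim\;}\ \Ho(\aCat)(\A^{\sem},\B),\qquad [\F]\mapsto[\F].
\]

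Finally, to remove the decoration $\star$ on the right-hand side, I would invoke Lemma \ref{donderbong}, which identifies $\aCat^{\tiny\mbox{cu}}(\A^{\sem},\B)/\approx$, $\aCat^{\tiny\mbox{u}}(\A^{\sem},\B)/\approx$ and $\aCat(\A^{\sem},\B)/\approx$; this uses Lemma \ref{sim} together with the nice-unit property of $\A^{\sem}$ from Theorem \ref{augmo} to upgrade any cohomological unital $\Ain$functor out of $\A^{\sem}$ to a $\approx$-equivalent strictly unital one.

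The only genuine obstacle is hidden inside Lemma \ref{dodi}. Surjectivity of $\epsilon$ is Lemma \ref{lordod}, which itself requires the full Homological Perturbation Theorem \ref{DC} applied after transporting everything into $\DgCat$ via the adjunction $U^n\dashv i^n$, passing to a DG cofibrant replacement, and returning via the quasi-inverses $\alpha^n,\epsilon^n$; the diagram chase through (\ref{Dianzo3}) is where the symbolic bookkeeping is delicate. Injectivity, by contrast, is formal. Once these are in hand, the theorem is pure diagram manipulation.
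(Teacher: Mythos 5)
Your proposal is correct and follows essentially the same route as the paper: $\dagger_1$ via invertibility of $[\Psi_{\A}]$, and $\dagger_2$ by chasing the square (\ref{mteo}) using Lemma \ref{rorororoor} for $\phi$, Lemma \ref{dodi} (with Lemma \ref{lordod}) for $\epsilon$, and Theorem \ref{mspaces} for well-definedness of $\delta$, with Lemma \ref{donderbong} handling the different unitality decorations $\star$. No gaps to report.
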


\begin{proof}
Given a strictly unital $\Ain$category $\A$ we have a semi-free resolution $\A^{\tiny\mbox{sf}}$ (see Theorem \ref{semifree}).\\
The quasi-equivalence $\Psi:\A^{\tiny\mbox{sf}}\to\A$ becomes an isomorphism in the homotopy category.\
So $$\mbox{Ho}(\aCat)(\A,\B)\simeq\mbox{Ho}(\aCat)(\A^{\tiny\mbox{sf}},\B).$$
By diagram (\ref{mteo}) and Lemma \ref{dodi} we have:
$$\mbox{Ho}(\aCat)(\A^{\tiny\mbox{sf}},\B)\simeq \aCat^{\tiny\mbox{u}}/\approx(\A^{\tiny\mbox{sf}},\B).$$
\end{proof}

\begin{rem}
Note that the proof of Lemma \ref{dodi} actually proves that, if $\A$ and $\B$ are $\Ain$categories such that $\A$ is h-projective then:
$$\Ho(\aCat)(\A,\B)\simeq\aCat^{\tiny\mbox{u}}(\A,\B)/\approx.$$
Moreover, if $\A$ has nice unit, then 
$$\Ho(\aCat)(\A,\B)\simeq\aCat(\A,\B)/\approx.$$
\end{rem}

\begin{rem}
Note that our Theorem \ref{artoo} does not provide an equivalence of categories.\ 
The functor 
$$\aCat/\approx\to\Ho(\aCat)$$
is not indeed an equivalence of categories, see \cite[Remark 4.12]{COS2}.\\
%To see that one can take a cohomologically trivial $\Ain$category 
The category $\Ho(\aCat)$ is equivalent to the category $\aCat^{\tiny\mbox{u}}_{\tiny\mbox{h-proj}}/\approx$ (see \cite[Theorem B]{COS2}).\ 
Where $\aCat^{\tiny\mbox{u}}_{\tiny\mbox{h-proj}}/\approx$ is the category whose objects, are the h-projective (strictly unital) $\Ain$categories, and whose morphisms 
are the cohomological unital $\Ain$functors quotiented by the relation $\approx$.\\  
Moreover, fixed two $\Ain$categories $\A$ and $\B$ the set 
$$\aCat(\A^{\sem},\B)/{\approx}$$
can be equipped with an $\Ain$structure (see Section \ref{lonzolenzo}): 
\begin{itemize}
\item[1.] the objects are the (strictly unital) $\Ain$functors quotiented by $\approx$, 
\item[2.] The morphisms are the strictly unital prenatural transformations.
\end{itemize}
On the other hand even the set 
$$\aCat^{\tiny\mbox{u}}(\A^{\sem},\B)/{\approx}$$
has an $\Ain$structure:
\begin{itemize}
\item[1.] the objects are the cohomological unital $\Ain$functors quotiented by $\approx$, 
\item[2.] The morphisms are the prenatural transformations.
\end{itemize}
Now we consider $\aCat(\A^{\sem},\B)$ and $\aCat^{\tiny\mbox{u}}(\A^{\sem},\B)/{\approx}$ with their $\Ain$structures.\
Since $\A^{\tiny\sem}$ has nice unit, it is clear that $\aCat(\A^{\sem},\B)$ and $\aCat^{\tiny\mbox{u}}(\A^{\sem},\B)/{\approx}$ have the same objects (see Lemma \ref{donderbong}), 
moreover and they are quasi-equivalent has $\Ain$categories (see \cite[Lemma 4.2]{COS2} or \cite[Lemma 3.8]{Orn1}, see also \cite[Remark 5.3]{COS2}).
\end{rem}

\newpage

\end{document}